\theoremstyle{definition} \newtheorem{Definition}{Definition}[section]
\theoremstyle{plain} \newtheorem{Theorem}[Definition]{Theorem}
\theoremstyle{plain} \newtheorem{corollary}[Definition]{Corollary}
\theoremstyle{plain} \newtheorem{lemma}[Definition]{Lemma}
\theoremstyle{definition} \newtheorem{Remark}[Definition]{Remark}
\theoremstyle{definition}
\theoremstyle{definition}
\newtheorem{notation}[Definition]{Notation}
\theoremstyle{plain}
\newtheorem{proposition}[Definition]{Propostion}
\theoremstyle{plain} 
\newtheorem{claim}[Definition]{Claim}
\theoremstyle{plain} 
\theoremstyle{definition}
\newtheorem{example}[Definition]{Example}
\theoremstyle{plain} 
\theoremstyle{definition}
\newtheorem{construction}[Definition]{Construction}
\newcommand{\gen}[1]{\langle #1 \rangle}
\newcommand{\id}{\mathrm{id}}
\newcommand{\tid}{\mathbbm{1}}
\newcommand{\aut}[1]{\mathop{\mathrm{Aut}}({#1})}
\newcommand{\End}{\mathop{\mathrm{End}}\nolimits}
\newcommand{\out}[1]{\mathop{\mathrm{Out}}({#1})}
\newcommand{\im}[1]{\mathrm{Image}{(#1)}}
\newcommand{\dom}[1]{\mathrm{Domain}{(#1)}}
\newcommand{\ext}[2]{\mathrm{ext}_{#1,#2}}
\newcommand{\com}[1]{[#1,#1]}
\newcommand{\T}[1]{\mathcal{#1}}
\newcommand{\CCmr}[1]{\mathfrak{C}_{#1}}
\newcommand{\CCnr}{\mathfrak{C}_{n,r}}
\newcommand{\CCn}{\mathfrak{C}_{n}}
\newcommand{\Bnr}{\mathcal{B}_{n,r}}
\newcommand{\Mn}[1]{\mathfrak{M}(#1)}
\newcommand{\mn}[1]{\mathfrak{G}(#1)}
\newcommand{\Smn}[1]{\mathcal{M}_{n}}
\newcommand{\On}{\T{O}_{n}}
\newcommand{\Onr}{\T{O}_{n,r}}
\newcommand{\Ons}[1]{\T{O}_{n,#1}}
\newcommand{\xn}{X_{n}}
\newcommand{\xns}{\xn^{*}}
\newcommand{\xnp}{\xn^{+}}
\newcommand{\xnl}[1]{\xn^{#1}}
\newcommand{\xnk}[2]{X_{#1}^{#2}}
\newcommand{\xno}{X_n^{\omega}}
\newcommand{\xnz}{X_n^{\Z}}
\newcommand{\xnN}{X_n^{-\N}}
\newcommand{\xnn}{X_n^{\N}}
\newcommand{\wn}{W_{n}}
\newcommand{\wns}{\wn^{\ast}}
\newcommand{\wnl}[1]{\wn^{#1}}
\newcommand{\rwnl}[1]{\mathsf{W}_{n}^{#1}}
\newcommand{\spnprod}[1]{\ast_{\spn{n}}}
\newcommand{\pn}[1]{\mathcal{P}_{#1}}
\newcommand{\spn}[1]{\widetilde{\T{P}}_{#1}}
\newcommand{\hn}[1]{\mathcal{H}_{#1}}
\newcommand{\core}{\mathrm{Core}}
\newcommand{\ALn}[1]{\mathcal{AL}_{#1}}
\newcommand{\Ln}[1]{\mathcal{L}_{#1}}
\newcommand{\Kn}[1]{\mathcal{K}_{#1}}
\newcommand{\SLn}[1]{\widetilde{\mathcal{L}_{#1}}}
\newcommand{\SOn}[1]{\widetilde{\mathcal{O}_{#1}}}
\newcommand{\Un}[1]{\mathfrak{U}_{#1}}
\newcommand{\ASLn}[1]{\widetilde{\mathcal{ASL}_{#1}}}
\newcommand{\shift}[1]{\sigma_{#1}}
\newcommand{\Shift}[1]{\mathfrak{S}_{#1}}
\newcommand{\dotr}{{\bf{\dot{r}}}}
\newcommand{\rev}[1]{\overleftarrow{#1}}
\newcommand{\Z}{\mathbb{Z}}
\newcommand{\N}{\mathbb{N}}
\newcommand{\Q}{\mathbb{Q}}
\newcommand{\sym}[1]{\mathrm{Sym}(#1)}
\newcommand{\tran}[1]{\mathop{\mathrm{Tr}}(#1)}
\newcommand{\rot}{\mathmbox{rot}}
\newcommand{\simrot}{\sim_{{\rot}}}
\newcommand{\rotclass}[1]{[#1]_{\simrot}}
\newcommand{\sig}{\mathrm{sig}}
\newcommand{\rsig}{\protect\overrightarrow{\mathrm{sig}}}
\newcommand{\emptyword}{\varepsilon}
\newcommand{\ew}{\emptyword}
\newcommand{\nset}[1]{\{0,1,\ldots, #1\}}
\newcommand{\rec}[1]{(#1)\mathfrak{r}}
\newcommand{\mrec}[1]{(#1)\overline{\mathfrak{r}}}
\newcommand{\revrec}[1]{(#1){\protect\overleftarrow{\mathfrak{r}}}}
\renewcommand{\pmod}[1]{\ (\mathrm{mod}\ #1)}
\renewcommand{\restriction}{\mathord{\upharpoonright}}
\renewcommand*{\eqref}[1]{%
  \hyperref[{#1}]{\textup{\tagform@{\ref*{#1}}}}%
}
\begin{document}

\title{Automorphisms of the two-sided shift and the Higman--Thompson groups III: extensions}
\author{Feyishayo Olukoya}
\date{}
\maketitle

\begin{abstract}
	A key result of the article \textit{Automorphisms of shift spaces and the Higman--Thompson groups: the two-sided case} \cite{BelkBleakCameronOlukoya} is  that the group of automorphisms of the two-sided dynamical system $\aut{\xnz, \shift{n}}$ is a central extension by a subgroup $\Ln{n}$ of the outer-automorphism group $\On$ of the Higman--Thompson group $G_{n,n-1}$. 
	
	The current article builds on the results of \cite{BelkBleakCameronOlukoya} by further  establishing connections between the groups $\aut{\xnz, \shift{n}}$ and $\On$. Our key aims are to interpret several important constructions in the theory of automorphisms of the shift dynamical system in terms of the group $\Ln{n}$ and secondly to extend results and techniques in $\aut{\xnz, \shift{n}}$ to the groups $\aut{G_{n,r}}$ of automorphisms of the Higman--Thompson group $G_{n,r}$ and their outerautomorphism groups $\Onr \cong \out{G_{n,r}}$. 
	
	Our mains results are a concrete realisation of the \textit{inert subgroup}, an important subgroup in the study of the groups $\aut{\xns, \shift{n}}$, as a subgroup $\mathcal{K}_{n}$ of $\Ln{n}$ using the technology developed in \cite{BelkBleakCameronOlukoya}. 
	
	Using the realisation above, and by passing to a subgroup $\mathcal{D}_{n}$ of $\mathcal{K}_{n}$ which lifts as a subgroup of $\aut{G_{n,r}}$ for all $1 \le r \le n-1$, we show that the $\aut{G_{n,r}}$ contains an isomorphic copy of $\aut{X_{m}^{\Z}, \shift{m}}$ for all $m \ge 2$. 
	A quick survey of the literature then yields that $\aut{G_{n,r}}$ contains isomorphic copies of  finite groups, finitely generated abelian groups, free groups, free products of finite groups,  fundamental groups of 2-manifolds, graph groups and countable locally finite residually finite groups \cite{KimRoush} to name a few. Another consequence of the embedding is  that both $\aut{G_{n,r}}$ and $\out{G_{n,r}}$ have undecidable order problem. 
	We note that the result for $\out{G_{n,r}}$ is proved in \cite{BelkBleakCameronOlukoya} using a slightly different method.
	
	Lastly, we extend a fairly straight-forward result for $\aut{\xnz, \shift{n}}$ to the groups $\Onr$. The homeomorphism $\rev{\phantom{a}}$, an involution, of $\xnz$ which maps a sequence $(x_i)_{i \in \Z}$ to the sequence $(y_{i})_{i \in \Z}$ defined such that $y_{i} = x_{-i}$ induces by conjugation an automorphism $\rev{\mathfrak{r}}$ of $\aut{\xnz, \shift{n}}$, and consequently, an automorphism of $\Ln{n}$. We extend the automorphism $\rev{\mathfrak{r}}$ to the group $\On$. As there is, to our knowledge, no index preserving action of $\On$ on $\xnz$, the extension to $\On$ requires a different proof. Our proof is combinatorial in nature and relies on the faithful representation of $\On$ in the permutation group on the set of equivalence classes of prime words under rotational equivalence.
	
	In a forthcoming article, we demonstrate that the group $\mathcal{O}_{n}$ is isomorphic to the mapping class group of the full two-sided shift over $n$ letters.

\end{abstract}

\section{Introduction}
A key result of the article \cite{BelkBleakCameronOlukoya} is  that the group of automorphisms of the two-sided dynamical system $\aut{\xnz, \shift{n}}$ is a central extension by a subgroup $\Ln{n}$ of the outer-automorphism group $\On$ of the Higman--Thompson group $G_{n,n-1}$. It is also shown in \cite{BleakCameronOlukoya} that the extension splits precisely when $n$ is not a power of a strictly smaller integer.  

The current article builds on the results of \cite{BelkBleakCameronOlukoya} by further  establishing connections between the groups $\aut{\xnz, \shift{n}}$ and $\On$. Our key aims are to interpret several important constructions in the theory of automorphisms of the shift dynamical system in terms of the group $\Ln{n}$ and secondly to extend results in $\aut{\xnz, \shift{n}}$ to the groups $\aut{G_{n,r}}$ of automorphisms of the Higman--Thompson group $G_{n,r}$ and their outerautomorphism groups $\Onr$.

 Relative to the first aim our main results are as follows.
 
 First we note briefly that elements of the group $\Ln{n}$, and related subgroups $\Ln{n,r} = \Ln{n} \cap \Onr$, are finite state machines satisfying called \textit{transducers} which satisfy a strong \textit{synchronizing} condition. Moreover all states of an element of $\Ln{n}$ induce continuous and injective maps on the Cantor space $\xnn$ of infinite sequences in the alphabet $\xn$. Thus, we may speak of the image of a state. For a  word $\nu \in \xns$, the subset of $\xnn$ consisting of all elements with a prefix $\nu$ is open and the collection of all such open sets forms a basis for the Cantor space $\xnn$. Write $\Kn{n} \le \Ln{n}$ for the set of those elements $T \in \Ln{n}$ such that the image of a state (and consequently any state) of $T$ is a union of a power of $n$ basic open sets. By results in \cite{BelkBleakCameronOlukoya}, $\Kn{n}$ is in fact a subgroup of $\Ln{n,1}$ and, consequently, a subgroup of $\Ln{n,r}$ for any $1 \le r \le n-1$ (using the inclusion $\Ons{1} \le \Ons{r}$ \cite{AutGnr})
 
 Our first result draws a connection to the \textit{inert subgroup} of $\aut{\xnz, \shift{n}}$ which is the kernel of the  \textit{dimension representation} \cite{KriegerDimension, BoyleMarcusTrow}. Briefly, the  dimension representation is a map $\mathfrak{d}_{n}$ from $\aut{\xnz}$ to the finitely generated abelian group $\Z^{r-1}$ where $r$ is the number of prime divisors of $n$. 
 
 Our result can be stated as follows:

 \begin{Theorem}\label{thm:intro1}
 	Let $n$ be a natural number bigger than $2$. Let $p_1, p_2, \ldots, p_r$ be distinct prime numbers and let $l_1, l_2, \ldots, l_r$ be elements of $\N_{1}$ such that $n = p_1^{l_1}p_2^{l_2}\ldots p_{r}^{l_r}$  and set $l = \gcd(l_1, l_2,\ldots, l_r)$. Then:
 	\begin{enumerate}[label=\alph*.]
 		\item $\Kn{n} \cong \mathrm{Inert}_{n}$ 
 		\item  $\Ln{n} \cong \Kn{n} \rtimes (\Z^{r-1} \times \Z/l\Z)$
 		\item $\aut{\xnz} \cong \Kn{n} \rtimes \Z^{r}$
 	\end{enumerate}
 \end{Theorem}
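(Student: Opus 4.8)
The plan is to read off all three parts from the interplay of two homomorphisms out of $\aut{\xnz}$. The first is the quotient $\pi\colon\aut{\xnz}\twoheadrightarrow\Ln{n}$ with central kernel $\gen{\shift{n}}\cong\Z$ supplied by \cite{BelkBleakCameronOlukoya}. The second is the dimension representation, which I will regard as a homomorphism $\mathfrak{d}_n\colon\aut{\xnz}\to\Z[1/n]^{\times}_{>0}$ with kernel $\mathrm{Inert}_n$, identified with a map to $\Z^{r}$ by sending $p_i$ to the $i$-th standard basis vector; thus $\mathfrak{d}_n(\shift{n})=(l_1,\dots,l_r)$. Since $\shift{n}^{k}$ is inert only when $k(l_1,\dots,l_r)=0$, i.e.\ $k=0$, the restriction $\pi|_{\mathrm{Inert}_n}$ is injective, so $\mathrm{Inert}_n\cong\pi(\mathrm{Inert}_n)$. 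Reducing $\mathfrak{d}_n$ modulo $\gen{(l_1,\dots,l_r)}=\gen{n}$ kills $\shift{n}$, so it descends to a surjection $\overline{\mathfrak{d}}_n\colon\Ln{n}\twoheadrightarrow\Z^{r}/\gen{(l_1,\dots,l_r)}$, and a direct diagram chase gives $\ker\overline{\mathfrak{d}}_n=\pi(\mathrm{Inert}_n)$; hence $\pi(\mathrm{Inert}_n)\trianglelefteq\Ln{n}$ and, writing $(l_1,\dots,l_r)=l\cdot v$ with $v$ a primitive vector (which extends to a $\Z$-basis of $\Z^{r}$), $\Ln{n}/\pi(\mathrm{Inert}_n)\cong\Z^{r}/\gen{lv}\cong\Z^{r-1}\times\Z/l\Z$. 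Everything therefore reduces to the identification $\pi(\mathrm{Inert}_n)=\Kn{n}$ together with the construction of explicit complements.

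For the explicit elements I would use the identification $\xnz\cong\prod_{i=1}^{r}(X_{p_i}^{\Z})^{l_i}$ coming from $X_n=\prod_i X_{p_i^{l_i}}$ and $X_{p_i^{l_i}}^{\Z}=(X_{p_i}^{\Z})^{l_i}$. Let $\phi_i\in\aut{\xnz}$ be the automorphism that applies the shift $\shift{p_i}$ to the first of the $l_i$ factors of type $X_{p_i}^{\Z}$ and fixes all other coordinates. The $\phi_i$ pairwise commute, and $\mathfrak{d}_n(\phi_i)=e_i$, since shifting one such coordinate acts on the dimension module $\Z[1/n]$ by multiplication by $p_i$. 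In particular $\mathfrak{d}_n$ is onto and $s\colon\Z^{r}\to\aut{\xnz}$, $e_i\mapsto\phi_i$, is a homomorphic section, so $\aut{\xnz}=\mathrm{Inert}_n\rtimes s(\Z^{r})$; this gives part (c) once part (a) is in hand. For the torsion summand needed in part (b), write $n=m^{l}$ with $m=\prod_i p_i^{l_i/l}$ and identify $\xnz\cong X_m^{\Z}$: then $\shift{m}$ is an $l$-th root of $\shift{n}$, so $\tau:=\pi(\shift{m})$ has order exactly $l$ and $\overline{\mathfrak{d}}_n(\tau)$ is the (order-$l$) image of $v$. Taking $\tau$ together with the $\pi$-images of an explicit commuting family of automorphisms built from $m$-level partial shifts (which commute with $\shift{m}$) and realising a $\Z$-basis $v,w_2,\dots,w_r$ of $\Z^{r}$, one gets an abelian subgroup of $\Ln{n}$ that maps isomorphically onto $\Z^{r}/\gen{lv}\cong\Z^{r-1}\times\Z/l\Z$ under $\overline{\mathfrak{d}}_n$, hence a complement to $\pi(\mathrm{Inert}_n)$; granting part (a), part (b) follows.

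The heart of the matter — and the one step that genuinely calls on the transducer machinery of \cite{BelkBleakCameronOlukoya} — is the identification $\pi(\mathrm{Inert}_n)=\Kn{n}$. The bridge I would use is a description of $\overline{\mathfrak{d}}_n$ in transducer terms. For $T\in\Ln{n}$ and a state $q$ of $T$, the set $\im{q}\subseteq\xnn$ is clopen with $\mu(\im{q})\in\Z[1/n]\cap(0,1]$; using the structure theory of $\Ln{n}$-transducers from \cite{BelkBleakCameronOlukoya} (this is the content of the ``and consequently any state'' in the definition of $\Kn{n}$), the class $[\mu(\im{q})]$ in $\Z[1/n]^{\times}_{>0}/\gen{n}$ is independent of $q$, so $T\mapsto[\mu(\im{q})]$ is a well-defined homomorphism $\Ln{n}\to\Z[1/n]^{\times}_{>0}/\gen{n}$ whose kernel is, by definition, exactly $\Kn{n}$, since $\mu(\im{q})$ lies in $\gen{n}$ precisely when $\im{q}$ is a union of $n^{k}$ basic open sets. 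It remains to show this homomorphism coincides with $\overline{\mathfrak{d}}_n$, i.e.\ that for every $\phi\in\aut{\xnz}$ the reduced dimension class $\overline{\mathfrak{d}}_n(\pi(\phi))$ is computed from the transducer $\pi(\phi)$ as $[\mu(\im{q})]$. I would establish this by tracking the definition of the dimension representation through the (bi-)synchronizing transducer presentation of $\phi$ from \cite{BelkBleakCameronOlukoya}, or, if a transducer-adapted generating set of $\Ln{n}$ is available there, by checking agreement on generators; as a consistency check, for each $\phi_i$ above the transducer $\pi(\phi_i)$ has a state whose image has measure representing $p_i$, matching $\overline{\mathfrak{d}}_n(\pi(\phi_i))=[p_i]$. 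Once this identity is established, $\Kn{n}=\ker\overline{\mathfrak{d}}_n=\pi(\mathrm{Inert}_n)$, which with the preceding paragraphs yields parts (a), (b) and (c). I expect this identity — that the dimension-theoretic invariant of an automorphism is already visible in the combinatorics of its bi-synchronizing transducer — to absorb essentially all of the work.
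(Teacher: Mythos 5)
Your overall architecture is sound and is, up to direction of travel, the paper's own: the paper first builds the transducer invariant $\rsig_{\omega}\colon\Ln{n}\to\mn{n}\cong\Z^{r-1}\times\Z/l\Z$ (your $T\mapsto[\mu(\im{q})]$), proves it is a split epimorphism with kernel $\Kn{n}$ using exactly your partial shifts (the transducers $T(p_i,N_i)$, with $\prod_i T(p_i,N_i)^{l_i/l}$ playing the role of your $\pi(\shift{m})$), lifts it to $\aut{\xnz,\shift{n}}$ via the alphabet-extension maps $\ext{n}{n^k}$ to get the $\Kn{n}\rtimes\Z^{r}$ decomposition, and only afterwards identifies the lifted map with the dimension representation by computing the action on rays and beams. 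Your diagram chase relating $\ker\overline{\mathfrak{d}}_n$ to $\pi(\mathrm{Inert}_n)$ and the injectivity of $\pi$ on $\mathrm{Inert}_n$ are both correct and match the paper's bookkeeping.

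There are two points to flag. First, a concrete inaccuracy: the identity you propose, $\overline{\mathfrak{d}}_n(\pi(\phi))=[\mu(\im{q})]$, is off by an inverse. Because an $i$-ray fixes the future coordinates and lets the past vary, the dimension representation of $(T,\alpha)$ is computed from the images of states of the \emph{reversed} transducer $\rev{T}$, not of $T$; the paper spends Proposition~\ref{prop:pastinverseoffuture} and Theorem~\ref{thm:pastinverseoffuture} establishing that this reversed invariant $\rev{\sig}_{\omega}$ equals $\rsig_{\omega}^{-1}$ on $\Ln{n}$. Your own consistency check exhibits the problem: a state of your $\phi_i$ has image of measure $1/p_i$, representing $[p_i]^{-1}$, whereas $\mathfrak{d}_n(\phi_i)=[p_i]$. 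Since a class and its inverse vanish simultaneously, the kernels agree and parts (a)--(c) survive, but the identity as you state it is false and the reversal is not a triviality — it is where a nontrivial chunk of Section~\ref{Section:revanddimgroup} lives. Second, the step you defer ("tracking the definition of the dimension representation through the transducer presentation") together with the well-definedness and multiplicativity of $T\mapsto[\mu(\im{q})]$ is essentially all of the technical content of the theorem: the paper proves multiplicativity by an explicit analysis of images of states in the product transducer $\core(T\ast U)$ (Proposition~\ref{prop:lnsplit}), and the kernel computation for the lifted map on $\aut{\xnz,\shift{n}}$ (that $(T,\alpha)\in\ker$ forces the image of a state to be a union of a power of $n$ cones) is a genuine argument with the congruences $n^{a-m}t\equiv 1\pmod{n^a-1}$ for all $a$. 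So the proposal is a correct skeleton, but the flesh — the two homomorphism properties and the beam computation, with the forward/reverse correction — still has to be supplied.
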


We prove this result by extending a homomorphism $\rsig$ (see \cite{OlukoyaAutTnr,BelkBleakCameronOlukoya}) from $\Ln{n}$ unto the subgroup $\Un{n,n-1}$ of the group of units of $\Z_{n-1}$ generated by the divisors of $n$, to a map $\rsig_{\omega}$ from $\Ln{n}$ unto the group  $\Mn{n}$ defined as the quotient of the submonoid of $\N$ generated by the prime divisors of $n$ by a certain congruence.  This congruence relates two elements of the submonoid where one is a power of $n$ times the other. The resulting quotient monoid is a finitely generated abelian group isomorphic to $\Z^{r-1} \times \Z/l \Z$ where $n= p_1^{L_1}p_2^{L_2} \ldots p_{r}^{L_r}$ for distinct prime numbers $p_{i}$ and $l = \gcd(l_1, \ldots, l_r)$. In particular we may write $\Ln{n} = \Kn{n} \times (\Z^{r-1} \times \Z/ l\Z)$ where $\Kn{n}$ is the kernel of the homomorphism $\rsig_{\omega}$. This decomposition is similar to well known results about the quotient of the group $\aut{\xnz, \shift{n}}$ by the kernel $\mathrm{Inert}_{n}$ of the \emph{dimension representation}, the \emph{ inert subgroup}. We note that in that decomposition, we have $\aut{\xnz, \shift{n}} = \mathrm{Inert}_{n} \rtimes \Z^{r}$ where $r$ is the number of prime divisors of $n$ (\cite{KriegerDimension, BoyleMarcusTrow}). Using the fact that $\aut{\xnz, \shift{n}}$ embeds naturally into the group $\aut{X_{n^{m}}^{\Z}, \shift{n^{m}}}$ for $m \in \N$ and lifting the homomorphism from $\Ln{n}$ to $\Un{n,n-1}$ one obtains a homomorphism from $\aut{\xnz, \shift{n}}$ into the group $\Pi_{k \in \N}\Un{n^{k}, n^{k-1}}$. The kernel of this homomorphism  turns out to be isomorphic to the group $\Kn{n}$ and in this way we obtain Theorem~\ref{thm:intro1}. {We should mention that the description of the group $\Kn{n}$ given above is known also to Belk and Bleak using different methods \cite{BelkBleakPrivate}. }

Theorem~\ref{thm:intro1} paves the way for accomplishing our second aim by yielding a means of concretely realising and identify elements of the inert subgroup. Taking advantage of this new realisation, we revisit various marker constructions existing in the literature (\cite{BoyleFranksKitchens, BoyleKrieger, VilleS18, Hedlund69, BoyleLindRudolph88}). We show that there is a combinatorially defined infinite index subgroup $\mathcal{D}_{n}$ of $\Kn{n}$ such that elements of $\Kn{n}$ arising from the marker constructions in the articles above cited give rise to elements in the subgroup $\mathcal{D}_{n}$. The subgroup $\mathcal{D}_{n}$ in the way it is defined can naturally be identified with a subgroup of $\aut{G_{n,1}}$ and generally of $\aut{G_{n,r}}$. This fact, together with results in the articles cited above, yields the following:

\begin{Theorem}\label{thm:intro2}
	For any $1 \le r \le n-1$ and any $m \ge 2$, $\out{G_{n,r}}$ and $\aut{G_{n,r}}$ contain an isomorphic copy of $\aut{X_{m}^{\Z}, \shift{m}}$.
\end{Theorem}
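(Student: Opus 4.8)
The plan is to build, for each fixed $n\ge 2$, each $1\le r\le n-1$, and each $m\ge 2$, an injective homomorphism $\aut{X_{m}^{\Z},\shift{m}}\hookrightarrow \aut{G_{n,r}}$ and an injective homomorphism $\aut{X_{m}^{\Z},\shift{m}}\hookrightarrow\out{G_{n,r}}$, both factoring through one controlled subgroup. The hinge is the subgroup $\Kn{n}\le\Ln{n}$, which by Theorem~\ref{thm:intro1}(a) is isomorphic to the inert subgroup $\mathrm{Inert}_{n}$ of $\aut{X_{n}^{\Z},\shift{n}}$. So it suffices to (i) embed $\aut{X_{m}^{\Z},\shift{m}}$ into $\mathrm{Inert}_{n}$ in such a way that the image, carried across $\mathrm{Inert}_{n}\cong\Kn{n}$, lies inside the designated subgroup $\mathcal{D}_{n}\le\Kn{n}$; and (ii) check that $\mathcal{D}_{n}$ embeds into $\out{G_{n,r}}\cong\Onr$ for every $r$ and lifts to a subgroup of $\aut{G_{n,r}}$ for every $r$.

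For (i) I would use the classical marker machinery of Hedlund, Boyle--Lind--Rudolph, Boyle--Franks--Kitchens and Boyle--Krieger \cite{Hedlund69,BoyleLindRudolph88,BoyleFranksKitchens,BoyleKrieger,VilleS18}. Fix $k$ with $n^{k}\ge m$ and an injective coding of the alphabet $X_{m}$ by length-$k$ words over $X_{n}$. Pick a non-overlapping marker word $w\in\xns$ so that in every point of $X_{n}^{\Z}$ the occurrences of $w$ are separated by gaps of length at least a large constant $N_{0}$. A sliding-block automorphism $\alpha$ of $\shift{m}$ is then \emph{simulated} by a sliding-block automorphism $\widehat{\alpha}$ of $\shift{n}$ that is the identity on a bounded collar about each occurrence of $w$ and, on each long gap, decodes the $X_{n}$-word into an $X_{m}$-word, applies $\alpha$ there, and re-encodes. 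Standard boundary bookkeeping gives that $\widehat{\alpha}$ is a well-defined element of $\aut{X_{n}^{\Z},\shift{n}}$, that $\widehat{\alpha^{-1}}=\widehat{\alpha}^{\,-1}$, and that $\alpha\mapsto\widehat{\alpha}$ is injective; and because $\widehat{\alpha}$ is built from markers with identity collars it is inert, so this is an embedding into $\mathrm{Inert}_{n}$. The remaining point of (i) is to read off, via the synchronizing-transducer description of $\Ln{n}$ in \cite{BelkBleakCameronOlukoya} together with Theorem~\ref{thm:intro1}, that each $\widehat{\alpha}$ corresponds to an element of $\Kn{n}$ whose local combinatorics meet the defining conditions of $\mathcal{D}_{n}$ --- equivalently, that $\mathcal{D}_{n}$ is defined so as to contain all such marker automorphisms.

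For (ii), since $\mathcal{D}_{n}\le\Kn{n}$ and $\Kn{n}\le\Ln{n,1}\le\Ln{n,r}=\Ln{n}\cap\Onr\le\Onr\cong\out{G_{n,r}}$ (using $\Ons{1}\le\Ons{r}$ from \cite{AutGnr}), the group $\mathcal{D}_{n}$ already sits in $\out{G_{n,r}}$ for every $r$; composing with the embedding from (i) gives $\aut{X_{m}^{\Z},\shift{m}}\hookrightarrow\out{G_{n,r}}$. For $\aut{G_{n,r}}$ one uses that $\mathcal{D}_{n}$, as a combinatorially described group of transducers, is arranged so that each of its elements is represented by a genuine automorphism of $G_{n,r}$; that is, the restriction to $\mathcal{D}_{n}$ of the central extension $\aut{G_{n,r}}\twoheadrightarrow\Onr$ of \cite{BelkBleakCameronOlukoya,BleakCameronOlukoya} splits. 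Pre-composing such a splitting $\mathcal{D}_{n}\hookrightarrow\aut{G_{n,r}}$ with the embedding from (i) gives $\aut{X_{m}^{\Z},\shift{m}}\hookrightarrow\aut{G_{n,r}}$, completing the proof.

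I expect the real work --- and the only genuine obstacle --- to be the calibration of $\mathcal{D}_{n}$ that both steps rely on: $\mathcal{D}_{n}$ must be chosen special enough that its elements, regarded as outer automorphisms of $G_{n,r}$, lift to $\aut{G_{n,r}}$ for \emph{every} $r$ simultaneously (an arbitrary element of $\Kn{n}\le\Onr$ need not lift, and the extension $\aut{G_{n,r}}\to\Onr$ need not split globally, this depending on whether $n$ is a proper power), yet large enough to accommodate every marker automorphism arising in the simulation of $\aut{X_{m}^{\Z},\shift{m}}$. Establishing both containments requires matching the synchronizing-transducer model of $\Ln{n}$ from \cite{BelkBleakCameronOlukoya} against the block-coding structure of the marker constructions, and I expect this transducer-level bookkeeping, rather than any isolated conceptual step, to carry the weight of the argument. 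Once the embedding is in place, the advertised corollaries --- copies of finite groups, finitely generated abelian groups, free groups and free products of finite groups, surface and graph groups, countable locally finite residually finite groups, and undecidability of the order problem in both $\aut{G_{n,r}}$ and $\out{G_{n,r}}$ --- follow formally from known facts about $\aut{X_{m}^{\Z},\shift{m}}$ \cite{KimRoush}.
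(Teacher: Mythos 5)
Your outline reproduces the architecture of the paper's argument --- simulate $\aut{X_{m}^{\Z},\shift{m}}$ by marker automorphisms of $\xnz$, observe that these land in a distinguished subgroup $\mathcal{D}_{n}\le\Kn{n}\le\Ln{n,1}\le\Onr\cong\out{G_{n,r}}$, and lift $\mathcal{D}_{n}$ to $\aut{G_{n,r}}$ --- but the two load-bearing steps are exactly the ones you defer to ``calibration'' and ``transducer-level bookkeeping,'' and neither is routine. First, $\mathcal{D}_{n}$ is not an adjustable parameter: it is the set of $T\in\Kn{n}$ all of whose loop states (states $q$ with $\pi_{T}(x,q)=q$ for some letter $x$) are homeomorphism states of $\xno$. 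Inertness alone does not give this; an element of $\Kn{n}$ can have a non-homeomorphism loop state (the paper exhibits one explicitly, and indeed $\mathcal{D}_{n}$ has infinite index in $\Kn{n}$), so your claim that the marker images lie in $\mathcal{D}_{n}$ needs a proof. The paper's proof is genuinely transducer-theoretic: for a marker automorphism $(T,\alpha)$ one shows every loop state $q_{a}$ satisfies $\lambda_{T}(a,q_{a})=a$ and $(q_{a})\alpha=0$, repeats this for the inverse transducer, and then invokes a separate lemma saying that matching annotation values $-(q)\alpha=(q')\beta$ on corresponding loop states of $T$ and $T^{-1}$ force those states to be homeomorphism states. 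Nothing in ``identity collars about the markers'' substitutes for this computation.

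Second, the lift of $\mathcal{D}_{n}$ into $\aut{G_{n,r}}$ is not obtained by abstractly splitting an extension (and the extension $\aut{G_{n,r}}\to\out{G_{n,r}}$ is not central, its kernel being $\inn{G_{n,r}}\cong G_{n,r}$; the central extension in this circle of ideas is $\aut{\xnz,\shift{n}}\to\Ln{n}$ with kernel $\gen{\shift{n}}$, which is a different object). It is obtained concretely: the loop-state condition is precisely what allows one to adjoin an initial state $q_{0}$ to a core transducer $D\in\mathcal{D}_{n}$, transitioning on each letter $x$ into the $x$-loop state, so that the resulting initial transducer is bi-synchronizing and induces an honest homeomorphism of $\xno$, i.e.\ an element of $\mathcal{B}_{n,1}\cong\aut{G_{n,1}}$, which then includes into $\Bnr\cong\aut{G_{n,r}}$ for every $r$. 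Without identifying the homeomorphism-loop-state condition and proving both containments (marker images contained in $\mathcal{D}_{n}$, and $\mathcal{D}_{n}\hookrightarrow\aut{G_{n,r}}$), the proposal records the strategy of the theorem rather than a proof of it. The $\out{G_{n,r}}$ half of your argument is, by contrast, essentially complete once the embedding into $\Kn{n}$ is granted, since $\Kn{n}\le\Ons{1}\le\Ons{r}$ already.
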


We note that the article \cite{BelkBleakCameronOlukoya} shows that $\aut{X_{m}^{\Z}, \shift{m}}$ embeds as a subgroup of $\Onr$ but says nothing about an embedding in $\aut{G_{n,r}}$. The result above gives a new embedding into $\Onr$, specifically into $\mathcal{D}_{n}$, and lifts the embedding to $\aut{G_{n,r}}$  by lifting $\mathcal{D}_{n}$. A quick survey of the literature now demonstrates that $\aut{G_{n,r}}$ and $\Onr$ contain isomorphic copies of the following groups: finite groups, finitely generated abelian groups, free groups \cite{BoyleLindRudolph88,Hedlund69}; free products of finite groups,  fundamental groups of 2-manifolds, graph groups and countable locally finite residually finite groups \cite{KimRoush} to name a few.

We further note that since the order problem is undecidable in $\Kn{n} \cong \mathrm{Inert}_{n}$, we obtain from Theorem~\ref{thm:intro2} another proof of the result in \cite{BelkBleakCameronOlukoya} that the order problem is undecidable in the group $\Onr$ and $\aut{G_{n,r}}$ for any $1 \le r \le n-1$. 

Lastly, we extend a fairly straight-forward result for $\aut{\xnz, \shift{n}}$ to the groups $\Onr$. The homeomorphism $\rev{\phantom{a}}$, an involution, of $\xnz$ which maps a sequence $(x_i)_{i \in \Z}$ to the sequence $(y_{i})_{i \in \Z}$ defined such that $y_{i} = x_{-i}$ induces by conjugation an automorphism $\rev{\mathfrak{r}}$ of $\aut{\xnz, \shift{n}}$. Since this automorphism preserves the centre of the group, we have an induced automorphism of $\Ln{n}$. We extend the automorphism $\rev{\mathfrak{r}}$ to the group $\On$ (that is the extension coincides with $\rev{\mathfrak{r}}$ on $\Ln{n}$).

\begin{Theorem}\label{thm:intro3}
	There is a non-trivial automorphism $\rev{\mathfrak{r}}$ of order $2$ of the group $\On$.
\end{Theorem}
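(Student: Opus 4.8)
The plan is to exhibit the extension, together with the original $\rev{\mathfrak{r}}$, as conjugation by a single explicit involution of the set on which $\On$ acts faithfully; since, as noted above, there is no evident index-preserving action of $\On$ on $\xnz$, one cannot obtain the extension by conjugating by $\rev{\phantom{a}}$ directly, and we work instead with rotational classes of prime words. Recall that $\On$ embeds into the symmetric group $\sym{\mathcal{P}}$, where $\mathcal{P}$ is the set of rotational equivalence classes of prime words over the alphabet $\xn$; for $T\in\On$ write $\widetilde{T}\in\sym{\mathcal{P}}$ for the associated permutation. For $w=w_{1}w_{2}\cdots w_{k}\in\xns$ put $\rev{w}=w_{k}\cdots w_{2}w_{1}$. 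Since $\rev{uv}=\rev{v}\,\rev{u}$, word reversal carries cyclic rotations to cyclic rotations and prime words to prime words, so it descends to a map $\rho\colon\mathcal{P}\to\mathcal{P}$ with $\rho^{2}=\id$; thus $\rho\in\sym{\mathcal{P}}$ and conjugation $c_{\rho}\colon\sigma\mapsto\rho\sigma\rho$ is an automorphism of $\sym{\mathcal{P}}$ of order dividing $2$. The first reduction is that, if $c_{\rho}$ maps the image of $\On$ onto itself, then transporting $c_{\rho}$ along the embedding yields an automorphism $\mathfrak{r}$ of $\On$ with $\mathfrak{r}^{2}=\id$; it then remains only to identify $\mathfrak{r}$ on $\Ln{n}$ and to prove non-triviality.

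The main obstacle is precisely this closure statement: for each $T\in\On$ I must produce $T'\in\On$ with $\widetilde{T'}=\rho\,\widetilde{T}\,\rho$. I would do this by a transducer construction. Represent $T$ by a minimal invertible transducer which, together with its inverse, is synchronizing in the sense of \cite{BelkBleakCameronOlukoya}, and build from it a reversed transducer $\rev{T}$ by exchanging the roles of input and output labels along each transition and reversing the direction in which blocks of symbols are read, so that on a periodic bi-infinite sequence $\ldots www\cdots$ with prime period $w$ the transducer $\rev{T}$ outputs a periodic sequence whose prime period is the reversal of a prime period of the image of $\ldots\rev{w}\,\rev{w}\cdots$ under $T$. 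The delicate point is that reversing a deterministic automaton produces a non-deterministic one in general, so the synchronizing property of both $T$ and $T^{-1}$ — together with the fact that these transducers are asynchronous only on a bounded window — must be used to check that $\rev{T}$ is again an invertible transducer of the kind representing elements of $\On$, with $\rev{T}^{-1}$ again synchronizing; this is the genuinely combinatorial heart of the argument and the step I expect to be hardest. Granting it, the construction gives $\widetilde{\rev{T}}=\rho\,\widetilde{T}\,\rho$, so $\mathfrak{r}$ is a well-defined automorphism of $\On$ of order at most $2$.

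It remains to pin down $\mathfrak{r}$ on $\Ln{n}$ and to show $\mathfrak{r}\neq\id$. On the image of $\Ln{n}$ the permutation $\rho$ is exactly the permutation of periodic points induced by the reversal homeomorphism $\rev{\phantom{a}}$ of $\xnz$ — the centre $\gen{\shift{n}}$ of $\aut{\xnz,\shift{n}}$ acts trivially on rotational classes and is preserved by $\rev{\phantom{a}}$, which is why the construction descends compatibly — so $c_{\rho}$ restricts on $\Ln{n}$ to the automorphism $\rev{\mathfrak{r}}$ of \cite{BelkBleakCameronOlukoya}, and hence $\mathfrak{r}$ genuinely extends $\rev{\mathfrak{r}}$. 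For non-triviality it is enough to produce one $T\in\On$ — one may take $T\in\Ln{n}$, for instance a suitable marker automorphism, or a small explicit transducer — and one prime word $w$ on whose class $\rho\,\widetilde{T}\,\rho$ and $\widetilde{T}$ disagree; a short finite verification supplies this, and forces $\mathfrak{r}$ to have order exactly $2$. This proves Theorem~\ref{thm:intro3}.
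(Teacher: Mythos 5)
Your proposal follows essentially the same route as the paper: conjugate the faithful representation $\Pi\colon\On\to\sym{\rwnl{\ast}}$ by the reversal involution on rotation classes of prime words, and realise the conjugated permutation inside $\On$ by reversing the arrows of a representing transducer and then recovering a deterministic, strongly synchronizing representative from the resulting non-deterministic one. The closure step you explicitly defer (``granting it'') is exactly the content of the paper's machinery of non-deterministic strongly synchronizing transducers and its recovery proposition, and one small correction: the reversed transducer is obtained by reversing edge directions and reversing the \emph{output} words while keeping inputs as inputs --- exchanging input with output labels as you describe would instead realise $\rho\,\widetilde{T^{-1}}\,\rho$ --- although, the image of $\On$ being a group, this does not affect the closure argument itself.
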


 There is no obvious action of $\On$ on $\xn^{\Z}$ by homeomorphisms, thus the proof of Theorem~\ref{thm:intro3} is not as straight-forward as for $\aut{\xnz, \shift{n}}$. In particular, the proof outlined above does not go through and we need to do something different to prove Theorem~\ref{thm:intro3}. Our proof approach makes use of the representation of elements of $\On$ by transducers and the result in  \cite{BelkBleakCameronOlukoya} that there is a faithful representation of $\On$ in the symmetric group $\sym{\rwnl{\ast}}$ of the set  $\rwnl{\ast}$. (Here $\rwnl{\ast}$ is the set of equivalence classes of prime words on an alphabet of size $n$ under the equivalence relation which relates words which are rotations of each other.) We show that reversing the arrows in a \emph{Moore} diagram for a transducer for $\On$ gives a faithful action of $\On$ on $\rwnl{\ast}$ by considering the action on circuits. We then give an algorithm which recovers from the `reversed transducer' a representative in $\On$  with the same action on $\rwnl{\ast}$. It is of interest to know whether the map $\rev{\mathfrak{r}}$ restricts to an automorphism of $\Ons{1}$. More generally, one can ask if the group $\Ons{1}$ is a characteristic subgroup of $\On$. We hope to address these in subsequent articles.  Indeed, as a first step, in a forthcoming article we demonstrate that the group $\On$ is isomorphic to the mapping class group of the full shift (see \cite{BoyleChuy,BoyleCarlsenEilers,BoyleFlowquivPosFact,Chuysurichay} for instance) over $n$ letters.

 The paper is organised as follows.  In Section~\ref{Preliminaries} we give the necessary defintions and background results. Section~\ref{sec:extmorph} considers the various extensions of the homomorphism from $\Ln{n}$ to the group $\Un{n-1,n}$ and develops consequences of these. Section~\ref{Section:revanddimgroup} establishes the isomorphism between the group $\Kn{n}$ and the group $\mathrm{Inert}_{n}$. In this section we also extend the automorphism $\rev{\mathfrak{r}}$ to the group $\On$. In Section~\ref{Section:markersandhomeostate}, where the paper concludes, we consider a combinatorially defined subgroup $\mathcal{D}_{n}$ of $\Kn{n}$; we show that the index of $\mathcal{D}_{n}$ in $\Kn{n}$ is infinite and demonstrate that various marker constructions yield elements of $\aut{\xnz, \shift{n}}$ which reside in $\mathcal{D}_{n}$. Pulling together results of the literature then yields various embedding results for the groups $\aut{G_{n,r}}$ and $\out{G_{n,r}}$.
 
 \section*{Acknowledgements}
 The author wishes to acknowledge support from EPSRC research grant EP/R032866/1, Leverhulme Trust Research Project Grant RPG-2017-159 and the LMS Early Career Fellowship grant ECF-1920-105.

\section{Preliminaries}\label{Preliminaries}
We introduce notation and terminology that we make use of in this work. These can be divided into two categories: those related to the free monoid over an alphabet and the Cantor space of singly- and bi-infinite sequences over a finite alphabet; and those  pertaining to transducers. 

There is one minor piece of notation which does not fit in the above categories that is work mentioning. For $i \in \N$, we write $\N_{i}$ for the set of natural number bigger than or equal to $i$.

\subsection{Words and Cantor space}\label{Section:wordsandcantor}
Throughout $\xn$ will denote the set $\nset{n}$.  Let $X$ be a finite set, we write $X^{\ast}$ for the set of all finite strings (including the empty string $\emptyword$) over the alphabet $X$. Write $X^{+}$ for the set $X^{\ast} \backslash \{\ew\}$. For an element $w \in X$, we write $|w|$ for the length of $w$. Thus, the empty string is the unique element of $X^{\ast}$ such that $|\ew| = 0$. For elements $x,y \in X^{\ast}$ we write $xy$ for the concatenation of $x$ and $y$.  Give $x \in X^{+}$ and $k \in \N$, we write $x^{k}$ for the word $w_1w_2\ldots w_{k}$ where $w_i = x$ for all $1 \le i \le k$.

Given $k \in \N$ we write $X^{k}$ for the set of all words in $X^{+}$ of length exactly $k$.

Let $w \in X^{+}$ and write $w= w_1 w_2 \ldots w_{r}$ for $w_i \in X$, $1 \le i \le r$. Then for some $j \in \N$, $1 \le j \le r$ we say that $w' = w_{j}\ldots w_{r}w_1 \ldots w_{r}$ is the \emph{$j$'th rotation of $w$}. A word $w'$ is called a \emph{rotation} of $w$ if it is a $j$\textsuperscript{th} rotation of $w$ for some $1 \le j \le |w|$. If $j > 1$, then $w'$ is called a \emph{non-trivial rotation of} $w$. The word $w$ is said to be a prime word if $w$ is not equal to a non-trivial rotation of itself. Alternatively, $w$ is a prime word if there is no word $\gamma \in X^{+}$ such that $w = \gamma ^{|w|/|\gamma|}$. 

The relation on $X^{\ast}$ which relates two words if one is a  rotation of the other is an equivalence relation. We write $\simrot$ for this equivalence relation. It will be clear from the context which alphabet is meant.

We write $\wns$ for the set of all prime words over the alphabet $\xn$. For $k \in \N$, $\wnl{k}$ will denote the set of all prime words of length $k$ over the alphabet. We shall write $\rwnl{\ast}$ for the set of equivalence classes $\wns/\simrot$; $\rwnl{+}$  for the set of equivalence classes $\wnl{+}/\simrot$; $\rwnl{k}$ for the set of equivalence classes $\wnl{k}/\simrot$.

 For $x,y \in X^{\ast}$ we write, $x \le y$ if $x$ is a prefix of $y$, in this case we write $y-x$ for the word $z \in X^{\ast}$ such that $xz = y$. If $x \not \le y$ and $y \not \le x$, then we say that $x$ and $y$ are \emph{incomparable} and write $x \perp y$ to denote this.

An \emph{antichain} is a subset $W$ of $X^{\ast}$ consisting of pairwise incomparable elements. That is for $u,v \in W$, $u \not\perp v$ if and only if $u=v$. An antichain $W$ is called \textit{complete} if every element of $X^{*}$ of long enough length has a prefix in $W$.

A \emph{bi-infinite sequence} over the alphabet $X$ is a map $x: \Z \to X$ and we write such a sequence as $\ldots x_{-2}x_{-1}x_{0}x_1 x_2\ldots$ where $x_{i} = x(i)$ for $ i \in \Z$.  In a similar way a \emph{(positive) singly infinite sequence} is a map $x: \N \to  X$ and, analogously, we write such a sequence as $x_{0}x_{1}\ldots$. A \emph{(negative)) singly infinite sequence} is a map $: x : -\N \to X$ and is denoted $\ldots x_{-2}x_{-1}x_{0}$. We write $X^{\N}, X^{-N}$ and $X^{\Z}$ respectively for the sets of positive singly infinite sequences, negative singly infinite sequences and bi-infinite sequences.

For $x \in (X^{\N} \sqcup X^{-\N} \sqcup X^{\Z} \sqcup X^{\ast})$ and $i \le j \in \Z$ where $ j \le |x|$ and $i$ is bigger than or equal to the smallest index of $x$ ($0$ if $x \in X^{\N} \sqcup X^{\ast}$ and $- \infty$ otherwise), we write $x_{[i,j]}$ for the sequence $x_{i}x_{i+1} \ldots x_{j}$.

We may concatenate, in a natural way, a string with a positive infinite sequence or a negative infinite sequence with a positive infinite sequence. Given an element $\gamma \in X^{+}$ and a positive infinite sequence $x$ with prefix $\gamma$, we write $x - \gamma$ for the positive infinite sequence $y$ such that $\gamma y = x$.

Let $\rev{\phantom{a}}: X^{\ast} \to X^{\ast}$ be defined as follows: $\rev{ \ew } = \ew$, and for $w \in X^{+}$ with $w = w_1 \ldots w_r$, $w_i \in X$ for all $1 \le i \le r$, $\rev{w} = w_rw_{r-1} \ldots w_{1}$. We also write $\rev{\phantom{a}}$ for the map from $X^{\N} \sqcup X^{-\N}\to X^{\N} \sqcup X^{-\N}$ defined by $\rev{x_0 x_1 \ldots} = \ldots x_{2}x_{1}x_{0}$ and $\rev{\ldots x_{2}x_{1}x_{0}} =x_0 x_1 \ldots$ . In particular for any prefix $w$ of $x_{0}x_{1}\ldots$. $\rev{w}$ is a suffix of  $\rev{x_{0} x_{1}\ldots }$ and vice versa. We note that $\rev{\phantom{a}}$ is an involution.

Taking the product topology on each of the sets $\xno$, $\xnN$ and $\xnz$, makes each homeomorphic to Cantor space. We introduce some notation for basic open sets in these spaces.  Let $\gamma \in \xns$ be a word. We denote by $U^{+}_{\gamma}$ the subset of $\xno$ consisting of all elements with prefix $\gamma$; $U^{-1}_{\gamma}$ the subset of $\xnN$ consisting of all elements with suffix $\gamma$;  we use the notation $U^0_{\gamma}$ for the subset of all elements of  $\xnz$ such that $x_{0}\ldots x_{|\gamma|-1} = \gamma$.

The shift map  $\shift{n}: \xnz \sqcup \xnN \to \xnz \sqcup \xnN$ is the map which sends a sequence $x \in \xnz \sqcup \xnN$ to the sequence $y \in \xnz \sqcup \xnN$ defined by $y_{i} = x_{i-1}$, for all $i$ for which $x_i$ is defined.  We retain the symbol $\shift{n}$ for  the restrictions to the subspaces $\xnz$ and $\xnN$.  We note that $\shift{n} \xnN \to \xnN$ is a continuous surjection, and  $\shift{n}: \xnz \to \xnz$ is a homeomorphism.

Let $F(X_n,m)$ denote the set of functions from $X_n^m$ to $X_n$. Then, for
all $m,r>0$, and all $f\in F(X_n,m)$, we define a map $f_r:X_n^{m+r-1}\to X_n^{r}$
as follows.
\begin{quote}
	Let $x=x_{-m-r+2}\ldots x_0$. For $-r+1 \le i\le 0$, set
	$y_i=(x_{i-m+1}x_{i-m+2}\ldots x_{i})f$. Then $xf_r=y$, where $y=y_{-r+1}\ldots y_{0}$.
\end{quote}

Thus, as in the paper \cite{BleakCameronOlukoya}, we have a ``window'' of length $m$ which slides along the sequence $x$ and at the $i$'th step we apply the map $f$ to the symbols visible in the window. We think of the map $f$ as acting on the rightmost letter in the viewing window according to the $m-1$ digits of history.  This procedure can
be extended to define a map $f_\infty:\xnz\to\xnz$, by setting $xf_\infty=y$
where $y_i=(x_{i-m+1}\ldots x_i)f$ for all $i\in\mathbb{Z}$; and similarly for
$\xnN$.

A function $f\in F(X_n,m)$ is called \emph{right permutive} if, for distinct
$x,y\in X_n$ and any fixed block $a\in X_n^{m-1}$, we have $(ax)f\ne(ay)f$. Analogously, a function $f \in  F(X_n, m)$ is called \emph{left permutive} if  the map from $X_n$ to itself given by $x\mapsto(xa)f$ is a permutation for all $a\in X_n^{m-1}$. If $f$ is not right permutive, then
the induced map $f_\infty$ from $\xnN$ to itself is not injective. Moreover, it not always the case that a right permutive map $f \in F(X_n, m)$ induces a bijective map $f_{\infty}: \xnN\to \xnN$, however, a right permutive map always induces a surjective map from $\xnN$ to itself.

\begin{Remark}
	Observe that, if $f\in F(X_n,m)$ and $k\ge1$, then the map $g\in F(X_n,m+k)$
	given by $(x_{-m-k+1}\ldots x_0)g=(x_{-m+1}\ldots x_0 )f$, satisfies
	$g_\infty=f_\infty$.
	\label{F(X_n,m)containedinF(X_n,m+1)}
\end{Remark}

\medskip

The following result is due to Curtis, Hedlund and Lyndon~\cite[Theorem 3.1]{Hedlund69}: 

\begin{Theorem}
	Let $f\in F(X_n,m)$. Then $f_\infty$ is continuous and commutes with the
	shift map on $\xnz$.
	\label{t:hed1}
\end{Theorem}

A continuous function from $\xnz$ to itself which commutes with the shift map
is called an \emph{endomorphism} of the shift dynamical system
$(\xnz,\shift{n})$. If the function is invertible, since $\xnz$ is compact and Hausdorff, its inverse is continuous:
it is an \emph{automorphism} of the shift system. The sets of endomorphisms and
of automorphisms are denoted by $\End(\xnz,\shift{n})$ and $\aut{\xnz,\shift{n}}$
respectively. Under composition, the first is a monoid, and the second a group.

Analogously, a continuous
function from $\xnN$ to itself which commutes with the shift map on this
space is an \emph{endomorphism of the one-sided shift} $(\xnN,\shift{n})$; if it is invertible, it is an \emph{automorphism} of this shift system. The sets of
such maps are denoted by $\End(\xnN,\shift{n})$ and $\aut{\xnN,\shift{n}}$; again
the first is a monoid and the second a group.

Note that $\shift{n}\in\aut{\xnz,\shift{n}}$, whereas
$\shift{n}\in\End(\xnN,\shift{n})\setminus\aut{\xnN,\shift{n}}$.

Define
\begin{eqnarray*}
	F_\infty(X_n) &=& \bigcup_{m\ge0}\{f_\infty:f\in F(X_n,m)\},\\
	RF_\infty(X_n) &=& \bigcup_{m\ge0}\{f_\infty:f\in F(X_n,m), f
	\mbox{ is right permutive}\}.
\end{eqnarray*}

Theorem~\ref{t:hed1} shows that $F_\infty(X_n)\subseteq\End(\xnz)$. In fact $F_{\infty}(X_{n})$ and $RF_{\infty}$ are submonoids of $\End(\xnz)$. Note that $\shift{n}\in F_\infty(X_n)$, but $\shift{n}^{-1}$ is not an element of $F_{\infty}(X_{n})$. Now, \cite[Theorem 3.4]{Hedlund69} shows:

\begin{Theorem}
	$\End(\xnz,\shift{n}) = \{ \shift{n}^{i} \phi \mid i \in \mathbb{Z}, \phi \in F_{\infty}(X_{n}) \}$.
	\label{t:hed2}
\end{Theorem}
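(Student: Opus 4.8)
The plan is to prove the two inclusions separately; the substantive direction is that an arbitrary endomorphism has the stated form, and the key input is the sliding-block-code description of continuous shift-commuting maps (the converse of Theorem~\ref{t:hed1}). First I would fix $\psi\in\End(\xnz,\shift{n})$ and show it is given by a local rule of bounded radius. The coordinate-$0$ evaluation $x\mapsto(\psi x)_0$ is a continuous map $\xnz\to X_n$, being the composition of $\psi$ with a projection; since $X_n$ is finite and discrete while $\xnz$ is compact and totally disconnected, this map is locally constant, and the usual compactness argument yields an integer $N\ge 0$ such that $(\psi x)_0$ depends only on the block $x_{[-N,N]}$. Because $\psi$ commutes with $\shift{n}$, the same rule computes every coordinate, so there is a function $g\colon X_n^{2N+1}\to X_n$ with $(\psi x)_i=g\bigl(x_{i-N}x_{i-N+1}\cdots x_{i+N}\bigr)$ for all $i\in\Z$.

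Next I would absorb the anticipation into a power of the shift. With the convention $\shift{n}(x)_i=x_{i-1}$ we have $(\shift{n}^{N}\psi x)_i=(\psi x)_{i-N}=g\bigl(x_{i-2N}\cdots x_i\bigr)$, which depends only on the coordinates $i-2N,\dots,i$. Hence $\phi:=\shift{n}^{N}\psi$ equals $f_\infty$ where $f\in F(X_n,2N+1)$ is defined by $f(x_{-2N}\cdots x_0)=g(x_{-2N}\cdots x_0)$; thus $\phi\in F_\infty(X_n)$ and $\psi=\shift{n}^{-N}\phi$ exhibits $\psi$ in the required form with $i=-N$. For the reverse inclusion, if $\phi=f_\infty$ with $f\in F(X_n,m)$ and $i\in\Z$, then $\phi$ is continuous and commutes with $\shift{n}$ by Theorem~\ref{t:hed1}, the homeomorphism $\shift{n}^{i}$ is continuous and commutes with $\shift{n}$, and therefore the composite $\shift{n}^{i}\phi$ is a continuous self-map of $\xnz$ commuting with $\shift{n}$, i.e.\ an element of $\End(\xnz,\shift{n})$.

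I expect the only real obstacle to be the first step, namely extracting a uniform radius $N$ for the local rule; this is precisely the content of the Curtis--Hedlund--Lyndon theorem, and if one is content to invoke that theorem wholesale the proof reduces to the bookkeeping above. The points to watch are keeping the shift convention consistent, so that it is the anticipation and not the memory of $\psi$ that is cancelled by $\shift{n}^{N}$, and sanity-checking the extreme cases $\psi=\shift{n}$ (already of the form $f_\infty$, with $f(x_{-1}x_0)=x_{-1}$, so no shift correction is needed) and $\psi=\shift{n}^{-1}$ (for which $\phi=\shift{n}\psi=\id$, so the negative shift factor is genuinely required), the latter being consistent with the remark in the excerpt that $\shift{n}^{-1}\notin F_\infty(X_n)$.
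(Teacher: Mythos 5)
Your argument is correct. The paper does not actually prove this statement — it quotes it directly from Hedlund (\cite[Theorem 3.4]{Hedlund69}) — so there is no in-paper proof to compare against; what you have written is the standard Curtis--Hedlund--Lyndon argument (local rule of bounded radius by compactness, then absorb the anticipation into a power of $\shift{n}$), correctly adapted to the paper's memory-only convention $y_i=(x_{i-m+1}\cdots x_i)f$ and to its convention $\shift{n}(x)_i=x_{i-1}$. The only point worth making explicit is that $\shift{n}^{N}\psi=\psi\shift{n}^{N}$ since $\psi$ commutes with the shift, so the order of composition in your definition of $\phi$ is immaterial; your sanity checks on $\shift{n}$ and $\shift{n}^{-1}$ confirm the sign of the exponent is right.
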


The following result is a corollary:
\begin{Theorem}
	$RF_\infty$ is a submonoid of $\End(\xnz, \shift{n})$ and $\aut{\xnN,\shift{n}}$ is the largest inverse closed subset of $RF_\infty(X_n)$.
\end{Theorem}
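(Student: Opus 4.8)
The plan is to deduce this from Theorems~\ref{t:hed1} and \ref{t:hed2} together with a comparison of one-sided and two-sided sliding block codes. For the submonoid claim: every element of $RF_\infty(X_n)$ is an endomorphism of $(\xnz,\shift{n})$ by Theorem~\ref{t:hed1}; the identity map equals $f_\infty$ for $f\in F(X_n,1)$ the identity of $\xn$, which is vacuously right permutive; and if $f\in F(X_n,m)$ and $g\in F(X_n,k)$ are right permutive, then the computation preceding Remark~\ref{F(X_n,m)containedinF(X_n,m+1)} gives $f_\infty\circ g_\infty=h_\infty$, where $h\in F(X_n,m+k-1)$ applies $g$ to each length-$k$ sub-window and then $f$ to the resulting length-$m$ block. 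Holding all but the last input letter of $h$ fixed, right permutivity of $g$ drives the last intermediate letter bijectively through $\xn$ while the others stay fixed, and then right permutivity of $f$ drives the value of $h$ bijectively through $\xn$; so $h$ is right permutive and $RF_\infty(X_n)$ is a submonoid of $\End(\xnz,\shift{n})$.

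For the second assertion, I would identify both $RF_\infty(X_n)$ and the monoid of surjective endomorphisms of $(\xnN,\shift{n})$ with the set of right-permutive reduced local rules, and check that ``same reduced local rule'' is a monoid isomorphism between them. On the two-sided side, with the paper's history-pointing window convention, left-padding a local rule does not alter the induced map on $\xnz$, so $RF_\infty(X_n)$ is in bijection with reduced right-permutive rules; on the one-sided side, the (standard) one-sided Curtis--Hedlund--Lyndon theorem, together with the fact stated in the text that a right-permutive rule induces a surjective map on $\xnN$ and, conversely, the fact --- to be extracted from the non-injectivity observation in the text --- that a non-right-permutive reduced rule induces a non-surjective one-sided map, shows that the surjective one-sided endomorphisms correspond exactly to the reduced right-permutive rules. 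That the correspondence is multiplicative reduces to the statement that the local rule of a composite is obtained by composing the local rules, a computation that does not see the ambient space. A monoid isomorphism matches up groups of units, and the group of units of the monoid of surjective endomorphisms of $(\xnN,\shift{n})$ is precisely $\aut{\xnN,\shift{n}}$: an automorphism is surjective and has surjective inverse, hence is a unit; a surjective endomorphism possessing a two-sided inverse among surjective endomorphisms is bijective, hence an automorphism. Since in any monoid the group of units is the largest inverse closed subset (every inverse closed subset consists of units, and the group of units is itself inverse closed), transporting $\aut{\xnN,\shift{n}}$ across the isomorphism identifies it with the largest inverse closed subset of $RF_\infty(X_n)$.

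The step I expect to be the main obstacle is the bookkeeping underlying the identification in the second paragraph. The paper's two-sided window ``looks at the history'', so passing between $\xnz$ and $\xnN$ involves reindexing by powers of $\shift{n}$, and since $\shift{n}$ is not injective on $\xnN$ one cannot justify the identification by cancelling shifts --- it all has to be reduced to statements about local rules, reductions, and paddings. The other slightly delicate ingredient is the equivalence ``a reduced one-sided local rule is right permutive $\iff$ the induced one-sided map is surjective'', one implication of which is asserted in the text and the other of which must be teased out of the non-injectivity remark made there for non-right-permutive rules.
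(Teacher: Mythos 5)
Your first paragraph (the submonoid claim) is fine. The second paragraph, however, rests on a false intermediate lemma. You propose to identify $RF_\infty(X_n)$ with the monoid of \emph{surjective} endomorphisms of $(\xnN,\shift{n})$ by matching reduced local rules, and for this you need the implication ``a reduced rule that is not right permutive induces a non-surjective one-sided map''. You correctly flag that this must be ``teased out of the non-injectivity remark'' in the text, but it cannot be: it is false. The one-sided shift itself is the counterexample, and it is sitting in the surrounding text: $\shift{n}$ lies in $F_\infty(X_n)$ with (reduced) local rule $(x_{-1}x_0)\mapsto x_{-1}$, which is not right permutive for $n\ge 2$ (and no right-permutive rule can induce $\shift{n}$, since the output would then depend bijectively on $x_0$), yet the paper explicitly records that $\shift{n}$ is a continuous surjection of $\xnN$ and an element of $\End(\xnN,\shift{n})\setminus\aut{\xnN,\shift{n}}$. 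More generally, composing any right-permutive code with a power of the shift yields surjective one-sided endomorphisms whose reduced rules are not right permutive. So the monoid of surjective one-sided endomorphisms strictly contains the image of $RF_\infty(X_n)$, the isomorphism you transport the group of units across does not exist, and the argument as written does not close. (Non-injectivity, not non-surjectivity, is what failure of right permutivity buys you; the two are genuinely different here because the one-sided shift is surjective but not injective.)

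The conclusion is nevertheless reachable by matching units directly, without claiming the ambient monoids agree. One direction: if $\phi\in\aut{\xnN,\shift{n}}$, then $\phi$ and $\phi^{-1}$ are injective one-sided endomorphisms, so by the contrapositive of the non-injectivity remark their (one-sided Curtis--Hedlund--Lyndon) local rules are right permutive; since composition of rules is the same formal operation on $\xnz$ as on $\xnN$ and the composite rules reduce to the identity rule, the corresponding two-sided maps are mutually inverse elements of $RF_\infty(X_n)$. The other direction: if $f_\infty,g_\infty\in RF_\infty(X_n)$ are mutually inverse on $\xnz$, the same rule computation shows the induced one-sided maps are mutually inverse; each is surjective (right permutivity) and injective (it has a left inverse), hence each lies in $\aut{\xnN,\shift{n}}$. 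Combined with your correct observation that the largest inverse-closed subset of a monoid is its group of units, this gives the statement. Note the paper offers no proof of this theorem (it is stated as a corollary of Theorems~\ref{t:hed1} and~\ref{t:hed2}), so there is no in-text argument to compare against; but the repair above is what the corollary implicitly relies on.
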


\subsection{Transducers}

In this section we state a result of \cite{BelkBleakCameronOlukoya} which shows how the elements of $\aut{\xnz, \shift{n}}$ can be
described by a pair consisting of a possibly \emph{asynchronous} transducer, and combinatorial data called an \emph{annotation}. We begin with a general definition of automata and transducers. 

\subsection{Automata and transducers}

An \emph{automaton}, in our context, is a triple $A=(X_A,Q_A,\pi_A)$, where
\begin{enumerate}
	\item $X_A$ is a finite set called the \emph{alphabet} of $A$ (we assume that
	this has cardinality $n$, and identify it with $X_n$, for some $n$);
	\item $Q_A$ is a finite set called the \emph{set of states} of $A$;
	\item $\pi_A$ is a function $X_A\times Q_A\to Q_A$, called the \emph{transition
		function}.
\end{enumerate}
We regard an automaton $A$ as operating as follows. If it is in state $q$ and
reads symbol $a$ (which we suppose to be written on an input tape), it moves
into state $\pi_A(a,q)$ before reading the next symbol. As this suggests, we
can imagine that the inputs to $A$ form a string in $\xn^\mathbb{N}$; after
reading a symbol, the read head moves one place to the right before the next
operation.

We can extend the notation as follows. For $w\in X_n^m$, let $\pi_A(w,q)$ be
the final state of the automaton with initial state $q$ after successively
reading the symbols in $w$. Thus, if $w=x_0x_1\ldots x_{m-1}$, then
\[\pi_A(w,q)=\pi_A(x_{m-1},\pi_A(x_{m-2},\ldots,\pi_A(x_0,q)\ldots)).\]
By convention, we take $\pi_A(\varepsilon,q)=q$.

For a given state $q\in Q_A$, we call the automaton $A$ which starts in
state $q$ an \emph{initial automaton}, denoted by $A_q$, and say that it is
\emph{initialised} at $q$.

An automaton $A$ can be represented by a labelled directed graph, whose
vertex set is $Q_A$; there is a directed edge labelled by $a\in X_a$ from
$q$ to $r$ if $\pi_A(a,q)=r$.

A \emph{circuit} in the automaton $A$ is therefore a word $w \in \xns$ and a state $q \in Q_{A}$ such that $\pi_{T}(w, q) = q$. We say that $w$ is a \emph{circuit based at $q$}. If $|w|= 1$, we say that $w$ is a \emph{loop based at $q$} and $q$ is called a \emph{$w$ loop state}.  A circuit $w$ based at a state $q$ is called \emph{basic} if, writing $w = w_1 \ldots w_{|w|}$,  for all $1 \le i < j < |w|$, $\pi_{T}(w_1\ldots w_i, q) \ne \pi_{T}(w_1\ldots w_{j}, q)$.

A \emph{transducer} is a quadruple $T=(X_T,Q_T,\pi_T,\lambda_T)$, where
\begin{enumerate}
	\item $(X_T,Q_T,\pi_T)$ is an automaton;
	\item $\lambda_T:X_T\times Q_T\to X_T^*$ is the \emph{output function}.
\end{enumerate}
Such a transducer is an automaton which can write as well as read; after
reading symbol $a$ in state $q$, it writes the string $\lambda_T(a,q)$ on an
output tape, and makes a transition into state $\pi_T(a,q)$. An \emph{initial
	transducer} $T_q$ is simply a transducer which starts in state $q$.

We note that the transducers  defined above are \emph{deterministic}. In other words the transition function is in fact a function. In Subsection~\ref{Section:revarrowaut} we consider general \emph{non-deterministic transducers} in which there  might be multiple ways of reading a word from a state. As the more general definition is only required in that subsection we state it there.

In the same manner as for automata, we can extend the notation to allow
the transducer to act on finite strings: let $\pi_T(w,q)$ and $\lambda_T(w,q)$
be, respectively, the final state and the concatenation of all the outputs
obtained when the transducer reads the string $w$ from state $q$.

A transducer $T$ can also be represented as an edge-labelled directed graph.
Again the vertex set is $Q_T$; now, if $\pi_T(a,q)=r$, we put an edge with
label $a|\lambda_T(a,q)$ from $q$ to $r$. In other words, the edge label
describes both the input and the output associated with that edge.

For example, Figure~\ref{fig:shift2} describes a transducer over the alphabet
$X_2$.

\begin{figure}[htbp]
	\begin{center}
		\begin{tikzpicture}[shorten >=0.5pt,node distance=3cm,on grid,auto]
		\tikzstyle{every state}=[fill=none,draw=black,text=black]
		\node[state] (q_0)   {$a_1$};
		\node[state] (q_1) [right=of q_0] {$a_2$};
		\path[->]
		(q_0) edge [loop left] node [swap] {$0|0$} ()
		edge [bend left]  node  {$1|0$} (q_1)
		(q_1) edge [loop right]  node [swap]  {$1|1$} ()
		edge [bend left]  node {$0|1$} (q_0);
		\end{tikzpicture}
	\end{center}
	\caption{A transducer over $X_2$ \label{fig:shift2}}
\end{figure}
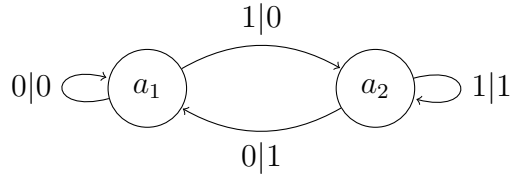

A transducer $T$ is said to be \emph{synchronous} if $|\lambda_T(a,q)|=1$
for all $a\in X_T$, $q\in Q_T$; in other words, when it reads a symbol, it
writes a single symbol. It is \emph{asynchronous} otherwise. Thus, an
asynchronous transducer may write several symbols, or none at all, at a given
step. Note that this usage of the word differs from that of Grigorchuk
\emph{et al.}~\cite{GNSenglish}, for whom ``asynchronous'' includes ``synchronous''.
The transducer of Figure~\ref{fig:shift2} is synchronous.

We can regard an automaton, or a transducer, as acting on an infinite string
in $\xno$, where $X_n$ is the alphabet. This action is given by iterating
the action on a single symbol; so the output string is given by
\[\lambda_T(xw,q) = \lambda_T(x,q)\lambda_T(w,\pi_T(x,q)).\]

Throughout this paper, we will (as in \cite{GNSenglish}) make the following
assumption:

\paragraph{Assumption} A transducer $T$ has the property that, when it reads
an infinite input string starting from any state, it writes an infinite  output string.

\medskip

The property above is equivalent to the property that any circuit in the underlying automaton has non-empty concatenated output. 

From the assumption, it follows that the transducer writes an infinite output string on reading any infinite input string from any state.
Thus $T_q$ induces a map $w\mapsto\lambda_T(w,q)$ from $\xno$ to itself; it is
easy to see that this map is continuous. If it is a
homeomorphism, then we call the state $q$ a \emph{homeomorphism state}. We write $\im{q}$ for the image of the map induced by $T_{q}$.

Two states $q_1$ and $q_2$ are said to be \emph{$\omega$-equivalent} if the
transducers $T_{q_1}$ and $T_{q_2}$ induce the same continuous map. (This can
be checked in finite time, see~\cite{GNSenglish}.)  More generally, we say that two
initial transducers $T_q$ and $T'_{q'}$ are \emph{$\omega$-equivalent} if they
induce the same continuous map on $\xno$.

A transducer is said to be \emph{weakly minimal} if no two states are
$\omega$-equivalent. For a synchronous transducer $T$, two states $q_1$ and $q_2$ are $\omega$-equivalent if $\lambda_T(a, q_1) = \lambda_T(a,q_2)$ for any finite word $a \in X_n^{*}$. Moreover, if $q_1$ and $q_2$ are $\omega$-equivalent states of a synchronous transducer, then for any finite word $a \in X_{n}^{p}$, $\pi_{T}(a, q_1)$ and $\pi_{T}(a, q_2)$ are also $\omega$-equivalent states. 

For a state $q$ of $T$ and a word $w\in X_n^*$, we let $\Lambda(w,q)$ be the
greatest common prefix of the set $\{\lambda(wx,q):x\in\xno\}$. The state
$q$ is called a \emph{state of incomplete response} if
$\Lambda(\varepsilon,q)\ne\varepsilon$; the length $|\Lambda(\varepsilon,q)|$ of the string $\Lambda(\varepsilon,q)$ is the \emph{extent of incomplete response} of the state $q$. Note that for a state $q \in  Q_{T}$, if the initial transducer $T_{q}$  induces a map from $\xno$ to itself with image size at least $2$, then $|\Lambda(\varepsilon, q)| < \infty$. 

Let  $T$ be a transducer and $q \in Q_{T}$ a state. Suppose $T_{q}$ has image size exactly one. By definition the number of states of $T_{q}$ is finite, therefore there is a state $p \in Q_{p}$, accessible from $q$ and a word $\Gamma \in \xnp$ such that $\pi_{T}(\Gamma, p)= p$. Let $x = \lambda_{T}(\Gamma, p) \in \xnp$ and let $Z_{x} = (\xn, \{x\}, \pi_{Z_{x}}, \lambda_{Z_{x}})$ be the transducer such that $\lambda_{Z_{x}}(i, x) = x$ for all $i \in \xn$. Then $T_{q}$, is $\omega$-equivalent to a transducer $T'_{q'}$ which contains $Z_{x}$ as a subtransducer and satisfies the following condition: there is a $k \in \N$ so that, for any word $y \in \xns$, $\pi_{T'}(y,q')= x$.

We say that an initial transducer $T_{q}$ is \emph{minimal} if it is weakly minimal, has no states of
incomplete response and every state is accessible from the initial state $q$. 

A non-initial transducer $T$  is called minimal if for any state $q \in Q_{T}$ the initial transducer $T_{q}$ is minimal.  Therefore a non-initial transducer $T$ is minimal if it is weakly minimal, has no states of incomplete response and is strongly connected as a directed graph. 

Observe that the transducer $Z_{x}$, for any $x \in \xnp$, is not minimal as  its only state is, by definition, a state of incomplete response. However, it will be useful when stating some results later on to have a notion of minimality for the single state transducers $Z_{x}$ considered as initial and non-initial transducers. We use the word minimal for this new meaning of minimality appealing to the context to clarify any confusion. The definition below, which perhaps appears somewhat contrived, is a consequence of this.  

Let $x= x_1 x_2 \ldots x_{r} \in \xns$ be a word. The transducer $Z_{x}$ is called \emph{ minimal as an initial transducer} if $x$ is a prime word; $Z_{x}$ is called \emph{minimal as a non-initial transducer} if  $Z_{x}$ is minimal as an initial transducer and $x$ is the minimal  element, with respect to the lexicographic ordering, of the set $\{ x_{i} \ldots x_{r}x_{1}\ldots x_{i-1} \vert 1 \le i \le r \}$. Note that if $|x| = 1$ then $Z_{x}$ is minimal as an initial transduce if and only if it is minimal as a non-initial transducer. In most applications, we consider the transducers $Z_{x}$ as non-initial transducers and  in this case we will omit the phrase `as a non-initial transducer'.

Two (weakly) minimal non-initial transducers $T$ and $U$  are said to be  \emph{$\omega$-equal} if there is a bijection $f: Q_{T} \to Q_{U}$, such that for any $q \in Q_{T}$, $T_{q}$ is $\omega$-equivalent to $U_{(q)f}$. Two (weakly) minimal initial transducers $T_{p}$ and $U_{q}$ are said to be $\omega$-equal if there is a bijection  $f: Q_{T} \to Q_{U}$, such that $(p)f = q$ and for any $t \in Q_{T}$,  $T_{t}$ is $\omega$-equivalent to $U_{(t)f}$. We shall use the symbol `$=$' to represent $\omega$-equality of initial and non-initial transducers. Two non-initial transducers are said to be $\omega$-equivalent if they have $\omega$-equal minimal representatives.

In the class of synchronous transducers, the  $\omega$-equivalence class of  any transducer has a unique weakly minimal representative.
Grigorchuk \textit{et al.}~\cite{GNSenglish} prove that the $\omega$-equivalence
class of an initialised transducer $T_q$ has a unique minimal representative, if one permits infinite outputs from finite inputs,
and give an algorithm for computing this representative.  The first step of this algorithm is to create a new transducer $T_{q_{-1}}$ which is $\omega$-equivalent to $T_{q}$ and has no states of incomplete response. As we do not allow our transducers to write infinite strings on finite inputs,  we impose an additional condition: we apply the process for removing incomplete response only in instances where all states accessible from the initial one have finite extent of incomplete response.  

\begin{proposition}\label{prop:algorithmforremovingincompleteresponse}
	
	Let $A_{q_0} = (X_n, Q_A, \pi_A, \lambda_A)$ be a finite initial transducer such that $|\Lambda(\varepsilon, q)|<\infty$ for all $q \in  Q_{A}$. Set $Q'_A:= Q \cup\{q_{-1}\}$, where $q_{-1}$ is a symbol not in $Q_{A}$.  Define:
	\begin{IEEEeqnarray}{rCl}
	\pi'_A(x,q_{-1}) &=& \pi_{A}(x,q_0), \label{transition same}\\
	\pi'_A(x,q) &=& \pi_{A}(x,q),\\
	\lambda_A(x, q_{-1}) &= & \Lambda(x,q_0),\\
	\lambda'_A(x,q) &=& \Lambda(x, q) - \Lambda(\varepsilon,q), \label{redifining output}
	\end{IEEEeqnarray}
	for all $x \in X_n$ and $q \in Q_{A}$.
	
	Then, the transducer $A' = (X_n, Q'_A, \pi'_A, \lambda'_A)$ with the initial state $q_{-1}$ is a transducer which is $\omega$-equivalent to the transducer $A_{q_0}$ and has no states of incomplete response.
	
	In  addition, for every $w \in X_n^{\ast}$, the following inequality holds:
	\begin{equation}
	\lambda'_A(w, q_{-1}) = \Lambda(w, q_0). \label{resulthasnoincompleteresponse}
	\end{equation}
\end{proposition}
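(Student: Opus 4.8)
The plan is to verify directly that the transducer $A'$ constructed in the statement has the three claimed properties: (i) it is a well-defined transducer (in particular every circuit has non-empty output, so it satisfies the running assumption), (ii) it is $\omega$-equivalent to $A_{q_0}$, and (iii) it has no states of incomplete response; and along the way to establish the identity \eqref{resulthasnoincompleteresponse}. The hypothesis $|\Lambda(\varepsilon,q)| < \infty$ for all $q \in Q_A$ is what makes the redefined outputs $\lambda'_A(x,q) = \Lambda(x,q) - \Lambda(\varepsilon,q)$ legitimate finite strings: one first checks that $\Lambda(\varepsilon,q)$ is a prefix of $\Lambda(x,q)$ for each $x \in X_n$, which follows because $\{\lambda_A(xy,q) : y \in \xno\} \subseteq \{\lambda_A(z,q) : z \in \xno\}$, so the greatest common prefix can only grow. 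Since $\Lambda(x,q)$ is itself finite (being bounded in length by $|\Lambda(\varepsilon,q)| + |\lambda_A(x,q)| + |\Lambda(\varepsilon,\pi_A(x,q))|$, all finite by hypothesis), the new output function takes values in $X_n^*$ as required.

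Next I would prove the ``prefix-tracking'' identity \eqref{resulthasnoincompleteresponse}, namely $\lambda'_A(w,q_{-1}) = \Lambda(w,q_0)$ for all $w \in X_n^*$, by induction on $|w|$. The base case $w = \varepsilon$ is the definition $\lambda'_A(\varepsilon,q_{-1}) = \Lambda(\varepsilon,q_0)$ (taking the convention consistent with $\lambda_A(\varepsilon,\cdot)=\varepsilon$; more precisely one reads the $w=x\in X_n$ case directly from the definition $\lambda'_A(x,q_{-1}) = \Lambda(x,q_0)$). For the inductive step, write $w = xv$ with $x \in X_n$, and use the concatenation rule $\lambda'_A(xv,q_{-1}) = \lambda'_A(x,q_{-1})\,\lambda'_A(v,\pi'_A(x,q_{-1}))$. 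By the definitions $\lambda'_A(x,q_{-1}) = \Lambda(x,q_0)$ and $\pi'_A(x,q_{-1}) = \pi_A(x,q_0)$, and by the inductive hypothesis applied to the state $\pi_A(x,q_0) \in Q_A$ (via the analogous identity for states in $Q_A$, which itself needs a parallel induction: $\lambda'_A(v,q) = \Lambda(v,q) - \Lambda(\varepsilon,q)$ for $q \in Q_A$), this becomes $\Lambda(x,q_0)\,\bigl(\Lambda(v,\pi_A(x,q_0)) - \Lambda(\varepsilon,\pi_A(x,q_0))\bigr)$. The key combinatorial fact to push through is that $\Lambda(x,q_0)$ concatenated with $\Lambda(v,\pi_A(x,q_0))$ equals $\Lambda(\varepsilon,\pi_A(x,q_0))$ followed by $\Lambda(xv,q_0)$: this is exactly the statement that the greatest common prefix of the outputs on reading $xv\xno$ from $q_0$ factors through reading $x$ first, and it uses that $\Lambda(x,q_0) = \lambda_A(x,q_0)\,\Lambda(\varepsilon,\pi_A(x,q_0))$ together with the definition of $\Lambda$ as a gcd of output strings.

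With \eqref{resulthasnoincompleteresponse} in hand, $\omega$-equivalence with $A_{q_0}$ follows: for any infinite input $w \in \xno$, the output of $A'_{q_{-1}}$ is $\lim_{k} \lambda'_A(w_{[0,k]},q_{-1}) = \lim_k \Lambda(w_{[0,k]},q_0)$, and this limit equals $\lambda_A(w,q_0)$ since $\Lambda(w_{[0,k]},q_0)$ is by definition a prefix of $\lambda_A(w,q_0)$ of length tending to infinity (here finiteness of the extents of incomplete response is used to guarantee the outputs are genuinely infinite, consistent with the running Assumption). Finally, no state of $A'$ has incomplete response: for a state $q \in Q_A$ one computes $\Lambda'(\varepsilon,q)$, the gcd of $\{\lambda'_A(qx,\cdot)\}$, and the identity $\lambda'_A(v,q) = \Lambda(v,q) - \Lambda(\varepsilon,q)$ shows the gcd over all $v$ is empty (since $\Lambda(\varepsilon,q)$ is precisely the common part that was subtracted off); for $q_{-1}$ one uses \eqref{resulthasnoincompleteresponse} and the analogous minimality of $\Lambda(\varepsilon,q_0) = \lambda'_A(\varepsilon,q_{-1})$. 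I expect the main obstacle to be bookkeeping the interaction between $q_{-1}$ and its ``image'' $q_0$ cleanly — in particular making sure the two parallel inductions (one for $q_{-1}$, one for ordinary states) are stated so that they feed each other correctly — rather than any conceptual difficulty; the underlying content is the standard observation of Grigorchuk–Nekrashevych–Suschanskii that pushing incomplete response forward along edges is well-defined exactly when all extents are finite.
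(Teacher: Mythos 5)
The paper does not actually prove this proposition --- it is imported as background from Grigorchuk \emph{et al.}~\cite{GNSenglish} and \cite{AutGnr} --- so there is no in-paper argument to compare against; judged on its own, your outline is the standard argument and its skeleton is correct. The two parallel inductions you set up do close, and the facts that drive everything are the ones you name: writing $\pi = \pi_A(x,q)$, one has $\Lambda(x,q) = \lambda_A(x,q)\,\Lambda(\varepsilon,\pi)$ and $\Lambda(xv,q) = \lambda_A(x,q)\,\Lambda(v,\pi)$, whence $\bigl(\Lambda(x,q)-\Lambda(\varepsilon,q)\bigr)\bigl(\Lambda(v,\pi)-\Lambda(\varepsilon,\pi)\bigr) = \Lambda(xv,q)-\Lambda(\varepsilon,q)$ and $\Lambda(x,q_0)\bigl(\Lambda(v,\pi)-\Lambda(\varepsilon,\pi)\bigr) = \Lambda(xv,q_0)$ by cancelling common prefixes.

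Two points need repair. First, your ``key combinatorial fact'' as literally stated --- that $\Lambda(x,q_0)$ concatenated with $\Lambda(v,\pi)$ equals $\Lambda(\varepsilon,\pi)$ followed by $\Lambda(xv,q_0)$ --- is false as a string identity: the left side is $\lambda_A(x,q_0)\Lambda(\varepsilon,\pi)\Lambda(v,\pi)$ while the right side is $\Lambda(\varepsilon,\pi)\lambda_A(x,q_0)\Lambda(v,\pi)$, and these agree only in length. What your induction actually requires is the second displayed identity above, which does hold; replace the phrasing accordingly. Second, the absence of incomplete response at $q_{-1}$ is more delicate than your sketch allows. Under the standing convention $\lambda'_A(\varepsilon,q_{-1})=\varepsilon$, the greatest common prefix of the infinite outputs from $q_{-1}$ is $\Lambda(\varepsilon,q_0)$, which is non-empty precisely when $q_0$ had incomplete response; so \eqref{resulthasnoincompleteresponse} must be read for $w \in X_n^{+}$, and the substantive content of ``no states of incomplete response'' concerns the states in $Q_A$ (which is all that is ever used downstream, since $q_{-1}$ is a source and is discarded on passing to the core). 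Your argument for those states --- that the infinite outputs of $A'_q$ are exactly $\lambda_A(z,q)-\Lambda(\varepsilon,q)$, whose greatest common prefix is empty by the very definition of $\Lambda(\varepsilon,q)$ --- is the right one. Finally, you flag but do not carry out the check that $A'$ satisfies the standing Assumption; this is immediate from $|\lambda'_A(w,q)| = |\lambda_A(w,q)| + |\Lambda(\varepsilon,\pi_A(w,q))| - |\Lambda(\varepsilon,q)|$, which for a circuit ($\pi_A(w,q)=q$) equals $|\lambda_A(w,q)|>0$.
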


\medskip

Throughout this article, as a matter of convenience, we shall not distinguish between $\omega$-equivalent transducers. Thus, for example, we introduce various groups as if the elements of those groups are transducers and not $\omega$-equivalence classes of transducers. 

Given two transducers $T=(X_n,Q_T,\pi_T,\lambda_T)$ and
$U=(X_n,Q_U,\pi_U,\lambda_U)$ with the same alphabet $X_n$, we define their
product $T*U$. The intuition is that the output for $T$ will become the input
for $U$. Thus we take the alphabet of $T*U$ to be $X_n$, the set of states
to be $Q_{T*U}=Q_T\times Q_U$, and define the transition and rewrite functions
by the rules
\begin{eqnarray*}
	\pi_{T*U}(x,(p,q)) &=& (\pi_T(x,p),\pi_U(\lambda_T(x,p),q)),\\
	\lambda_{T*U}(x,(p,q)) &=& \lambda_U(\lambda_T(x,p),q),
\end{eqnarray*}
for $x\in S_n$, $p\in Q_T$ and $q\in Q_U$. Here we use the earlier 
convention about extending $\lambda$ and $\pi$ to the case when the transducer
reads a finite string.  If $T$ and $U$ are initial with initial states $q$ and $p$ respectively then the state $(q,p)$ is considered the initial state of the product transducer $T*U$.

We say that an initial transducer $T_q$ is \emph{invertible} if there is an initial transducer $U_p$ such that $T_q*U_p$ and $U_p*T_q$ each induce the identity map
on $\xno$. We call $U_p$ an \emph{inverse} of $T_q$.  When this occurs we will denote $U_p$ as $T_q^{-1}$.

In automata theory a synchronous (not necessarily initial) transducer $$T = (X_n, Q_{T}, \pi_{T}, \lambda_T)$$ is \emph{invertible} if for any state $q$ of $T$, the map $\rho_q:=\pi_{T}(\centerdot, q): X_{n} \to X_{n}$ is a bijection. In this case the inverse of $T$ is the transducer $T^{-1}$ with state set $Q_{T^{-1}}:= \{ q^{1} \mid q \in Q_{T}\}$, transition function $\pi_{T^{-1}}: X_{n} \times Q_{T^{-1}} \to Q_{T^{-1}}$ defined by $(x,p^{-1}) \mapsto q^{-1}$ if and only if $\pi_{T}((x)\rho_{p}^{-1}, p) =q$, and output function  $\lambda_{T^{-1}}: X_{n} \times Q_{T^{-1}} \to X_{n}$ defined by  $(x,p) \mapsto (x)\rho_{p}^{-1}$. 

In this article, we will come across synchronous transducers which are not invertible in the automata theoretic sense but which nevertheless induce self-homeomorphims of the space $\xnz$ and so are invertible in a different sense.

\subsection{Synchronizing automata and bisynchronizing transducers}\label{Subsection:synchandbisynch}

Given a natural number $k$, we say that an automaton $A$ with alphabet $X_n$
is \emph{synchronizing at level $k$} if there is a map
$\mathfrak{s}_{k}:X_n^k\mapsto Q_A$ such that, for all $q$ and any word
$w\in X_n^k$, we have $\pi_A(w,q)=\mathfrak{s}_{k}(w)$; in other words, if the
automaton reads the word $w$ of length $k$, the final state depends only on
$w$ and not on the initial state. (Again we use the extension of $\pi_A$ to
allow the reading of an input string rather than a single symbol.) We
call $\mathfrak{s}_{k}(w)$ the state of $A$ \emph{forced} by $w$; the map $\mathfrak{s}_{k}$ is called the \emph{synchronizing map at level $k$}. An automaton $A$ is called \emph{strongly synchronizing} if it is synchronizing at level $k$ for some $k$.

We remark here that the notion of synchronization occurs in automata theory
in considerations around the \emph{\v{C}ern\'y conjecture}, in a weaker sense.
A word $w$ is said to be a \emph{reset word} for $A$ if $\pi_A(w,q)$ is
independent of $q$; an automaton is called \emph{synchronizing} if it has
a reset word~\cite{Volkov2008,ACS}. Our definition of ``synchonizing at level $k$''/``strongly synchronizing"
requires every word of length $k$ to be a reset word for the automaton.

If the automaton $A$ is synchronizing at level $k$, we define the
\emph{core} of $A$ to be the set of states forming the image of the map
$\mathfrak{s}_{k}$. The core of $A$ is an automaton in its own right, and is also 
synchronizing at level $k$. We denote this automaton by $\core(A)$. We say that an
automaton or transducer is \emph{core} if it is equal to its core.

The following result was proved in Bleak \textit{et al.}~\cite{AutGnr} .

\begin{proposition}
	Let $T_q$, $U_p$ be synchronous initial transducers which (as automata) are synchronizing at levels $j$, $k$ respectively, Then $T*U$ is synchronizing at level at most $j+k$.
	\label{p:synchlengthsadd}
\end{proposition}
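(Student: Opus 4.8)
The plan is to track how the synchronizing map behaves under the product construction $T*U$, whose state set is $Q_T \times Q_U$. Recall that $T$ is synchronizing at level $j$ means that reading any word $w$ of length $j$ from any state of $T$ lands in the forced state $\mathfrak{s}_j^T(w)$, and similarly for $U$ at level $k$. The natural guess is that a word of length $j+k$ forces a state of $T*U$, and the proof should make this precise by splitting such a word into a length-$j$ prefix and a length-$k$ suffix.

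First I would take an arbitrary state $(p_0,q_0) \in Q_T \times Q_U$ and an arbitrary word $v \in X_n^{j+k}$, and write $v = v_1 v_2$ with $|v_1| = j$ and $|v_2| = k$. Reading $v_1$ from $(p_0,q_0)$: by the definition of $\pi_{T*U}$, the $T$-coordinate after reading $v_1$ is $\pi_T(v_1,p_0)$, which by synchronization of $T$ at level $j$ equals $\mathfrak{s}_j^T(v_1)$ — independent of $p_0$. The $U$-coordinate is $\pi_U(\lambda_T(v_1,p_0),q_0)$, which still depends on $p_0$ and $q_0$ through the output $\lambda_T(v_1,p_0)$; call this intermediate $U$-state $q_1$. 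Next, reading $v_2$ from $(\mathfrak{s}_j^T(v_1), q_1)$: the $T$-coordinate becomes $\pi_T(v_2, \mathfrak{s}_j^T(v_1)) = \mathfrak{s}_k^T(v_2)$, which depends only on $v_2$. The key point is the $U$-coordinate: after reading $v_2$ it is $\pi_U(\lambda_T(v_2, \mathfrak{s}_j^T(v_1)), q_1)$, and since $T$'s state when reading $v_2$ is the forced state $\mathfrak{s}_j^T(v_1)$ depending only on $v_1$, the string $\lambda_T(v_2, \mathfrak{s}_j^T(v_1))$ depends only on $v = v_1 v_2$ and not on $(p_0,q_0)$. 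Call this string $w_2 = w_2(v)$.

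So after reading all of $v$ the $U$-coordinate is $\pi_U(w_2, q_1)$. This still ostensibly depends on $q_1$, hence on $(p_0,q_0)$ — so we are not yet done, and this is the one subtlety to handle. I would resolve it by a case split on $|w_2|$. If $|w_2| \ge k$, then reading $w_2$ from $q_1$ forces a state of $U$ by synchronization of $U$ at level $k$ (reading the length-$k$ prefix of $w_2$ already forces a state, and the remaining letters carry that forced state forward deterministically), so the $U$-coordinate depends only on $v$. The potential obstacle is the case $|w_2| < k$, which can genuinely occur since $\lambda_T$ may shrink strings; here $q_1$ need not be forced by $w_2$ alone. However — and this is where I expect the actual argument in the paper to differ slightly or to invoke an additional hypothesis — for the transducers under consideration the situation is controlled: either one argues that $v_2$ of length $k$ produces $T$-output of length at least $k$ (which holds, e.g., when the relevant transducers are length-non-decreasing, or after passing to $\omega$-equivalent minimal representatives with the Assumption that circuits have non-empty output), or one simply states the bound as "at most $j+k$" precisely to absorb this slack, meaning one may need to read a few more than $j+k$ symbols in degenerate cases but never more than $j+k$ in the synchronous core setting where $|\lambda_T(x,p)|=1$ for all $x,p$. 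Indeed, in the \emph{synchronous} case all outputs have length exactly one, so $|w_2| = |v_2| = k$ exactly, and the case split collapses: the argument goes through cleanly and gives synchronization at level $\le j+k$.

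To finish, I would assemble the two coordinates: after reading any $v \in X_n^{j+k}$ from any $(p_0,q_0)$, the $T$-coordinate is $\mathfrak{s}_k^T(v_2)$ and the $U$-coordinate is $\pi_U(w_2(v), q_1)$, both depending only on $v$, so we may define $\mathfrak{s}_{j+k}^{T*U}(v)$ to be this pair, establishing that $T*U$ is synchronizing at level $j+k$, hence at level at most $j+k$. The genuinely delicate point to get right is the bookkeeping showing $w_2(v)$ is independent of the start state — which hinges on the fact that $T$ has already "forgotten" its initial state after the first $j$ letters — together with confirming, in whatever class of transducers Proposition~\ref{p:synchlengthsadd} is being applied to, that this intermediate output is long enough (length $\ge k$) for $U$'s synchronization to bite; in the synchronous setting this is automatic.
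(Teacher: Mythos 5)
Your proof is correct and is the standard argument for this result (which the paper itself only quotes from Bleak \emph{et al.}\ without proof): split $v \in X_n^{j+k}$ into a length-$j$ prefix, which forces the $T$-coordinate, and a length-$k$ suffix, whose $T$-output has length exactly $k$ by synchronicity and therefore forces the $U$-coordinate regardless of the intermediate $U$-state $q_1$. One minor notational slip: the final $T$-state should be written as $\pi_T(v_2,\mathfrak{s}_j(v_1))$ rather than $\mathfrak{s}_k^T(v_2)$, since $T$ is only assumed synchronizing at level $j$, not at level $k$ --- but this does not affect the argument, as that state still depends only on $v$.
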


Let $T_q$ be an initial transducer which is invertible with inverse $T_q^{-1}$. If $T_q$ is synchronizing at level $k$, and $T_q^{-1}$ is synchronizing at level $l$, 
we say that $T_q$ is \emph{bisynchronizing} at level $(k,l)$. If $T_q$ is
invertible and is synchronizing at level~$k$ but not bisynchronizing, we say
that it is \emph{one-way synchronizing} at level~$k$.

For a non-initial synchronous and invertible transducer  $T$ we also say $T$ is bi-synchronizing (at level $(k,l)$) if both $T$ and its inverse $T^{-1}$ are synchronizing at levels $k$ and $l$ respectively.

\begin{notation}
	Let $T$ be a transducer which is synchronizing at level $k$ and Let $l \ge k$ be any natural number. Then for any word $\Gamma \in X_{n}^{l}$, we write $q_{\Gamma}$ for the state $\mathfrak{s}_{l}(\Gamma)$, where $\mathfrak{s}_{l}: X_{n}^{l} \to Q_{T}$ is the synchronizing map at level $l$.
\end{notation}

\subsection{Groups and monoids of Transducers}

We define several monoids and groups whose elements consists of $\omega$-equivalence classes of transducers which appear first in the papers \cite{AutGnr,BelkBleakCameronOlukoya}.  In practise, as each $\omega$-equivalence class contains a unique minimal element as exposited above, we represent the elements of these monoids will be minimal transducers.  These monoids and groups play a central role in what follows

\subsubsection{The monoid \texorpdfstring{$\spn{n}$}{Lg}}
Let $T$ be a transducer which is synchronous and which (regarded as an
automaton) is synchronizing at level $k$, then the core of $T$ (similarly denoted 
$\core(T)$) induces a continuous map $f_T:\xnz\to\xnz$. The function $f_{T}: \xnz \to \xnz$ induced by a strongly synchronizing transducer $T$ is defined as follows.

Let $T$ be a transducer which is core, synchronous and which is synchronizing at level $k$. The map $f_{T}$ maps an element $x \in \xnz$ to the sequence $y$ defined by $y_{i} = \lambda_T(x_i, q_{x_{i-k}x_{i-k+1}\ldots x_{i-1}})$.  We observe that for an element  $x \in  \xnz$, and $i \in \Z$, if $ (x)f_{T} = y$, then, by definition,  $y_{i}y_{i+1}\ldots = \lambda_{T}(x_i x_{i+1}\ldots, q_{x_{i-k}x_{i-k+1}\ldots x_{i-1}})$.

Clearly, if $T$ is synchronizing at level $k$, then it is synchronizing at
level~$l$ for all $l\ge k$; but the map $f_T$ is independent of the level
chosen to define it.

Now strongly synchronizing transducers may induce endomorphisms of the shift \cite{BleakCameronOlukoya}:

\begin{proposition}\label{prop:pntildeisinendo}
	Let $T$ be a minimal synchronous transducer which is  synchronizing at 
	level $k$ and which is core. Then $f_T\in\End(\xnz,\shift{n})$.
\end{proposition}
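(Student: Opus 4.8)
The plan is to show two things: that $f_T$ is continuous, and that it commutes with the shift $\shift{n}$ on $\xnz$. The key technical tool is the formula defining $f_T$: for $x \in \xnz$ and $i \in \Z$, $(x)f_T = y$ where $y_i = \lambda_T(x_i, q_{x_{i-k}\ldots x_{i-1}})$, and more usefully the observed ``tail'' identity $y_i y_{i+1}\ldots = \lambda_T(x_i x_{i+1}\ldots, q_{x_{i-k}\ldots x_{i-1}})$. Since $T$ is core and synchronizing at level $k$, the state $q_{x_{i-k}\ldots x_{i-1}}$ depends only on the length-$k$ block immediately to the left of position $i$, so each output coordinate $y_i$ is a function of the finite window $x_{i-k}x_{i-k+1}\ldots x_i$ of length $k+1$.

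First I would prove continuity. Given $x \in \xnz$ and $N \in \N$, the coordinates $y_{-N}, \ldots, y_N$ of $(x)f_T$ are each determined by $x_{-N-k}, \ldots, x_N$; hence if $x'$ agrees with $x$ on the block $[-N-k, N]$ then $(x')f_T$ agrees with $(x)f_T$ on $[-N, N]$. This is exactly continuity of $f_T$ in the product topology on $\xnz$ (using, e.g., the basic open sets $U^0$-type cylinders shifted appropriately, or simply the standard metric). Here I should be slightly careful that each $\lambda_T(x_i, q)$ is a single symbol because $T$ is synchronous, so there is no issue of output coordinates ``drifting''; the map is coordinate-wise well-defined with window $k+1$, which is the cleanest way to see continuity.

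Next I would verify shift-commutation, i.e. $f_T \circ \shift{n} = \shift{n} \circ f_T$. Write $x' = \shift{n}(x)$, so $x'_i = x_{i-1}$ for all $i$. Then the state forced by the block to the left of position $i$ in $x'$ is $q_{x'_{i-k}\ldots x'_{i-1}} = q_{x_{i-1-k}\ldots x_{i-2}}$, which is precisely the state forced at position $i-1$ in $x$. Therefore $((x')f_T)_i = \lambda_T(x'_i, q_{x'_{i-k}\ldots x'_{i-1}}) = \lambda_T(x_{i-1}, q_{x_{i-1-k}\ldots x_{i-2}}) = ((x)f_T)_{i-1} = (\shift{n}((x)f_T))_i$. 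Since this holds for every $i \in \Z$, we get $(\shift{n}(x))f_T = \shift{n}((x)f_T)$, as required. Together with continuity, this shows $f_T \in \End(\xnz, \shift{n})$ by definition of an endomorphism of the shift dynamical system.

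I do not expect a serious obstacle here; the only point demanding a little care is the bookkeeping that ensures the ``memory'' supplied by the synchronizing level $k$ correctly tracks under the shift, i.e.\ that the forced state at position $i$ of $\shift{n}(x)$ equals the forced state at position $i-1$ of $x$ — this is immediate from synchronization but is the crux of the commutation argument. One should also note explicitly that $f_T$ is well-defined independently of the chosen synchronizing level $l \ge k$ (stated just before the proposition), so there is no ambiguity in the formula.
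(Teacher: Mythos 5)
Your proof is correct and is essentially the standard argument: you observe that $f_T$ is a sliding block code with memory $k$ and anticipation $0$ (well-defined because $T$ is synchronous, core, and synchronizing at level $k$), and then verify continuity and shift-commutation directly. The paper itself quotes this proposition from \cite{BleakCameronOlukoya} without proof, but your argument is exactly what is intended; indeed it could be compressed further by noting that $f_T = f_\infty$ for the local rule $f\in F(X_n,k+1)$ given by $(x_{-k}\ldots x_0)f=\lambda_T(x_0,q_{x_{-k}\ldots x_{-1}})$ and invoking Theorem~\ref{t:hed1}.
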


The transducer in Figure~\ref{fig:shift2} induces the shift map on $\xnz$. More generally, let  $\Shift{n} = (\xn, Q_{\Shift{n}}, \pi_{\Shift{n}}, \lambda_{\Shift{n}})$ be the transducer defined as follows. Let $Q_{\Shift{n}}:= \{0,1,2,\ldots, n-1\}$, and let $\pi_{\Shift{n}}: \xn \times Q_{\Shift{n}}\to Q_{\Shift{n}}$  and $\lambda_{\Shift{n}}: \xn \times Q_{\Shift{n}}\to \xn$ be defined by $\pi_{\Shift{n}}(x, i) = x$ and $\lambda_{\Shift{n}}(x, i) = i$ for all $x \in X_{n}$ $i \in Q_{\shift{n}}$. Then $f_{\Shift{n}} = \shift{n}$.

In \cite{AutGnr}, the authors show that the set $\spn{n}$ of weakly
minimal finite synchronous core transducers is a monoid; the monoid operation
consists of taking the product of transducers and reducing it by removing non-core states and identifying $\omega$-equivalent states to obtain a weakly minimal and synchronous representative. Let $\mathcal{P}_n$ be the subset
of $\spn{n}$ consisting of transducers which induce automorphisms
of the shift. (Note that these may not be minimal.) Clearly $\Shift{n} \in \pn{n}$.

The following result is proved in \cite{BleakCameronOlukoya}.

\begin{corollary}\label{cor:FinftyisoPn}
	The monoid $F_{\infty}(X_{n})$ is isomorphic to $\spn{n}$.
\end{corollary}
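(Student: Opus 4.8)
The plan is to exhibit an explicit isomorphism between $F_\infty(X_n)$ and $\spn{n}$ and to verify that it respects the monoid operations. First I would build the map $\Phi\colon F_\infty(X_n)\to\spn{n}$. Given $f_\infty$ with $f\in F(X_n,m)$, by Remark~\ref{F(X_n,m)containedinF(X_n,m+1)} we may always inflate $m$, so $f_\infty$ determines $f$ up to the obvious padding equivalence. Associate to $f$ the synchronous transducer $T_f$ whose state set is $X_n^{m-1}$ (the ``window of history''), with transition $\pi_{T_f}(x, a_1\ldots a_{m-1}) = a_2\ldots a_{m-1}x$ and output $\lambda_{T_f}(x, a_1\ldots a_{m-1}) = (a_1\ldots a_{m-1}x)f$. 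This $T_f$ is synchronous by construction, it is synchronizing at level $m-1$ (after reading any word of length $m-1$ the state is exactly that word, independent of the start), and it is core since every state $X_n^{m-1}$ is hit by the synchronizing map. Reducing $T_f$ to its weakly minimal core representative gives a well-defined element $\Phi(f_\infty)\in\spn{n}$; I would check that the comparison $y_i=(x_{i-m+1}\ldots x_i)f$ defining $f_\infty$ agrees verbatim with the formula $y_i=\lambda_{T_f}(x_i,q_{x_{i-(m-1)}\ldots x_{i-1}})$ defining $f_{T_f}$, so $f_{T_f}=f_\infty$, and that padding $f$ to larger $m$ yields an $\omega$-equivalent transducer, so $\Phi$ is well defined on $F_\infty(X_n)$.

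Next I would produce the inverse map. Given $T\in\spn{n}$, say synchronizing at level $k$ and core, define $f^T\in F(X_n,k+1)$ by $f^T(a_0 a_1\ldots a_k) = \lambda_T(a_k, q_{a_0\ldots a_{k-1}})$, i.e. read the length-$k$ prefix to reach the forced state $q_{a_0\ldots a_{k-1}}$ and output the symbol written on reading $a_k$ there. Then $f^T_\infty = f_T$ directly from the definition of $f_T$ in terms of the synchronizing states. This gives $\Psi\colon\spn{n}\to F_\infty(X_n)$, and the two composites are identities: $\Psi\circ\Phi = \id$ because $f^{T_f}_\infty = f_{T_f} = f_\infty$, and $\Phi\circ\Psi = \id$ because $T_{f^T}$ and $T$ induce the same map $f_T$ and both get reduced to the unique weakly minimal synchronous core representative of that $\omega$-equivalence class (using that in the synchronous world the $\omega$-equivalence class has a unique weakly minimal representative, as recalled in the excerpt).

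Finally I would check multiplicativity, which is the one genuinely content-bearing step. On the $F_\infty$ side the product is composition of maps on $\xnz$; on the $\spn{n}$ side it is the transducer product $T*U$ followed by core-reduction and minimization. The key point is that $f_{T*U} = f_T\circ f_U$ for synchronous core synchronizing transducers: unwinding the definition of $*$, reading $x$ through $T$ produces $(x)f_T$ (once one is far enough right of any start state that synchronization has kicked in, which is automatic on bi-infinite sequences), and then feeding that through $U$ produces $((x)f_T)f_U$; by Proposition~\ref{p:synchlengthsadd} the product is itself synchronizing, so $f_{T*U}$ is well defined and equals the composite. Hence $\Phi(f_\infty\circ g_\infty)$ and $\Phi(f_\infty)\ast_{\spn{n}}\Phi(g_\infty)$ induce the same map and so are $\omega$-equal. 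The main obstacle is precisely this compatibility of $f_{(-)}$ with the transducer product: one must be careful that $f_T$ is genuinely independent of the synchronization level (stated in the excerpt but used essentially here) and that the core-reduction and $\omega$-minimization built into $\ast_{\spn{n}}$ do not change the induced map — both follow from Proposition~\ref{prop:pntildeisinendo} together with uniqueness of the weakly minimal synchronous representative, but this is where the bookkeeping lives.
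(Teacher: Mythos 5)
The paper does not prove this corollary itself --- it is quoted from \cite{BleakCameronOlukoya} --- so there is no internal proof to compare against; your argument is the standard one (the de~Bruijn-window transducer $T_f$ in one direction, the local rule $f^T(a_0\ldots a_k)=\lambda_T(a_k,q_{a_0\ldots a_{k-1}})$ in the other) and is essentially sound. The one step you should make explicit rather than gesture at is the passage from ``$T_{f^T}$ and $T$ induce the same map on $\xnz$'' to ``they lie in the same $\omega$-equivalence class'': a priori $\omega$-equivalence concerns the maps $\xno\to\xno$ induced by individual states, not the map on $\xnz$. The gap closes because both transducers are core and strongly synchronizing: for a common synchronizing level $k$ and any $\Gamma\in X_n^k$, the forced states $q_\Gamma\in Q_T$ and $p_\Gamma\in Q_{T_{f^T}}$ satisfy $\lambda_T(w,q_\Gamma)=\lambda_{T_{f^T}}(w,p_\Gamma)$ for every finite $w$ (both strings are read off from $f_T$ applied to configurations with history $\Gamma$), and coreness guarantees every state arises as some $q_\Gamma$; this is exactly the synchronous criterion for $\omega$-equivalence recalled in the preliminaries, and it is also what makes $\Phi$ injective. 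With that spelled out, the remaining bookkeeping (well-definedness under padding, $f_{T*U}$ being $f_T$ followed by $f_U$ in the right-action convention, and invariance of the induced map under core-reduction and weak minimisation) goes through as you describe.
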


\subsubsection{The monoids \texorpdfstring{$\Smn{n}$, $\SOn{n}$ and $\SLn{n}$}{Lg}} \label{Subsection:MnOnLn}

Let $\Smn{n}$ be the set of all {\bfseries{non-initial}}, minimal, strongly synchronizing and core transducers. Recall that for $x \in \xnp$, $Z_{x}= (\xn, \{x\}, \pi_{Z_{x}}, \lambda_{Z_{x}})$ is the transducer such that $\lambda_{Z_{x}}(i,x) = x$ for all $i \in \xn$. Moreover $Z_{x} \in \Smn{n}$ for a prime words $x = x_0 x_1 \ldots x_r \in \xnp$ if and only if $x$ is the minimal, with respect to the lexicographic ordering, element of the set $\{ x_{i}\ldots x_{r}x_1 \ldots x_{i-1} \vert 1 \le i \le r \}$.  If $T \in \Smn{n}$ is a strongly synchronizing and core transducer, such that some state (and so all states by connectivity) has infinite extent of incomplete response, then $T = Z_{x}$ for some minimal non-initial transducer $Z_{x}$.   Let  $\SOn{n}$ be the subset of $\Smn{n}$ consisting of  all  transducers $T$ satisfying the following conditions:

\begin{enumerate}[label = {\bfseries S\arabic*}]
	\item for any state $t \in Q_{T}$, the initial transducer $T_{t}$ induces a injective map $h_{t}: \xno \to \xno$ and,
	\item for any state $t \in Q_{T}$ the map $h_{t}$ has clopen image which we denote by $\im{t}$.
\end{enumerate}
Note that an element of $\Smn{n}$ with more than one state has no states of incomplete response. The single state identity transducer is an element of $\Smn{n}$ and we denote it by   $\tid$. 

Following \cite{BelkBleakCameronOlukoya} we define a binary operation on the set $\Smn{n}$ as follows. For two transducers $T,U \in \Smn{n}$, the product transducer $T \ast U$ is strongly synchronizing. Let $TU$ be the transducer obtained from $\core(T\ast U)$ as follows. Fix a state $(t,u)$ of $\core(T\ast U)$. Then $TU$ is the core of the minimal representative of the initial transducer $\core(T\ast U)_{(t,u)}$. That is, if any state (and so all states by connectivity) of $\core(T\ast U)$ has finite extent of incomplete response  we remove the states of incomplete response from $\core(T\ast U)_{(t,u)}$ to get a new transducer initial $(TU)'_{q_{-1}}$ which is still strongly synchronizing and has all states accessible from the initial state. Then we identify the $\omega$-equivalent states of $(TU)'_{q_{-1}}$ to get a minimal, strongly synchronizing initial transducer $(TU)_{p}$. The transducer $TU$ is the core of $(TU)_{p}$. Otherwise, for any state $(t,u) \in \core(T,U)$, $\core(T,U)_{(t,u)}$ is $\omega$-equivalent to a transducer of the form $Z_{x}$ for some prime word $x = x_1 \ldots x_r \in \xnp$ and we set $TU = Z_{\overline{x}}$ where $\overline{x}$ is the minimal element, with respect to the lexicographic ordering, of the set $\{ x_i \ldots x_{r}x_1 \ldots _{i-1}\vert 1 \le i \le r \}$.

It is a result of \cite{BelkBleakCameronOlukoya} that $\Smn{n}$ together with the binary operation  $\Smn{n} \times \Smn{n} \to \Smn{n}$ given by $(T,U) \mapsto TU$ is a monoid. (The single state transducer $\tid$ which induces the identity permutation on $X_n$ is the identity element.)

A consequence of a result in \cite{AutGnr} is that an element  $T \in \Smn{n}$ is an element of $\SOn{n}$ if and only if  for any state $q \in Q_T$ there is a minimal, initial transducer $U_{p}$ such that the minimal representative of the product $(T\ast U)_{(p,q)}$ is strongly synchronizing and has trivial core.  If the transducer $U_{p}$ is strongly synchronizing as well then $\core(U_{p}) \in \SOn{n}$ and satisfies $T\core(U_p) = \core(U_p)T =\tid$. 

The group $\On$ is the largest inverse closed subset of $\SOn{n}$. 

We give below a procedure for constructing the inverse of an element $T \in \SOn{n}$.

\begin{construction}\label{construction:inverse}
	Let $T \in \On$. For $q \in Q_{T}$ define a map $L_{q}: \xnp \to \xns$ by setting $(w)L_{q}$ to be the greatest common prefix of the set $(U^{+}_{w})h_{q}^{_1}$.  Set $Q_{T'}$ to be the set of pairs $(w, q)$ where $w \in \xns$, $q \in Q_{T}$, $U^{+}_{w} \subset \im{q}$ and $(w)L_{q}   = \ew$. Define maps $\pi_{T'}: \xn \times Q_{T'} \to Q_{T'}$ and $\lambda_{T'}: \xn \times Q_{T'} \to \xns$ as follow:  $\pi_{T'}(a, (w,q)) = wa - \lambda_{T}((wa)L_{q}, q)$ and $\lambda_{T'}(a, (w,q)) =(wa) L_{q}$. Let $T' = \gen{ \xn, Q_{T'}, \pi_{T'}, \lambda_{T'}}$. It is shown in \cite{AutGnr} that $T'$ is a finite transducer and, setting  $U$ to be the minimal representative of $T'$, $UT = TU = \tid$.
\end{construction}

Let $\SLn{n}$ be those elements $T \in \Smn{n}$ which satisfy the following additional constraint:

\begin{enumerate}[label= {\bfseries SL\arabic*}]
	\setcounter{enumi}{2}
	\item for all strings $a \in X_n^{\ast}$ and any state $q \in Q_{T}$ such that $\pi_{T}(a,q) = q$, $|\lambda_{T}(a,q)| = |a|$.
	\label{Lipshitzconstraint}
\end{enumerate}

The set $\SLn{n}$ is a submonoid of $\Smn{n}$ \cite{AutGnr}. Set $\Ln{n} := \On \cap \SLn{n}$, the largest inverse closed subset of $\SLn{n}$.

For each $1 \le r \le n-1$ the group $\On$ has a subgroup $\Ons{r}$ where $\Ons{n-1} = \Ons{n}$; we define $\Ln{n,r}:= \Ons{r} \cap \Ln{n}$. The following theorem is from \cite{AutGnr}.

\begin{Theorem}
	For $1 \le r \le n-1$, $\Ons{r} \cong \out{G_{n,r}}$.
\end{Theorem}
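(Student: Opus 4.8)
This theorem is established in \cite{AutGnr}; I outline the strategy one would follow to recover it. The plan is to realise $\aut{G_{n,r}}$ concretely as a group of homeomorphisms of the Cantor space $\mathfrak{C}_{n,r}$ on which $G_{n,r}$ acts, and then to quotient out the inner automorphisms transducer-theoretically. First I would show that every automorphism of $G_{n,r}$ is \emph{spatial}: for $\phi \in \aut{G_{n,r}}$ there is a unique homeomorphism $h_{\phi}$ of $\mathfrak{C}_{n,r}$ with $\phi(g) = h_{\phi}^{-1} g h_{\phi}$ for all $g \in G_{n,r}$. This follows from a reconstruction theorem (e.g.\ Rubin's theorem, applicable because $G_{n,r}$ contains micro-supported elements for every nonempty clopen set and $\mathfrak{C}_{n,r}$ has no isolated points), or from a hands-on argument recovering the points and basic clopen sets of $\mathfrak{C}_{n,r}$ from the subgroup lattice of $G_{n,r}$. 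Since $G_{n,r}$ has trivial centraliser in $\mathrm{Homeo}(\mathfrak{C}_{n,r})$ (again a consequence of the abundance of micro-supported elements), $\phi \mapsto h_{\phi}$ is an injective homomorphism, so $\aut{G_{n,r}}$ is isomorphic to the normaliser $N$ of $G_{n,r}$ in $\mathrm{Homeo}(\mathfrak{C}_{n,r})$.

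Next I would identify $N$ with a group of transducers. One inclusion is a direct computation: an initial bi-synchronising core transducer $T_q$ induces a self-homeomorphism of $\mathfrak{C}_{n,r}$, and conjugating a prefix-exchange element of $G_{n,r}$ by it produces another prefix-exchange, since the synchronising levels add (in the spirit of Proposition~\ref{p:synchlengthsadd}), so that sufficiently deep cones are again mapped onto cones by a local substitution; using bi-synchronisation one gets the same for $T_q^{-1}$, whence $T_q \in N$. For the converse, given $h \in N$ one reconstructs a finite-state structure for $h$ from the way $h$ conjugates elements of $G_{n,r}$ supported on shallow cones: this yields finitely many ``local actions'' of $h$ which assemble into a minimal transducer, and the fact that both $h$ and $h^{-1}$ send prefix-exchanges to prefix-exchanges forces it to be bi-synchronising. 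The defining constraints on $\Ons{r}$, and the dependence on $r$, enter here through the boundary behaviour that $h$ must respect. Thus $\aut{G_{n,r}}$ is isomorphic to a group $\mathcal{T}$ of initial minimal bi-synchronising core transducers representing the elements of $N$.

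Finally I would quotient out the inner automorphisms. The forgetful assignment sending $T_q$ to the minimal non-initial core representative of $T$ is a group homomorphism $\mathcal{T} \to \On$ whose image is precisely $\Ons{r}$ (every $T \in \Ons{r}$ lifts by choosing some state as initial state) and whose kernel consists of those $T_q$ for which $T$ reduces to the trivial transducer $\tid$. But an element of $N$ whose underlying non-initial transducer reduces to $\tid$ is exactly one that copies input to output on all sufficiently deep cones, i.e.\ a finite prefix-exchange, i.e.\ an element of $G_{n,r}$; and since $Z(G_{n,r}) = 1$ we have $\inn{G_{n,r}} \cong G_{n,r}$. Hence the kernel is $\inn{G_{n,r}}$ and $\out{G_{n,r}} = \aut{G_{n,r}}/\inn{G_{n,r}} \cong \Ons{r}$. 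I expect the main obstacle to be the converse half of the transducer characterisation in the second paragraph — extracting a \emph{finite}, bi-synchronising transducer from an arbitrary normalising homeomorphism and checking it lands in $\Ons{r}$ rather than some larger group — together with the spatiality step, which supplies the bridge from abstract group theory to homeomorphisms.
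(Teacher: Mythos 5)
The paper does not prove this theorem; it imports it verbatim from \cite{AutGnr}, and your outline is an accurate reconstruction of the strategy of that reference: Rubin-type spatiality identifies $\aut{G_{n,r}}$ with the normaliser of $G_{n,r}$ in $\mathrm{Homeo}(\CCnr)$, that normaliser is realised as the group $\Bnr$ of bi-synchronizing transducers, and the map forgetting the initial state and passing to the minimal core has image $\Ons{r}$ and kernel exactly the trivial-core transducers, i.e.\ $G_{n,r}\cong\inn{G_{n,r}}$. Your proposal is correct in outline and follows essentially the same route as the cited source, with the genuine technical content concentrated exactly where you flag it: extracting a finite bi-synchronizing transducer from an arbitrary normalising homeomorphism.
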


In \cite{BelkBleakCameronOlukoya} it is shown that any element of $\End(\xnz, \shift{n})$ can be represented by a pair $(T, \alpha)$ where $T \in \SLn{n}$ and $\alpha$ is an \emph{annotation} of $T$. We give a brief exposition of this result.

\begin{Definition}\label{def:annotatingLn}
	Let $L \in \SLn{n}$. An \emph{annotation} of $L$ is a map $\alpha: Q_{L} \to \Z$ which obeys the following rule: for any state $q \in Q_{L}$ and any word $\Gamma \in \xns$
	
	\begin{equation}
	(\pi_{L}(\Gamma, q))\alpha = (q)\alpha + |\lambda_{L}(\Gamma, q)| - |\Gamma|. \label{eqn:annotationrule}
	\end{equation}
\end{Definition}

\begin{notation}
	Let $T \in \SLn{n}$ and suppose that $T$ admits an annotation $\alpha$ which assigns $0$ to every state of $T$. Then we call $\alpha$ the \emph{zero annotation} and denote it by $\bm{0}$.
\end{notation}

We are now in a position to define an action of $\SLn{n}$ on $X_n^{\mathbb{Z}}$.

	Let $L \in \SLn{n}$, let $\alpha$ be an annotation of $L$ and let $k \in \mathbb{N}$ be the synchronizing level of $L$. Let $(L, \alpha) : X_n^{\Z} \to X_n^{\Z}$ be defined as follows: for $x \in X_n^{\Z}$ and $i \in \mathbb{Z}$ let $q_j$, $1\le j \le |L|$, be the state of $L$ forced by ${x_{i-k}\ldots x_{i-1}}$. Set $r: = (q_j)\alpha$ and $w: = \lambda_{L}(x_i,q_j)$. Let $y \in X_n^{\Z}$ be defined by ${y_{i+r}y_{i+r+1}\ldots y_{i+r+|w|}} := w$. Set $(x)(L, \alpha) := y$.

The following results are from \cite{BelkBleakCameronOlukoya}.

\begin{proposition} \label{prop:annotationsofLareinEnd(Xn,sigma)}
	Let $L \in \SLn{n}$, and let $\alpha$ be an annotation of $L$. Then $(L,\alpha)$ is an element of $\End(X_n^{\Z}, \shift{n})$.
\end{proposition}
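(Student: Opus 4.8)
The plan is to show that $(L,\alpha)$ is continuous and commutes with the shift, since by definition an element of $\End(X_n^{\Z},\shift{n})$ is precisely a continuous shift-commuting self-map of $X_n^{\Z}$.

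First I would pin down a concrete symbol-by-symbol description of the map. Since $L$ is synchronizing at level $k$, for any $x \in X_n^{\Z}$ and any $i \in \Z$ the state $q$ forced by $x_{i-k}\ldots x_{i-1}$ is determined by the finite window $x_{[i-k,i-1]}$ alone. The annotation rule \eqref{eqn:annotationrule} together with constraint \ref{Lipshitzconstraint} controls how outputs accumulate: reading $x_{[i-k,j]}$ from the state forced at position $i-k$, the total output length minus the input length shifts by exactly $(q')\alpha - (q)\alpha$ where $q,q'$ are the states at the two ends, and this is bounded (it ranges over the finite set of annotation differences). Concretely, I would argue that for a fixed $x$, the map $(L,\alpha)$ produces $y$ where the block of $y$ ``generated'' by reading $x_i$ from the forced state $q_j$ is placed starting at coordinate $i + (q_j)\alpha$; one must check these blocks tile $\Z$ consistently (no gaps, no overlaps), which follows from \eqref{eqn:annotationrule}: the next block, from $x_{i+1}$ at state $\pi_L(x_i,q_j)$, starts at $(i+1) + (q_j)\alpha + |\lambda_L(x_i,q_j)| - 1 + \ldots$ — unwinding \eqref{eqn:annotationrule} shows the block starting at position $i + (q_j)\alpha$ has length exactly matching the gap to the next. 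This well-definedness is the first genuine thing to verify.

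Next, continuity. Here the key is that the value $y_m$ for a fixed coordinate $m$ depends only on finitely many coordinates of $x$: the forced state at any position is determined by a window of length $k$ to its left, and by the bounded-annotation remark there is a uniform bound $N$ (depending on $k$, $|Q_L|$, and $\max_q|(q)\alpha|$, and the maximal output length on a single letter) such that $y_m$ is determined by $x_{[m-N,\,m+N]}$. I would make this bound explicit: since annotation values lie in a finite set, there is $C$ with $|(q)\alpha| \le C$ for all $q$, and reading any single letter outputs a string of length at most some $M$; then the block covering coordinate $m$ is generated by reading $x_i$ for some $i$ with $|i - m| \le C + M$, and that reading in turn depends on $x_{[i-k,i]}$. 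So $y_m$ is a function of a fixed finite window of $x$, giving continuity in the product topology.

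Finally, commuting with $\shift{n}$: this is essentially bookkeeping. Applying $\shift{n}$ to $x$ relabels coordinates by $x'_i = x_{i-1}$, so the forced states and outputs all shift index by $1$, and by the definition of $(L,\alpha)$ the output $y$ also shifts index by $1$; hence $(\shift{n} x)(L,\alpha) = \shift{n}((x)(L,\alpha))$. I expect the main obstacle to be the well-definedness/tiling step — verifying rigorously that the output blocks indexed via the annotation cover $\Z$ exactly once, with the annotation rule \eqref{eqn:annotationrule} doing the work and constraint \ref{Lipshitzconstraint} (applied to circuits) guaranteeing the construction does not drift off to $\pm\infty$ or stall; everything after that (continuity, shift-commutation) is routine finite-window reasoning. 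It would also be worth remarking that the construction is independent of the synchronizing level $k$ chosen, exactly as for $f_T$ earlier, so that $(L,\alpha)$ is genuinely well-defined as an element of $\End(X_n^{\Z},\shift{n})$.
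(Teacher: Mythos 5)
Your proposal is correct; note that the paper itself states this proposition without proof (it is imported from \cite{BelkBleakCameronOlukoya}), and your argument is the natural verification one would give. You correctly isolate the one non-routine point — that the output blocks tile $\Z$ exactly once, which follows from the annotation rule applied to a single letter, $(\pi_L(x_i,q))\alpha = (q)\alpha + |\lambda_L(x_i,q)| - 1$, so the block produced by $x_{i+1}$ begins exactly one position after the block produced by $x_i$ ends — and the remaining steps (boundedness of $\alpha$ on the finite state set giving finite-window dependence, hence continuity, and shift-equivariance by reindexing) are routine as you say.
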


\begin{corollary}\label{cor:annotationsinthesamecoset}
	Let $L \in \SLn{n}$, $i \in \Z$ and $\alpha$ and $\beta$ be any annotations of $L$, then there is an $i \in \Z$ such that $(L, \alpha) = \shift{n}^{i}(L, \beta)$. In particular $\beta = \alpha + i$.
\end{corollary}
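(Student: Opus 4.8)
The plan is to prove this in two stages: first show that any two annotations of $L$ differ by a fixed integer, and then read off the relation between the corresponding maps on $\xnz$ directly from the definition of $(L,\cdot)$.

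\emph{Stage 1: annotations differ by a constant.} Suppose $\alpha,\beta$ are annotations of $L$ and set $\gamma := \alpha-\beta\colon Q_{L}\to\Z$. Writing down the annotation rule \eqref{eqn:annotationrule} for $\alpha$ and for $\beta$ and subtracting, the terms $|\lambda_{L}(\Gamma,q)|-|\Gamma|$ cancel, leaving $(\pi_{L}(\Gamma,q))\gamma=(q)\gamma$ for every state $q\in Q_{L}$ and every word $\Gamma\in\xns$. Since $L\in\SLn{n}\subseteq\Smn{n}$ is a minimal non-initial transducer, its underlying automaton is strongly connected (the only exception being the single-state transducers $Z_{x}$, for which the claim is vacuous); hence for any two states $q,q'$ there is a word $\Gamma$ with $\pi_{L}(\Gamma,q)=q'$, and so $(q')\gamma=(q)\gamma$. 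Therefore $\gamma$ is constant, say $\gamma\equiv i$ with $i\in\Z$, giving the ``in particular'' clause (with the sign depending on the conventions fixed below).

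\emph{Stage 2: the shift relation.} Fix $x\in\xnz$ and let $k$ be the synchronizing level of $L$. For each index $j\in\Z$, the state $q^{(j)}$ of $L$ forced by $x_{j-k}\ldots x_{j-1}$ and the output block $w^{(j)}:=\lambda_{L}(x_{j},q^{(j)})$ depend only on $L$ and $x$, not on the chosen annotation; the only difference between the constructions of $(x)(L,\alpha)$ and $(x)(L,\beta)$ is that the block $w^{(j)}$ is placed starting at coordinate $j+(q^{(j)})\alpha$ in the first case and at coordinate $j+(q^{(j)})\beta$ in the second, and these two positions differ by the constant $i$ for all $j$. By Proposition~\ref{prop:annotationsofLareinEnd(Xn,sigma)} both $(L,\alpha)$ and $(L,\beta)$ are well-defined elements of $\End(\xnz,\shift{n})$, so in each case the blocks do assemble consistently into a point of $\xnz$; comparing coordinates, $(x)(L,\alpha)$ is exactly the translate of $(x)(L,\beta)$ by $i$, i.e. $(x)(L,\alpha)=((x)(L,\beta))\shift{n}^{\,i}$. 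As endomorphisms commute with $\shift{n}$, this rewrites as $(L,\alpha)=\shift{n}^{\,i}(L,\beta)$, completing the proof.

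\emph{Main obstacle.} There is no real obstacle here: the statement is a short bookkeeping corollary of the annotation formalism. The two points requiring mild care are that the constancy argument in Stage~1 uses strong connectivity of $L$, which makes one isolate the degenerate transducers $Z_{x}$ (trivially), and that equating ``translate every output block by $i$'' with ``post-compose with $\shift{n}^{\,i}$'' in Stage~2 relies on the already-established fact (Proposition~\ref{prop:annotationsofLareinEnd(Xn,sigma)}) that the blocks coming from different input coordinates overlap consistently and cover all of $\Z$. One should also reconcile the right-action composition convention with the direction of $\shift{n}$ to pin down the sign relating $i$ to $\alpha-\beta$ in the final clause.
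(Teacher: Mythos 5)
Your proof is correct. The paper itself quotes this corollary from \cite{BelkBleakCameronOlukoya} without supplying a proof, and your argument is the natural one: the annotation rule \eqref{eqn:annotationrule} forces the difference $\alpha-\beta$ to be transition-invariant, hence constant by the core (strongly connected) hypothesis on elements of $\SLn{n}$, and the defining formula for $(L,\cdot)$ then shows the two maps place identical output blocks at positions offset by that constant, which is exactly post-composition with a power of $\shift{n}$. Your closing caveat about the sign is apt, since the corollary as stated in the paper quantifies $i$ ambiguously and the precise sign in $\beta=\alpha+i$ versus $\beta=\alpha-i$ depends on the left/right composition convention; this is a defect of the statement, not of your argument.
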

 
 Let $L, M \in \SLn{n}$, $\alpha$ an annotation of $L$ and $\beta$ an annotation of $M$. Let  $(LM)$ be the core of the transducer product of $L$ and $M$ with incomplete response removed, and let $LM$ be the product in $\SLn{n}$ of $L$ and $M$. We note that the states of $(LM)$ are in bijective correspondence with the states of $L \ast M$ and the states of $LM$ are $\omega$-equivalence classes of states of $(LM)$. Define a map from $\overline{\alpha+ \beta}:Q_{LM} \to \Z$ as follows. Let $q \in Q_{LM}$ and let $(s,t) \in Q_{L} \times Q_{M}$ be the state of $(LM)$ such that  $(LM)_{(s,t)}$ is $\omega$-equivalent to $LM_{q}$. Then we set $(q)\overline{\alpha + \beta} = (s)\alpha + (t)\beta + |\Lambda(\epsilon,(s,t))|$ where $|\Lambda(\epsilon,(s,t))|$ is the extent of incomplete response of $(s,t)$ as a state of $L \ast M$. It is shown in \cite{BelkBleakCameronOlukoya} that $\overline{\alpha + \beta}$ is an annotation of $LM$.  
 
 Set $\ASLn{n}$ to be the set of pairs $(L, \alpha)$ for $L \in \SLn{n}$ and $\alpha$ an annotation of $L$. Let $\ALn{n}$ be the subset of $\ASLn{n}$ consisting of elements $(L, \alpha)$ such that $L \in \Ln{n}$.The following result is from \cite{BelkBleakCameronOlukoya}
 
 \begin{Theorem}\label{theorem:repsbyLnandannotation}
 	The set $\ASLn{n}$ with the binary operation mapping $(L, \alpha)(M, \beta) \to (LM, \overline{\alpha + \beta})$ is a monoid isomorphic to $\End(\xnz, \shift{n})$. Moreover, the subset $\ALn{n}$ of $\ASLn{n}$ is a subgroup isomorphic to $\aut{\xnz, \shift{n}}$.
 \end{Theorem}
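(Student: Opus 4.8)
The plan is to analyse the map $\Phi\colon\ASLn{n}\to\End(\xnz,\shift{n})$ that sends a pair $(L,\alpha)$ to the continuous self-map of $\xnz$ it induces, and to show that $\Phi$ is a bijection carrying the operation $(L,\alpha)(M,\beta)\mapsto(LM,\overline{\alpha+\beta})$ to composition of maps. That $\Phi$ lands in $\End(\xnz,\shift{n})$ is Proposition~\ref{prop:annotationsofLareinEnd(Xn,sigma)}, and that $(LM,\overline{\alpha+\beta})$ is again an element of $\ASLn{n}$ has already been recorded above: $LM\in\SLn{n}$ because $\SLn{n}$ is a monoid, and $\overline{\alpha+\beta}$ is an annotation of $LM$. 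Once $\Phi$ is known to be a bijection intertwining the two operations, associativity of composition and the relation $\Phi((\tid,\bm{0}))=\id_{\xnz}$ transport back to endow $\ASLn{n}$ with a monoid structure making $\Phi$ a monoid isomorphism, after which the ``moreover'' clause drops out by comparing groups of units. So the argument breaks into: (i) multiplicativity of $\Phi$; (ii) surjectivity; (iii) injectivity; (iv) identification of the units.

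For (i) I would fix $x\in\xnz$ and follow the block of output produced at a single coordinate $i$. Reading a symbol of $x$ through $L$ and then through $M$ is governed by the product transducer, since $\lambda_{L\ast M}(x,(p,q))=\lambda_M(\lambda_L(x,p),q)$; restricting to $\core(L\ast M)$ changes no induced map because a bi-infinite input drives the automaton only through core states, and identifying $\omega$-equivalent states changes no induced map by definition. The one step in forming $LM$ from $\core(L\ast M)$ that moves output symbols is the removal of incomplete response, which by Proposition~\ref{prop:algorithmforremovingincompleteresponse} (applied statewise) advances the block written at coordinate $i$ by exactly $|\Lambda(\varepsilon,(s,t))|$, where $(s,t)$ is the state of $L\ast M$ reading coordinate $i$ and $q$ is the state of $LM$ corresponding to $(s,t)$. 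Assembling these facts with the offsets $(s)\alpha$ and $(t)\beta$ imposed by the two annotations, the block emitted by $(L,\alpha)(M,\beta)$ at coordinate $i$ begins at $i+(s)\alpha+(t)\beta+|\Lambda(\varepsilon,(s,t))|=i+(q)\overline{\alpha+\beta}$, which is precisely the defining rule for the action of $(LM,\overline{\alpha+\beta})$. Making this alignment precise uniformly in $i\in\Z$ — verifying that the blocks tile $\Z$ and that the incomplete-response correction and the two annotation shifts combine as stated — is the main obstacle; the remaining points are essentially formal.

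For (ii), given $g\in\End(\xnz,\shift{n})$, Theorem~\ref{t:hed2} writes $g=\shift{n}^{\,i}\phi$ with $\phi\in F_{\infty}(X_n)$, and Corollary~\ref{cor:FinftyisoPn} produces $P\in\spn{n}$ with $f_P=\phi$. Minimising $P$ — removing incomplete response where it is finite and identifying $\omega$-equivalent states, or passing to the appropriate single-state transducer $Z_x$ in the degenerate constant-map case — yields $L\in\SLn{n}$ together with an annotation $\alpha$ recording the net delays introduced, and $(L,\alpha)$ induces $\phi$; since $\shift{n}=\Phi((\tid,\bm{1}))$ and $\Phi$ is multiplicative, $\Phi((L,\alpha+i))=\shift{n}^{\,i}\phi=g$. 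For (iii), the key point is that the underlying transducer of a map $g$ can be recovered from $g$ alone: choose $N$ large enough that $\shift{n}^{\,N}g\in F_{\infty}(X_n)$ (possible since $\shift{n}\in F_{\infty}(X_n)$ and $F_{\infty}(X_n)$ is a monoid), take the unique $P\in\spn{n}$ with $f_P=\shift{n}^{\,N}g$ (unique because $\spn{n}\cong F_{\infty}(X_n)$ and synchronous $\omega$-equivalence classes have unique weakly minimal representatives), and minimise $P$; multiplying $g$ by $\shift{n}^{\,N}=\Phi((\tid,\bm{N}))$ does not alter the underlying transducer, so the result depends on $g$ alone. Hence $(L,\alpha)$ and $(M,\beta)$ inducing the same $g$ forces $L=M$ in $\SLn{n}$ (the degenerate single-state cases being settled once one fixes the convention that such transducers carry the zero annotation), and then Corollary~\ref{cor:annotationsinthesamecoset} forces $\alpha=\beta$, since two annotations of one $L$ differ by an integer $c$ and give maps differing by $\shift{n}^{\,c}$.

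Finally, for (iv): the projection $(L,\alpha)\mapsto L$ is a monoid homomorphism $\ASLn{n}\to\SLn{n}$, so a unit of $\ASLn{n}$ has underlying transducer a unit of $\SLn{n}$, i.e.\ an element of $\Ln{n}$; conversely, if $L\in\Ln{n}$ then choosing an annotation $\beta$ of $L^{-1}$ (which exists, since constraint~\ref{Lipshitzconstraint} is exactly the cocycle condition guaranteeing a potential) gives $(L,\alpha)(L^{-1},\beta)=(\tid,\bm{c})$ for some $c\in\Z$, and $(\tid,\bm{c})$ is a unit with inverse $(\tid,\bm{-c})$; so $(L,\alpha)$ has a right inverse, and symmetrically a left inverse, hence is a unit. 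Thus the group of units of $\ASLn{n}$ is precisely $\ALn{n}$, and under the monoid isomorphism $\Phi$ it corresponds to the group of units of $\End(\xnz,\shift{n})$, which is $\aut{\xnz,\shift{n}}$ because a bijective endomorphism of the compact Hausdorff space $\xnz$ automatically has a continuous inverse. This gives the ``moreover'' clause.
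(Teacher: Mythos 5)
This theorem is one the paper does not prove: it is stated with the attribution ``The following result is from \cite{BelkBleakCameronOlukoya}'', so there is no in-paper argument to compare yours against. Judged on its own terms, your reconstruction is sound and organised the way one would expect: multiplicativity of the evaluation map $\Phi$ via the bookkeeping identity $(q)\overline{\alpha+\beta}=(s)\alpha+(t)\beta+|\Lambda(\varepsilon,(s,t))|$, surjectivity via Theorem~\ref{t:hed2} and Corollary~\ref{cor:FinftyisoPn}, injectivity by recovering the minimal transducer from the map and then invoking Corollary~\ref{cor:annotationsinthesamecoset}, and the ``moreover'' clause by matching groups of units (using compactness of $\xnz$ to see that the units of $\End(\xnz,\shift{n})$ are exactly $\aut{\xnz,\shift{n}}$). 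Two points deserve emphasis. First, step (i) is where essentially all the work lives, and you correctly identify the delicate part: one must check that the blocks written by the composite genuinely tile $\Z$ and that the incomplete-response offset combines additively with the two annotation shifts; your sketch gives the right formula but a full proof would have to carry this out uniformly in $i$, including the case where $\core(L\ast M)$ degenerates to a $Z_x$. Second, your parenthetical convention for the single-state transducers $Z_x$ (with $|x|=1$) is not cosmetic: such a transducer satisfies the annotation rule for \emph{every} constant annotation, and all of these induce the same constant map, so without fixing one annotation per $Z_x$ the map $\Phi$ fails to be injective on $\ASLn{n}$ as literally defined. You flag this, which is the right instinct, but it is a genuine normalisation that the statement silently requires rather than a detail that ``settles itself''. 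With that caveat made explicit, the argument goes through.
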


In this work we alternate between the description of elements of $\End(\xnz, \shift{n})$ given in Section~\ref{Section:wordsandcantor} and the description as pairs of consisting of an element $L$ of $\SLn{n}$ and an annotation $\alpha$ of $L$.

The paper \cite{BelkBleakCameronOlukoya} gives a faithful representation of the monoid $\Smn{n}$ in the full transformation monoid $\tran{\rwnl{\ast}}$.  We define this representation below noting that its restriction to the group $\Ln{n}$ yields the so called \emph{periodic orbit} of \cite{BoyleKrieger}.

Let $\rotclass{\gamma} \in  \rwnl{\ast}$ and $T \in \Smn{n}$. There is a unique state $q$ of $T$ such that $\pi_{T}(\gamma, q) = q$. Let $\nu \in \wnl{\ast}$ be a prime word such that $\lambda_{T}(\gamma, q)$ is a power of $\gamma$. We set $(\rotclass{\gamma})(T)\Pi = \rotclass{\nu}$.  The map $\Pi: \Smn{n} \to \tran{\rwnl{\ast}}$ is an injective homomorphism.
\section{A homomorphism}\label{sec:extmorph}

\subsection{On $\Ln{n}$}
In the paper \cite{OlukoyaAutTnr} a homomorphism is defined from $\On \to \Un{n-1}$ which restricts to a homomorphism from $\Ln{n} \to \Un{,n-1n}$. We make use of an observation in the paper \cite{BelkBleakCameronOlukoya} to extend this homomorphism from $\Ln{n}$ to a quotient of a submonoid  of $\N$ by a certain congruence.
We later show that the kernel of this homomorphism is isomorphic to the \emph{inert subgroup} consisting of elements of $\aut{\xnz, \shift{n}}$ belonging to the kernel of the \emph{dimension representation}.  We define these terms in Section 4 where we make explicit the connection to the dimension group. 

We begin by defining first the map $\rsig: \On \to \Un{n-1}$.

\begin{Definition}\label{Def:rsig}
Let $T \in \On$ and  $q \in Q_{T}$. Let $s \in \N_{1}$ be such that there exists distinct pairwise incomparable elements $\mu_1, \mu_2, \ldots, \mu_s \in \xns$ satisfying $\bigcup_{1 \le i \le s} U^{+}_{\mu_i} = \im{q}$. Set $(T)\rsig \in \Un{n-1}$ to be equal to $s \pmod{n-1}$.
\end{Definition}

The following results are from \cite{OlukoyaAutTnr}:

\begin{Theorem}\label{thm:sigOn}
	Let $T \in \On$ then $(T)\rsig$ depends only on $T$ moreover and induces a epimorphism from $\On$ onto the group $\Un{n-1,n}$.
\end{Theorem}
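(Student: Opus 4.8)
The plan is to prove the three assertions separately: well-definedness of $(T)\rsig$, the homomorphism property, and surjectivity onto $\Un{n-1,n}$. It is convenient to record a piece of bookkeeping first. For a clopen subset $C\subseteq\xno$ let $\mathfrak{m}(C)\in\Z/(n-1)\Z$ denote the class of $s$, where $C=\bigsqcup_{i=1}^{s}U^{+}_{\mu_i}$ for some antichain $\{\mu_1,\dots,\mu_s\}\subseteq\xns$. Any two such antichains for $C$ differ by a finite sequence of \emph{expansions} $\mu\rightsquigarrow\{\mu a:a\in\xn\}$ and their inverses, and an expansion changes the count by $n-1$; hence $\mathfrak{m}(C)$ is well defined, $\mathfrak{m}(\xno)=1$, $\mathfrak{m}$ is additive over disjoint unions of clopen sets, and $\mathfrak{m}(w\cdot C)=\mathfrak{m}(C)$ for every $w\in\xns$. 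In this language $(T)\rsig=\mathfrak{m}(\im{q})$ for the chosen state $q$.

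First I would show $\mathfrak{m}(\im{q})$ does not depend on the state $q$. Let $T$ be synchronizing at level $k$. For any state $q$ and any $w\in\xn^{k}$ the state $q_w:=\pi_T(w,q)$ depends only on $w$; since $T_q$ induces the injective map $h_q$ with image $\im{q}$, one has $(U^{+}_{w})h_q=\lambda_T(w,q)\cdot\im{q_w}$, and as $w$ ranges over $\xn^{k}$ these $n^k$ sets partition $\im{q}$. Additivity and prefix-invariance of $\mathfrak{m}$ then give $\mathfrak{m}(\im{q})=\sum_{w\in\xn^{k}}\mathfrak{m}(\im{q_w})$, which is manifestly independent of $q$. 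So $(T)\rsig$ is a well-defined element of $\Z/(n-1)\Z$ (and, by the same identity applied to any synchronizing transducer, $\mathfrak{m}(\im{p})$ has the same value for every state $p$ of the product appearing below).

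For the homomorphism property the key lemma is: for $T\in\On$, any state $q$, and any clopen $C\subseteq\xno$, $\mathfrak{m}\big((C)h_q\big)=\mathfrak{m}(C)\cdot(T)\rsig$. It suffices to check $C=U^{+}_{w}$ a single cone, for then $(U^{+}_{w})h_q=\lambda_T(w,q)\cdot\im{\pi_T(w,q)}$, so by prefix-invariance and the previous paragraph $\mathfrak{m}((U^{+}_{w})h_q)=\mathfrak{m}(\im{\pi_T(w,q)})=(T)\rsig=\mathfrak{m}(U^{+}_{w})\cdot(T)\rsig$; the general case follows by decomposing $C$ into cones and using injectivity of $h_q$ together with additivity of $\mathfrak{m}$. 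Now, given $T_1,T_2\in\On$, a state of $T_1T_2$ comes from a pair $(p,q)\in Q_{T_1}\times Q_{T_2}$ whose initial transducer in $T_1\ast T_2$ induces $h_p$ followed by $h_q$, hence has image $(\im{p})h_q$; passing to the minimal core representative preserves induced maps, so, using the state-independence above, $(T_1T_2)\rsig=\mathfrak{m}\big((\im{p})h_q\big)=\mathfrak{m}(\im{p})\cdot(T_2)\rsig=(T_1)\rsig\cdot(T_2)\rsig$. Since $(\tid)\rsig=\mathfrak{m}(\xno)=1$ and therefore $(T)\rsig\cdot(T^{-1})\rsig=1$, every value $(T)\rsig$ is a unit, so $\rsig$ is a homomorphism from $\On$ into $(\Z/(n-1)\Z)^{\times}$ whose image is a subgroup.

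It remains to identify this image with $\Un{n-1,n}$, the subgroup of $(\Z/(n-1)\Z)^{\times}$ generated by the divisors of $n$ (each such divisor is a unit mod $n-1$ because $n\equiv 1$). For the inclusion $\Un{n-1,n}\subseteq\operatorname{image}(\rsig)$ I would, for each divisor $d\mid n$, exhibit an explicit $T_d\in\On$ with $(T_d)\rsig=d$: writing $n=de$ and identifying $\xn$ with $\nset{d-1}\times\nset{e-1}$, one builds a bisynchronizing transducer realising an odometer-type homeomorphism $\xno\to U^{+}_{0}\sqcup\cdots\sqcup U^{+}_{d-1}$, whose core state has image a disjoint union of $d$ cones; the $\rsig$-values of the $T_d$ generate $\Un{n-1,n}$. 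The reverse inclusion $\operatorname{image}(\rsig)\subseteq\Un{n-1,n}$ — that no unit outside $\langle\text{divisors of }n\rangle$ arises as $\mathfrak{m}(\im{q})$ for $T\in\On$ — is the delicate point and the main obstacle: it requires analysing which clopen sets can be the image of a state of a synchronizing, bisynchronizing transducer (equivalently, a compatibility with the Higman--Thompson-type invariants underlying $\Ons{r}\cong\out{G_{n,r}}$), and is the part I would either carry out by induction on the synchronizing level together with the local branching of $T$ at each state, or import from \cite{OlukoyaAutTnr}. Combining the two inclusions shows $\rsig$ is an epimorphism onto $\Un{n-1,n}$.
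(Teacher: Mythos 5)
The paper does not prove this theorem: it is stated under the heading ``The following results are from \cite{OlukoyaAutTnr}'', so there is no in-paper argument to compare yours against. Judged on its own terms, the bulk of your proposal is correct and is the natural argument. The invariant $\mathfrak{m}$ is well defined because any two antichain decompositions of a clopen set have a common refinement reached by cone expansions, each of which changes the count by $n-1$; the identity $(U^{+}_{w})h_{q}=\lambda_{T}(w,q)\cdot\im{\pi_{T}(w,q)}$ together with injectivity of $h_q$ gives $\mathfrak{m}(\im{q})=\sum_{w\in\xn^{k}}\mathfrak{m}(\im{q_{w}})$ at a synchronizing level $k$, whence state-independence; the same two facts give the key lemma $\mathfrak{m}((C)h_{q})=\mathfrak{m}(C)\cdot(T)\rsig$ and hence multiplicativity (your parenthetical about minimalisation is slightly imprecise --- removing incomplete response does change the induced maps, by deleting a common output prefix --- but $\mathfrak{m}$ of the image is unchanged by prefix-invariance, which is all you use). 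The containment $\Un{n-1,n}\subseteq\operatorname{im}(\rsig)$ via explicit transducers matches the paper's own witnesses $T(d,e)$ in Example~\ref{Example:imagegenerators}, which have $(T(d,e))\rsig=e$ for each factorisation $n=de$.

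The genuine gap is the one you flag yourself: the containment $\operatorname{im}(\rsig)\subseteq\Un{n-1,n}$. Nothing you have established constrains $(T)\rsig$ beyond its being a unit of $\Z_{n-1}$: the decomposition identity only yields $m\equiv n^{k}m$, which is vacuous mod $n-1$, and $(T)\rsig\cdot(T^{-1})\rsig=1$ only gives invertibility. The statement is far from formal --- for $n$ prime it asserts that \emph{every} $T\in\On$ has $(T)\rsig=1$, i.e.\ every state's image is a union of $\equiv 1\pmod{n-1}$ cones. For $T\in\Ln{n}$ one can close the gap using Proposition~\ref{Prop:lnpn} and Remark~\ref{rem:consequencesofsynchronousrep}: a synchronous representative forces the image of each state to be a union of $s$ cones of a common length with $s$ dividing a power of $n$, so $s\bmod(n-1)$ lies in $\Un{n-1,n}$. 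But a general element of $\On$ admits no synchronous representative, and the corresponding structural fact about images of states of bi-synchronizing transducers is precisely the content imported from \cite{OlukoyaAutTnr}. Citing that reference is consistent with what the present paper does, but as a self-contained proof your argument establishes only that $\rsig$ is a homomorphism onto a subgroup of the units of $\Z_{n-1}$ containing $\Un{n-1,n}$, not that this subgroup equals $\Un{n-1,n}$.
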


\begin{proposition}\label{Proposition:sigdeterminesmembership}
	Let $T \in \On$, and $1 \le r < n$, then $T \in T \in \Onr$ if and only if $r (T)\sig \equiv r \mod{n-1}$.
\end{proposition}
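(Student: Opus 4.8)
The plan is to characterize membership in $\Onr$ via the action of $T$ on the "size" of images of states, then relate that size-count modulo $n-1$ to $\rsig$. Recall that $\Onr \cong \out{G_{n,r}}$ arises from transducers acting compatibly with the $r$-coloured structure: concretely, an element $T \in \On$ lies in $\Onr$ precisely when the partition of $\xno$ into the $r$ basic clopen sets $U^{+}_{0}, U^{+}_{1}, \ldots, U^{+}_{r-1}$ (or rather the corresponding $G_{n,r}$-structure, where $r$ pieces may be taken) is respected in the appropriate sense — equivalently, when there is a state (or an initial configuration) whose image decomposes as a union of basic open sets in a way consistent with the number $r$. The key numerical invariant is: if a clopen set $U$ is a disjoint union of $s$ basic open sets $U^{+}_{\mu_i}$, then writing any one $U^{+}_{\mu_i}$ as a union of $n$ basic open sets of one level deeper changes the count $s \mapsto s + (n-1)$; hence the number of basic open sets in a decomposition of a fixed clopen set is well-defined modulo $n-1$. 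This is exactly the well-definedness already granted by Theorem~\ref{thm:sigOn} for $\rsig$.

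First I would recall the precise description of $\Onr$ as a subgroup of $\On$: an element $T \in \On$ belongs to $\Onr$ if and only if $r$ lies in the "range of reachable sizes" — more carefully, if and only if there is an antichain decomposition of $r$ basic open sets $\{U^{+}_{i} : 0 \le i < r\}$ (thought of as a clopen partition of the relevant Cantor space, or the disjoint union of $r$ copies used to build $G_{n,r}$) that is mapped by $T$ to another clopen set decomposing into exactly $r$ basic open sets, modulo the $(n-1)$-ambiguity. Then, for a state $q \in Q_T$ with $\im{q}$ a union of $s$ basic open sets, the value $(T)\rsig \equiv s \pmod{n-1}$ measures precisely how $T$ changes the count of basic open sets: feeding in a clopen set that is a union of $t$ basic open sets, the image is a union of $t \cdot s \equiv t (T)\rsig \pmod{n-1}$ basic open sets (this multiplicativity is what makes $\rsig$ a homomorphism in Theorem~\ref{thm:sigOn}). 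Applying this with $t = r$, the image of the "$r$-piece" object is a union of $\equiv r(T)\sig \pmod{n-1}$ basic open sets, and $T$ preserves the $G_{n,r}$-structure exactly when this equals $r \pmod{n-1}$, i.e. $r(T)\sig \equiv r \pmod{n-1}$.

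The main steps in order: (1) state and use the transducer-theoretic criterion from \cite{AutGnr} that $T \in \Onr$ iff some appropriate clopen set built from $r$ basic pieces maps under $T$ to a clopen set again expressible with a count $\equiv r \pmod{n-1}$; (2) establish the multiplicative relation $(\text{count of image of a }t\text{-piece clopen set}) \equiv t \cdot (T)\rsig \pmod{n-1}$, which follows by decomposing the clopen set and summing, using the definition of $\rsig$ and the fact that $T$ acts as a local homeomorphism determined by its states; (3) combine (1) and (2) with $t = r$ to conclude. The converse direction — that $r(T)\sig \equiv r \pmod{n-1}$ forces $T \in \Onr$ — requires exhibiting, from the numerical congruence, an actual clopen set of the right form realized inside the transducer; this is where I expect the real work to be, and I would handle it by an explicit construction: starting from $\im{q}$ for a suitable state $q$, iteratively refine or coarsen the basic-open decomposition (each refinement step adjusting the count by $n-1$) until one obtains a decomposition into exactly $r$ pieces, then check this yields the required $G_{n,r}$-compatibility. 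The synchronizing and core properties of elements of $\On$, together with Construction~\ref{construction:inverse} for inverses, ensure these manipulations stay within finite transducers, so the construction terminates.
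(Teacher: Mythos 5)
First, a contextual note: the paper gives no proof of this proposition --- it is imported verbatim from \cite{OlukoyaAutTnr} --- so there is no internal argument to compare yours against; I am judging the proposal on its own terms.

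Your forward direction is essentially right and is the standard counting argument: every complete antichain of $\CCnr$ has cardinality $\equiv r \pmod{n-1}$; if $T$ is the core of a bi-synchronizing transducer for $\CCnr$ inducing a homeomorphism, then cutting the domain along a synchronizing complete antichain of $N \equiv r \pmod{n-1}$ cones, each of which maps onto a translate of some $\im{q}$ and hence onto $\equiv (T)\rsig$ cones, produces a complete antichain of the codomain of size $\equiv N\cdot(T)\rsig \equiv r\,(T)\rsig$, which must also be $\equiv r$. Do pin your step (1) to the actual definition of $\Onr$ via transducers for $\CCnr$: ``respecting the partition of $\xno$ into $U^{+}_{0},\dots,U^{+}_{r-1}$'' is not literally what membership means, though your parenthetical suggests you know this.

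The genuine gap is in the converse. You propose to ``refine or coarsen the basic-open decomposition [of $\im{q}$] until one obtains a decomposition into exactly $r$ pieces.'' That is the wrong target and is in general impossible: refinement and coarsening change the count only by multiples of $n-1$, so $\im{q}$ can be rewritten as a union of exactly $r$ cones only when $(T)\rsig \equiv r \pmod{n-1}$, whereas the hypothesis is merely $r\,(T)\rsig \equiv r \pmod{n-1}$, which is strictly weaker whenever $\gcd(r,n-1)>1$. Concretely, take $n=5$, $r=2$ and $T$ with $(T)\rsig = 3$: then $2\cdot 3 \equiv 2 \pmod{4}$, so the proposition asserts $T \in \Ons{2}$, yet $3 \not\equiv 2 \pmod{4}$, so no state image is a union of $2$ cones. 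The correct construction is global rather than local: fix a synchronizing level $k$, take the complete antichain of $\CCnr$ of size $rn^{k} \equiv r \pmod{n-1}$ whose cones each force a state of $T$, observe that the corresponding state images contribute $\equiv rn^{k}(T)\rsig \equiv r \pmod{n-1}$ cones in total, and then use the fact that complete antichains of $\CCnr$ exist of every cardinality $\equiv r \pmod{n-1}$ that is at least $r$ to choose output prefixes assembling all of these images simultaneously into a complete antichain of the codomain; bi-synchronization of the resulting initial transducer then follows from $T \in \On$. Your counting framework is the right one, but the assembly must be carried out across all states at once, not state by state.
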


We compute the images under $\rsig$ of some elements of $\Ln{n}$. The elements in this example will be important later on.

\begin{example} \label{Example:imagegenerators}
	Let $n \in \N_{2}$, $p_1, p_2, \ldots, p_{l}, r_1, r_2, \ldots, r_l \in \N_{1}$ be such that  $p_1^{r_1}p_2^{r_2}\ldots p_l^{r_{l}}$ is the prime decomposition of $n$. For $1 \le i \le l$ fix an alphabet $X_{p_i}$, $1 \le i \le l$, of size $p_i$, we also assume that for $i \ne j$, $X_{p_i} \cap X_{p_j} = \emptyset$ . Set $X_{n}:= X_{p_1}^{r_1} \times X_{p_{2}}^{r_{2}} \ldots \times X_{p_l}^{r_l}$. Fix $1 \le i \le l$ and let $N_{i} = p_1^{r_1} \cdot p_2^{r_2} \cdot \ldots \cdot p_{i}^{r_{i}-1} \cdot \ldots \cdot p_{l}^{r_l}$. 
	
	Define a transducer $T(p_i, N_i)$ as follows. The state set of $T(p_i, N_i)$ is precisely the set $\{  q_x \mid x \in X_{p_i}\}$. The transition and output function are given as follows: for $ x \in X_{p_i}$ and $\gamma:= \gamma_1 \gamma_{2} \ldots \gamma_{l} \in X_{n}$,  $\gamma_j \in X_{p_j}^{r_j}$ for all $1 \le j \le l$, set $\pi_{T(p_i,N_i)}(\gamma, q_x) = q_{y}$ if and only if $y$ is the last  letter of $\gamma_{i}$; set $ \lambda_{T(p_i,N_i)}(\gamma, q_x) := \gamma_{1} \gamma_{2} \ldots \gamma_{i}' \gamma_{i+1} \ldots \gamma_{l}$ where $\gamma'_{i}$ is obtained from $\gamma_{i}$ by deleting its  letter and appending  letter $x$ to the resulting word, i.e $\gamma'_i = x\nu$ where $\nu$ is the length $r_i-1$ prefix of $\gamma_{i}$. 
	
	It is clear that $T(p_i, N_i)$ is an element of  $\SLn{n}$, we show that is in facts an element of $\Ln{n}$ by examining its action on $\xnz$.
	
	The action of  $T(p_i,N_i)$, with the trivial annotation  on $X_{n}^{\Z}$ is best seen  as follows. Let $(\gamma_j)_{j \in \Z } \in \xnz$, and for $j \in \Z$, write $\gamma_j = {\gamma_j^{1}\gamma_{j}^{2}\ldots \gamma_{j}^{l}}$, $\gamma_j^{k} \in X_{p_k}^{r_k}$, $1 \le k \le l$. Then $(\gamma_{j})_{j \in \Z})(T(p_i, N_i), 0)$ is the sequence $(\delta_j)_{j \in \Z}$ defined as follows. For $j \in \Z$, $\delta_j = {\gamma_{j}^{1} \gamma_{j}^{2} \ldots \gamma_{j}^{i-1} \delta_{j}^{i} \gamma_{j}^{i+1}\ldots \gamma_{j}^{l}}$ where $\delta _{j}^{i}$ is obtained from $\gamma_{j}^{i}$ by deleting its  last letter and appending  the last letter of $\gamma_{j-1}^{i}$ to the resulting word. From this, one sees that $(T(p_i, N_i), 0)$ is a shift-commuting homeomorphism of $\xnz$. Moreover, note that  $(\gamma_{j})_{j \in \Z})(T(p_i, N_i), 0)^{r_i}$ is the sequence $(\nu_{j})_{j \in \Z}$ such that $\nu_j  = { \gamma_{j}^{1}\gamma_{j}^{2} \ldots \gamma_{j}^{i-1}\gamma_{j-1}^{i}\gamma_{j}^{i+1}\ldots \gamma_{j}^{l}}$.  
	
	Observe that for $1 \le i_1, i_2 \le l$ such that $i_1 \ne i_2$, $(T(p_{i_1}, N_{i_1}),0)$ and $(T(p_{i_2}, N_{i_2}),0 )$ commute.
	
	Fix a subset  $i_1, i_2, \ldots, i_{t}$ of $\{1,2,\ldots, l \}$ such that $i_{j}\le i_{j'}$ whenever $j \le j'$, and fix  $s_{i_j}$ between $1$ and $r_{i_{j}}$.  Set $d := p_{i_1}^{s_{i_1}}p_{i_2}^{s_{i_2}}\ldots p_{i_t}^{s_{i_t}}$,  and  let $e \in \N $ be such that $de = n$. Generalising the construction above, define  a transducer $T(d,e) \in \Ln{n}$ as follows. 
	
	The set of states of $T(d,e)$ is precisely the set $\{q_{x} \mid x \in X_{p_{i_1}}^{s_{i_1}} \times X_{p_{i_2}}^{s_{i_2}} \times \ldots \times X_{p_{i_t}}^{s_{i_t}}\}$. The transition and output are as follows. Given $\gamma = \gamma_1\gamma_{2} \ldots \gamma_{l} \in X_{n}$, $\gamma_{j} \in X_{p_j}^{r_j}$ for all $1 \le j \le l$, and $x = x_{i_1} \ldots x_{i_t} \in X_{p_{i_1}}^{s_{i_1}} \times \ldots \times X_{p_{i_t}}^{s_{i_t}}$, set $\pi_{T(d,e)}(\gamma, q_x) = q_y$ if and only if $y$ is equal to the word  $\overline{\gamma}_{i_{1}}\overline{\gamma}_{i_{2}}\ldots \overline{\gamma}_{i_{t}}$, for $\overline{\gamma}_{i_j}$, $1 \le j \le t$, the length $s_{i_j}$ suffix of $\gamma_{i_j}$. Set $\lambda_{T(d,e)}(\gamma, q_x) = \delta_2 \delta_{2} \ldots \delta_{l}$, where for $j \in \{ i_1, i_2, \ldots, i_{t}\}$, $\delta_{j}$ is obtained from $\gamma_{j}$ by deleting the last $s_{j}$ letters of $\gamma_{j}$ and appending  $x_{j}$ to the resulting word. 
	
	The reader may verify that $T(d,e)$ is precisely  the product $(T(p_{i_1}, N_{i_1}),0)^{s_{i_1}}(T(p_{i_2}, N_{i_2}),0)^{s_{i_2}}\ldots (T(p_{i_t}, N_{i_t}),0)^{s_{i_t}}$.
	
	Observe that $(T(n, 1), 0)$ is the shift-map $\shift{n}$, one may also think  of $(T(1,n),0)$ (although not defined) as representing the identity map.
	
	We now compute the value of $(T(d,e),0)\rsig$ for $d,e \in \N$ such that $de = n$. 
	
	Let $d = p_{i_1}^{s_{i_1}}p_{i_2}^{s_{i_2}}\ldots p_{i_t}^{s_{i_t}}$, where  $i_1 < i_2 < \ldots < i_{t}$ are elements of $\{1,2,\ldots, l\}$. Fix  $x:= x_{i_1} \ldots x_{i_t} \in X_{p_{i_1}}^{s_{i_1}} \times \ldots \times X_{p_{i_t}}^{s_{i_t}}$  a state of $T(d,e)$. Let $V \subseteq \xn$ be the set of those elements $\gamma_{1} \gamma_{2} \ldots \gamma_{l}$ such that  for $1 \le j \le t$, $x_{i_j}$ is a prefix of $\gamma_{i_j}$. Notice that $|V| = e$ and the set ${\{ U^{+}_{\nu} \mid \nu \in V \}}$ is precisely the image of  the state $q_{x}$. Therefore, $(T(d,e),0)\rsig = e \pmod{n-1}$.

\end{example}

We define the submonoid of  $\N$ and the congruence we will be quotienting by.

Fix $n \in \N_{2}$. Let $p_1, p_2, \ldots, p_{r}$ be distinct prime number  and $l_1, l_2, \ldots, l_r \in \N_{1}$ such that $n = p_1^{l_1}p_2^{l_2}\ldots p_{r}^{l_r}$. Let $\Mn{n} := \gen{p_1, p_2,\ldots, p_r}$ be the submonoid of $\N$ generated by the distinct primes in the prime decomposition of $n$. Let $\sim$ be the congruence on $\Mn{n}$ defined by $a \sim b$ if and only if there is some $l \in \Z$ such that $an^{l} = b$. 

The group $\mn{n}:= \Mn{n}/\sim$ is  a finitely generated abelian group and is in fact isomorphic, in a natural way to the quotient of the subgroup of $\Q\backslash \{0\}$ generated by the prime divisors of $n$ by the group generated by $n$. Below we show that $\mn{n}$ is isomorphic to $\Z^{r-1} \times  \Z/l\Z$  where $n = p_1^{l_1}p_2^{l_2}\ldots p_{r}^{l_r}$ and   $l = \gcd(l_1, l_2, \ldots, l_r)$.

\begin{lemma}\label{lemma:characterisationofmn}
	Let $n \in \N_{2}$, $p_1, p_2, \ldots, p_{r}$ be distinct prime numbers and $L_1, L_2, \ldots, L_r \in \N_{1}$ such that $n = p_1^{L_1}p_2^{L_2}\ldots p_{r}^{L_r}$. Let $l= \gcd(L_1, L_2, \ldots, L_r)$ and set $l_i = L_i/l$ for $1 \le i \le r$. Then group $\mn{n}$ is isomorphic to $\Z^{r-1} \times \Z/l\Z$ where $\Z^{r-1} \cong \gen{p_1, \ldots, p_{r-1}}$ and $\Z/l\Z \cong \gen{ p_{1}^{l_1} \ldots p_{r}^{l_r}}$. 
\end{lemma}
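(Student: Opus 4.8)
The plan is to exhibit $\mn{n}$ explicitly as an abelian group by generators and relations and then identify the resulting presentation with $\Z^{r-1}\times\Z/l\Z$. First I would pass from the monoid picture to the group picture: since every element of $\Mn{n}=\gen{p_1,\dots,p_r}$ becomes invertible modulo $\sim$ (because $n^{\pm1}$ are identified with $1$), the quotient $\mn{n}=\Mn{n}/\sim$ is the abelian group with presentation $\gen{e_1,\dots,e_r \mid L_1e_1+\dots+L_re_r=0}$, where $e_i$ is the class of $p_i$; here I use that $\Mn{n}$ is free abelian on $p_1,\dots,p_r$ (unique factorisation in $\N$) and that the congruence $\sim$ is generated precisely by the single relation $n\sim 1$, i.e.\ $\sum L_ie_i = 0$. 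So the task reduces to a purely arithmetic computation: identify the cokernel of the $1\times r$ integer matrix $(L_1,\dots,L_r)$ acting on $\Z^r$.

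Next I would compute this cokernel. Write $l=\gcd(L_1,\dots,L_r)$ and $l_i=L_i/l$, so $\gcd(l_1,\dots,l_r)=1$. By elementary Smith normal form, the row $(L_1,\dots,L_r)=l\cdot(l_1,\dots,l_r)$ has Smith normal form $(l,0,\dots,0)$ after a change of basis of $\Z^r$ by a matrix $U\in\mathrm{GL}_r(\Z)$ with first column the primitive vector $(l_1,\dots,l_r)^{\!\top}$ (such a $U$ exists exactly because $\gcd(l_i)=1$). Consequently $\Z^r/\langle(L_1,\dots,L_r)\rangle \cong \Z/l\Z \oplus \Z^{r-1}$, where the $\Z/l\Z$ factor is generated by the image of the column $(l_1,\dots,l_r)^{\!\top}$. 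Translating back through $e_i\mapsto p_i$, the class of $p_1^{l_1}p_2^{l_2}\cdots p_r^{l_r}$ generates a cyclic subgroup of order exactly $l$, and it is a direct summand with complement generated by (the classes of) any $r-1$ of the $p_i$'s; choosing $p_1,\dots,p_{r-1}$ works because, after quotienting by $p_1^{l_1}\cdots p_r^{l_r}$, one can solve for (a multiple of) $e_r$ in terms of $e_1,\dots,e_{r-1}$, which together with the freeness count gives $\gen{p_1,\dots,p_{r-1}}\cong\Z^{r-1}$ inside $\mn{n}$. This yields $\mn{n}\cong\Z^{r-1}\times\Z/l\Z$ with the claimed generators.

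I would also record the alternative, perhaps cleaner, route promised by the text: embed $\Mn{n}$ into the multiplicative group $\langle p_1,\dots,p_r\rangle\le\Q^{\times}_{>0}$, which is free abelian of rank $r$, and observe that $a\sim b$ iff $a/b\in\langle n\rangle$, so $\mn{n}\cong \langle p_1,\dots,p_r\rangle/\langle n\rangle$; this is literally the cokernel computation above and makes the group structure transparent.

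The only mildly delicate point — which I would treat as the main obstacle — is verifying that the congruence $\sim$ is exactly the one generated by $n\sim1$ and introduces no extra identifications: one must check that if $a,b\in\Mn{n}\subseteq\N$ satisfy $an^l=b$ for some $l\in\Z$, then this is already forced by $n\sim1$ in the free abelian monoid/group, i.e.\ that $\sim$ does not accidentally identify more than the $\langle n\rangle$-cosets. This is immediate once one works inside the free abelian \emph{group} $\Z^r$ (or $\Q^\times_{>0}$): there $an^l=b$ with $l$ possibly negative is well-defined and the relation is visibly $\langle n\rangle$-coset equality; the point is just to note that the natural map $\Mn{n}\to\Z^r$ is injective and that $\sim$ is the restriction of coset-equality, so no collapsing is lost or gained. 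Everything else is bookkeeping with Smith normal form.
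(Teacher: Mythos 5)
Your route is genuinely different from the paper's. You present $\mn{n}$ as the cokernel of the row $(L_1,\ldots,L_r)$ acting on $\Z^{r}$ (equivalently, as the multiplicative subgroup of $\Q\setminus\{0\}$ generated by $p_1,\ldots,p_r$ modulo $\gen{n}$) and read off the isomorphism type from Smith normal form; the paper instead argues internally, showing that every torsion element lies in $\gen{p_1^{l_1}\cdots p_r^{l_r}}$ and that $\gen{p_1,\ldots,p_{r-1}}$ is free of rank $r-1$ and meets that cyclic subgroup trivially. Your computation of the abstract isomorphism type $\Z^{r-1}\times\Z/l\Z$, including the fact that the class of $p_1^{l_1}\cdots p_r^{l_r}$ has order exactly $l$, is correct and in some respects more complete than the paper's own argument, which never verifies that its two subgroups generate $\mn{n}$ nor that the cyclic factor has order exactly $l$. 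Your care about why $\sim$ is exactly $\gen{n}$-coset equality is also well placed.

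There is, however, one genuinely false step: the claim that the torsion part ``is a direct summand with complement generated by (the classes of) any $r-1$ of the $p_i$'s.'' Smith normal form hands you a complement generated by the images of the remaining vectors of \emph{some} basis extending the primitive vector $(l_1,\ldots,l_r)$, and these need not be $e_1,\ldots,e_{r-1}$. Your own parenthetical gives the game away: from $\sum_i l_ie_i$ together with $e_1,\ldots,e_{r-1}$ you recover only $l_re_r$, not $e_r$, so $\gen{p_1,\ldots,p_{r-1}}\cdot\gen{p_1^{l_1}\cdots p_r^{l_r}}$ may be a proper finite-index subgroup of $\mn{n}$. Concretely, for $n=72=2^{3}\cdot 3^{2}$ one has $l=1$, the cyclic factor is trivial, and $\mn{72}\cong\Z^{2}/\gen{(3,2)}\cong\Z$ is generated by the class of $6$, while $\gen{2}$ sits inside it with index $2$; so $\gen{p_1}$ is not a complement, although it is still infinite cyclic. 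What does hold (and is all the paper's own proof establishes) is the pair of abstract statements $\gen{p_1,\ldots,p_{r-1}}\cong\Z^{r-1}$ and $\gen{p_1^{l_1}\cdots p_r^{l_r}}\cong\Z/l\Z$: the first because a relation $\sum_{i<r}a_ie_i=0$ in $\mn{n}$ forces $(a_1,\ldots,a_{r-1},0)=m(L_1,\ldots,L_r)$, whence $mL_r=0$ and $m=0$. You should prove exactly these two facts alongside your (correct) cokernel computation and drop the internal direct-product claim, which is not available with these particular generators.
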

\begin{proof}
	Let $t \in \mn{n}$ be an non-trivial element of finite order. Write $t = p_1^{s_1}p_2^{s_2} \ldots p_r^{s_r}$. We may assume that $\gcd(s_1, s_2, \ldots, s_r) = 1$ since otherwise $t$ is a power of an element $t'$ of $\mn{n}$ and we may replace $t$ with $t'$. Let $k \in \N$  be such that $t^{k} =1 \in \mn{n}$. This means that $t^{k} = p_1^{ks_1}p_2^{ks_2} \ldots p_r^{ks_r} \in \gen{n}$. This is true if and only if there is an $m \in \Z$ such that $k s_i = m L_i$ for some $1 \le i \le r$. Let $d = \gcd(k,m)$ and set $k' = k/d$ and $m' = m/d$. It follows that $k'| L_i$ for all $i$ and so $k' |l$. Write $l = s k'$, then and $L_i/k' = sl_i$. Therefore $t =  (p_1^{l_1}p_2^{l_2} \ldots p_r^{l_r})^{sm'} \in \gen{p_1^{l_1}p_2^{l_2} \ldots p_r^{l_r}}$.
	
	Next observe that $\gen{p_1, \ldots, p_{r-1}} \cong \Z^{r-1}$ and $\gen{p_1, \ldots, p_{r-1}} \cap \gen{p_1^{l_1}p_{2}^{l_2} \ldots p_{r}^{l_r}} = 1$. This is a straight-forward noting that for $t \in \gen{p_1^{l_1}p_{2}^{l_2} \ldots p_{r}^{l_r}}$ and   $u \in \gen{p_1, \ldots, p_{r-1}}$, $tu^{-1} \notin \gen{n}$ unless $t=u = 1$.
	
\end{proof}
\begin{Remark}
	We note that $\mn{n}$ is a torsion group if and only if $n$ is a power of strictly smaller integer. Moreover, $\mn{n}$ is trivial  if and only if $n$ is prime.
\end{Remark}
 We now define an epimorphism from $\Ln{n}$ to the group $\mn{n}$. We require the following result is from \cite{BelkBleakCameronOlukoya}.

\begin{proposition}\label{Prop:lnpn}
	Let $T \in \Ln{n}$, then there is an element $P \in \pn{n}$ such that the minimal representative of $P$ is $T$. Moreover, letting $[q]$ denote the state of $T$ corresponding to a state $q \in Q_{P}$, the map from $Q_{T} \to \N$ given by $[q] \mapsto  \Lambda(\emptyword, q)$ is a well-defined annotation of $T$. 
\end{proposition}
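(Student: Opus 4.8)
The plan is to route through the realisation $\aut{\xnz, \shift{n}} \cong \ALn{n}$ of Theorem~\ref{theorem:repsbyLnandannotation} together with the Curtis--Hedlund--Lyndon normal form for $\End(\xnz, \shift{n})$, and then to read off both assertions from the injectivity of the isomorphism $\ASLn{n} \cong \End(\xnz, \shift{n})$.

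First I would check that $T$ carries an annotation: fix a base state $q_0$ of the strongly connected transducer $T$, put $(q_0)\alpha = 0$ and $(q)\alpha := |\lambda_T(\Gamma, q_0)| - |\Gamma|$ for any path $\Gamma$ from $q_0$ to $q$. Constraint~\ref{Lipshitzconstraint}, applied to the circuits obtained by appending a common return path to $q_0$ to two such paths, makes this independent of $\Gamma$, and \eqref{eqn:annotationrule} holds by construction. Since $T \in \On$ we have $T^{-1} \in \Ln{n}$, and the $\ASLn{n}$-product of $(T, \alpha)$ with $(T^{-1}, \alpha')$ (for $\alpha'$ any annotation of $T^{-1}$) is $(\tid, c)$ for some $c \in \Z$; by definition of the $\SLn{n}$-action this pair induces a power of $\shift{n}$, so together with Proposition~\ref{prop:annotationsofLareinEnd(Xn,sigma)} we get $(T, \alpha) \in \aut{\xnz, \shift{n}}$.

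Now apply Theorem~\ref{t:hed2} and Corollary~\ref{cor:FinftyisoPn} to write $(T, \alpha) = \shift{n}^{i} f_P$ with $i \in \Z$ and $P \in \spn{n}$. As $\shift{n}$ and $(T, \alpha)$ are automorphisms, so is $f_P = \shift{n}^{-i}(T, \alpha)$, hence $P \in \pn{n}$; and by Corollary~\ref{cor:annotationsinthesamecoset}, $f_P = (T, \alpha - i)$ as a self-map of $\xnz$. Let $L$ be the minimal representative of $P$: first delete states of incomplete response from $\core(P)$ using Proposition~\ref{prop:algorithmforremovingincompleteresponse}, then identify $\omega$-equivalent states. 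Because $P$ is synchronous, $|\lambda_P(w, q)| = |w|$ for all $w$; combined with the identity $\Lambda(\emptyword, q)\,\lambda_L(\Gamma, q) = \lambda_P(\Gamma, q)\,\Lambda(\emptyword, \pi_P(\Gamma, q))$ which follows from~\eqref{resulthasnoincompleteresponse}, a short length count gives: every circuit of $L$ has output length equal to its input length (so $L$ satisfies~\ref{Lipshitzconstraint}), $L$ remains strongly synchronizing and core (so $L \in \SLn{n}$), and the assignment $\gamma$ sending the state of $L$ obtained from $q \in Q_P$ to $|\Lambda(\emptyword, q)|$ obeys~\eqref{eqn:annotationrule} --- in particular $\gamma$ is well defined on $Q_L$ and $(L, \gamma) \in \ASLn{n}$. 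Reading the same identity off position by position shows that $f_P$ and $(L, \gamma)$ induce the same self-map of $\xnz$: passing from $P$ to $(L, \gamma)$ merely relocates each one-letter output of $P$ by the amount recorded in $\gamma$, and these relocations telescope back to $f_P$.

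Finally, since the isomorphism $\ASLn{n} \cong \End(\xnz, \shift{n})$ of Theorem~\ref{theorem:repsbyLnandannotation} sends a pair to the self-map it induces, it is injective; from $(L, \gamma) = f_P = (T, \alpha - i)$ we conclude $L = T$ and $\gamma = \alpha - i$. Hence the minimal representative of $P$ is $T$, and $[q] \mapsto |\Lambda(\emptyword, q)|$ is an annotation of $T$, as claimed. I expect the decisive step to be the position-by-position identification $f_P = (L, \gamma)$, together with the length bookkeeping for incomplete-response removal that feeds it; everything else is a direct appeal to the results already quoted. A more hands-on alternative --- building $P$ from $T$ and a non-negative annotation via a bounded output buffer --- would replace the first two paragraphs but still hinge on exactly that identification.
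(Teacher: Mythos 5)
Your argument is correct. Note that the paper does not prove this proposition at all — it is imported verbatim from \cite{BelkBleakCameronOlukoya} — so there is no in-paper proof to compare against; what you have done is reassemble a self-contained derivation from exactly the toolkit the paper quotes (Theorem~\ref{t:hed2}, Corollary~\ref{cor:FinftyisoPn}, Proposition~\ref{prop:algorithmforremovingincompleteresponse}, Theorem~\ref{theorem:repsbyLnandannotation}), and that route is the natural one: realise $(T,\alpha)$ as a shift automorphism, peel off a power of $\shift{n}$ to land in $\pn{n}$ via Hedlund, and track the incomplete-response bookkeeping through the identity $\Lambda(\emptyword,q)\,\lambda_L(\Gamma,q)=\lambda_P(\Gamma,q)\,\Lambda(\emptyword,\pi_P(\Gamma,q))$ before invoking injectivity of $\ASLn{n}\cong\End(\xnz,\shift{n})$. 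The telescoping verification that $(L,\gamma)=f_P$ is the genuine content and your length count for it is right. One small point you should state explicitly rather than fold into "in particular $\gamma$ is well defined": if $q_1,q_2\in Q_P$ are $\omega$-equivalent then they induce the same map on $\xno$, hence $\Lambda(\emptyword,q_1)=\Lambda(\emptyword,q_2)$, which is what makes $[q]\mapsto|\Lambda(\emptyword,q)|$ descend to $Q_T$ independently of the annotation identity. With that made explicit the proof is complete.
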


\begin{Remark}\label{rem:consequencesofsynchronousrep}
There are a few consequences of this proposition. Firstly, observe that by the strong synchronizing condition and results in \cite{BelkBleakCameronOlukoya}, for a given element $P \in \pn{n}$, there is a number $l \in \N$, and a divisor $s \in \N$ of some power of $n$, such that for any state $q \in Q_{P}$, there are elements $\mu_1, \mu_2, \ldots, \mu_s \in \xn^{l}$ and $\bigcup_{1 \le i \le s} U^{+}_{\mu_i} = \im{q}$. The second thing to observe, is that if $T$ is the minimal representative of $P$, then for every $q \in Q_{P}$, the image of the  state $[q]$ is the set $\{ x - \Lambda(\ew, q) \mid x \in \im{q} \}$. Let $\alpha: Q_{T}  \to \N$ be the annotation of $T$ such that $[q] \mapsto \Lambda(\emptyword, q)$. Then, we see that for every state $t \in Q_{T}$, there are words $\mu'_1, \mu'_2, \ldots, \mu'_{s} \in \xn^{l- ([q])\alpha}$ such that $\bigcup_{1 \le i \le s} U^{+}_{\mu'_{i}} = \im{[q]}$. Set $s_{T}$ to be the smallest number such that $s$ is equal to $s_T$ times some power  of $n$. Thus $s_T$ and $s$ represent the same element of $\mn{n}$. The last thing we note is the following fact: let $\beta$ be any other annotation of $T$, then for any states $[p], [q] \in Q_{T}$, $([p])\beta + l-([p])\alpha = ([q])\beta + l-([q])\alpha$. This follows from the fact that there is an $m \in \Z$ such that $\beta = \alpha + m$ by definition of annotations. 
\end{Remark}

\begin{Definition}
	Define a map $\rsig_{\omega}: \Ln{n} \to \mn{n}$ by  $T \mapsto s_{T}$.
\end{Definition}

We have the following result:

\begin{proposition}\label{prop:lnsplit}
	The following things hold:
	\begin{enumerate}
		\item  the map $\rsig_{\omega}$ is a surjective homomorphism; 
		\item  $\ker(\rsig_{\omega}) \unlhd \Ln{n,1}$;
		\item setting $\Kn{n}:= \ker(\rsig_{\omega})$, we have $\Ln{n}  \cong \Kn{n} \rtimes \mn{n}$. Thus if $n$ is prime $\Ln{n} = \Kn{n}$.
	\end{enumerate}
	
\end{proposition}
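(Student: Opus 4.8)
The plan is to establish the three assertions in turn; the homomorphism property in (1) is where essentially all the work lies, and I expect it to be the main obstacle.

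\textbf{Part (1).} First I would record a measure-theoretic reformulation. Equip $\xnn$ with the uniform Bernoulli measure $\mu$, so $\mu(U^+_\nu)=n^{-|\nu|}$; under the identification $\mn n\cong \Q^{(n)}_{>0}/\gen n$ (with $\Q^{(n)}_{>0}$ the subgroup of $\Q_{>0}$ generated by the primes dividing $n$), the map $\rsig_\omega$ sends $T\in\Ln n$ to the class of $\mu(\im q)$, for $q$ any state of a synchronous representative $P\in\pn n$ of $T$ as provided by Proposition~\ref{Prop:lnpn}. By Remark~\ref{rem:consequencesofsynchronousrep} one has $\im q=\bigsqcup_{i=1}^{s}U^+_{\mu_i}$ with $\mu_i\in\xn^{l}$ for every state $q$ of $P$, so $\mu(\im q)=s\,n^{-l}$ is independent of $q$ and its class is $\rsig_\omega(T)=[s]$; refining $l$ multiplies $s$ by a power of $n$, and any two synchronous representatives yield state-images differing only by removal of the common prefix dictated by the canonical minimal representative of $T$, so $\rsig_\omega$ is well defined. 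Surjectivity is immediate from Example~\ref{Example:imagegenerators}: every state of $T(p_i,N_i)\in\Ln n$ has image a union of $N_i=n/p_i$ cylinders of length one, whence $\rsig_\omega(T(p_i,N_i))=[n/p_i]=p_i^{-1}$, and the $p_i^{-1}$ generate $\mn n$.

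For the homomorphism property, let $T,U\in\Ln n$ with synchronous representatives $P,R\in\pn n$. The product $P*R$ is synchronous and, by Proposition~\ref{p:synchlengthsadd}, strongly synchronizing, so after removing non-core states and identifying $\omega$-equivalent states it gives a $\pn n$-representative of $TU$; since that reduction preserves induced maps, and all states of any element of $\pn n$ have image of one common measure-class (Remark~\ref{rem:consequencesofsynchronousrep}), $\rsig_\omega(TU)$ equals the class of $\mu(\im{(p,q)})$ for $(p,q)$ a state of $P*R$. The map induced by $(p,q)$ is the composite of those induced by $p\in Q_P$ and $q\in Q_R$, so $\im{(p,q)}=h_q(\im p)$, where $h_q$ is the map induced by $q$. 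The crux is that $h_q$ scales $\mu$ by the constant $m_R:=\mu(\im{q'})$, the common measure of the state-images of $R$: for a cylinder, synchronicity of $R$ gives $h_q(U^+_\nu)=\lambda_R(\nu,q)\cdot\im{\pi_R(\nu,q)}$, of measure $n^{-|\nu|}m_R=\mu(U^+_\nu)m_R$, and the case of a general clopen set follows by (finite) additivity and injectivity of $h_q$. Hence $\mu(\im{(p,q)})=m_R\,\mu(\im p)$, and passing to classes in the abelian group $\mn n$ yields $\rsig_\omega(TU)=\rsig_\omega(T)\rsig_\omega(U)$. I expect this step, and especially the verification that passage to a reduced representative does not disturb the computation, to be the main obstacle.

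\textbf{Part (2).} The subgroup $\Kn n=\ker\rsig_\omega$ is normal in $\Ln n$, and since $\Ln{n,1}=\Ln n\cap\Ons{1}$ is a subgroup of $\Ln n$ it is enough to check $\Kn n\subseteq\Ons{1}$. If $T\in\Kn n$ then $\mu(\im q)\in\gen n\cap(0,1]$, so $\mu(\im q)=n^{-k}$ for some $k\ge0$ and $\im q$ is a single cylinder; thus $(T)\rsig=1$ in $\Un{n-1}$, and Proposition~\ref{Proposition:sigdeterminesmembership} with $r=1$ yields $T\in\Ons{1}$, hence $T\in\Ln{n,1}$.

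\textbf{Part (3).} By (1) we have a short exact sequence $1\to\Kn n\to\Ln n\xrightarrow{\rsig_\omega}\mn n\to1$, so it suffices to construct a complement to $\Kn n$. Using $\mn n\cong\Z^{r-1}\times\Z/l\Z$ (Lemma~\ref{lemma:characterisationofmn}), I would take $\sigma_j:=T(p_j,N_j)\in\Ln n$ for $1\le j\le r-1$ and, when $l>1$, $\tau_0:=T(d_0,e_0)\in\Ln n$ with $e_0:=p_1^{l_1}\cdots p_r^{l_r}$ and $d_0:=n/e_0$; by Example~\ref{Example:imagegenerators} these pairwise commute (each being a product of the commuting transducers $T(p_i,N_i)$), $\rsig_\omega(\sigma_j)=p_j^{-1}$ and $\rsig_\omega(\tau_0)=e_0$, and $\{p_1^{-1},\dots,p_{r-1}^{-1},e_0\}$ generates $\mn n$. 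Writing each in terms of the $T(p_i,N_i)$ and using $\prod_i T(p_i,N_i)^{L_i}=T(n,1)=\shift{n}$ (Example~\ref{Example:imagegenerators}), one finds $\tau_0^{\,l}=\shift{n}^{\,l-1}$, which is trivial in $\Ln n$ under the identification $\Ln n\cong\aut{\xnz}/\gen{\shift{n}}$ coming from Theorem~\ref{theorem:repsbyLnandannotation} and Corollary~\ref{cor:annotationsinthesamecoset}. Thus $C:=\gen{\sigma_1,\dots,\sigma_{r-1},\tau_0}$ is abelian, generated by $r-1$ elements together with one element of order dividing $l$, so it is a quotient of $\mn n$; as $\rsig_\omega$ maps $C$ onto $\mn n$, the composite $\mn n\twoheadrightarrow C\twoheadrightarrow\mn n$ is a surjective endomorphism of a finitely generated abelian group, hence an isomorphism, so $\rsig_\omega|_C\colon C\to\mn n$ is an isomorphism. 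Therefore $C\cap\Kn n=1$ and $\Ln n=\Kn n\,C$, giving $\Ln n=\Kn n\rtimes C\cong\Kn n\rtimes\mn n$; and if $n$ is prime then $r=1$, $l=1$, so $\mn n$ is trivial and $\Ln n=\Kn n$.
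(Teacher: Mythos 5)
Your proof is correct and follows essentially the same route as the paper: part (1) is the paper's cylinder-counting argument on $\core(P\ast R)$ (image of a product state equals the composite $h_q(\im{p})$, so the counts multiply) repackaged in terms of the Bernoulli measure; part (2) is the paper's appeal to Proposition~\ref{Proposition:sigdeterminesmembership}; and part (3) splits the sequence using the transducers of Example~\ref{Example:imagegenerators}, your Hopfian-endomorphism argument being a cleaner justification of the complement than the paper's bare assertion about which words in the $T(p_i,N_i)$ are trivial in $\Ln{n}$. One small slip in part (2): $\mu(\im{q})=n^{-k}$ does not force $\im{q}$ to be a single cylinder (e.g.\ $U^{+}_{00}\cup U^{+}_{11}\subseteq X_2^{\N}$ has measure $1/2$), but the conclusion $(T)\rsig=1$ still follows, since the number $s$ of pairwise incomparable cylinders covering $\im{q}$ is then a power of $n$ and hence congruent to $1$ modulo $n-1$.
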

\begin{proof}
	We first show that $\rsig_{\omega}$ is a homomorphism. 
	
	Let $\overline{U},\overline{T} \in \Ln{n}$ and let  $T, U \in \pn{n}$ be such that $\overline{T}$ and $\overline{U}$ are the minimal representatives of $T$ and $U$ respectively.
	
	Let $s_{T}$ and $s_{U}$ be divisors of some power of $n$ and $l_{T}, l_{U} \in \N$, such that for any pair of states $q \in  Q_{T}$, $p \in   Q_{U}$, there are words $\mu_1, \ldots, \mu_{s_{T}} \in \xn^{l_T}$, $\nu_1 \ldots \nu_{s_{U}} \in \xn^{l_{U}}$, and, $\im{q} =  \bigcup_{1 \le i \le  s_{T}} U^{+}_{\mu_i}$ and   $\im{p} = \bigcup_{1 \le j \le s_{U}} U^{+}_{\nu_j}$.
		
   Let $(q,p)$ be a state of $T \ast U$ such that $(q,p) \in \core(T \ast U)$. Let $\mu_1, \ldots, \mu_{s_{T}} \in \xn^{l_T}$, satisfy $\im{q} =  \bigcup_{1 \le i \le  s_{T}} U^{+}_{\mu_i}$. For each $1 \le i \le s_{T}$, let $p_i = \pi_{T}(\mu_i, p)$, and let $\nu_{1,i}, \nu_{2,i}, \ldots, \nu_{s_{U},i} \in \xn^{l_{u}}$ satisfy, $\im{p_i} = \bigcup_{1 \le j \le s_{U}} U^{+}_{\nu_{j,i}}$. Then it follows that $\im{(q,p)} = \bigcup_{1 \le i \le s_{T}, 1 \le j \le S_{U}}U^{+}_{\lambda_{U}(\mu_i,p) \nu_{j,i}}$. Thus  we see that for any state $(q',p')$  of $\core(T\ \ast U)$, there are $\tau_1, \tau_2, \ldots, \tau_{s_{T}s_{U}} \in \xn^{l_{T}+l_{U}}$ such that $\im{(q', p')} = \bigcup_{1 \le i \le s_{T}s_{U}}U^{+}_{\tau_i}$.
   	
   	Finally observe that if $s$ is any divisor of a power of $N$ and $l \in \N$, are  such that any state $(p',q') \in Q_{P}$, there are elements $\tau_1, \tau_2, \ldots, \tau_s \in \xn^{l}$ and $\bigcup_{1 \le i \le s} U^{+}_{\mu_i} = \im{q}$. Then either $s = n^{l_{U}+ l_{T} - l}s_{T}s_{U}$ or $sn^{l- l_{U}-l_{T}} = s_{T}s_{U}$. In particular, $s$ and $s_Ts_{U}$ correspond to the same element of $\mn{n}$. By Remark~\ref{rem:consequencesofsynchronousrep} the result follows, since $TU$, the product of $T$ and $U$ in $\Ln{n}$, is precisely the minimal representative of $\core(T \ast U)$.
   	
   	Surjectivity of $\rsig_{\omega}$ is a consequence of examples in constructed in the paper \cite{OlukoyaAutTnr}. In that paper, for each prime $p$ dividing $n$, an example is constructed of an element $T \in \Ln{n}$ for which $(T)\sig_{\omega} =  p$. Alternatively, the transducers in Example~\ref{Example:imagegenerators} are also valid. 
   	
   	Let $n = p_{1}^{a_1}p_2^{a_2}\ldots p_{r}^{a_r}$, and write $N_i = n/p_{i}$ for each $1 \le i \ne r$. As noted in Example~\ref{Example:imagegenerators}, the elements  $T(p_i, N_i)$ commute and $(T(p_i,N_i))\rsig_{\omega} = N_i$. In particular, the map $\rsig_{\omega}$ is an isomorphism from $\gen{ T(p_i, N_i) \mid 1 \le  1 \le r}$  to the group  $\mn{n}$, since  the product  $\Pi_{1 \le i \le r} T(p_i, N_i)^{l_i} = \id$ in $\Ln{n}$ and a word $w \in \{ T(p_i, N_i) \mid 1 \le i \le r\}$ represents the trivial element of $\Ln{n}$ if and only if it is of the form $\Pi_{1 \le i \le r} T(p_i, N_i)^{al_i}$ for some $a \in \N$. Thus, the extension $$ 1 \to \Kn{n} \to \Ln{n} \to  \mn{n}$$ splits.
   	
   	A straightforward application of Proposition~\ref{Proposition:sigdeterminesmembership} shows that $\Kn{n} \le \Ln{n,1}$. This is because $T \in \Ln{n}$ is an element of $\Kn{n}$ if and only if for any state $q \in Q_{T}$, the image of $q$ can be written as a union of a power of $n$ basic open sets.
  \end{proof}

\begin{notation}
	Write $\Kn{n}$ for the group $\ker(\rsig_{\omega})$.  Observe that an element $T \in \Ln{n,1}$ is an element of $\Kn{n}$ if and only if for some state $q \in Q_{T}$ (and so for any state of $T$), the image of $q$ can be written as a union of a power of $n$ basic open sets.
\end{notation}

\subsection{On \texorpdfstring{$\aut{\xnz, \shift{n}}$}{Lg}}

The paper \cite{BelkBleakCameronOlukoya} gives an epimorphism homomorphism from the group $\aut{\xnz, \shift{n}}$ to the subgroup $\Un{n-1,n}$ of the group $\Un{n-1}$ of units of $\Z_{n-1}$ generated by the divisors of $n$. We make use of the following  proposition to extend this homomorphism to the direct product  $\Pi_{k \in \N_{1}} \Un{n^{k}-1, n^{k}}$. We show that the kernel of this new homomorphism is isomorphic to the group $\Kn{n}$. In a later section we show that this homomorphism is precisely the \emph{dimension representation}, thus making the group $\Kn{n}$ isomorphic to the so called \emph{inert subgroup}.

We begin by setting up some notation.

Fix $n,m \in \N$ and let $d \in \N_{1}$ be such that $n^{d} = m$. There is an embedding from the monoid $\End(\xnz, \shift{n})$ into the monoid $\End(X_{m}^{\Z}, \shift{m})$ which is given by `increasing the alphabet size' as follows. Order the set $\xn^{d}$ in the lexicographic ordering and let $\beta_{n,m}: X_{m} \to \xn^{d}$  be the bijection that sends a number $i$ to the $i$\textsuperscript{th} element of $\xn^{d}$. Given $l \in \N$, we extend $\beta_{n,m}$ to a bijection $(\beta_{n,m})_{l}: X_{m}^{l}\to \xn^{dl}$ by $\gamma_1\gamma_2 \ldots \gamma_l \mapsto (\gamma_1)\beta_{n,m}(\gamma_2)\beta_{n,m}\ldots(\gamma_l)\beta_{n,m}$. Given a  sequence $x \in X_{m}^{\Z}$, write $(x)(\beta_{n,m})_{\infty}$ for the sequence $y \in \xn^{\Z}$ such that $(x_{i})\beta_{n,m} = y_{id}y_{id+1}\ldots y_{(i+1)d-1}$. Note that $(\beta_{n,m})_{\infty}: X_{m}^{\Z} \to \xnz $ is a homeomorphism. Moreover $\shift{m}(\beta_{n,m})_{\infty} = (\beta_{n,m})_{\infty} \shift{n}^{d}$. Thus, given $\phi \in \End(\xnz, \shift{n})$, the map $(\beta_{n,m})_{\infty}\phi (\beta_{n,m})_{\infty}^{-1}$ is an element of $\End((X_{m})^{\Z},\shift{m})$. Write $\shift{n,m}$ for the map $(\beta_{n,m})_{\infty}\shift{n} (\beta_{n,m})_{\infty}^{-1}$ and note that $\shift{n,m}^{d} = \shift{m}$. Let $\ext{n}{m}: \aut{\xnz, \shift{n}} \to \aut{\xnk{m}{}}$ denote the map which sends an element $\psi$ to $\beta_{\infty}\psi \beta_{\infty}^{-1}$. Henceforth we no longer distinguish between the maps $\beta_{n,m}$, $(\beta_{n,m})_{\infty}$ and $(\beta_{n,m})_{l}$, $l \in \N$, as it will normally be clear from the arguments which is meant. The following result is standard and so we do not give a proof.

\begin{proposition}\label{prop:extmorph}
	Let $n, m \in \N_{2}$ such that $m$ is a power of $n$. Let $C_{\End(X_{m}^{\Z}, \shift{m})}(\shift{n,m}):= \{ \psi \in \End(X_{m}^{\Z}, \shift{m}) \mid \psi \shift{{n,m}} = \shift{n,m}\psi \}$ and $C_{\aut{X_{m}^{\Z}, \shift{m}}}(\shift{n,m}):= \{ \psi \in \aut{X_{m}^{\Z}, \shift{m}} \mid \psi \shift{n,m} = \shift{n,m}\psi \}$. Then $C_{\End(X_{m}^{\Z}, \shift{m})}(\shift{n,m})$ is precisely the monoid $(\End(\xnz, \shift{n}))\ext{n}{m}$ and $C_{\aut{X_{m}^{\Z}, \shift{m}}}(\shift{n,m})$ is precisely the group $(\aut{\xnz, \shift{n}})\ext{n}{m}$. 
\end{proposition}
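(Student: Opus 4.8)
The plan is to use the fact, recorded just before the statement, that $\beta := (\beta_{n,m})_{\infty}$ is a homeomorphism $X_m^{\Z} \to \xnz$ with $(\phi)\ext{n}{m} = \beta\phi\beta^{-1}$, with $\shift{n,m} = \beta\shift{n}\beta^{-1}$, and with $\shift{n,m}^{d} = \shift{m}$. Conjugation by $\beta$ is a bijection between self-maps of $\xnz$ and self-maps of $X_m^{\Z}$; I will check it carries the continuous maps commuting with $\shift{n}$ onto exactly the continuous maps commuting with $\shift{n,m}$, and respects invertibility. Throughout I use the two identities $\beta\shift{n} = \shift{n,m}\beta$ and $\beta^{-1}\shift{n,m} = \shift{n}\beta^{-1}$, both immediate from $\shift{n,m} = \beta\shift{n}\beta^{-1}$.

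First I would do the inclusion $(\End(\xnz,\shift{n}))\ext{n}{m} \subseteq C_{\End(X_m^{\Z},\shift{m})}(\shift{n,m})$. For $\phi \in \End(\xnz,\shift{n})$, the map $\psi := (\phi)\ext{n}{m} = \beta\phi\beta^{-1}$ is continuous and, as the text already observes, lies in $\End(X_m^{\Z},\shift{m})$; moreover $\psi\shift{n,m} = \beta\phi\shift{n}\beta^{-1} = \beta\shift{n}\phi\beta^{-1} = \shift{n,m}\psi$, using only $\phi\shift{n} = \shift{n}\phi$. If in addition $\phi$ is invertible, $\beta\phi^{-1}\beta^{-1}$ is a two-sided inverse of $\psi$, so $\psi \in C_{\aut{X_m^{\Z},\shift{m}}}(\shift{n,m})$. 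This yields the ``$\subseteq$'' halves of both claims.

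The reverse inclusion is the only step requiring any argument. Given $\psi \in C_{\End(X_m^{\Z},\shift{m})}(\shift{n,m})$, set $\phi := \beta^{-1}\psi\beta \colon \xnz \to \xnz$, so $\beta\phi\beta^{-1} = \psi$, i.e.\ $\psi = (\phi)\ext{n}{m}$. Then $\phi$ is continuous, and $\phi\shift{n} = \beta^{-1}\psi\beta\shift{n} = \beta^{-1}\psi\shift{n,m}\beta = \beta^{-1}\shift{n,m}\psi\beta = \shift{n}\beta^{-1}\psi\beta = \shift{n}\phi$, where the third equality is the hypothesis that $\psi$ centralises $\shift{n,m}$. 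Hence $\phi \in \End(\xnz,\shift{n})$ and $\psi$ lies in $(\End(\xnz,\shift{n}))\ext{n}{m}$, proving the endomorphism statement. If moreover $\psi \in \aut{X_m^{\Z},\shift{m}}$, then $\psi^{-1}$ is continuous and also centralises $\shift{n,m}$ (being the inverse of something that does), so the same computation applied to $\beta^{-1}\psi^{-1}\beta$ shows $\phi$ is invertible with inverse in $\End(\xnz,\shift{n})$; thus $\phi \in \aut{\xnz,\shift{n}}$, giving the automorphism statement.

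I do not expect a real obstacle here: the proposition is the familiar fact that the centraliser of a conjugate is the conjugate of the centraliser, specialised to the continuous (respectively invertible) self-maps of Cantor space. The points to watch are keeping the composition convention fixed so that $\beta\shift{n} = \shift{n,m}\beta$ and $\shift{n,m}^{d} = \shift{m}$ are used in the right direction, and noting that commuting with $\shift{n,m}$ is strictly stronger than commuting with $\shift{m} = \shift{n,m}^{d}$; it is precisely this extra constraint --- compatibility with the blocking $\beta$ of $X_m$-letters into length-$d$ words over $X_n$ --- that cuts $\End(X_m^{\Z},\shift{m})$ down to the image of $\ext{n}{m}$.
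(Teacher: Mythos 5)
Your proof is correct. The paper deliberately omits the argument (``the following result is standard''), and what you give is exactly the intended standard one: conjugation by the homeomorphism $\beta=(\beta_{n,m})_{\infty}$ is a monoid isomorphism between continuous self-maps of $\xnz$ and of $X_m^{\Z}$ carrying $\shift{n}$ to $\shift{n,m}$, hence carrying $\End(\xnz,\shift{n})$ (by definition the continuous maps commuting with $\shift{n}$) onto the continuous maps commuting with $\shift{n,m}$, which is precisely $C_{\End(X_m^{\Z},\shift{m})}(\shift{n,m})$ since commuting with $\shift{n,m}$ already forces commuting with $\shift{m}=\shift{n,m}^{d}$; the invertible case follows as you say.
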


 Clearly the map the composition of the natural map from $\aut{\xnz, \shift{n}}$ to $\aut{ \xnz, \shift{n}}/ \gen{\shift{n}}$, yields a homomorphism from $\aut{\xnz, \shift{n}}$ to $\Un{n-1,n}$. We will also denote this map by $\rsig$. Below, we explicitly define this map $\rsig: \aut{\xnz, \shift{n}} \to \Un{n-1,n}$: we give two equivalent definitions: the first using the representation of elements of $\aut{\xnz, \shift{n}}$ as an element of $\gen{\shift{n}}F_{\infty}$ and the second using the representation in terms of elements of $\ALn{n}$, that is in terms of bi-synchronizing core transducers. Both points of view will be useful to us.

\begin{Definition}
	Let $m \in \N$ and  $f\in F(\xn,m)$ so that $f_{\infty} \in F_{\infty}(\xn)$. Let $\gamma = \gamma_{-m+1}\ldots \gamma_{-1} \gamma_{0} \in \xnl{m}$ and consider the clopen set $U^{0}_{\gamma}:= \{ x \in \xnz \mid x_{-m+1}\ldots x_{-1}x_0 = \gamma\}$. The image $(U^{0}_{\gamma})f_{\infty}$ is clopen and so the set $((U^{0}_{\gamma})f_{\infty})_{> 0}:= \{  x_{1}x_{2}\ldots \mid  \exists y \in (U^{0}_{\gamma})f_{\infty} :  y_{1}y_{2}\ldots = x_{1}x_{2}\ldots \}$ is clopen also. Let $r \in \N$, be minimal such that there exists elements $\nu_1, \nu_2 \ldots \nu_r \in \xns$ satisfying $\cup_{1 \le i \le r} U^{+}_{\nu_i} = {((U^{0}_{\gamma})f_{\infty})_{> 0}}$. Set $(f)\rsig = r \pmod{n-1}$. More generally for $k \in \Z$,  set $(\shift{n}^{k}f_{\infty}) = (f_{\infty})\rsig$.
	
	Alternatively, given $\psi = \shift{n}^{k}f_{\infty}$, $k \in \N$ and $f \in F(\xn, m)$ for some $m \in \N$, an element of $\aut{ \xnz, \shift{n}}$, $\psi = (L, \alpha)$ for some annotation $L \in \Ln{n}$ and an annotation $\alpha$ of $L$.  Let $q \in Q_{L}$ be an arbitrary state, and let $r \in \N$ be minimal such that there are $\nu_1, \nu_2 \ldots \nu_r \in \xns$ satisfying $\cup_{1 \le i \le r} U^{+}_{\nu_i} = \im{q}$, then set $(L,\alpha)\rsig = r \pmod{n-1}$.
\end{Definition}

 Below we make use of Proposition~\ref{prop:extmorph} to define an extension of the homomorphism $\rsig$ and explore some implications of this. For reasons which become clear later on we also write $\sig_{\omega}$ for this homomorphism. We require the following properties of the  elements $T(p_i, N_i)$ of Example~\ref{Example:imagegenerators}.

\begin{lemma}\label{lem:canomit0}
	Let $r \in \N_{2}$, $p_1 \ldots, p_r$ be distinct primes, $l_1, \ldots, l_r \in \N_{1}$ and  $ n= p_{1}^{l_1}p_2^{l_2}\ldots p_{r}^{l_r}$. Fix $1 \le i \le r$, a word $w= w_1w_2 \ldots w_{l} \in (\{1, \ldots, r\}\backslash \{i\})^\ast$, then the product $U:= T(p_{w_1}, N_{w_1}) \ast T(p_{w_2}, N_{w_2}) \ast \ldots \ast T(p_{w_l}, N_{w_l})$ has no  states of incomplete response. Consequently, setting $S$ to be the minimal representative of $U$, we have $(T(p_{w_1}, N_{w_1}),0) (T(p_{w_2}, N_{w_2}),0)  \ldots (T(p_{w_l}, N_{w_l}),0) = (S,0)$.
\end{lemma}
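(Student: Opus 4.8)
The guiding observation is that, under the identification $X_n \cong X_{p_1}^{l_1}\times\cdots\times X_{p_r}^{l_r}$, the transducer $T(p_k, N_k)$ acts on a letter $\gamma = \gamma_1\cdots\gamma_r \in X_n$ by rewriting only its $k$-th block (the $X_{p_k}^{l_k}$-coordinate) and fixing every other block; this is immediate from the formula for $\lambda_{T(p_k,N_k)}$ in Example~\ref{Example:imagegenerators}. Since a product of synchronous transducers is synchronous (the single-letter output of one factor is the single-letter input of the next), an easy induction on $l$ gives the following: for $U = T(p_{w_1},N_{w_1})\ast\cdots\ast T(p_{w_l},N_{w_l})$ with all $w_j\in\{1,\dots,r\}\setminus\{i\}$, for every state $Q$ of $U$, and for every $\gamma = \gamma_1\cdots\gamma_r\in X_n$, the output $\lambda_U(\gamma,Q)$ is a single letter of $X_n$ whose $i$-th block equals $\gamma_i$ — indeed, none of the factors ever rewrites block $i$.

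\textbf{No incomplete response.} For a synchronous transducer a state $Q$ satisfies $\Lambda(\varepsilon,Q)\neq\varepsilon$ exactly when the map $\gamma\mapsto\lambda_U(\gamma,Q)$ is constant on $X_n$ (the first output letter is then forced, and it is the first letter of $\Lambda(\varepsilon,Q)$). By the previous paragraph this map is not constant: since $|X_{p_i}^{l_i}|\ge 2$, choosing $\gamma,\gamma'$ with distinct $i$-th blocks gives $\lambda_U(\gamma,Q)\neq\lambda_U(\gamma',Q)$. Hence $\Lambda(\varepsilon,Q)=\varepsilon$ for every state $Q$, i.e.\ $U$ has no states of incomplete response. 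The identical argument applies to every partial product $T(p_{w_1},N_{w_1})\ast\cdots\ast T(p_{w_j},N_{w_j})$, which we use below.

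\textbf{The annotation is zero.} For the second assertion we invoke Theorem~\ref{theorem:repsbyLnandannotation}: in $\ALn{n}$ one has $(L,\alpha)(M,\beta)=(LM,\overline{\alpha+\beta})$, and on a state $[(s,t)]$ of $LM$ the value of $\overline{\alpha+\beta}$ is $(s)\alpha+(t)\beta+|\Lambda(\varepsilon,(s,t))|$, the last summand being the extent of incomplete response of $(s,t)$ as a state of $L\ast M$. I would compute $(T(p_{w_1},N_{w_1}),\bm0)\cdots(T(p_{w_l},N_{w_l}),\bm0)$ from the left, showing by induction that each partial product equals $(L^{(j)},\bm0)$ with $L^{(j)}\in\Ln{n}$ the corresponding product in $\Ln{n}$. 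The inductive step needs the correction term $|\Lambda(\varepsilon,(s,t))|$ to vanish, i.e.\ that $L^{(j)}\ast T(p_{w_{j+1}},N_{w_{j+1}})$ has no incomplete response. This holds because $L^{(j)}$, being the minimal representative of (the core of) $T(p_{w_1},N_{w_1})\ast\cdots\ast T(p_{w_j},N_{w_j})$, still rewrites no block other than those indexed by $w_1,\dots,w_j$: passing to the core preserves this, and weak minimalization preserves it since $\omega$-equivalent states of a synchronous transducer have equal output functions. Thus the block-$i$-preservation argument of the first two paragraphs applies to $L^{(j)}\ast T(p_{w_{j+1}},N_{w_{j+1}})$ too. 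By associativity of the product in $\ALn{n}$, the transducer part of the full product is the minimal representative $S$ of $U$ and the annotation is $\bm0$, which is the claim.

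\textbf{Main obstacle.} There is no serious difficulty here; the only point requiring care is tracking the ``block $i$ is never rewritten'' property through the core-taking and weak-minimalization steps implicit in the products in $\Ln{n}$ and $\ALn{n}$, so as to guarantee that every intermediate transducer being multiplied is free of incomplete response. This is routine given the characterization of $\omega$-equivalence for synchronous transducers recalled in Section~\ref{Preliminaries}.
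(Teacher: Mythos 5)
Your proof is correct and follows essentially the same route as the paper: both hinge on the observation that each factor $T(p_{w_j},N_{w_j})$ with $w_j\neq i$ fixes the $i$-th block of every letter, so distinct $i$-th blocks yield distinct first output letters and no state of $U$ has incomplete response. Your treatment of the ``consequently'' clause is somewhat more explicit than the paper's (which simply notes that $(S,0)=(S',0)$ for $S'=\core(U)$ and identifies this with the composite map), but the substance is the same.
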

\begin{proof}
	We use the description of the elements $T(p_i, N_i)$ as given in Example~\ref{Example:imagegenerators}. That is, let $X_{p_a}$, $1 \le a \le r$ be alphabets of size $p_a$, and we think of an element of $\xn$ as a word $u_ru_{r-1}\ldots u_{1}$ for $u_a \in  X_{p_a}^{l_a}$, $1 \le  a \le r$. Let $ a \in \{1,\ldots, r\}\backslash\{i\}$ and $x \in  X_{p_a}$ so that $q_x$ is a state of $T(p_a, N_a)$. Then for any element  $\gamma = \gamma_r \gamma_{r-1} \ldots \gamma_1$, $\gamma_b \in X_{p_b}^{l_b}$, $1 \le b \le r$, $\lambda_{T(p_a,N_a)}(\gamma, q_x)$, is the word obtained from $\gamma$, by deleting the last letter of $\gamma_{a}$ and appending $x$ as a prefix to the resulting word. We thus observe that $T(p_a, N_a)$ leaves the subword $\gamma_i$ unchanged.
	
	Let $w_1,w_2,\ldots w_{l} \in \{1,\ldots, r\}\backslash\{i\}$, set $U:= T_{w_1} \ast T_{w_2} \ast \ldots \ast T_{w_l}$ and let $q$ be an arbitrary state of $U$. Then, given words $\gamma = \gamma_{r}\ldots\gamma_1, \delta= \delta_{r}\ldots \delta_{1} \in \xn$, $\delta_a, \gamma_a \in X_{p_a}^{l_a}$, such that the $\gamma_i \ne \delta_i$, the previous paragraph implies that $\lambda_{U}(\delta, q) \ne \lambda_{U}(\gamma, q)$. Thus we see that $U$ has no states of incomplete response. Setting $S' = \core(U)$ and $S$ to be the minimal representative of $S'$. We have that both $S$ and $S'$ are synchronous since $S'$ has no states of incomplete response. Moreover, as $S$ is precisely $S'$ with $\omega$-equivalent states identified, the equality $(S,0) = (S', 0)$ is valid. To conclude we observe that $(S',0)$ is precisely the map $(T(p_{w_1}, N_{w_1}),0) (T(p_{w_2}, N_{w_2}),0)  \ldots (T(p_{w_l}, N_{w_l}),0)$.
\end{proof}

\begin{corollary}\label{cor:omit0}
	Let $r \in \N_{2}$, $p_1 \ldots, p_r$ be distinct primes, $l_1, \ldots, l_r \in \N_{1}$ and  $ n= p_{1}^{l_1}p_2^{l_2}\ldots p_{r}^{l_r}$. Fix $1 \le i \le r$, then the subgroup $\gen{ (T(p_j, N_{j}),0) \mid j \in \{1,2,\ldots, r\} \backslash \{i\} }$ of $\aut{\xnz, \shift{n}}$ is isomorphic to the subgroup $\gen{ T(p_j, N_{j}) \mid j \in \{1,2,\ldots, r\} \backslash\{i\}}$ of $\Ln{n}$.
\end{corollary}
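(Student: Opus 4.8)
The plan is to realise the claimed isomorphism as the restriction of the ``forget the annotation'' homomorphism. By Theorem~\ref{theorem:repsbyLnandannotation} we identify $\aut{\xnz,\shift{n}}$ with $\ALn{n}$, and then the assignment $(L,\alpha)\mapsto L$ is a homomorphism $\phi\colon\aut{\xnz,\shift{n}}\to\Ln{n}$: it is multiplicative because the product in $\ALn{n}$ takes $(L,\alpha)(M,\beta)$ to $(LM,\overline{\alpha+\beta})$; it is surjective because every $L\in\Ln{n}$ admits an annotation by Proposition~\ref{Prop:lnpn}; and by Corollary~\ref{cor:annotationsinthesamecoset} its kernel is exactly $\gen{\shift{n}}$. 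Write $H:=\gen{(T(p_j,N_j),0)\mid j\in\{1,\ldots,r\}\setminus\{i\}}\le\aut{\xnz,\shift{n}}$ and $K:=\gen{T(p_j,N_j)\mid j\in\{1,\ldots,r\}\setminus\{i\}}\le\Ln{n}$. Since $\phi$ sends each generator $(T(p_j,N_j),0)$ of $H$ to the generator $T(p_j,N_j)$ of $K$, it restricts to a surjection $\phi\restriction_H\colon H\twoheadrightarrow K$, and the whole content of the corollary is that this surjection is injective.

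For injectivity, recall from Example~\ref{Example:imagegenerators} that the elements $(T(p_j,N_j),0)$ with $j\ne i$ pairwise commute, so any $g\in H$ has the form $g=\prod_{j\ne i}(T(p_j,N_j),0)^{a_j}$ with $a_j\in\Z$. Choose $M\in\N$ with $a_j+M\ge 0$ for all $j\ne i$ and put $g':=g\cdot\prod_{j\ne i}(T(p_j,N_j),0)^{M}$, so that $g'=\prod_{j\ne i}(T(p_j,N_j),0)^{a_j+M}$. Lemma~\ref{lem:canomit0} now applies both to $g'$ and to $\prod_{j\ne i}(T(p_j,N_j),0)^{M}$: each is of the form $(S,\bm{0})$ where $S$ is the corresponding product formed in $\Ln{n}$; write $S'$ and $S''$ for these two elements of $K$. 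Then $\phi(g')=S'$ and $\phi\bigl(\prod_{j\ne i}(T(p_j,N_j),0)^{M}\bigr)=S''$, so the hypothesis $\phi(g)=\tid$ forces $S'=S''$ in $\Ln{n}$, i.e.\ $\prod_{j\ne i}T(p_j,N_j)^{a_j}=\tid$ in $\Ln{n}$.

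To conclude I would invoke that $K$ is free abelian with basis $\{T(p_j,N_j)\mid j\ne i\}$. Indeed, in the proof of Proposition~\ref{prop:lnsplit} the map $\rsig_{\omega}$ is shown to restrict to an isomorphism of $\gen{T(p_j,N_j)\mid 1\le j\le r}$ onto $\mn{n}$ sending $T(p_j,N_j)$ to the class of $N_j=n/p_j$, which is $p_j^{-1}$ in $\mn{n}$; and Lemma~\ref{lemma:characterisationofmn}, after the obvious relabelling of the primes, identifies $\gen{p_j\mid j\ne i}$ with a free abelian subgroup of $\mn{n}$ of rank $r-1$. Hence the $r-1$ generators $T(p_j,N_j)$, $j\ne i$, form a basis of $K\cong\Z^{r-1}$, so $\prod_{j\ne i}T(p_j,N_j)^{a_j}=\tid$ forces $a_j=0$ for every $j\ne i$, whence $g=\tid$. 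Thus $\phi\restriction_H\colon H\to K$ is an isomorphism, which is the assertion.

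The only genuinely delicate step is controlling annotations: a product of zero-annotated elements of $\ALn{n}$ need not carry the zero annotation in general, and it is precisely Lemma~\ref{lem:canomit0} --- which records that the relevant transducer products have no states of incomplete response because the omitted coordinate is never rewritten --- that legitimises the identification $g'=(S',\bm{0})$. Everything else is elementary arithmetic in the finitely generated abelian group $\mn{n}$. (One could also bypass the $M$-trick altogether: $H$ is abelian, generated by $r-1$ elements, and surjects onto the free abelian group $K\cong\Z^{r-1}$, so the surjection is automatically an isomorphism; but phrasing the argument through Lemma~\ref{lem:canomit0} keeps it inside the transducer picture used later.)
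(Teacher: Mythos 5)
Your argument is correct, and since the paper states this result as an unproved corollary immediately after Lemma~\ref{lem:canomit0}, there is no written proof to compare against word for word; what you have produced is a legitimate fleshing-out, with all citations in the right logical order (you correctly avoid Theorem~\ref{thm:exthom}, which comes later and itself uses this corollary). The comparison worth making concerns where injectivity really comes from. In your write-up Lemma~\ref{lem:canomit0} does less work than it is designed to do: the identity $\phi(g')=S'$ that you extract from it already follows from $\phi$ being the first-coordinate projection homomorphism, so the lemma's actual content --- that these products carry the \emph{zero} annotation --- is discarded, and you must then import the freeness of $K\cong\Z^{r-1}$ on the generators $T(p_j,N_j)$, $j\ne i$, via $\rsig_{\omega}$, the proof of Proposition~\ref{prop:lnsplit} and Lemma~\ref{lemma:characterisationofmn}. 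That is a valid chain, but the annotation information gives a shorter route, which is presumably why the paper treats the statement as immediate: if $g=\shift{n}^{m}\in H\cap\gen{\shift{n}}$, then after your $M$-trick the equation $\shift{n}^{m}\cdot(S'',\bm{0})=(S',\bm{0})$ says the two sides have the same underlying transducer while their annotations differ by the constant $m$ (Corollary~\ref{cor:annotationsinthesamecoset}), forcing $m=0$ outright, with no input about the structure of $K$ at all. So both proofs are sound; yours trades the annotation bookkeeping for the arithmetic of $\mn{n}$, and your closing parenthetical (an abelian group on $r-1$ generators surjecting onto $\Z^{r-1}$ does so isomorphically, by the Hopfian property) is likewise a complete argument once the freeness of $K$ is in hand.
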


In the statement of the theorem below, we write $\Kn{n}$ for the kernel of the map $\rsig_{\omega}:  \Ln{n} \to \mn{n}$ and we write $\ker(\rsig_{\omega})$ for the kernel of the map $\rsig_{\omega}: \aut{\xnz, \shift{n}} \to \Pi_{k \in \N} \Un{n^{k}-1,n^k}$. As we will see, these two groups are isomorphic.
\begin{Theorem}\label{thm:exthom}
	The following things hold:
	\begin{enumerate}
		\item \label{exthom1}There is a non-trivial homomorphism  $\rsig_{\omega}: \aut{\xnz, \shift{n}} \to \Pi_{k \in \N} \Un{n^{k}-1, n^{k}}$, given by $\psi \mapsto (((\psi)\ext{n}{n^{k}})\rsig)_{k \in \N}$.
		\item \label{exthom2} Thus, for each $k \in \N$, there is a homomorphism $\rsig_{[k]}: \aut{\xnz, \shift{n}} \to \Pi_{1 \le j \le k } \Un{n^{j}-1, n^{j}}$ and an epimorphism $\rsig_{k}: \aut{\xnz, \shift{n}} \to  \Un{n^{k}-1, n^{k}}$. 
		\item\label{exthom3} The equality:
		\begin{IEEEeqnarray*}{rCl}
		(\aut{\xnz,\shift{n}})\rsig_{\omega} &=&  (\gen{(T(d,e),0) \mid d,e \in \N, de = n })\rsig \nonumber \\ &=& \gen{(d\pmod{n-1},d,d \ldots) \mid d \ \mathrm{ is \ a \ prime \ divisor \ of  } \ n }
		\end{IEEEeqnarray*}
		 is valid.  In  particular, for any $\psi \in \aut{\xnz, \shift{n}}$, there exist prime numbers $e_1, e_2, \ldots, e_l$,  depending only on $\psi$, such that, taking $d_i \in \N$ satisfying  $d_ie_i = n$, we have $$((T(d_1,e_1),0)^{-1}(T(d_2,e_2),0)^{-1} \ldots (T(d_l,e_l),0)^{-1}\psi)\rsig_{\omega} =  (1,1,1,\ldots).$$
		
	\item\label{exthom4} The image $(\aut{\xnz, \shift{n}})\rsig_{\omega}$ of $\aut{\xnz, \shift{n}}$ under $\rsig_{\omega}$ is a finitely generated torsion free abelian group of rank equal to the number of prime divisors of $n$.
	
	\item \label{exthom5} Let $p_1, p_2, \ldots, p_r$ be distinct prime numbers and let $l_1, l_2, \ldots, l_r$ be elements of $\N_{1}$ such that $n = p_1^{l_1}p_2^{l_2}\ldots p_{r}^{l_r}$. Then,
	  \begin{enumerate}
	  	\item \label{exthom5a}  $\aut{\xnz, \shift{n}} \cong \ker(\rsig_{\omega}) \rtimes \Z^{r} \cong \Kn{n} \rtimes \Z^{r}$;
	  	\item \label{exthom5b} and $\aut{\xnz,\shift{n}} \cong \Ln{n} \times \Z$ if and only if $\gcd(l_1, l_2, \ldots, l_r) = 1$ (i.e $n$ is not a power of a strictly smaller integer.)
	  \end{enumerate}

	\end{enumerate} 
\end{Theorem}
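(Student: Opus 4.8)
The plan is to build $\rsig_\omega$ out of ingredients already in hand, compute it explicitly on the generators $(T(d,e),0)$ of Example~\ref{Example:imagegenerators}, and then read off its image and kernel. Parts~(\ref{exthom1}) and~(\ref{exthom2}) are formal: for each $k$ the map $\ext{n}{n^k}$ is an injective group homomorphism $\aut{\xnz,\shift{n}}\to\aut{X_{n^k}^{\Z},\shift{n^k}}$ --- conjugation by the homeomorphism $\beta_{n,n^k}$, with image $C_{\aut{X_{n^k}^{\Z},\shift{n^k}}}(\shift{n,n^k})$ by Proposition~\ref{prop:extmorph} --- and $\rsig$ is a homomorphism onto $\Un{n^k-1,n^k}$ by Theorem~\ref{thm:sigOn} applied over the alphabet $X_{n^k}$. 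Hence $\rsig_k:=\rsig\circ\ext{n}{n^k}$, the partial products $\rsig_{[k]}$, and the assembled map $\rsig_\omega=(\rsig_k)_{k\in\N}$ are homomorphisms. Surjectivity of $\rsig_k$ follows from Part~(\ref{exthom3}), which shows that the $(T(d,e),0)$ (with $de=n$, all lying in $\aut{\xnz,\shift{n}}$) realise under $\rsig_k$ every divisor of $n$ modulo $n^k-1$, and these generate $\Un{n^k-1,n^k}$. Non-triviality of $\rsig_\omega$ is witnessed by $\shift{n}$: re-blocking the transducer $\Shift{n}$ to the alphabet $X_{n^2}$ yields a state whose image is a union of $n$ length-one cylinders, so $(\shift{n})\rsig_2\equiv n\pmod{n^2-1}\ne 1$.

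The heart of Part~(\ref{exthom3}) is the claim that \emph{for each $\psi\in\aut{\xnz,\shift{n}}$ there is a unique $N(\psi)$ in the multiplicative subgroup $\gen{p_1,\dots,p_r}\le\Q^{\times}$ --- the ``expansion rate'' of $\psi$ --- with $(\psi)\rsig_k\equiv N(\psi)\pmod{n^k-1}$ for all sufficiently large $k$.} I would prove this by analysing, for $\psi=(L,\alpha)$ with $L\in\Ln{n}$, the minimal $\Ln{n^k}$-representative of $\ext{n}{n^k}(\psi)$: by Remark~\ref{rem:consequencesofsynchronousrep} its state images are unions of a common number $s^{(k)}$ of cylinders of $X_{n^k}^{\N}$, with $s^{(k)}$ dividing a power of $n$; comparing the representatives over $X_{n^k}$ and over $X_{n^{jk}}$ (re-blocking $j$ letters of $X_{n^k}$ into one of $X_{n^{jk}}$) shows $s^{(jk)}=s^{(k)}\cdot n^{k m}$ for some $m\ge 0$, so the discrepancy is a power of $n^{k}$ and hence $\equiv 1\pmod{n^k-1}$. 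Feeding this back, $(\psi)\rsig_k=s^{(k)}\bmod n^k-1$ becomes, for $k$ large, $\prod_i p_i^{a_i}\bmod n^k-1$ with $(a_i)$ independent of $k$ (concretely $N(\psi)=s\,n^{-l}$ where $s,l$ are the count and length of a state image of $L$); uniqueness is immediate since two such elements agree modulo $n^k-1$ for all large $k$. For $\psi=(T(d,e),0)$, Example~\ref{Example:imagegenerators} gives $s=e$ with length-one state images, so $N(T(d,e),0)=e\,n^{-1}=d^{-1}$ --- consistent with $(T(d,e),0)\rsig\equiv d^{-1}\equiv e\pmod{n-1}$ --- whence $(T(n/p,p),0)^{-1}\rsig_\omega=(p\bmod n-1,p,p,\dots)$. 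Since $\rsig_\omega$ is a homomorphism into an abelian group and every divisor of $n$ is a product of prime divisors of $n$, this is the second displayed equality. For the first, $N$ is multiplicative and valued in $\gen{p_1,\dots,p_r}$, so writing $N(\psi)=\prod_i p_i^{a_i}$ gives $(\psi)\rsig_\omega=\bigl(\prod_i(T(p_i,n/p_i),0)^{-a_i}\bigr)\rsig_\omega$; this proves the first equality and exhibits the required primes (the $p_i$ with multiplicities $|a_i|$, all depending only on $\psi$).

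Parts~(\ref{exthom4}) and~(\ref{exthom5}) are then bookkeeping. Writing $g_i:=(p_i\bmod n^k-1)_{k}$, Part~(\ref{exthom3}) gives $(\aut{\xnz,\shift{n}})\rsig_\omega=\gen{g_1,\dots,g_r}$; a relation $\prod_i g_i^{a_i}=1$ means $\prod_i p_i^{a_i}\equiv 1\pmod{n^k-1}$ for all $k$, forcing $\prod_i p_i^{a_i}=1$ (numerator and denominator differ by a fixed integer divisible by every $n^k-1$) and all $a_i=0$, so the image is free abelian of rank $r$, which is~(\ref{exthom4}). For~(\ref{exthom5a}): $\rsig_\omega$ is a surjection onto $\Z^{r}$, split by $H=\gen{(T(p_i,n/p_i),0)\mid 1\le i\le r}$ (abelian by Example~\ref{Example:imagegenerators}, with $H\cap\ker(\rsig_\omega)=1$ by the same relation argument), so $\aut{\xnz,\shift{n}}\cong\ker(\rsig_\omega)\rtimes\Z^{r}$. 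Restricting the quotient $q\colon\aut{\xnz,\shift{n}}\to\aut{\xnz,\shift{n}}/\gen{\shift{n}}\cong\Ln{n}$ of \cite{BelkBleakCameronOlukoya}: $q$ is injective on $\ker(\rsig_\omega)$ since $\gen{\shift{n}}\cap\ker(\rsig_\omega)=1$ (because $(\shift{n}^{\,j})\rsig_\omega=(n^{\,j(k-1)})_k$ is trivial only for $j=0$), and comparing $N(\psi)$ with the divisor $s_T$ attached to the $\Ln{n}$-representative of $\psi$ (Remark~\ref{rem:consequencesofsynchronousrep}, Proposition~\ref{prop:lnsplit}) shows $\psi\in\ker(\rsig_\omega)$ iff $s_T$ is a power of $n$ iff $q(\psi)\in\ker(\rsig_\omega^{\Ln})=\Kn{n}$; so $q$ maps $\ker(\rsig_\omega)$ isomorphically onto $\Kn{n}$. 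For~(\ref{exthom5b}): $\gcd(l_1,\dots,l_r)=1$ exactly when $n$ is not a proper power, which by \cite{BleakCameronOlukoya} is exactly when the central extension $1\to\gen{\shift{n}}\to\aut{\xnz,\shift{n}}\to\Ln{n}\to 1$ splits; a split central extension with kernel $\gen{\shift{n}}\cong\Z$ is $\cong\Ln{n}\times\Z$, giving ``if'', while an isomorphism $\aut{\xnz,\shift{n}}\cong\Ln{n}\times\Z$ identifies the centre $\gen{\shift{n}}$ of $\aut{\xnz,\shift{n}}$ with $Z(\Ln{n})\times\Z$, forces $Z(\Ln{n})=1$, and so exhibits that extension as split, giving ``only if''.

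The main obstacle is the computation opening Part~(\ref{exthom3}): making precise, and controlling uniformly in $k$, the minimal $\Ln{n^k}$-representative of $\ext{n}{n^k}(\psi)$ --- in particular how the cylinder count $s^{(k)}$ of its state images relates to the $X_n$-data, i.e.\ how passing to the alphabet $X_{n^k}$ interacts with annotations and with incomplete response (the ``fractional shift'' $\shift{n,n^k}$ means one cannot simply re-block the transducer). Once $(\psi)\rsig_k\equiv N(\psi)\pmod{n^k-1}$ is established for large $k$, the congruence $n^{k}\equiv 1$ does the rest essentially for free, and all remaining assertions reduce to elementary group theory together with Proposition~\ref{prop:lnsplit}, Lemma~\ref{lemma:characterisationofmn}, and the splitting criterion of \cite{BleakCameronOlukoya}.
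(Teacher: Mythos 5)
Your proposal is correct and follows essentially the same route as the paper: parts (1)--(2) by composing $\ext{n}{n^k}$ with $\rsig$, part (3) by computing $((T(d,e),0))\rsig_{k}\equiv en^{k-1}$ and, for a general $\psi$, extracting the ``expansion rate'' $sn^{-l}$ from the cylinder decomposition of a state image (the paper realises your $N(\psi)$ as $sn^{l^{-1}}$ with $l^{-1}\equiv -l\pmod{k}$, working directly with the re-blocked clopen sets $((U^{0}_{\Gamma})f_{\infty})_{>0}$ rather than with minimal $\Ln{n^{k}}$-representatives, which is exactly how it disposes of the obstacle you flag at the end), and parts (4)--(5a) by the same torsion-freeness, complement, and $s_T$-is-a-power-of-$n$ arguments. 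Two small remarks: the generator mapping to $(p,p,\dots)$ is $(T(p,n/p),0)^{-1}$ rather than $(T(n/p,p),0)^{-1}$ (your $d$ and $e$ are swapped there, harmlessly, since the generated subgroup is unchanged); and for (5b) the paper argues both directions internally --- exhibiting the splitting via Corollary~\ref{cor:omit0} and refuting it via proper roots of $\shift{n}$ --- whereas you reduce both directions to the splitting criterion of \cite{BleakCameronOlukoya}, which is logically fine but makes that part rest on the external citation.
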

\begin{proof}
 Parts \ref{exthom1} and \ref{exthom2} arise by combining Theorem~\ref{thm:sigOn} and Proposition~\ref{prop:extmorph}.
 
 We now consider \ref{exthom3}. We begin with the last equality in the first sentence. 
 
 Let $k \in \N$, and let $d,e \in \N$ be such that $de = n$. Define a transducer $T(d,e)_{k} = \gen{\xnl{k}, Q_{T(d,e)}, \pi_{T(d,e)_{k}}, \lambda_{T(d,e)_{k}}}$ as follows: for $\gamma \in \xnl{k}$ and $q \in Q_{T(d,e)}$, $\pi_{T(d,e)_{k}}(\gamma,q) = \pi_{T(d,e)}(\gamma, q)$ and $\lambda_{T(d,e)_{k}}(\gamma,q) = \lambda_{T(d,e)}(\gamma, q)$. Then $(T(d,e)_{k},0)$ defines an element of $\aut{\xnk{n^{k}}{\Z}, \shift{n^{k}}}$ moreover, upto relabelling the alphabet, we have $(T(d,e)_{k},0)=((T(d,e),0))\ext{n}{n^{k}}$. 
 
 It is straightforward to see that $((T(d,e)_{k},0))\rsig = e n^{k-1}$. This is because, as computed in Example~\ref{Example:imagegenerators}, for a state $q \in Q_{T(d,e))}$, there is a subset $V_{q}$ of $\xn$ length $e$ such that $\im{q} = \cup_{\nu \in V_q} U^{+}_{\nu}$. It therefore follows that $\im{q}$, thought of now as a state of $T(d,e)_{k}$, is precisely:  $$\bigcup_{ \nu \in V_{q}, \mu \in \xnl{k-1}}  U^{+}_{\nu \mu}. $$ We therefore  see that $$((T(d,e),0))\rsig_{\omega} = (e \pmod{n-1}, en\pmod{n^2-1}, \ldots, en^{k-1}\pmod{n^{k}-1},\ldots),$$ and so  $$((T(d,e),0)^{-1})\rsig_{\omega} = (d \pmod{n-1}, d,d,d,\ldots).$$
  
  We now consider the remainder of  part \ref{exthom3}. Note that since $\shift{n} = (T(n,1),0)$ it suffices to show that for any element $f_{\infty} \in F_{\infty}(\xn)$, there is an element $\psi \in \gen{ (T(d,e),0) \mid d,e \in \N, de = n }$ such that $(\psi f_{\infty}) \rsig_{\omega} = (1,1,1,\ldots)$.
 
 Let $f_{\infty} \in F_{\infty}(\xn)$ and let $m \in \N$ be such that $f \in F(\xn,m)$.  Let $k \in \N$ and define a map $g$ from $\xnl{mk} \to \xnl{k}$ by setting $(\gamma_{m+k-1} \ldots \gamma_{1}\gamma_0)g = (\gamma_0\gamma_1\ldots\gamma_{m+k-1})f_{k}$. Thus one may define a map $g_{\infty}: {(\xnl{k})^{\Z}} \to (\xnl{k})^{\Z}$. Observe that, up to relabelling the alphabet,  $g_{\infty} = (f_{\infty})\ext{n}{n^{k}}$. We thus move freely between the maps $g_{\infty}$ and $(f_{\infty})\ext{n}{n^{k}}$.
  
 Fix a word $\Gamma:= \gamma_{-mk+1} \ldots \gamma_{-1} \gamma_0 \in (\xn^{km})$ and consider the clopen set $U^{0}_{\Gamma} \subset \xnk{n^{k}}{\Z}$. We dually consider $U^{0}_{\Gamma}$ both as a subset of $\xnk{n^{k}}{\Z}$ and $\xnz$. 
 
 Note that $((U^{0}_{\Gamma})g_{\infty})_{> 0} = \{ \rho_{1}\rho_{2}\ldots \mid \rho_i \in  \xn^{k}, i \in \N_{1} \mathrm{ \ and \ } \rho_{1}\rho_{2}\ldots \in ((U^{0}_{\Gamma})f_{\infty})_{>0} \}$. That is, $((U^{0}_{\Gamma})g_{\infty})_{> 0}$ is precisely the set $((U^{0}_{\Gamma})f_{\infty})_{>0}$ partitioned into blocks of length $k$ (from left to right). Thus, $((f_{\infty})\ext{n}{n^{k}})\rsig$ is   congruent modulo $n^{k}-1$ to the minimal number $r$ such that there exist $\mu_1, \mu_2, \ldots,\mu_r \in (\xnl{k})^{\ast}$ satisfying ${\left(\bigcup_{1 \le i \le r} U^{+}_{\mu_i}\right)} = ((U^{0}_{\Gamma})f_{\infty})_{>0}$.
 
 By a result in \cite{BelkBleakCameronOlukoya} there are minimal numbers $l,s \in \N$ such that $s$ is a divisor of some power of $n$ and there are $\mu_1, \mu_2, \ldots, \mu_s \in \xnl{l}$ such that $((U^{0}_{\Gamma})f_{\infty})_{>0} = {\left(\bigcup_{1 \le i \le s} U^{+}_{\mu_i}\right)}$. Let $l^{-1} := -l \pmod{k}$, and let $d_1, d_2,\ldots, d_{l}, e_1, e_2, \ldots, e_l \in \N$ satisfy  $d_i e_i = n$ for all $1 \le i \le l$ and  $s =d_1d_2\ldots d_{l}$. Observe that $${\left(\bigcup_{1 \le i \le s, \ \nu \in \xn^{l^{-1}}} U^{+}_{\mu_i \nu}\right)} = ((U^{0}_{\Gamma})f_{\infty})_{> 0},$$ 
 therefore $$(f_{\infty})\rsig_{k} = sn^{l^{-1}} \pmod{n^{k}-1}.$$  Consider the map $\psi = (T(d_1,e_1),0)^{-1}(T(d_2,e_2),0)^{-1} \ldots (T(d_l,e_l),0)^{-1}f_{\infty}$. It follows that $(\psi)\rsig_{k} = e_1\ldots e_{l} \cdot  s  \cdot  n^{l^{-1}} \pmod{n^{k}-1}$. However, since $e_1\ldots e_{l} \cdot s =  n^{l}$ and  $l + l^{-1} \equiv 0 \pmod{k}$ it follows that $(\psi)\rsig_{k} = 1$. 	Moreover, as the product $(T(d_1,e_1),0)^{-1}(T(d_2,e_2),0)^{-1} \ldots (T(d_l,e_l),0)^{-1}$ depends only on $f_{\infty}$, then for any $k' \in \N$, $(\psi)\rsig_{k'} = 1$.
 
 We may now deduce that $\rsig_{k}$ is surjective for any $k \in \N$. For, as  $\Un{n^{k}-1,n^{k}}$ is precisely the subgroup of $\Un{n^{k}-1}$ generated by the divisors of $n^{k}$, it is therefore also generated by the divisors of $n$ and by  Part~\ref{exthom3}, $\rsig_{k}$ is unto the divisors of $n$.
 
 Part~\ref{exthom4}  follows since the generators are torsion free and commute. 
 
Write $n = p_1^{l_1} \ldots p_{r}^{l_r}$, where $l_{i} >0$ and $p_{i}$ is prime for all $1 \le i \le r$; set $N_{i} = n/p_{i}$.  By Part~\ref{exthom3},  $\aut{\xnz, \shift{n}} = \ker(\rsig_{\omega}) \gen{ (T(p_{i},N_{i}),0)  | 1 \le i \le r  }$ and so we have $\aut{\xnz, \shift{n}} = \ker(\sig_{\omega}) \rtimes \gen{ (T(p_{i},N_{i}),0)  | 1 \le i \le r  }$.  It was demonstrated in  Example~\ref{Example:imagegenerators} that $\gen{ (T(p_{i},N_{i}),0) | 1 \le i \le r }$ isomorphic to $\Z^{r}$. 

We now show that $\ker(\rsig_{\omega}) \cong  \Kn{n}$. 

Let  $\shift{n}^{-m} f_{\infty} \in \ker(\rsig_{\omega})$ for $f_{\infty} \in F_{\infty}(\xn)$. There is an element $T \in \Ln{n}$ and an annotation $\alpha$ such that $f_{\infty} = (T, \alpha)$. Moreover, by Proposition~\ref{Prop:lnpn} we may assume $\alpha: Q_{T} \to \N$. Let $\kappa$ be the canonical annotation of $T$, then there is a $j \in \Z$ such that $(T,\kappa) \shift{n}^{j} = f_{\infty}$. Thus, by results in \cite{BelkBleakCameronOlukoya} the map $\kappa +j : Q_{T}  \to \Z$ by $p \mapsto (p)\kappa + j$ is an annotation of $T$ and is equal to $\alpha$. Since there is a state of $T$ which takes value zero under $\kappa$,  as $\kappa$ is the canonical annotation, it follows  that  $j \ge 0$.

 Let $q \in Q_{T}$ be a state of $T$ such that  $(q)\kappa = 0$. Let $k \in  \N$ be such that $T$ is  synchronizing at level  $k$ and let $l \in \N$ be such that there are elements $\mu_1, \ldots, \mu_{s} \in \xn^{l}$ satisfying $\bigcup_{1 \le i \le s} U^{+}_{\mu_i} = \im{q}$. It simplifies our argument to assume that $l >m$ and that $k$ is large enough such that for any word $\gamma \in \xn^{kl}$, and any state $p \in Q_{T}$, $|\lambda_{T}(\gamma, p)| \ge j$. 
 
 Let $t \in \N$ be such that $ts = n^{l}$, since $s$ must be a divisor of a power of $n$, $t$ exists.  Fix, words $\Gamma, \Delta \in \xn^{kl}$ such that the state of $T$ forced by ${\Gamma}$ is $q$, and consider the clopen set $(U^{0}_{\Delta\Gamma })(T,\kappa))_{>0}$.  Since $(q)\kappa = 0$, it follows that $(U^{0}_{\Delta\Gamma})(T,\kappa))_{>0} = {\im{q}}$.
 
  Let $\rho$ be the length $j$ suffix of the word ${ \lambda_{T}({\Gamma}, q_{{\Delta}}) }$.  Since by definition $(x)\shift{n}^{j}$    is the sequence $y$ satisfying $y_{i} = x_{i-j}$ for all $i \in \Z$, it follows that $(U^{0}_{\Gamma})(T,\kappa+j))_{>0} =  {\bigcup_{1 \le i \le s} U^{+}_{\rho\mu_i} }$. Thus $(f_{\infty})\rsig_{l+j} = s$. 
  
  Let $a \in \N$ be such that $a \ge \max\{l+j, m\}$. Let $b \in \N$ be such that $b \equiv -(l+j) \pmod{a}$, then, $(f_{\infty})\rsig_{a} = sn^{b}$. Therefore we see that $(\shift{n}^{-m}f_{\infty})\rsig_{a} = n^{m}sn^{b} \pmod{n^{a}-1}$.  This now implies that $((\shift{n}^{-m}f_{\infty})\sig_{a})^{-1} = n^{a-m} t \mod{n^{a}-1}$.  
  
  If $n^{a-m}t \ge n^a$, then $t \ge n^{m}$ and $n^{a-m}t \equiv 1 \pmod{n^{a}-1}$ (since $\shift{n}^{-m}f_{\infty} \in \ker(\rsig_{\omega})$). Since this equation $n^{a-m}t \equiv 1 \pmod{n^{a}-1}$ is valid for all $a \in  \N_{1}$, it must be the case that $t=n^m$. 
  
  If $n^{a-m}t \le n^a$, then, as $n^{a-m}t \equiv 1 \pmod{n^{a}-1}$ for all $a \in \N$,   we must have that $n^{a-m}t = n^{a}$ and  $t = n^{m}$. Thus, in either case, it follows that $s = n^{l-m}$, which implies that  $T \in \Kn{n}$. 

On the other hand, given $T \in  \Kn{n}$ we show that there is a unique annotation $\alpha$ of $T$ such that $(T,\alpha) \in \ker(\rsig_{\omega})$.

Fix $T \in \Kn{n}$. Let $\kappa$ be the canonical annotation of $T$ and  $q \in Q_{T}$ be a state such that  $(q)\kappa = 0$. Since $T \in \Kn{n}$ there is are  numbers $a,b \in \N_{1}$ and words $\mu_1, \mu_2, \ldots, \mu_{n^{a}} \in \xn^{b}$ such that $\bigcup_{1 \le i \le n^{l}} U^{+}_{\mu_i} = \im{q}$.  Thus, it follows that for arbitrary $d \in \N_{1}$, and $e \in \Z_{d}$ such that $b+e \equiv 0 \mod{d}$, $((T,\alpha))\rsig_{d} = n^{a+e} \mod {n^{d} - 1}$. It is now clear that $(\shift{n}^{a-b}(T,\alpha)) \rsig_{\omega} = (1,1,1,\ldots)$, since $(\shift{n}^{a-b})\rsig_{\omega} = (n^{b-a}, n^{b-a}, \ldots)$. 

Observe that $\shift{n}^{a-b}(T,\alpha)) = (T, \alpha +a-b)$. Moreover,  for any other annotation $\beta$ of $T$, there is an integer $i \in \Z$ such that $(T, \beta) = (T, \alpha) \shift{n}^{i}$. Since $\shift{n}$ is not an element of $\ker(\rsig_{\omega})$, it follows that $\alpha$ is the unique annotation of $T$ such that $(T,\alpha+ a-b) \in \ker(\rsig_{\omega})$.

Thus we see that the map from $\ker(\rsig_{\omega}) \to \Ln{n}$ by $(T,\alpha) \mapsto T$ is an isomorphism. This proves \ref{exthom5a}. 

We now demonstrate \ref{exthom5b}. 

Fix $1 \le i \le r$ and suppose that $n$ is not a power of a strictly smaller integer, that is $\gcd(l_1, l_2, \ldots, l_r) = 1$. The proof of Proposition~\ref{prop:lnsplit} and Lemma~\ref{lemma:characterisationofmn} shows that,  $\Ln{n} \cong \Kn{n} \rtimes \gen{T(p_j, N_j) \mid 1 \le j \le r, \ j \ne i}$. Therefore, by Corollary~\ref{cor:omit0} the map from the subgroup $\ker(\rsig_{\omega}) \gen{(T(p_i, N_i),0) \mid 1 \le j \le r, \ j \ne i}$ of $\aut{\xnz, \shift{n}}$ to $\Ln{n}$ sending  $(T,\alpha) U \to  TU$ for $U \in \gen{T(p_i, N_i) \mid 1 \le i \le r-1}$  is an isomorphism. Now as the group $\aut{\xnz, \shift{n}}/ \gen{\shift{n}} \cong \Ln{n}$, we see that the extension $1 \to \gen{\shift{n}} \to \aut{\xnz, \shift{n}} \to \Ln{n}$ splits and so $\aut{\xnz, \shift{n}} \cong \ln \times \Z$.

If $n$ is a power of a proper integer, then $\shift{n} \in \aut{\xnz, \shift{n}}$ has proper roots. For example setting $d$ to be the $\gcd(l_1, l_2, \ldots, l_r)$, $$\prod_{1 \le i \le r} (T(p_i, N_i),0)^{\frac{l_i}{d}}$$  is a root of $\shift{n}$. This now means that $\aut{\xns, \shift{n}}$ cannot be written as $\gen{\shift{n}}$ and some subgroup $L \le \aut{\xnz, \shift{n}}$, since $L$ must necessarily have non-trivial intersection with $\shift{n}$.
\end{proof}

\begin{corollary}\label{cor:exthm}
	Let $n \in \N_{2}$, $p_1, p_2, \ldots, p_r$ be distinct primes, and $l_1, \ldots, l_r \in \N_{1}$, such that $n = p_1^{l_1} p_2^{l_2} \ldots p_{r}^{l_r}$. Set $G:= (\gen{(\shift{n})\rsig_{\omega}})\rsig_{\omega}^{-1}$  a subgroup of $\aut{\xnz, \shift{n}}$ and  $l =  \gcd(l_1, l_2, \ldots, l_r)$. Then, 
	\begin{enumerate}[label=\arabic*.]
		\item $G$ is a normal subgroup of $\aut{\xnz, \shift{n}}$;
		
		\item $G \cong \Kn{n} \times \Z$ and there is a short-exact sequence: $$1  \to G \to \aut{\xnz, \shift{n}} \to \Z^{r-1} \times \Z/l \Z;$$      
		\item if $n$ is prime, $G = \aut{\xnz, \shift{n}} \cong \Ln{n} \times \Z$. \label{cor:exthm5}
	\end{enumerate}
	  
\end{corollary}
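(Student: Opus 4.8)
The plan is to read everything off Theorem~\ref{thm:exthom} together with the explicit description of the image of $\rsig_{\omega}$. Throughout write $K:=\ker(\rsig_{\omega})$ for the kernel of $\rsig_{\omega}\colon\aut{\xnz,\shift{n}}\to\Pi_{k\in\N}\Un{n^{k}-1,n^{k}}$, so that $K\cong\Kn{n}$ by Theorem~\ref{thm:exthom}(\ref{exthom5a}). Part~1 will be immediate: $G$ is the preimage of the subgroup $\gen{(\shift{n})\rsig_{\omega}}$ under the homomorphism $\rsig_{\omega}$ into the \emph{abelian} group $\Pi_{k\in\N}\Un{n^{k}-1,n^{k}}$, and every subgroup of an abelian group is normal, so $G\unlhd\aut{\xnz,\shift{n}}$.

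For Part~2 I would first identify $\gen{(\shift{n})\rsig_{\omega}}$. Since $\shift{n}=(T(n,1),0)$, the computation in the proof of Theorem~\ref{thm:exthom}(\ref{exthom3}) gives $(\shift{n})\rsig_{\omega}=(1,n,n^{2},\ldots)$, whose $k$-th coordinate is $n^{k-1}\equiv n^{-1}\pmod{n^{k}-1}$. The multiplicative order of $n$ modulo $n^{k}-1$ equals $k$ (indeed $n^{k}\equiv1$, while for $1\le j<k$ we have $1<n^{j}\le n^{k-1}<n^{k}-1$), so the order of $(\shift{n})\rsig_{\omega}$ in the product is $\operatorname{lcm}_{k\ge1}k=\infty$; hence $\gen{(\shift{n})\rsig_{\omega}}\cong\Z$ and, in particular, $K\cap\gen{\shift{n}}=1$. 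Since $g\in G$ exactly when $(g)\rsig_{\omega}=(\shift{n}^{m})\rsig_{\omega}$ for some $m\in\Z$, i.e.\ when $g\shift{n}^{-m}\in K$, we get $G=K\gen{\shift{n}}$. Now $\shift{n}$ is central in $\aut{\xnz,\shift{n}}$, hence centralises $K$, and together with $K\unlhd G$ and $K\cap\gen{\shift{n}}=1$ this upgrades the product to a direct one: $G=K\times\gen{\shift{n}}\cong\Kn{n}\times\Z$.

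For the short exact sequence I would restrict $\rsig_{\omega}$ and use the first isomorphism theorem: $\aut{\xnz,\shift{n}}/G\cong(\aut{\xnz,\shift{n}})\rsig_{\omega}/\gen{(\shift{n})\rsig_{\omega}}$. By Theorem~\ref{thm:exthom}(\ref{exthom4}) and (\ref{exthom3}) the image is free abelian of rank $r$ with basis $f_{i}:=((T(p_{i},N_{i}),0)^{-1})\rsig_{\omega}=(p_{i},p_{i},\ldots)$, $1\le i\le r$, and $(\shift{n}^{-1})\rsig_{\omega}=(n,n,\ldots)$ corresponds (written additively in this $\Z^{r}$) to $\sum_{i}l_{i}f_{i}$; a Smith normal form computation then gives $\Z^{r}/\gen{(l_{1},\ldots,l_{r})}\cong\Z^{r-1}\times\Z/l\Z$. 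Equivalently one may note that $G\supseteq\gen{\shift{n}}$ with $G/\gen{\shift{n}}\cong K\cong\Kn{n}$ sitting inside $\aut{\xnz,\shift{n}}/\gen{\shift{n}}\cong\Ln{n}$ as the subgroup $\Kn{n}$, so that $\aut{\xnz,\shift{n}}/G\cong\Ln{n}/\Kn{n}\cong\mn{n}\cong\Z^{r-1}\times\Z/l\Z$ by Proposition~\ref{prop:lnsplit} and Lemma~\ref{lemma:characterisationofmn}. This completes Part~2.

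Part~3 then follows by specialisation: when $n$ is prime we have $r=1$ and $l=l_{1}=1$, so $\Z^{r-1}\times\Z/l\Z$ is trivial and the sequence of Part~2 forces $G=\aut{\xnz,\shift{n}}$; moreover $\Kn{n}=\Ln{n}$ by Proposition~\ref{prop:lnsplit}, so Part~2 yields $G\cong\Ln{n}\times\Z$. The only point requiring genuine care is the passage from a semidirect to a direct product in Part~2, which is exactly what the centrality of $\shift{n}$ supplies; everything else is bookkeeping on top of Theorem~\ref{thm:exthom}.
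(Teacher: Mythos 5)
Your proof is correct and follows essentially the same route as the paper: normality from $G$ being a preimage under a homomorphism into an abelian group, the decomposition $G=\ker(\rsig_{\omega})\times\gen{\shift{n}}$ from trivial intersection plus centrality of the shift, and the quotient computed via $\Z^{r}/\gen{(l_1,\ldots,l_r)}$. The only cosmetic difference is that you finish that last computation by Smith normal form where the paper identifies the torsion subgroup and invokes the third isomorphism theorem; both are routine and equivalent.
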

\begin{proof}
	For $1 \le i \le r$ let $N_{i} = n/p_{i}$.
	
	By Theorem~\ref{thm:exthom}, $(\aut{\xnz,\shift{n} })\rsig_{\omega} = \gen{ (T(N_i,p_i),0)) \rsig_{\omega} \mid 1 \le i \le r }$. The quotient  $(\aut{\xnz,  \shift{n}})\rsig_{\omega} / (\gen{\shift{n}})\rsig_{\omega}$,  is the quotient of $\Z^{r}$ by the subgroup $\{ (ml_1, ml_2, \ldots, ml_{r})\mid m \in \Z \}$. It is not hard to see that the torsion subgroup of $\Z^{r}/ \gen{(l_1, l_2, \ldots, l_r)}$ is precisely the subgroup $\gen{(\sfrac{l_1}{l}, \sfrac{l_2}{l}, \ldots, \sfrac{l_r}{l})}$. By the third isomorphism theorem, we have: $$\dfrac{\left(\dfrac{\Z^{r}}{\gen{(l_1, l_2, \ldots, l_r)}}\right)}{\left(\dfrac{ \gen{(\sfrac{l_1}{l}, \sfrac{l_2}{l}, \ldots, \sfrac{l_r}{l})}}{ \gen{(l_1, l_2, \ldots, l_r)}}\right)} \cong \dfrac{\Z^{r}}{\gen{(\sfrac{l_1}{l}, \sfrac{l_2}{l}, \ldots, \sfrac{l_r}{l})}}.$$ 
	As the group $\gen{(1,1,\ldots,1, 0)}$ is a finite index subgroup of $\Z^{r}/\gen{(\sfrac{l_1}{l}, \sfrac{l_2}{l}, \ldots, \sfrac{l_r}{l})}$, we deduce that $\Z^{r}/\gen{(\sfrac{l_1}{l}, \sfrac{l_2}{l}, \ldots, \sfrac{l_r}{l})} \cong \Z^{r-1}$.
	
	Let $\psi: \aut{\xnz,\shift{n} } \to (\aut{\xnz,  \shift{n}})\rsig_{\omega} / (\gen{\shift{n}})\rsig_{\omega} $, be  the composition of $\rsig_{\omega}$ with the natural map from $(\aut{\xnz,\shift{n} })\rsig_{\omega}$ to the quotient $(\aut{\xnz,  \shift{n}})\rsig / (\gen{\shift{n}})\rsig_{\omega}$. It is clear that $\ker(\psi)$ is precisely the group $G$. Moreover, it is also clear that,  $\gen{ \{((T(N_i,p_i),0) \psi \mid 1 \le i < r\} \sqcup \{ (T(N_r, p_r),0)^{j}\psi \mid 1 \le j < l_{r} \} } = (\aut{\xnz,\shift{n} })\psi$. This is because, $(\Pi_{1 \le i \le r}(T(N_i,p_i),0)^{l_i})\psi \in \gen{(\shift{n})\psi}$ which is trivial. Thus we see that, there is a short exact sequence $$1  \to G \to \aut{\xnz, \shift{n}} \to (\aut{\xnz,  \shift{n}})\rsig_{\omega} / (\gen{\shift{n}})\rsig_{\omega}.$$
	
	In particular, any element of $\aut{\xnz, \shift{n}}$, can be written as a product $gt\shift{n}^{j}$, for $g \in G$, $t$ a product of an element of $\gen{ \{((T(N_i,p_i),0) \psi \mid 1 \le i < r\} }$ with an element of $ \{ (T(N_r, p_r),0)^{j} \mid 1 \le j < l_{r} \}$, and $j \in \Z$.

	The homomorphism $\rsig_{\omega}\restriction_{G}$ has image precisely $\gen{(\shift{n})\rsig_{\omega}}$, thus the short exact sequence  $$1  \to \ker(\rsig_{\omega}\restriction_{G}) \to G \to (G)\rsig_{\omega}$$ splits. Since $\shift{n}$ commutes with every element of $G$ we see that $G = \ker(\rsig_{\omega}\restriction_{G}) \times \gen{\shift{n}}$. Clearly, by definition of the group $G$, $\ker(\rsig_{\omega}\restriction_{G}) = \ker(\rsig_{\omega})$. Thus by Theorem~\ref{thm:exthom} $\ker(\rsig_{\omega}\restriction_{G}) \cong \Kn{n}$.

	Part~\ref{cor:exthm5} follows from the previous part, since when $n$ is prime, $r = 1$, and $\Kn{n} = \Ln{n}$.
\end{proof}

\begin{corollary}
	Let $n \in \N_{2}$, then, $\com{\aut{\xnz, \shift{n}}} \cong \com{\Ln{n}} \cong \com{\Kn{n}}$.  \qed
\end{corollary}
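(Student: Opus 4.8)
The plan is to identify all three commutator subgroups with one and the same subgroup of $\Kn{n}$, leaning on the semidirect decompositions $\aut{\xnz,\shift{n}} \cong \Kn{n} \rtimes \Z^{r}$ and $\Ln{n} \cong \Kn{n} \rtimes \mn{n}$ (Theorem~\ref{thm:exthom} and Proposition~\ref{prop:lnsplit}; here $r$ is the number of distinct prime divisors of $n$ and $\mn{n}$ is abelian), on the central extension $1 \to \gen{\shift{n}} \to \aut{\xnz,\shift{n}} \to \Ln{n} \to 1$, and on the splitting $G = \Kn{n} \times \gen{\shift{n}}$ from Corollary~\ref{cor:exthm}.

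First I would record the formal facts. (i) Since $\aut{\xnz,\shift{n}}/\Kn{n} \cong \Z^{r}$ and $\Ln{n}/\Kn{n} \cong \mn{n}$ are abelian, both $\com{\aut{\xnz,\shift{n}}}$ and $\com{\Ln{n}}$ lie inside $\Kn{n}$. (ii) By Corollary~\ref{cor:exthm}, $\gen{\shift{n}}$ is a direct factor of $G = \Kn{n} \times \gen{\shift{n}}$, so $\Kn{n} \cap \gen{\shift{n}} = 1$. (iii) As $\gen{\shift{n}}$ is a central, hence direct, factor of $G$, we have $\com{G} = \com{\Kn{n}}$; since $G \le \aut{\xnz,\shift{n}}$ this gives $\com{\Kn{n}} = \com{G} \le \com{\aut{\xnz,\shift{n}}}$, and combining with (i), $\com{\Kn{n}} \le \com{\aut{\xnz,\shift{n}}} \le \Kn{n}$.

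Next, the quotient map $\phi : \aut{\xnz,\shift{n}} \to \Ln{n}$ with kernel $\gen{\shift{n}}$ carries $\com{\aut{\xnz,\shift{n}}}$ onto $\com{\Ln{n}}$, and the restriction of $\phi$ to $\com{\aut{\xnz,\shift{n}}}$ has kernel $\com{\aut{\xnz,\shift{n}}} \cap \gen{\shift{n}}$, which is trivial by (i) and (ii); hence $\com{\aut{\xnz,\shift{n}}} \cong \com{\Ln{n}}$ --- and in fact these coincide once we use $\phi$ to identify $\Kn{n} \le \aut{\xnz,\shift{n}}$ with the canonical copy of $\Kn{n}$ in $\Ln{n}$. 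It then suffices to show $\com{\Ln{n}} = \com{\Kn{n}}$. The inclusion $\com{\Kn{n}} \le \com{\Ln{n}}$ is immediate. For the reverse, write $\Ln{n} = \Kn{n} \rtimes \mn{n}$, where a complement is generated by the classes of the transducers $T(p_{i},N_{i})$ of Example~\ref{Example:imagegenerators} (with $N_{i} = n/p_{i}$); expanding a general commutator of $\Ln{n}$ by the standard identities and using that $\mn{n}$ is abelian, the inclusion $\com{\Ln{n}} \le \com{\Kn{n}}$ reduces to $[x,(T(p_{i},N_{i}),0)] = x^{-1}x^{(T(p_{i},N_{i}),0)} \in \com{\Kn{n}}$ for all $x \in \Kn{n}$ and all $i$, i.e.\ to the assertion that the conjugation action of $\mn{n}$ on $\Kn{n}$ is trivial on the abelianisation $\Kn{n}/\com{\Kn{n}}$. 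Granting that, $\Ln{n}/\com{\Kn{n}} \cong (\Kn{n}/\com{\Kn{n}}) \rtimes \mn{n}$ is abelian, so $\com{\Ln{n}} \le \com{\Kn{n}}$, and all three commutator subgroups equal $\com{\Kn{n}}$.

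The main obstacle is precisely this last assertion: that conjugation by each complement generator $(T(p_{i},N_{i}),0)$ induces the identity on $\Kn{n}^{\mathrm{ab}}$. Everything else is formal group theory resting on the already-established decompositions; this step needs the combinatorics of transducers, and I would attack it through the explicit description of $(T(p_{i},N_{i}),0)$ in Example~\ref{Example:imagegenerators} as the ``partial shift'' acting on the $X_{p_{i}}$--coordinates of the alphabet $X_{n} = X_{p_1}^{l_1}\times\cdots\times X_{p_r}^{l_r}$: conjugating an element of $\Kn{n}$ by it is a re-indexing of coordinates that alters each element only by a product of commutators in $\Kn{n}$, and hence is trivial on $\Kn{n}^{\mathrm{ab}}$. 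I would be careful to keep this computation uniform in $n$ --- in particular independent of whether $n$ is a proper power --- since the rest of the argument provides no such reduction.
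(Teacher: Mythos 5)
The paper states this corollary with no proof, so there is no argument of record to compare against; I can only judge your proposal on its own terms. Its formal skeleton is sound and is presumably what the author had in mind for the first isomorphism: $\com{\aut{\xnz,\shift{n}}}$ lies in $\ker(\rsig_{\omega})$ because the quotient is abelian, $\ker(\rsig_{\omega})\cap\gen{\shift{n}}$ is trivial, so the central quotient onto $\Ln{n}$ restricts to an isomorphism of $\com{\aut{\xnz,\shift{n}}}$ onto $\com{\Ln{n}}$, and $\com{\Kn{n}}=\com{G}\le\com{\aut{\xnz,\shift{n}}}$ is a correct use of Corollary~\ref{cor:exthm}. Your reduction of the remaining equality $\com{\Ln{n}}=\com{\Kn{n}}$ to the single statement that the conjugation action of $\mn{n}=\gen{T(p_i,N_i)}$ on $\Kn{n}/\com{\Kn{n}}$ is trivial is also the right reduction.

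That statement, however, is where the entire content of the corollary sits, and you do not prove it --- you only announce that you ``would attack it'' via the coordinate description of $T(p_i,N_i)$. For a semidirect product $K\rtimes A$ with $A$ abelian one has $\com{K\rtimes A}=\com{K}\cdot[K,A]$, and $[K,A]\le\com{K}$ can genuinely fail even for $A\cong\Z/2\Z$ (take $S_3=\Z/3\Z\rtimes\Z/2\Z$); nor can the issue be sidestepped by perfectness, since $\Kn{n}$ surjects onto $\Z/2\Z$ by composing $\Pi$ with the sign of the induced permutation of $\rwnl{1}$ (the single-state transposition transducers lie in $\mathcal{D}_n\le\Kn{n}$ by Corollary~\ref{cor:transhomeo}). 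Your one-sentence heuristic --- that conjugation by the partial shift ``re-indexes coordinates'' and therefore ``alters each element only by a product of commutators'' --- does not identify those commutators, and nothing established in the paper shows that $x^{-1}x^{T(p_i,N_i)}$ lies in $\com{\Kn{n}}$ for every $x\in\Kn{n}$: the action of $T(p_i,N_i)$ on $\Kn{n}$ is not shown to be inner, and the element $\prod_i T(p_i,N_i)^{l_i}$ being trivial in $\Ln{n}$ only constrains one product of these actions, not each generator separately. Until this step is supplied --- for instance by verifying it on a generating set of $\Kn{n}$ such as the marker automorphisms of Section~\ref{Section:markersandhomeostate}, or by computing $\Kn{n}/\com{\Kn{n}}$ and checking the $\mn{n}$-action on it --- what you have actually proved is only the chain $\com{\Kn{n}}\le\com{\Ln{n}}\cong\com{\aut{\xnz,\shift{n}}}\le\Kn{n}$, not the claimed isomorphisms.
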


\begin{corollary}
	The order problem in $\Kn{n}$ is undecidable.
\end{corollary}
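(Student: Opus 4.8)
The plan is to reduce the order problem in the ambient group $\aut{\xnz,\shift{n}}$ to the order problem in $\Kn{n}$, and then to invoke the known undecidability of the former. Recall that an element of $\aut{\xnz,\shift{n}}$ is exactly a reversible one-dimensional cellular automaton over $n$ symbols, and that the periodicity problem for such automata — equivalently, the order problem in $\aut{\xnz,\shift{n}}$ — was proved undecidable by Kari and Ollinger for every $n\ge 2$ (see also \cite{BelkBleakCameronOlukoya} and the references therein, where this is exploited to show the order problem in $\Onr$ is undecidable). So it suffices to give a Turing reduction: assuming an oracle deciding the order problem in $\Kn{n}$, we build an algorithm deciding the order problem in $\aut{\xnz,\shift{n}}$.

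The structural input is Theorem~\ref{thm:exthom}, which provides the isomorphism $\aut{\xnz,\shift{n}}\cong\ker(\rsig_{\omega})\rtimes\Z^{r}$ together with the identification $\ker(\rsig_{\omega})\cong\Kn{n}$ given by $(L,\alpha)\mapsto L$, where $\rsig_{\omega}$ is a homomorphism onto a torsion-free abelian group. Two observations follow immediately. First, if $\psi\in\aut{\xnz,\shift{n}}$ and $\psi\notin\ker(\rsig_{\omega})$, then $\psi$ has infinite order, since its image under $\rsig_{\omega}$ is a nontrivial element of a torsion-free group. Second, if $\psi\in\ker(\rsig_{\omega})$ and $L\in\Kn{n}$ is the element matched to $\psi$, then $\psi$ has finite order in $\aut{\xnz,\shift{n}}$ if and only if $L$ has finite order in $\Kn{n}$, because $\ker(\rsig_{\omega})\cong\Kn{n}$ as groups. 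The algorithm is therefore: given $\psi$, first decide whether $\psi\in\ker(\rsig_{\omega})$; if not, output ``infinite order''; if so, compute the matching $L\in\Kn{n}$ and query the order oracle for $\Kn{n}$.

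To see that both steps are effective, represent $\psi$ as a pair $(L,\alpha)$ with $L\in\Ln{n}$ and $\alpha$ an annotation (Theorem~\ref{theorem:repsbyLnandannotation}). From $L$ one computes a state $q$ on which the canonical annotation vanishes, the clopen set $\im{q}$, and hence the minimal $l$ and the count $s$ for which $\im{q}$ is a union of $s$ basic open sets $U^{+}_{\mu}$ with $\mu\in\xn^{l}$, as well as the integer $m$ recording the difference between $\alpha$ and the canonical annotation. The proof of Theorem~\ref{thm:exthom} shows that $\psi\in\ker(\rsig_{\omega})$ is equivalent to an explicit arithmetic relation among $s$, $l$ and $m$ — in particular $s$ is forced to be a power of $n$, so $L\in\Kn{n}$ — and this relation is decidable; when it holds, $L$ is precisely the element of $\Kn{n}$ matched to $\psi$. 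Hence, were the order problem in $\Kn{n}$ decidable, so would be the order problem in $\aut{\xnz,\shift{n}}$, a contradiction. (Alternatively, once Theorem~\ref{thm:intro1} is established one has $\Kn{n}\cong\mathrm{Inert}_{n}$, and the corollary is just the undecidability of the order problem for the inert subgroup.)

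I expect the only genuine obstacle to be the effectiveness of deciding membership in $\ker(\rsig_{\omega})$: a priori $\rsig_{\omega}$ takes values in the infinite product $\Pi_{k\in\N}\Un{n^{k}-1,n^{k}}$, so one cannot simply test the coordinates $\rsig_{k}(\psi)$ one at a time; the point is that the proof of Theorem~\ref{thm:exthom} already packages the relevant information into the finite data $(s,l,m)$ read off the transducer. Everything else is formal — a homomorphism to a torsion-free group carries every nontrivial torsion element into the kernel, and that kernel is a faithful copy of $\Kn{n}$.
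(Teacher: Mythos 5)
Your argument is correct and is essentially the paper's own proof: the paper likewise cites the Kari--Ollinger undecidability of the order problem in $\aut{\xnz,\shift{n}}$ and deduces the result from Theorem~\ref{thm:exthom}, leaving implicit exactly the reduction you spell out (torsion elements are forced into $\ker(\rsig_{\omega})\cong\Kn{n}$ because the quotient is torsion-free, and kernel membership is read off effectively from the finite transducer data). Your extra care about the effectiveness of the membership test is a detail the paper glosses over, but it is not a different method.
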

\begin{proof}
	By a result of \cite{KariOllinger}, the order problem in $\aut{\xnz, \shift{n}}$ is undecidable, the result now follows  Theorem~\ref{thm:exthom}.
	
\end{proof}

We note that it is a result of \cite{BelkBleakCameronOlukoya} that the groups $\Ons{r}$, $1 \le r \le n-1$, have unsolvable order problem. This now also follows from the corollary above since $\Kn{n} \unlhd \Ln{n,1} \le \Ons{1} \le \Ons{r}$.

\section{Reversing arrows and the dimension group} \label{Section:revanddimgroup}

In this section we show that the operation of ``reversing arrows'' as described in Subsection~\ref{subsection:reversingarrows} induces an automorphism of $\On$. We note that the homeomorphism of $\rho$ of $\xnz$ that sends a sequence $x \in \xnz$ to the element $y \in \xnz$ such that $y_i = x_{-i}$, induces, by conjugation, an automorphism of  $\aut{\xnz, \shift{n}}$ sending the shift map to its inverse. Therefore,  the map $\rho$ induces an automorphism of $\Ln{n}$. This automorphism corresponds precisely to ``reversing arrows'' of the elements of $\Ln{n}$. Thus one may recover a representative in $\Ln{n}$. Combinatorially, we note that the map $\rev{\phantom{a}}: \xn^{k} \to \xn^{k}$ induces a bijection, an involution, from $\rwnl{k}$ to itself for all $k \in \N$. Thus conjugation by this bijection induces an automorphism of $\sym{\rwnl{\ast}}$. An interpretation of the results of this section is that $(\On)\Pi$ (see Subsection~\ref{Subsection:MnOnLn} for the definition of the map $\Pi: \On \to \sym{\wns}$) is normalised by the bijection $\rev{\phantom{a}}: \rwnl{\ast} \to \rwnl{\ast}$. Since $(\On)\Pi \le \sym{\rwnl{\ast}}$ is isomorphic to $\On$ we obtain an automorphism of $\On$.  We note that for $T \in \On$, the bijection on the induced circuits of $\rev{T}$ from $\rwnl{\ast}$ to itself, is precisely the map $\rev{\phantom{a}} (T)\Pi \rev{\phantom{a}}$. To prove the main result of this section we develop a procedure that associates to each element $T \in \On$ an element $U \in \On$ such that $(U)\Pi = \rev{\phantom{a}} (T)\Pi \rev{\phantom{a}}$ (note that $U$ must be unique as $\Pi: \On \to \sym{\rwnl{\ast}}$ is a faithful representation ). 

We begin by introducing \textit{non-deterministic transducers}.  

\subsection{Non-deterministic, strongly synchronizing transducers}

In this subsection we often have to distinguish between equality in the monoid $\xns$ and equality of words. Therefore, given elements $w_1,\ldots, w_l, u_1, \ldots, u_{m} \in \xn \sqcup \{ \emptyword\}$,  we write $w_1 \ldots
w_l = u_1 \ldots u_m$ if and only if $l= m$ and $w_i = u_i$ for all $1 \le i \le m$; we write $w_1 \ldots w_l \equiv u_1 \ldots u_m$ if and only if $w_1 \ldots w_l$ and $u_1 \ldots u_l$ represent the same element of the monoid $\xns$.

A \textit{non-deterministic automaton} $A = \gen{\xn, Q_{A}, \pi_{A}}$ is a labelled directed graph with a finite vertex/state set $Q_{A}$, a finite edge set $\pi_{A} \subseteq  \xn^{\ast} \times Q_{A} \times Q_{A} $ ($|\pi_{A}|< \infty$) and a finite alphabet set $\xn$. In other words, a non-deterministic automaton is a finite directed graph with a finite number edges between states and with edges labelled by elements of $\xns$.

Let $A$ be a non-deterministic automaton, a path in $A$ is a word $$(w_1, p_1, q_1), (w_2, p_2, q_2),\ldots, (w_m, p_m, q_{m}) \in \pi_{A}^{\ast}$$ such that $q_{i} = p_{i+1}$ for $1 \le i \le m-1$. To simplify notation, we write $(w, p_1, q_{m})$ for the path $(w_1, p_1, q_1), (w_2, p_2, q_2),\ldots, (w_m, p_m, q_{m})$ where $w  =  w_1 w_2 \ldots w_m$.

We impose the following non-degeneracy condition on all non-deterministic automata in this article: for any circuit $(w, p, p)$, $w  \not \equiv \ew$.

Let $A$ be a non-deterministic automaton. For a state $q \in Q_{A}$, we define $\dom{q} \subseteq \xnn$ as follows. Let $x \in \xno$ be such that for any non-empty proper prefix $x_i$ of $x$ there are elements $w_1, w_2, \ldots, w_m, w \in \xns$, with $x_i$ a prefix of $w$, states $p_0=q,p_1, \ldots, p_m \in Q_{A}$, such that $(w_i, p_{i-1}, p_{i} )$ is an element of $\pi_{A}$ for all $1 \le i \le m$ and  $w_1w_2 \ldots w_m \equiv w$. Then $ x \in \dom{q}$.

A non-deterministic automaton $A = \gen{\xn, Q_{A}, \pi_{A}}$ is called \textit{strongly synchronizing} if it satisfies the following condition. For any word $u \in \xnp$, there is a unique (up-to rotation) sequence  $$(w_1, p_1, p_2), (w_2, p_2, p_3),\ldots, (w_m, p_m, p_{m+1}) \in \pi_{A}$$ with $p_{m+1} = p_1$ such that for $v \in \xnp$ with $v \equiv w_1 \ldots w_m$, the equality $\rotclass{v} = \rotclass{u}$ holds. 

The collision with terminology in Subsection~\ref{Subsection:synchandbisynch} is deliberate as Remark~\ref{rmk:stronglysynchautisstronglysynchnondetaut} makes clear. An interpretation of Proposition~\ref{Prop:recoveringrepinOn} is that strongly synchronzing non-deterministic automaton and strongly synchronizing automaton are essentially ``the same thing''.

A strongly synchronizing non-deterministic automaton $A$ is called \emph{core} if it is strongly connected.

\begin{Remark}\label{rmk:stronglysynchautisstronglysynchnondetaut}
		Every strongly synchronizing automaton is a strongly synchronizing non-deterministic automaton. On the other hand any deterministic automaton which is strongly synchronizing in the non-deterministic sense in strongly synchronizing in the deterministic sense. That is the two notions of being ``strongly synchronizing'' coincide on the class of deterministic automata.
\end{Remark}

\begin{Remark}\label{rmk:consequencesofstrongsynch}
	Let $A$ be a strongly synchronizing non-deterministic automaton. Given a prime word $u \in \xn^{+}$, the unique circuit $(w_1, p_1, p_2)(w_2, p_2, p_3)\ldots (w_m, p_m, p_{m+1})$ in $A$ with $w_1 \ldots w_m$ equivalent to an element of $\rotclass{u}$, is a prime word in $\pi_{A}^{\ast}$. This follows from the non-degeneracy condition and the fact that $u$ is a prime word. On the other hand, given a circuit $(w_1, p_1, p_2)(w_2, p_2, p_3)\ldots (w_m, p_m, p_{m+1})$ in $A$ which is a prime word in $\pi_{A}^{\ast}$,  the word $w_1\ldots w_m$ is equivalent to a prime word in $\xns$. This follows from the existence and uniqueness clause in the definition of strongly synchronizing automata. Thus it follows that given a strongly synchronizing automaton $A$, there will be circuits in $A$ which are prime words in $\pi_{A}$.
\end{Remark} 

\begin{Remark}\label{rmk:singlestateandstrongsynch}
	Let $A$ be a strongly synchronizing non-deterministic automaton with alphabet set $\xn$ and only one state. Then $A$ is the single state automaton with $n$ edges each labelled with a unique element of $\xn$.
\end{Remark}

A \textit{non-deterministic transducer} $T$ is a tuple $T = \gen{\xn, Q_{T}, \pi_{T}, \lambda_{T}}$ where $I(T) = \gen{\xn, Q_{T}, \pi_{T}}$ is a (non-degenerate) non-deterministic automaton called the \textit{input automaton}, and,  an \textit{output function}  $\lambda_{T}: \pi_{T} \to \xn^{\ast}$. We identify a non-deterministic transducer with a labelled digraph whereby an edge $(w,p,q)$  of $I(T)$ is labelled $w|\lambda_{T}(w,p,q)$.

Let $p_1, \ldots, p_{m+1} \in Q_{T}$ and $w_{1}, \ldots, w_m \in \xns$ such  $(w_{i}, p_i, p_{i+1})$ is an element of $\pi_{T}$ for all $1 \le i \le m$. Then, setting $w =  w_1 \ldots w_m $, we write $\lambda_{T}(w, p_1)$ for the word $\lambda_{T}(w_1, p_1, p_2) \ldots \lambda_{T}(w_{m}, p_m, p_{m+1})$ and write $(w, p_1, p_{m+1})$ for the path $(w_1, p_1, p_2) (w_2, p_2, p_3)\ldots (w_m, p_m, p_{m+1})$ labelled $w$ from $p_1$ to $p_{m+1}$.  Setting $\mathsf{w} \in \pi_{T}^{\ast}$ to be the path $(w_2, p_2, p_3)\ldots (w_m, p_m, p_{m+1})$ we  write $\lambda_{T}(\mathsf{w})$ for the word $\lambda_{T}(w_1, p_1, p_2) \ldots \lambda_{T}(w_{m}, p_m, p_{m+1})$.

We impose the following non-degeneracy condition on all non-deterministic transducers in this article: for any circuit $(w, p, p)$, $\lambda_{T}(w, p, p) \not \equiv \emptyword$. 

We identify the output function $\lambda_{T}$ with a subset of $\xns \times Q_{T} \times Q_{T}$. Thus, the non-degeneracy condition above means that the tuple $O(T)= \gen{\xn, Q_{T}, \lambda_{T}}$ is also a non-deterministic automaton called the \textit{output automaton}. Thus, the labelled digraph for the input automaton of $T$ is obtained by deleting output words in the digraph of the transducer; likewise the digraph for the output automaton of $T$ is obtained by input words in the labelled digraph of $T$.

For a non-deterministic transducer $T$ and $q \in Q_{T}$, we identify $\dom{q}$ with $\dom{q}$ as a state of $I(T)$.

At this point, one might expect the definition of the image $\im{q}$ of a state $q$ of a non-deterministic transducer $T$ and the corresponding map $\lambda_{T}(\cdot, q): \dom{q} \to \im{q}$. However, for a general non-deterministic transducer, the map $\lambda_{T}(\cdot,q): \dom{q} \to \im{q}$ is a multifunction.

Let $T$ be a non-deterministic automaton. Then $T$ is called \textit{strongly synchronizing} if $I(T)$ is strongly synchronizing; $T$ is called \textit{bi-synchronizing} if both $I(T)$ and $O(T)$ are strongly synchronizing. If $T$ is strongly synchronizing or bi-synchronizing, then $T$ is called \textit{core} if its input automaton is core.

The results that follow demonstrate that if we restrict to the class of strongly-synchronizing transducers, the proposed map $\lambda_{T}$ is in fact a well-defined function. 

The following function will be used regularly in the proofs that follow.

\begin{Definition}
	Let $A$ be a non-deterministic automaton or non-deterministic transducer. For a state $q \in Q_{A}$ define a function $L_{q}: \xns \to \pi_{A}^{\ast}$ as follows. Set $(\ew)L_{q} = \ew$; for a word $w \in \xnp$ we set $(w)L_{q} $ to be  the greatest common prefix of the set of all infinite paths, $(u_1, p_1, p_2) (u_2, p_2, p_3) \ldots \in \pi_{A}^{\N}$ where $p_1 = q$ and $w$ is equivalent in $\xns$ to a prefix of $u_1u_2 u_3 \ldots$.
\end{Definition}

\begin{lemma}\label{Lemma:injectionfromedgespacetodomain}
	Let $A$ be a core strongly synchronizing non-deterministic automaton. There is an $N \in \N$ such that the following holds. Let $q \in Q_{A}$, $x \in \dom{q}$, paths $(w, q, p)$ and $(w',q, p')$ in $A$ and words $\mu, \mu' \in \xns$, both beginning with the length $N$ prefix $x_{[1,N]}$ of $x$ be given. If $\mu \equiv w$ and $\mu' \equiv w'$, then the paths $(w,q,p)$ and $(w,q,p')$, as words in $\pi_{A}^{\ast}$, have a non-empty prefix in common.
\end{lemma}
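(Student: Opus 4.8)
The plan is to argue by contradiction: I assume no such $N$ exists, extract from the failures a rigid limiting object, and contradict the uniqueness of circuits built into the definition of ``strongly synchronizing''.

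\textbf{Step 1 (reduction to two infinite paths with a common label).} Suppose that for every $N \in \N$ there are $q_N \in Q_A$, $x^{(N)} \in \dom{q_N}$, paths $(w_N, q_N, p_N)$, $(w_N', q_N, p_N')$ and $\mu_N \equiv w_N$, $\mu_N' \equiv w_N'$ both beginning with $x^{(N)}_{[1,N]}$, such that the two paths share no non-empty prefix. Since $A$ is core, every state has an outgoing edge, so I may extend both paths to infinite paths from $q_N$; this does not shorten their labels, so they still have a common prefix of length $N$ and they still disagree on their first edge. By the non-degeneracy condition any infinite path contains a closed subpath among its first $|Q_A|+1$ edges, whose label is non-empty; iterating, the label of the first $m(|Q_A|+1)$ edges of an infinite path has length $\ge m$. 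Hence the labelling map from infinite paths (with the product topology on $\pi_A^{\,\omega}$) to $\xno$ is uniformly continuous, and the set of infinite paths issuing from a fixed state is a closed, hence compact, subspace of $\pi_A^{\,\omega}$. Passing to a subsequence I may assume $q_N = q$ is constant; applying compactness to the two sequences of paths and continuity to their labels I obtain two infinite paths $\Gamma,\Gamma'$ from $q$ which disagree on their first edge but have the same label $\xi \in \dom q$.

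\textbf{Step 2 (producing two circuits with the same output word).} Put $D := \max\{|w| : (w,\cdot,\cdot)\in\pi_A\}$. Reading $\Gamma$ while letting $\Gamma'$ catch up to matching output length, I track the configuration consisting of the current state of $\Gamma$, the current state of $\Gamma'$, and the (bounded, in $\{0,\dots,D-1\}$) difference of output lengths produced so far; this runs in a finite set. Sampling these configurations far apart, some configuration recurs across an arbitrarily long stretch of $\Gamma$. This gives a state $r$ visited twice by $\Gamma$ with a non-empty output word $v$ between the two visits (a circuit $\mathbf c$ based at $r$, non-empty by non-degeneracy), and simultaneously a state $r'$ visited twice by $\Gamma'$ with output word $v'$ of the same length, where $v$ and $v'$ are two windows of $\xi$ offset by a bounded amount; by taking the stretch long and using Remark~\ref{rmk:consequencesofstrongsynch} (and, if necessary, iterating the recurrence so the offset is absorbed), one arranges $v = v'$. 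Then $\mathbf c$ and $\mathbf c'$ are circuits whose output words coincide, so by the uniqueness clause in the definition of a strongly synchronizing non-deterministic automaton they are cyclic rotations of one another; in particular $\Gamma$ and $\Gamma'$ each pass through one common state $t$, reached along a non-empty initial segment and with equal output.

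\textbf{Step 3 (the contradiction).} Let $\sigma$ be the initial segment of $\Gamma$ reaching $t$ and $\sigma'$ that of $\Gamma'$ reaching $t$, so $\lambda_{I(A)}$-outputs of $\sigma$ and $\sigma'$ are $\equiv$-equal and non-empty, while $\sigma\ne\sigma'$ as words in $\pi_A^{\ast}$ since $\Gamma$ and $\Gamma'$ already disagree on their first edge. Because $A$ is core, pick a path $\tau$ from $t$ back to $q$. Then $\sigma\tau$ and $\sigma'\tau$ are two distinct circuits based at $q$ with the same (non-empty) output word, contradicting the uniqueness up to rotation of the circuit realising a given word. Hence no counterexample exists and the desired $N$ is found.

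The main obstacle is Step 2: reconciling the asynchrony of the two paths — edges carry output words of varying length, so $\Gamma$ and $\Gamma'$ need not reach matching output lengths at the same edge — in order to extract two circuits whose output words \emph{literally} coincide rather than merely have the same length, and correspondingly to locate a common state reached with equal output for Step 3. Everything else is the standard pigeonhole-on-configurations argument together with the uniqueness of circuits; the asynchrony is where the genuine bookkeeping lies.
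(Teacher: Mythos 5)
Your architecture is essentially the paper's: pigeonhole on a bounded ``configuration'' carried along two paths with a common label to extract two circuits with matching labels, apply the uniqueness clause of strong synchronization, and finish with two distinct circuits based at $q$ carrying the same label. The compactness reduction in Step~1 is a legitimate (non-constructive) substitute for the paper's explicit choice of $N$, and is fine. The genuine gap is the one you yourself flag in Step~2. Your configuration records only the \emph{length} of the output offset between $\Gamma$ and $\Gamma'$. When such a configuration recurs you obtain circuits $\mathbf{c}$, $\mathbf{c}'$ whose labels are the windows $v=\xi_{[a+1,b]}$ and $v'=\xi_{[a+1-\delta,\,b-\delta]}$; writing $\eta=\xi_{[a+1-\delta,\,a]}$, $\eta'=\xi_{[b-\delta+1,\,b]}$ and $u=\xi_{[a+1,\,b-\delta]}$ one has $v=u\eta'$ and $v'=\eta u$, so $v$ and $v'$ are rotations of one another only if $\eta=\eta'$, and literally equal only if in addition $u\eta=\eta u$. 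Neither follows from recurrence of the offset's length, and ``iterating the recurrence'' does not obviously absorb it. The repair -- and this is exactly what the paper's proof does -- is to put the overhang \emph{word} $\eta$ itself (there are only finitely many, of length at most $D$) into the configuration; recurrence then forces $\eta=\eta'$, hence $v$ and $v'$ are rotations of one another, which is all the uniqueness clause requires. Insisting on literal equality $v=v'$ is both unnecessary and in general unattainable.

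Step~3 also asserts a contradiction that is not yet one: the synchronizing condition gives uniqueness of circuits only \emph{up to rotation}, so two distinct circuits $\sigma\tau$ and $\sigma'\tau$ based at $q$ with equivalent labels could still be non-trivial rotations of one another (this can happen when their common label is a proper power and $q$ occurs more than once along the circuit). The paper closes this by choosing the return path so that both circuits are prime words in $\pi_{A}^{+}$; by Remark~\ref{rmk:consequencesofstrongsynch} their common label is then a prime word, so a non-trivial rotation preserving the label would produce a sub-circuit with empty output, contradicting non-degeneracy. You need an analogous primality argument before invoking uniqueness. Relatedly, the assertion at the end of Step~2 that $\sigma$ and $\sigma'$ reach the common state $t$ ``with equal output'' requires the alignment bookkeeping supplied by the rotation of the two circuits; it does not come for free from their being rotations as paths.
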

\begin{proof}
	
	It suffices to show that there is an $N \in \N$ such that given $q \in Q_{T}$, and any word $\nu \in \xn^{N}$ which is a prefix of an element of $\dom{q}$, $(\nu)L_{q} \ne \varepsilon$.
	
	Let $d \in \N$ be such that for any element $(w, s,t) \in \pi_{A}$, $w$ is equivalent to a word of length at most $d$ in $\xns$. (We note that $d$ since $A$ has finitely many edges.) Set $$N:= \left(\frac{n^{d+1}-1}{n-1}\right) |Q_{A}| +1$$ noting that $\sum_{i=0}^{d}n^{i} = (n^{d+1}-1)/(n-1)$ is the size of the set of all words of length at most $d$.
	
	Let $\nu \in \xn^{N}$ be a prefix of an element of $\dom{q}$ and suppose $(\nu)L_{q} = \varepsilon$. Let $$(w_1,q=q_0, q_1)(w_2, q_1, q_2)...(w_a,q_{a-1}, q_a )$$ and $$(u_1,q=p_0, p_1)(u_2, p_1, p_2)...(p_b,p_{b-1}, p_b )$$ be paths beginning at $q$ such that $(w_1,q_0, q_1) \ne (u_1,p_0, p_1)$ and $\nu$ is equivalent to a prefix of both $w_1w_2\ldots w_a$ and $u_1u_2\ldots u_a$. 
	For $1 \le i \le |\nu|$, let $\nu_{[1,i]}$ be the length $i$ prefix of $\nu$. For each $1 \le i \le |\nu|$, set $(w_1,q_0, q_1)(w_2, q_1, q_2)...(w_{a_i},q_{a_i -1}, q_{a_i})$ and $(u_1,p_0, p_1)(u_2, p_1, p_2)...(u_{b_i},p_{b_i -1}, p_{b_i})$ to be, respectively, the maximal prefixes of $(w_1,q=q_0, q_1)(w_2, q_1, q_2)...(w_a,q_{a-1}, q_a )$ and $(u_1,q=p_0, p_1)(u_2, p_1, p_2)...(p_b,p_{b-1}, p_b )$ such that $w_1w_2\ldots w_{a_i}$ and $u_1 \ldots u_{b_i}$ are equivalent to prefixes of $\nu_i$. 
	Define $\alpha_i$ and $\beta_i$ as the unique elements of $\xns$ such that $\nu_i $ is equivalent to $w_1w_2\ldots w_{a_i} \alpha_i$ and $\nu_i $ is equivalent $u_1 \ldots u_{b_i} \beta_{i}$. 
	Note that $|\alpha_i|, |\beta_i| \le d$. 
	By choice of $N = |\nu|$, there are $i_1$ and $i_2$ such that $(q_{a_{i_1}},\alpha_{i_1}) = (q_{a_{i_2}}, \alpha_{i_2})$ and $(p_{b_{i_1}},\beta_{i_1}) = (p_{b_{i_2}}, \beta_{i_2})$. Set $\alpha = \alpha_{i_1}$ and $\beta = \beta_{i_1}$.
	
	Without loss of generality we assume $|\alpha| \le |\beta|$. Let $\nu(\alpha)_{i_1}, \nu(\beta)_{i_1} \in \xns$ be, respectively, equivalent to $w_1 \ldots w_{a_{i_1}}$ and $u_1 \ldots u_{b_{i_1}}$. Now as $|\alpha| \le |\beta|$, there is a $\xi \in \xns$ such $\nu(\alpha)_{i_1} = \nu(\beta)_{i_1} \xi$. Since $\nu(\alpha)_{i_1} \alpha = \nu(\beta)_{i_1} \beta = \nu_{i_1}$ it follows that $\beta = \xi \alpha$.
	
	Consider the paths $$(w_{a_{i_1} +1}, q_{a_{i_1}}, q_{a_{i_1} +1}) \ldots (w_{a_{i_2}}, q_{a_{i_2}-1}, q_{a_{i_2}})$$ and $$(u_{b_{i_1} +1}, p_{b_{i_1}}, p_{b_{i_1} +1}) \ldots (u_{b_{i_2}}, p_{b_{i_2}-1}, p_{b_{i_2}}).$$ There are $\delta(\alpha)$ and $\delta(\beta)$ in $\xns$ such that $w_{a_{i_1}+1} \ldots w_{a_{i_2}}$ is equivalent to $\alpha \delta(\alpha)$ and $u_{b_{i_1}+1} \ldots u_{b_{i_2}}$ is equivalent to $\beta\delta(\beta)$. 
	Using the facts that $\beta = \xi \alpha$, $\nu_{i_1} = \nu(\alpha)_{i_1} \alpha = \nu(\beta)_{i_1} \beta$ and $\nu_{i_1}\delta(\alpha)\alpha = \nu_{i_1} \delta(\beta)\beta  = \nu_{i_2}$ (since $\alpha_{i_2} = \alpha_{i_1} = \alpha$). It follows that $\delta(\beta)\xi = \delta(\alpha)$. Therefore, $\alpha \delta(\alpha) = \alpha \delta(\beta)\xi$ and $\beta \delta(\beta) = \xi \alpha \delta(\beta)$ are rotations of one another.
	
	Observe that $$(w_{a_{i_1} +1}, q_{a_{i_1}}, q_{a_{i_1} +1}) \ldots (w_{a_{i_2}}, q_{a_{i_2}-1}, q_{a_{i_2}})$$ and $$(u_{b_{i_1} +1}, p_{b_{i_1}}, p_{b_{i_1} +1}) \ldots (u_{b_{i_2}}, p_{b_{i_2}-1}, p_{b_{i_2}})$$ are in fact circuits based at $q_{a_{i_1}} = q_{a_{i_2}}$ and $p_{b_{i_1}} = p_{b_{i_2}}$ respectively. 
	Moreover as $w_{a_{i_1}+1} \ldots w_{a_{i_2}}$ is equivalent to $\alpha \delta(\alpha)$ and $u_{b_{i_1}+1} \ldots u_{b_{i_2}}$ is equivalent to $\beta\delta(\beta)$, we conclude, by the strong synchronizing condition, that $(w_{a_{i_1} +1}, q_{a_{i_1}}, q_{a_{i_1} +1}) \ldots (w_{a_{i_2}}, q_{a_{i_2}-1}, q_{a_{i_2}})$ is equal to a rotation of $(u_{b_{i_1} +1}, p_{b_{i_1}}, p_{b_{i_1} +1}) \ldots (u_{b_{i_2}}, p_{b_{i_2}-1}, p_{b_{i_2}})$. 
	This means  there are $j_1, j_2$, with $a_{i_1} \le j_1, j_2 < a_{i_2}$, such that $(w_{{j_1} +1}, q_{ {j_1}},q_{{j_1}+1}) = (u_{{j_2}+1}, p_{{j_2}},p_{{j_2}+1})$ and the words $w_{1} w_{2} \ldots w_{{j_1}+1}$ and $u_{1} u_{2} \ldots u_{{j_2}+1}$ are equivalent. %
	%
	%
	However, the strong synchronizing condition again implies that this happens only when $$(w_1,q_0, q_1)(w_2, q_1, q_2)\ldots(w_{j_1 +1},q_{j_{1}-1}, q_{j_{1} +1} )$$ and $$(u_1,p_0, p_1)(u_2, p_1, p_2)\ldots(u_{j_2 +1},p_{j_{2}-1}, p_{j_{2}+1} )$$ are the same path in $A$. 
	For, otherwise, using the strongly connected hypothesis and the existence of prime circuits in $A$ (Remark~\ref{rmk:consequencesofstrongsynch}),  we may find a word $\varrho \in \pi_{A}^{\ast}$ such that $$(w_1,q_0, q_1)(w_2, q_1, q_2)\ldots(w_{j_1 +1},q_{j_{1}-1}, q_{j_{1} +1} ) \varrho$$ and $$(u_1,p_0, p_1)(u_2, p_1, p_2)\ldots(u_{j_2 +1},p_{j_{2}-1}, p_{j_{2}+1} )\varrho$$ are circuits based at $q = p_0 = q_0$ which are also prime words in $\pi_{A}^{\ast}$.  Remark~\ref{rmk:consequencesofstrongsynch} implies that the output of both of these circuits is equivalent to a prime word in $\xns$. Furthermore, by construction, the outputs of the two circuits are equivalent to the same word in $\xns$. It thereofore follows that, as the circuits cannot be rotations of one another, being prime, they must be equal. 
\end{proof}

\begin{corollary} \label{cor:strongsyncinjectionondomain}
	Let $A$ be a core strongly synchronizing non-deterministic automaton. Let $q \in Q_{A}$. For any $x \in \dom{q}$, there is a unique infinite path $$(w_1, q_0 =q, q_1)(w_2, q_1, q_2) (w_3, q_2, q_3) \ldots$$ in $A$ and a non-decreasing sequence $i_1 \le i_2 \le i_3 \ldots \in \N$ with $ \lim_{j \to \infty} i_j = \infty$ and $w_{1} \ldots w_{i_{j}} \equiv x_{[1, i_{j}]}$ for all $j \in \N_{1}$. 
\end{corollary}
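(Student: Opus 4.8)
The plan is to build the path by a compactness argument and then to read off its uniqueness from Lemma~\ref{Lemma:injectionfromedgespacetodomain}. \emph{Existence.} Fix $x \in \dom{q}$ and let $d$ bound the lengths (in $\xns$) of the labels of the finitely many edges of $A$. Form the tree $\mathcal{T}$ whose vertices are the finite paths $(u_1,q,p_1)(u_2,p_1,p_2)\cdots(u_k,p_{k-1},p_k)$ of $A$ starting at $q$ for which the word $u_1 u_2 \cdots u_k$ is a prefix of $x$, ordered by the prefix relation on $\pi_A^{\ast}$ with root the empty path. This tree is finitely branching because $\pi_A$ is finite. It is infinite: given $m$, the hypothesis $x \in \dom{q}$ supplies a path from $q$ whose concatenated label has $x_{[1,m]}$ as a prefix, and discarding its last edges one at a time until the label has length at most $m$ leaves a vertex of $\mathcal{T}$ whose label still has length at least $m-d$; hence $\mathcal{T}$ has vertices of arbitrarily large label-length. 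By K\"onig's lemma $\mathcal{T}$ has an infinite branch $(w_1, q_0 = q, q_1)(w_2, q_1, q_2)(w_3, q_2, q_3)\cdots$. The non-degeneracy condition forbids circuits labelled $\emptyword$, so no run of consecutive empty-labelled edges can revisit a state; therefore $|w_1 w_2 \cdots w_k| \to \infty$, and since every $w_1 \cdots w_k$ is a prefix of $x$ the infinite concatenation $w_1 w_2 \cdots$ equals $x$. Setting $i_j := |w_1 w_2 \cdots w_j|$ gives a non-decreasing sequence with $i_j \to \infty$ and $w_1 \cdots w_j \equiv x_{[1,i_j]}$ for every $j$ (when $A$ carries single-letter labels this reads $i_j = j$, matching the statement literally).

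\emph{Uniqueness.} Let $N$ be the constant produced by Lemma~\ref{Lemma:injectionfromedgespacetodomain} for $A$, and suppose $(w_k, q_{k-1}, q_k)_{k\ge1}$ and $(w'_k, q'_{k-1}, q'_k)_{k\ge1}$ are infinite paths from $q$ with $w_1 w_2 \cdots = x = w'_1 w'_2 \cdots$. I claim, by induction on $k$, that the two paths agree on their first $k$ edges. The base case $k=0$ is vacuous. Assuming agreement on the first $k$ edges, the two paths reach a common state $r$ having emitted a common prefix $y$ of $x$; put $z := x_{[|y|+1,\infty)}$. The tail of each path is an infinite path from $r$ with label $z$, so in particular $z \in \dom{r}$. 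Choose finite prefixes of the two tails whose labels each have length at least $N$; these labels then both begin with $z_{[1,N]}$, so Lemma~\ref{Lemma:injectionfromedgespacetodomain}, applied with initial state $r$, shows that the two chosen finite paths share a non-empty prefix in $\pi_A^{\ast}$ — i.e. their first edges, which are the $(k+1)$-st edges of the original paths, coincide. This completes the induction, and the two paths are therefore equal.

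The step that needs genuine input is the inductive step of the uniqueness argument, where Lemma~\ref{Lemma:injectionfromedgespacetodomain} is invoked to force the next edge; the observation that makes it go through is the easy one that a tail of a path whose label is $x$ and which starts at a state $r$ has that tail-label in $\dom{r}$, so the lemma is applicable at $r$ rather than only at $q$. Everything else — the tree construction, K\"onig's lemma, and the use of non-degeneracy to guarantee that label-lengths along an infinite path are unbounded — is routine bookkeeping with the prefix relation on $\pi_A^{\ast}$. (An alternative, equivalent, route to existence is to iterate the function $L_{q}$: Lemma~\ref{Lemma:injectionfromedgespacetodomain}'s proof yields $(x_{[1,N]})L_{q} \neq \emptyword$, one peels off this initial segment, checks that the remaining tail of $x$ lies in the domain of the new state, and repeats; I prefer the K\"onig argument because it makes the domain-tail check unnecessary for existence.)
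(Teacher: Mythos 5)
Your proof is correct, and your uniqueness argument --- induction on the number of agreeing edges, passing to the common state reached and the tail of $x$, and invoking Lemma~\ref{Lemma:injectionfromedgespacetodomain} there to force the next edge --- is exactly the paper's argument. For existence the paper simply asserts the path "exists by definition" of $\dom{q}$; since that definition only supplies a finite path for each finite prefix of $x$, your K\"onig's lemma (or, equivalently, coherence-via-uniqueness) step is a slightly more careful rendering of the same proof rather than a different one.
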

\begin{proof}
	Since $x \in \dom{q}$ such a sequence exists by definition. Suppose 
	
	\begin{align}
	&(w_1, q_0 =q, q_1)(w_2, q_1, q_2) (w_3, q_2, q_3) \ldots  \label{path 1}\\
	\intertext{and}
	&(u_1, p_0 =q, p_1)(u_2, p_1, p_2) (u_3, p_2, p_3) \ldots \label{path 2}
	\end{align}
	
are two such sequences. We prove, by induction that they must be equal.

By Lemma~\ref{Lemma:injectionfromedgespacetodomain} there is an $i_1 \in \N_{1}$ such that $$(w_1, q_0, q_1)\ldots (w_{i_1}, q_{i_1 -1}, q_{i_1}) =  (u_1, p_0, p_1)\ldots (u_{i_1}, p_{i_1 -1}, p_{i_1}) \ldots.$$

Let $j_1 \in \N_{1}$ be such that $w_1 \ldots w_{i_1} = x_{[1, j_1]}$. Let $x'$ be defined by the equality $x = x_{[1, j_1]} x'$. Then it is clear that $x' \in \dom{ p_{i_1}}$. 

Replacing $q$ with $p_{i_1}$ and $x$ with $x'$ we may thus repeat the argument above with the infinite paths 

\begin{align*}\textbf{}
&(w_{i_1 + 1}, q_{i_1}=p_{i_1}, q_{i_1 +1 })(w_{i_1 + 2}, q_{i_1 + 1}, q_{i_1 +2}) \ldots \\
\intertext{and}
&(u_{i_1 + 1}, p_{i_1}=q_{i_1}, p_{i_1 +1 })(u_{i_1 + 2}, p_{i_1 + 1}, p_{i_1 +2}) \ldots.
\end{align*}

Inductively we conclude that the infinite paths \eqref{path 1} and \eqref{path 2} must be equal.

\end{proof}

In light of Corollary~\ref{cor:strongsyncinjectionondomain} above, we have the following definition. Let $T$ be a strongly synchronizing transducer and $q$ a state of $T$. Let  $x \in \dom{q}$ and let $(w_1, q_0 =q, q_1)(w_2, q_1, q_2) (w_3, q_2, q_3) \ldots$ be the unique infinite path in $A$  with  $w_{1} \ldots w_{i_{j}} \equiv x_{[1, i_{j}]}$, for all $j \in \N$ and for a non-decreasing divergent sequence $(i_{j})_{j \in \N}$ of natural numbers.  Write $\lambda_{T}(x, q)$ for the element $y \in \xno$ such that for any $j \in \N$, $\lambda_{T}(w_1, q_0 =q, q_1)\lambda_{T}(w_2, q_1, q_2) \lambda_{T}(w_3, q_2, q_3) \ldots \lambda_{T}(w_{j+1}, q_j, q_{j+1})$ is equivalent to a prefix of $y$. Write $\im{q}$ for the set $\{\lambda_{T}(x,q) \mid x \in \dom{q}\}$. 

\begin{Remark}\label{remark:strongbisynchbijectionfromdomaintorange}
	Let $T$ be a core and strongly synchronizing transducer. The map $\lambda_{T}(\cdot,q): \dom{q} \to \im{q}$ is a well-defined continuous and surjective map. If $T$ is bi-synchronizing then $\lambda_{T}(\cdot, q)$ is a homeomorphism. This follows by applying Corollary~\ref{cor:strongsyncinjectionondomain} to the output automaton of $T$.
\end{Remark}

We have the following results.

\begin{lemma} \label{lemma:ND1impliesclopenimage}
	Let $A$ be a core strongly synchronizing non-deterministic automaton. Then for any state $q \in Q_{A}$, $\dom{q}$ is clopen.
\end{lemma}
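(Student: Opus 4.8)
The \emph{closed} half is immediate. If $x\in\xno\setminus\dom q$ then, by the definition of $\dom q$, some non-empty finite prefix $\nu=x_{[1,i]}$ of $x$ is not equivalent in $\xns$ to a prefix of the label of any path of $A$ based at $q$; since this property of $\nu$ is insensitive to the letters of $x$ beyond position $i$, the cone $U^{+}_{\nu}$ is disjoint from $\dom q$. Hence $\xno\setminus\dom q$ is a union of basic open sets, so $\dom q$ is closed, and therefore compact.

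For the \emph{open} half the plan is to realise $\dom q$ as a finite union of cones $U^{+}_{\nu}$. Fix $x\in\dom q$. By Corollary~\ref{cor:strongsyncinjectionondomain} there is a unique infinite path $(w_1,q_0=q,q_1)(w_2,q_1,q_2)\cdots$ in $A$ reading $x$, together with a divergent sequence $(i_j)_{j\in\N}$ of ``clean'' indices, at which exactly the letters $x_{[1,i_j]}$ have been consumed with no partial syllable left. I would track the configuration reached after reading each finite prefix of $x$ --- the current state, together with whatever proper prefix of the next edge-label has already been matched --- and note that only finitely many configurations can occur, there being finitely many states and finitely many edge-labels. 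The point to be proved is that after reading some prefix $x_{[1,m]}$ the configuration reached is \emph{saturated}: every infinite word of $\xno$ can be read onward from it. Granting that, and using Lemma~\ref{Lemma:injectionfromedgespacetodomain} to see that every reading of $x_{[1,m]}$ from $q$ is forced through this same configuration once $m$ is large, we get $x_{[1,m]}\,y\in\dom q$ for all $y$, i.e. $U^{+}_{x_{[1,m]}}\subseteq\dom q$; as $x$ was arbitrary and $\dom q$ is compact, $\dom q$ is then a finite union of cones, hence clopen.

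The crux is therefore the intrinsic assertion that, in a core strongly synchronizing non-deterministic automaton, the reading of any $x\in\dom q$ eventually reaches a saturated configuration. This is where the hypotheses genuinely enter. Using finiteness of $Q_A$ I would pass to a configuration that recurs along the reading of $x$, sitting at a state $p$ which lies on a circuit with non-empty label (such a circuit exists by the non-degeneracy condition on $A$); and then, to read an arbitrary word $u\in\xnp$ onward from $p$, I would extend $u$ to a prime word $P=u\,v$ with $v$ long and chosen to destroy all rotational coincidences, apply the strong-synchronizing hypothesis to obtain the unique circuit realising $P$, traverse that circuit enough times that $u$ occurs as a factor of its label \emph{at a syllable boundary}, and prepend a path from $p$ into the circuit supplied by strong connectivity. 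I expect the real obstacle to be exactly this last clause --- arranging that $u$, and not merely some rotation of $P$, appears at an edge boundary --- which I would handle using the uniqueness of prime circuits together with the single-state description in Remarks~\ref{rmk:consequencesofstrongsynch} and~\ref{rmk:singlestateandstrongsynch} (the latter also disposing of the case $|Q_A|=1$, where $\dom q=\xno$ directly).
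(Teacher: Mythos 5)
Your closed half is fine, and the outer shell of your open half (channel all readings of a long prefix through one configuration via Lemma~\ref{Lemma:injectionfromedgespacetodomain}, then invoke compactness) is close in spirit to the paper's propagation step. The gap is in the saturation claim, exactly where you locate the crux, and the mechanism you propose for it fails. To show the configuration at $p$ is saturated you must exhibit, for an arbitrary $u\in\xnp$, a path \emph{starting at that configuration} whose input label begins with $u$. Your construction instead produces the circuit realising the prime word $P=uv$, on whose label $u$ appears somewhere as a factor, and then ``prepends a path from $p$ into the circuit supplied by strong connectivity''. That connecting path, and the portion of the circuit traversed before reaching $u$, have their own input labels, so what is actually read from $p$ is $(\text{connector label})(\text{circuit label before }u)\,u\cdots$. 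This shows only that $u$ occurs as a \emph{factor} of some element of $\dom{p}$, not that $u$ is a \emph{prefix} of one. The real obstacle is not getting $u$ onto a syllable boundary but forcing the circuit realising $P$ to pass through your configuration at exactly the moment $u$ is about to be read, and for an arbitrary $u$ the strong-synchronizing hypothesis gives you no control over where that circuit sits in the automaton.

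The paper avoids this by not aiming at a cone around an arbitrary point of $\dom{q}$. It fixes a prime circuit $\mathsf{w}$ based at $q$ with input label $w$ and proves the single containment $U^{+}_{w^{L}}\subseteq\dom{q}$: there is a pigeonhole bound $L$ such that any path reading $w^{L}$ must lock onto $\mathsf{w}$; then for $\delta\perp w$ one picks $l>L$ with $w^{l}\delta$ prime, and the unique circuit realising $w^{l}\delta$ is forced by that bound to travel around $\mathsf{w}$, hence to pass through $q$ with residual input $w^{a}\delta\cdots$ --- entering it there gives a path from $q$ itself reading $w^{a}\delta$, and an induction over $\delta$ yields the cone. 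A separate propagation step (a path from $q$ to an arbitrary $p$ whose label extends $w^{L}$, combined with Lemma~\ref{Lemma:injectionfromedgespacetodomain}) then upgrades this one cone to $U^{+}_{\xi}\subseteq\dom{p}$ for every length-$N$ prefix $\xi$ of an element of $\dom{p}$, which is the clopen decomposition. To repair your outline you would need to replace ``extend $u$ to a prime word'' by ``take the prime word $w^{l}\delta$ built from the label $w$ of a circuit through the recurrent configuration'', since that is the only way the synchronizing hypothesis hands you a circuit guaranteed to visit the right state at the right time.
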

\begin{proof}

	 It suffices to show that for $q \in Q_{T}$, there is a $\nu \in \xn$ such that $U_{\nu}^{+} \subseteq \dom{q}$.
	 
	 To see why this suffices let $p \in Q_{A}$ be any state. Using the strong connectivity of $A$, we may find a path $\mathsf{w} = (w_1,q=q_0, q_1)(w_2, q_1, q_2) \ldots (w_i, q_{i-1}, p)$. Write $\mu$ for the element of $\xns$ equivalent to $w_1, w_2, \ldots w_i$ and note that we may chose the path $\mathsf{w}$ such that $\mu$ has prefix $ \nu$. Let $N$ be as in the statement of Lemma~\ref{Lemma:injectionfromedgespacetodomain} and $\xi \in \xn^{N}$ be a prefix of an element of $\dom{p}$. Then, $(\mu \xi)L_{q}$ has prefix $\mathsf{w}$. This follows as $\xi$ has length $N$, and so if $(\mu \xi)L_{q} = (w_1,q=q_0, q_1)(w_2, q_1, q_2) \ldots (w_a, q_{a-1}, q_{a})$ for some $a<i$, then, for $\delta$ the prefix of $\mu$ equivalent to $ w_1\ldots w_{a}$, $(\mu \xi - \delta)L_{q_a} = \ew$ contradicting Lemma~\ref{Lemma:injectionfromedgespacetodomain}. Thus, any element of $\dom{q}$ with prefix $\mu \xi$ is the output of an infinite path beginning with $\mathsf{w}$. Since $U_{\mu}^{+} \subseteq \dom{q}$, it follows that $U_{\xi}^{+} \subseteq \dom{p}$.
	 Therefore, for $X(p) \subseteq \xn^{N}$, the set of length $N$ prefixes elements of $\dom{p}$,  $\dom{p} = \bigcup_{\xi \in X(p)} U_{\xi}^{+}$ proving that $\dom{p}$ is clopen. 
	 
	We now show that for $q \in Q_{A}$, $\dom{q}$ contains a clopen subset of $\xn^{\omega}$. 
	
	Once more using the strong connectivity of $A$, we may find a  circuit $\mathsf{w} \in \pi_{A}$ based at $q$.  Let $w \in \xnp$ be equivalent to the concatenation of labels on the edges in the circuit. We may further assume, since $T$ is strongly synchronizing, that $w$ is a prime word. 
	 Now, it is not hard to see using the fact that $T$ is strongly synchronizing and has finitely many states, that there is an $L \in \N$, such that for any state $p \in Q_{A}$ and any path $(v_1, p_0 = p, p_1)(v_2, p_1, p_2)\ldots$  for which $w^{L}$ is equivalent to a prefix of $v_1 v_2 \ldots$,  there is minimal $i_1 \in \N$ and a maximal $i_2 \in \N$ with $(v_{i_1}, p_{{i_1}-1}, p_{{i_1}})\ldots (v_{i_2}, p_{{i_2}-1}, p_{{i_2}})$ a power of $\mathsf{w}$ and $v_1 v_2 \ldots v_{i_2}$  equivalent to $w^{L}$.
	 
	 Let $\delta \in \xnp$ be any word such that $w \perp \delta$ . There is an $l \in \N$ with $l > L$ such that $w^{l} \delta$ is a prime word. The strong synchronizing condition means we can find a state $p \in Q_{A}$ and a circuit $\mathsf{v} \in \pi_{A}$ based at $p$ whose label corresponds to the word $w^{l} \delta$. By definition of $L$, it therefore follows there is a path leaving $q$ with domain label containing a prefix equivalent to $w^{a}\delta$ for some $a <l$.
	 
	 We observe that for any path $\mathsf{w}' \in \pi_{A}^{\ast}$ beginning at $q$ which does not have $\mathsf{w}$ as a prefix and whose domain label contains a prefix equivalent to $w$, the maximal power of $w$ which is equivalent to the domain label of the path $\mathsf{w}'$ is strictly less than $L$. This fact readily implies that for any word $\delta \in \xnp$ of length greater than $|w|$ and which does not contain $w$ as a prefix, $w^{L}\delta$ is a prefix of an element of $\dom{q}$. Therefore, by  induction, we see that $U_{w^{L}}^{+} \subseteq \dom{q}$. 	 
\end{proof}

\begin{Remark}\label{remark:bisycndomainandimageclopen}
	Let $T$ be a core bisynchronizing non-deterministic transducer, then for any state $q \in Q_{T}$, $\dom{q}$ and $\im{q}$ are clopen.
\end{Remark}

\begin{proposition}\label{Prop:recoveringrepinOn}
	
	Let $T$ be a core strongly synchronizing non-deterministic transducer. 
	Then there is a strongly synchronizing transducer $S$ with the following property. Given $u \in \xnp$ and writing $$(w_1, p_1, p_2) (w_2, p_2, p_3)\ldots (w_m, p_m, p_{m+1}= p_m) \in \pi_{T}^{+}$$ for the unique circuit (up to rotation) in $T$ such that $w_1 \ldots w_m$ is equivalent to an element in   $\rotclass{u}$, for $v \in \xnp$ such that $v\equiv \lambda_{T}(w_1, p_1, p_2) \ldots \lambda_{T}(w_m, p_m, p_{m+1})$, we have, $\rotclass{v} \equiv \rotclass{\lambda_{S}(u, q_u)}$.
\end{proposition}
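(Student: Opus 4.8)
The plan is to \emph{determinise} $T$ with a bounded input buffer, to take the core of the result as $S$, and to verify the action on circuits by a periodicity argument.

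First I would fix $d\in\N$ bounding the length in $\xns$ of every edge label of $I(T)$ (finite since $\pi_T$ is finite) and let $N$ be the constant of Lemma~\ref{Lemma:injectionfromedgespacetodomain} applied to $I(T)$, so that $(\gamma)L_q\ne\ew$ whenever $\gamma$ is a prefix of an element of $\dom q$ with $|\gamma|\ge N$; recall also from Lemma~\ref{lemma:ND1impliesclopenimage} that each $\dom q$ is clopen. I would then build a deterministic transducer by an \emph{append-and-flush} rule: its states are the pairs $(q,\beta)$ with $q\in Q_T$ and $\beta\in\xns$ a prefix of an element of $\dom q$ with $|\beta|<\max(N,d)$; on reading $a\in\xn$ from $(q,\beta)$ one forms $\gamma=\beta a$, takes the path $(\gamma)L_q$, commits its longest initial segment $(w_1,q,q_1)\cdots(w_k,q_{k-1},q_k)$ whose label $w_1\cdots w_k$ is equivalent to a prefix of $\gamma$, and puts $\pi_S(a,(q,\beta))=(q_k,\delta)$, $\lambda_S(a,(q,\beta))=\lambda_T(w_1,q,q_1)\cdots\lambda_T(w_k,q_{k-1},q_k)$, where $\gamma\equiv w_1\cdots w_k\,\delta$. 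Using Lemma~\ref{Lemma:injectionfromedgespacetodomain} I would check that the remainder $\delta$ is again short in all three cases (nothing committed forces $|\gamma|<N$; an unfinished edge leaves $|\delta|<d$; a clean commitment forces $(\delta)L_{q_k}=\ew$, hence $|\delta|<N$), so the state set is finite. This produces a finite deterministic transducer, possibly with states of incomplete response (which is permitted), and I would finally pass to its core $\core(S)=:S$, the strongly connected part on which the transition function is total.

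The substantive point is that $S$ is strongly synchronising in the sense of Subsection~\ref{Subsection:synchandbisynch}, and this is exactly where the hypothesis on $T$ is used. I would show that for $\ell$ large, reading any realisable word of length $\ell$ from any state of the construction forces the reached state: such a word traces a path in $I(T)$ whose position, by the strong synchronising condition on $I(T)$ together with Corollary~\ref{cor:strongsyncinjectionondomain}, is eventually determined by the recent input and not by the start state; once the $T$-component is forced the flushed material, hence the residual buffer, is forced as well. This says $S$ is synchronising at level $\ell$, and the core of a synchronising automaton is a (total) synchronising automaton, so $S$ is a strongly synchronising transducer.

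For the action on circuits, let $u\in\xnp$ be prime and let $q_u=(p,\beta)$ be the unique state of $S$ with $\pi_S(u,q_u)=q_u$, as in the definition of $\Pi$. Reading $u$ from $q_u$ returns to $q_u$, so in the $T$-component it traces a circuit based at $p$ whose concatenated edge labels form a word $W$ with $|W|=|u|$ and $\beta u\equiv W\beta$; by the conjugacy characterisation (Lyndon--Sch\"utzenberger) this forces $\rotclass W=\rotclass u$, so $W$ is prime, and by the uniqueness clause in the definition of strong synchronisation of $T$ and Remark~\ref{rmk:consequencesofstrongsynch} the circuit traced, as a word in $\pi_T^{+}$, is a rotation of the unique $u$-realising circuit $(w_1,p_1,p_2)\cdots(w_m,p_m,p_{m+1}=p_1)$. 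Hence $\lambda_S(u,q_u)$ is the output of that circuit, $\lambda_T(w_{j+1},p_{j+1},p_{j+2})\cdots\lambda_T(w_j,p_j,p_{j+1})$ for the appropriate $j$, a rotation at edge boundaries — hence at letters — of $\lambda_T(w_1,p_1,p_2)\cdots\lambda_T(w_m,p_m,p_{m+1})\equiv v$. Therefore $\rotclass{\lambda_S(u,q_u)}=\rotclass v$, as required.

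The main obstacle is the construction together with the proof that $\core(S)$ is strongly synchronising: the asynchrony of $T$ (edge labels of unequal length) forces the buffering, and one must carefully juggle the $\equiv$-relation, the non-degeneracy conditions and Lemma~\ref{Lemma:injectionfromedgespacetodomain} to keep the buffer bounded and to see that the synchronising property of $I(T)$ is inherited by $S$. Once these are in place the circuit computation is comparatively routine.
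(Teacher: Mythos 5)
Your construction is the same as the paper's (determinise with a maximally flushed, bounded input buffer; the paper's states are exactly the flushed pairs $(w,q)$ with $U^{+}_{w}\subseteq\dom{q}$ and $(w)L_{q}=\ew$), your finiteness argument via Lemma~\ref{Lemma:injectionfromedgespacetodomain} is sound, and your circuit computation is correct and in fact cleaner than the paper's: once you know $q_u=(p,\beta)$ exists and is fixed by $u$, the relation $\beta u\equiv W\beta$ with $|W|=|u|$ plus conjugacy of words, primality of $u$, and the uniqueness clause in the definition of strong synchronisation of $T$ immediately pin the committed circuit down to a rotation of the canonical one. The paper instead exhibits the fixed state explicitly as $(\underline{v}v^{i_2},p_k)$ by chasing $(v^{j})L_{q_0}$, which is more laborious.

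The genuine gap is the step you describe in one sentence: that $S$ is strongly synchronising. This is where the paper spends roughly half of its proof. Your justification --- that a long input word ``traces a path in $I(T)$ whose position, by the strong synchronising condition on $I(T)$ together with Corollary~\ref{cor:strongsyncinjectionondomain}, is eventually determined by the recent input and not by the start state'' --- is an assertion of exactly the thing to be proved, and the cited tools do not deliver it: Corollary~\ref{cor:strongsyncinjectionondomain} gives uniqueness of the path reading a given tail \emph{from a fixed starting state}, and the strong synchronising hypothesis on $I(T)$ is, by definition, only a statement about \emph{circuits} (uniqueness up to rotation of the circuit realising each $\rotclass{u}$), not about arbitrary long paths from distinct states converging. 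Two committed paths read from different states $(q,\beta)$ and $(q',\beta')$ have labels that nearly agree as words in $\xns$, but a priori they could be realised by disjoint families of edges, and nothing you have cited forces their endpoints or residual buffers to coincide. The paper closes exactly this hole by proving that the state fixed by $\pi_S(u,\cdot)$ is \emph{unique} for every prime $u$ --- a delicate page-long argument that extends competing circuits to prime circuits based at a common state and derives a contradiction from Remark~\ref{rmk:consequencesofstrongsynch} --- and then invokes Remark~\ref{rmk:stronglysynchautisstronglysynchnondetaut} to convert circuit-uniqueness for the deterministic $S$ into the reset-word property. Your existence claim for $q_u$ (needed even to state your circuit computation) also sits downstream of this missing step. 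Either supply a direct proof of the reset-word property, or (easier, and closer to your own circuit analysis) prove uniqueness of the $u$-fixed state and route through Remark~\ref{rmk:stronglysynchautisstronglysynchnondetaut}; note that uniqueness does not follow merely from the two committed circuits being rotations of one another, since you must still show the two buffers agree.
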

\begin{proof}

We begin by constructing the transducer $S$ from $T$, we then argue that $S$ has the required properties.
	
	Set  $Q_{S}$ to be the set of all pairs $(w, q)$ in $\xns \times Q_{T}$ where $U^{+}_{w} \subseteq \dom{q}$  and $(w)L_{q} = \emptyword$.
	
	We argue that $Q_{S}$ is  finite. 
	
	Let $q \in Q_{T}$ and $x \in \xns$. Since $\dom{p}$ is clopen for any $p \in Q_{T}$, then for any such $p$, there is a minimal  $l \in \N$ and words $\mu_1, \ldots, \mu_{r} \in \xn^{l}$  such that $\bigcup_{1 \le i \le r} U^{+}_{\mu_i}  = \dom{p}$. Furthermore, by the non-degeneracy condition, there is a  $j \in \N$, $j\ge 1$, such that for any state $p \in Q_{T}$ and any path $(w_1, p, p_1), \ldots, (w_j, p_{j-1}, p_{j})$, the word $w_1\ldots w_{j}$ is equivalent to an element of $\xnp$ of length at least $l$. 
	
	Suppose $(x, q, p)$ is an edge in $T$. Let $v,\nu \in \xns$ such that  $v \equiv x$ and $ \nu \equiv w_1 \ldots w_{j}$ where $(w_1, p, p_1), \ldots, (w_j, p_{j-1}, p_{j})$ is a path of length $j$  beginning at $p$. We note that $|\nu| \ge l$. 
	
	Suppose for a contradiction that $(v \nu)L_{q}$ is the empty word in $\pi_{T}^{\ast}$. This means that there are elements $y= y_1, y_2, \ldots, y_{m} \in \xn$, states $q_1, q_2, \ldots, q_{m} \in Q_{T}$, such that $(y_1,q, q_1),(y_2, q_1, q_2), \ldots,(y_m, q_{m-1}, q_m)$ is a path beginning at $q$, $y_1y_2 \ldots y_m$  has a prefix equivalent to $v\nu$ and  $(y_1, q, q_1) \ne (x,q,p)$. 
	
	Using the fact that $T$ is strongly connected, we may find a circuit $$(y_{m+1}, q_{m}, q_{m+1}), \ldots ,(y_{m+k}, q_{m+k-1}, q_{m+k} = q_m).$$ Let $\delta \equiv y_{m+1}  \ldots y_{m+k}$, noting that $\delta$ is non-empty by the non-degenracy assumption. Then $\delta^{\omega}$ is an element of $\dom{q_{m}}$.
	
	Since $|\nu| \ge l$, then $U^{+}_{\nu} \subseteq \dom{p}$ and  $\nu \delta^{\omega}$ is an element of $\dom{p}$. Thus there is a infinite path $(w_1', p, p_1'), (w_2', p_1', p_2') \ldots$  such that $w_1' w_2' \ldots \equiv v\nu\delta^{\omega}$. By the strong synchronizing condition,  it must be the case that there is an $N \in \N$ such that $(w_{N}', p_{N-1}', p_{N}')(w_{N+1}', p_{N}', p_{N+1}') \ldots = ((y_{m+1}, q_{m}, q_{m+1})\ldots (y_{m+k}, q_{m+k-1}, q_{m}))^{\omega}$. This follows since by a compactness type argument, using the fact that $T$ has finitely many states and $\delta$ is finite, there are $N, a \in \N$ such that $(w'_{N}, p'_{N-1}, p_{N}') (w_{N+1}', p_{N}', p_{N+1}') \ldots (w_{N+a+1}', p_{N+a}', p_{N+a}')$, $p_{N+a}' = p_{N-1}'$, is a circuit in $T$  and $w'_{N}w_{N+1}' \ldots w_{N+a+1}'$ is equivalent to an element of $\xns$ in $\rotclass{\delta^{b}}$ for some $b$.
	
	Therefore there are $a,b \in \N$ such that $$(x, q, p)(w_1', p, p_1') (w_2', p_1', p_2') \ldots (w_{N-1}', p_{N-2}', p_{N-1}')((y_{m+1}, q_{m}, q_{m+1})\ldots (y_{m+k}, q_{m+k-1}, q_{m}))^{a}$$ and $$(y, q, q_1), \ldots (y_{m}, q_{m}, q_{m+1})((y_{m+1}, q_{m}, q_{m+1})\ldots (y_{m+k}, q_{m+k-1}, q_{m}))^{b}$$ are paths in $T$, and $$xw_1' w_2' \ldots w_{N-1}' (y_{m+1} \ldots y_{m+k})^{a} \equiv y_1 y_2 \ldots y_{m}(y_{m+1}\ldots y_{m+k})^{b} \equiv v\nu \delta^{b}.$$ 
	
	We obtain the desired contradiction as follows. Set $q_{m+k+1} := q_{m}$. Using the strong connectivity hypothesis, we may  find a path $(y_{m+k+1}, q_{m+k}, q_{m+k+1}) \ldots (y_{m+k+r}, q_{m+k+r-1}, q_{m+k+r}) \in \pi_{T}$, where $q_{m+k+r} = q$.  By the strong synchronizing condition, it follows that 
	\begin{IEEEeqnarray*}{rCl}
	\mu:=&(&x, q, p)(w_1', p, p_1') (w_2', p_1', p_2') \ldots (w_{N-1}', p_{N-2}', p_{N-1}')\\&(&(y_{m+1}, q_{m}, q_{m+1})\ldots (y_{m+k}, q_{m+k-1}, q_{m}))^{a} \\ &(&y_{m+k+1}, q_{m+k}, q_{m+k+1}), \ldots, (y_{m+k+r}, q_{m+k+r-1}, q_{m+k+r})
	\end{IEEEeqnarray*}
	
	and 
	\begin{IEEEeqnarray*}{rCl}      \tau:=&(&y, q, q_1), \ldots (y_{m}, q_{m}, q_{m+1})((y_{m+1}, q_{m}, q_{m+1})\ldots (y_{m+k}, q_{m+k-1}, q_{m}))^{b}\\ &(&y_{m+k+1}, q_{m+k}, q_{m+k+1}), \ldots, (y_{m+k+r}, q_{m+k+r-1}, q_{m+k+r})
	\end{IEEEeqnarray*}
	are non-trivial rotations of one another. Since both are circuits based at $q$, there must be distinct words $\eta_1, \ldots
	\eta _c \in \pi_{T}^{+}$,  such that for $1 \le i \le c$,  $\eta_i$ is a minimal circuit based at $q$ (that is there is no non-empty prefix (in $\pi_{T}^{+}$) of $\eta_i$ which is a circuit based at $q$) and the following condition is satisfied. There is a word $w_1 \ldots w_d \in  \{ \eta_i \mid 1 \le i \le c \}^{+}$ such that  $\mu = w_1 \ldots w_d$ and $\tau = w_{i}\ldots w_d w_1 \ldots w_{i-1}$ where $w_1 \ne w_i$ (since $(x,q,p) \ne (y,q, q_1)$). Let $\overline{w_e}$, $1 \le e \le d$, be the element of $\xnp$ corresponding to the label of the circuit  $w_e$ in $T$. 
	Then $\overline{\mu}:=\overline{w_1}\ldots \overline{w_d} = \overline{w}_{i}\ldots \overline{w}_d\overline{w}_{1}\ldots \overline{w}_{i-1} =:\overline{\tau}$. However this now means that $\overline{\nu}$ and $\overline{\tau}$ are powers of the same prime word $\gamma \in \xnp$. In particular  $\overline{w}_{i}\ldots \overline{w}_d$ and $\overline{w}_{1}\ldots \overline{w}_{i-1}$ are both equal to  powers of $\gamma$.
	 Remark~\ref{rmk:consequencesofstrongsynch}  now means that there is a prime word $\xi \in \pi_{T}^{+}$ which is also a circuit based at $q$ such that $w_1 \ldots w_{i-1}$ is a power of $\xi$ and $w_i \ldots w_{d}$ is also a power of $\xi$. This now contradicts the fact that $w_1 \ne w_i$.
%

	It therefore follows, since $q \in Q_{T}$ was chosen arbitrarily and $v \nu$ was an arbitrarily chosen prefix of an element of $\dom{q}$ corresponding to a path of length $j$, that the subset of $\xns \times Q_{T}$ consisting of elements $(u,q)$ where $U^{+}_{u}\subseteq \dom{q}$ and $(u)L_{q} = \ew$, is finite.

	We now argue that $Q_{S}$ is non-empty. 
	
	Let $q \in Q_{T}$ and let $\nu \in \xnp$ be a word such that $U^{+}_{\nu} \subseteq \dom{q}$. If $(\nu)L_{q} = \ew$, we are done. Therefore suppose  $(\nu)L_{q} = (x_1, q, q_1) (x_2,q_1, q_2) \ldots (x_m, q_{m-1}, q_m)$.  Let $v \in \xns$ be such that $v \equiv x_1 x_2 \ldots x_m$. By definition of the function $L_{q}$, $v$ is a prefix of $\nu$. Set $\mu:= \nu - v$, and consider the pair $(\mu, q_m)$. Since $U^{+}_{\nu} \subseteq \dom{q}$, it follows that  $U^{+}_{\mu} \subseteq \dom{q_m}$. Since $(\nu)L_{q} = (x_1, q, q_1) (x_2,q_1, q_2) \ldots (x_m, q_{m-1}, q_m) (\mu)L_{q_{m}}$, it follows that $(\mu )L_{q_m} = \ew$.

	We may now define a transducer $S = (\xn, Q_S, \pi_{S}, \lambda_{S})$ as follows. The transition function $\pi_{S}: \xn \times Q_S \to Q_S$, is defined such that $\pi_{S}(x, (w, q)) = (v, p)$ where $v$ and $p$ are determined as follows: set $(wx)L_{q} = (y_1, q, q_1) \ldots (y_m, q_{m-1}, q_m)$, and let  $y \in \xns$ satisfy $y \equiv y_1 \ldots y_m$, then $v = wx-y$, and $p = q_m$.  By an argument similar to that above which shows that $Q_{S}$ is non-empty, $\pi_{S}$ is well defined. The output function is determined by the following rule:  for $x \in \xn$ and $(w, q) \in Q_{S}$, if $(wx)L_{q} = (y_1, q, q_1) \ldots (y_m, q_{m-1}, q_m)$ and $y \in \xn$ is such that $y \equiv \lambda_{T}(y_1, q, q_1) \ldots \lambda_{T}(y_m, q_{m-1}, q_m)$, then $\lambda_{S}(x, (w,q)) = y$.
	
	We demonstrate that $S$ satisfies the conclusion of the proposition. 
	
	We begin by showing that given $u \in \xnp$ and $$(v_1, q_0, q_1) (v_2, q_1, p_2)\ldots (v_m, q_{m-1}, q_{m}= q_0) \in \pi_{T}^{+}$$ the unique circuit (up to rotation) in $T$ such that $v_1 \ldots v_m$ is equivalent to an element in   $\rotclass{u}$, there is a state $q_u \in Q_{S}$ with the following properties:
	\begin{itemize}
		\item  $\pi_{S}(u, q_u) = q_u$, and,
		\item for $w \in \xnp$ such that $w\equiv \lambda_{T}(v_1, q_0, q_1) \ldots \lambda_{T}(v_m, q_{m-1}, q_{m})$, we have, $\rotclass{w} \equiv \rotclass{\lambda_{S}(u, q_u)}$. 	
	\end{itemize}
    Once this fact is demonstrated, the proof will be concluded by showing the state $q_u \in Q_{S}$ satisfying $\pi_{S}(u, q_u) = q_u$ is unique --- this is sufficient to demonstrate that $S$ is strongly synchronizing (see Remark~\ref{rmk:stronglysynchautisstronglysynchnondetaut}). The proof of both aspects hinge on  the  strong synchronizing assumption on $T$ and the construction of the input and output function of $S$.
	
	Let $u \in \xnp$ be arbitrary. Since $T$ is strongly synchronizing, there exists a state $q \in Q_{T}$  a unique (up-to-rotation) circuit  $(v_1, q, q_1)(v_2, q_{1}, q_2) \ldots (v_m,q_{m-1},q_{m} = q_0)$ and a rotation $v$ of $u$ such that $v \equiv v_1v_2 \ldots v_{m}$.  
	Since  $\dom{q}$ is clopen, there is an $i \in \N$ such that $U_{v^{i}}^{+} \subseteq \dom{q}$.  
	Let $i \in \N$ be such that $$(v^{i})L_{q} = ((v_1, q, q_1)(v_2, q_{1}, q_2) \ldots (v_m,q_{m-1},q_{m}))^{i_1} (v_1, q, q_1)(v_2, q_{1}, q_2) \ldots (v_j,q_{j-1},q_{j}),$$  where  $i = i_1 + i_2 +1$ for some $i_2 \in \N$.  
	Set $\nu_{1} \in \xns$ such that  $\nu_1 \equiv v_1 v_2 \ldots v_{j}$ .
	  It follows that $v = \nu_1 \nu_2$ for some suffix $\nu_2$ of $v$.  We also observe that $U_{\nu_2v^{i_2}}^{+} \subseteq \dom{q_{j}}$ and $(\nu_2 v^{i_2})L_{q_{j}} = \varepsilon$ so that $(\nu_2 v^{i_2},q_{j})$ is an element of $Q_{S}$. Further observe that consequently, $U_{v^{i_2 +1}}^{+} \subseteq \dom{q}$.
	  
	 We claim that $$(\nu_2v^{i_2}v)L_{q_{j}}  = (v_{j+1}, q_{j}, q_{j+1})(v_{j+2}, q_{j+1}, q_{j+2}) \ldots (v_m,q,q_{m} )(v_1, q, q_1) \ldots (v_{j-1},q_{j-1},q_{j} ).$$
	This follows as  suppose there is an infinite path  $\mathsf{w}'$ in $T$ starting at $q_{j}$ such that $\lambda_{T}(\mathsf{w'})$ has a prefix equivalent to $\nu_2v^{i_2}v$ and does not begin with  $(v_{j+1}, q_{j}, q_{j+1})(v_{j+2}, q_{j+1}, q_{j+2}) \ldots (v_m,q,q_{m} )$.
	 Then we may find an infinite path $\mathsf{w}$ beginning at $q$ such that $\lambda_{T}(\mathsf{w})$  has a prefix equivalent to $v^{i_2 + 2}$ and does not begin with $(v_{1}, q, q_{1})(v_{2}, q_{1}, q_{2}) \ldots (v_m,q,q_{m} )$. However, using the fact that $U_{\nu^{i_2 +1}} \subseteq \dom{q}$,  there is a path $\mathsf{u}$ starting that $q$ beginning with $(v_{1}, q, q_{1})(v_{2}, q_{1}, q_{2}) \ldots (v_m,q,q_{m} )$  such that $\lambda_{T}(\mathsf{w})$ and $\lambda_{T}(\mathsf{u})$ are equivalent to the same element of $\im{q_{j}}$. This contradicts Corollary~\ref{cor:strongsyncinjectionondomain}.  The claim now follows by observing that $(v^{i_2+1})L_{q} = (v_1, q, q_1)(v_2, q, q_2) \ldots (v_j,q_{j-1},q_{j} = q_0)$  since $(\nu_2v^{i_2})L_{q_{j}} = \ew$ and $U_{\nu_2 v^{i_2}} \subseteq \dom{q_{j}}$. 
	
	We therefore see that $\pi_{S}(v, (\nu_{2}v^{i_2}, q_{j})) =(\nu_{2}v^{i_2}, q_{j})$.
	 Also notice that, by construction, $\lambda_{S}(v, (\nu_{2}v^{i_2},q_{j}))$ is the element of $\xnp$ equivalent to $$ \lambda_{T}(v_{j+1}, q_{j}, q_{j+1})\lambda_{T}(v_{j+2}, q_{j+1}, q_{j+2}) \ldots \lambda_{T}(v_m,q,q_{m} )\lambda_{T}(v_1, q, q_1) \ldots\lambda_{T} \lambda_{T}(v_{j-1},q_{j-1},q_{j} ).$$ 
	 Therefore  $\lambda_{S}(v, (\nu_{2}v^{i_2},q_{j}))$ is in the rotation class of the element of $\xnp$ equivalent to $\lambda_{T}(v_{1}, q, q_{1})\lambda_{T}(v_{2}, q_{1}, q_{2}) \ldots \lambda_{T}(v_m,q_{m-1},q_{m} )$. Thus, it follows that for any $u \in \xnp$  and $$(v_1, q_0, q_1) (v_2, q_1, p_2)\ldots (v_m, q_{m-1}, q_{m}= q_0) \in \pi_{T}^{+}$$ the unique circuit (up to rotation) in $T$ such that $v_1 \ldots v_m$ is equivalent to an element in   $\rotclass{u}$, there is a state $q_u \in Q_{S}$ with the following properties:
	\begin{itemize}
		\item  $\pi_{S}(u, q_u) = q_u$, and,
		\item for $w \in \xnp$ such that $w\equiv \lambda_{T}(v_1, q_0, q_1) \ldots \lambda_{T}(v_m, q_{m-1}, q_{m})$, we have, $\rotclass{w} \equiv \rotclass{\lambda_{S}(u, q_u)}$. 	
	\end{itemize}
		
Now, to conclude the proof of the proposition, as discussed, we need only argue that the state  $q_u$ is unique.
	
Once more let $u \in \xnp$ be arbitrary. 
 Let $(w,p)$ be a state of $S$ such that  $\pi_{S}(u, (w,p)) = (w,p)$.   Let $(v_1, q_0, q_1) \ldots (v_{m}, q_{m-1}, q_{m})$ be the unique circuit of $T$ (up-to-rotation) such that for $v \in \xnp$ with $v \equiv v_1 \ldots v_m$, we have $\rotclass{v} = \rotclass{u}$. Let $j \in \N$ be such that $v^{j} \subseteq \dom{ q_0}$ and $(v^{j})L_{q_0} = (v_1, p_0, p_1) \ldots (v_k, p_{k-1}, p_k)$ where $p_0 = q_0$. Let $v^{i_1} \overline{v}$ be such that $v_1 \ldots v_k \equiv v^{i_1}\overline{v}$ and let $\underline{v}v^{i_2}$ be such that $v = \overline{v}\underline{v}$ and $j = i_1 + i_2 +1$.  We note that $(\underline{v}v^{i_2})L_{p_{k}} = \ew$ and $U^{+}_{\underline{v}v^{i_2}} \subseteq \dom{p_k}$.  Consequently,  $v^{i_2 + 1} \subseteq \dom{ p_0}$. Set  $j = i_2 + 1$. 
 It follows that $v_1 \ldots v_{k} = \overline{v}$. Moreover, replicating an argument above, we see that $\pi_{S}(v, (\underline{v}v^{i_2}, p_k)) = (\underline{v}v^{i_2}, p_{k})$. 
	
	Let $l$ be sufficiently large and set $$(x_1, t_0, t_1), \ldots, (x_a, t_{a-1},  t_a) := (wu^{l})L_{p}.$$
	 Now as $\pi_{S}(u, (w,p)) = (w,p)$, for $l$ large enough it must be the case that   there are $b, N \in \N$ with $1 \le b \le a$  and $N$ maximal such that $$(x_1, t_0, t_1) \ldots (x_b t_{b-1}, t_b)((v_1, q_0, q_1)\ldots (v_m, q_{m-1}, q_m))^{N}$$ is a prefix of $(x_1, t_0, t_1), \ldots, (x_a, t_{a-1},  t_a)$.  
	This follows by the strong synchronizing hypothesis, the finiteness of $T$, and the fact that  $x_1 \ldots x_a$ must be equivalent to the length $|wu^{l}| - |w|$ prefix of $wu^{l}$.
	
	Let $\nu \equiv x_1 \ldots x_b$ so that $x_1 \ldots x_b (v_1 \ldots v_m)^{N} \equiv \nu v^{N}$. Note that $\nu v^{N} - w$ is a prefix of $u^{l}$.  By choosing $l$ large enough we may assume that $N > i_2+1$. As above this now means that $(\nu v^{N})L_{p} =  (x_1, t_0, t_1) \ldots (x_b t_{b-1}, t_b) (v^{N})L_{q_0}$. Hence, $(\nu v^{N})L_{p} = (x_1, t_0, t_1) \ldots (x_b t_{b-1}, t_b)((v_1,q_0, q_1)\ldots (v_{m}, q_{m-1}, q_m))^{N-i_2-1}(v_1, p_0, p_1)\ldots(v_k, p_{k-1}, p_k)$. This means that $\pi_{S}(\nu v^{N} - w,(w,p)) = (\underline{v}v^{i_2}, p_k)$. Thus there are $v_1, v_2 \in \xns$ such that $v = v_1v_2$, $u = v_2v_1$, $\pi_{S}(v_1, (\underline{v}v^{i_2}, p_k)) = (w, p)$ and $\pi_{S}(v_2, (w,p)) = (\underline{v}v^{i_2}, p_k)$. From this we see that $(w,p)$ is the unique state of $S$ satisfying $\pi_{S}(u,(w,p)) = (w,p)$ since the state $(\underline{v}v^{i_2}, p_k))$ depends only on the circuit $(v_1, q_0, q_1) \ldots (v_{m}, q_{m-1}, q_{m})$.

\end{proof}

\begin{Definition}
	Let $T$ be a core strongly synchronizing non-deterministic transducer. Write $\rec{T}$ for the transducer with state set $Q_{\rec{T}} = \{ (w,q) \mid U^{+}_{w} \subseteq \dom{q}, (w)L_{q} = \ew \}$. The transition and output functions of $\rec{T}$ are defined as follows:
	\begin{itemize}
		\item  $\pi_{\rec{T}}: \xn \times Q_{\rec{T}} \to Q_{\rec{T}}$, is defined such that $\pi_{{\rec{T}}}(x, (w, q)) = (v, p)$ where $v$ and $p$ are determined as follows: set $(wx)L_{q} := (y_1, q, q_1) \ldots (y_m, q_{m-1}, q_m)$, and  let $y \in \xn$ satisfy $y \equiv y_1 \ldots y_m$, then $v = wx-y$, and $p = q_m$;
		\item  for $x \in \xn$ and $(w, q) \in Q_{\rec{T}}$, if $(wx)L_{q} = (y_1, q, q_1) \ldots (y_m, q_{m-1}, q_m)$ and $y \in \xn$ satisfies $y \equiv \lambda_{T}(y_1, q, q_1) \ldots \lambda_{T}(y_m, q_{m-1}, q_m)$, then $\lambda_{{\rec{T}}}(x, (w,q)) = y$. 
	\end{itemize}  As demonstrated in Proposition~\ref{Prop:recoveringrepinOn},   $Q_{\rec{T}}$ is non-empty and  $\pi_{\rec{T}}$ and $\lambda_{\rec{T}}$ are well defined. 
\end{Definition}

\begin{Remark}\label{remark:remarksonrecoveredsynchtransducer}
	We note that for a core strongly synchronizing non-deterministic transducer $T$, $\rec{T}$ is non-degenerate (since by assumption $T$ is non-degenerate). This can be argued by using induction to show that for a state $(w,q) \in Q_{\rec{T}}$ and a word  $v \in \xnp$, $\lambda_{\rec{T}}(v, (w,q))$ is equivalent to the output of the path $(wv)L_{q}$ in $T$. The claim is therefore a consequence of   Proposition~\ref{Prop:recoveringrepinOn}. It is a simple exercise to demonstrate that for a core strongly synchronizing deterministic transducer $T$, $\rec{T}$ and $T$ are in fact isomorphic as transducers (that is there is a bijection from the state set of $T$ to the state set of $\rec{T}$ which commutes with the transition and output function; indeed such a bijection maps a state $q$ of $T$ to the state $(\ew, q)$ of $\rec{T}$).
\end{Remark}

We note that for a given core strongly synchronizing non-deterministic transducer $T$, $\rec{T}$ is not necessarily minimal. We write $\mrec{T}$ for the minimal representative of $\rec{T}$. We denote the states of $\mrec{T}$  by the label of the corresponding states of $\rec{T}$ as,  having removed incomplete response, a state of $\rec{T}$ is $\omega$-equivalent to a unique state of $\mrec{T}$. That is, given a state $q $ of $\rec{T}$ by the phrase``the state $q$ of $\mrec{T}$'' (or words to the same effect)  we mean the unique state of $\mrec{T}$ which is $\omega$-equivalent to the state $q$ of $T$.

We have the following lemma.

\begin{lemma}\label{lem:determiningimage}
	Let $T$ be a core strongly synchronizing non-deterministic transducer and let $ (w,q ) \in Q_{\rec{T}}$. Then  the image of the state $(w,q)$ is precisely the set of all elements $\rho \in \xno$ such that $\rho$ is equivalent to $\lambda_{T}(w_1, q_0, q_1)\lambda_{T}(w_2, q_1, q_2)\lambda_{T}(w_3, q_2, q_3) \ldots$ where $q_0 = q$ , $(w_1, q_0, q_1)(w_2, q_1, q_2)(w_3, q_2, q_3)\ldots$ is an infinite path in $T$ beginning at $q$, and $w_1w_2\ldots$ as an element of $\xno$ has $w$ as a prefix. In other words, $\im{(w,q)} = (U_{w}^{+})\lambda_{T}(\cdot, q) \subseteq \im{q}$.
\end{lemma}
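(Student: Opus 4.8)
# Proof Proposal for Lemma~\ref{lem:determiningimage}

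The plan is to unwind the construction of $\rec{T}$ and match the recursive definition of $\lambda_{\rec{T}}$ against the path-following procedure in $T$. The key auxiliary fact, already noted in Remark~\ref{remark:remarksonrecoveredsynchtransducer}, is that for a state $(w,q) \in Q_{\rec{T}}$ and a finite word $v \in \xnp$, the output $\lambda_{\rec{T}}(v, (w,q))$ is equivalent in $\xns$ to the output $\lambda_{T}((wv)L_{q})$ of the path $(wv)L_{q}$ in $T$; and moreover, writing $(wv)L_{q} = (y_1, q, q_1)\ldots(y_m, q_{m-1}, q_m)$ and $y \equiv y_1\ldots y_m$, we have $\pi_{\rec{T}}(v, (w,q)) = (wv - y,\, q_m)$. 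I would first establish this by induction on $|v|$, using directly the defining formulas for $\pi_{\rec{T}}$ and $\lambda_{\rec{T}}$ and the fact that $L_{q}$ respects concatenation in the sense that $(wvx)L_{q}$ extends $(wv)L_{q}$ (which is built into the definition of the greatest-common-prefix function $L_q$ together with the well-definedness argument from Proposition~\ref{Prop:recoveringrepinOn}).

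Granting that, the proof splits into the two inclusions. For $\im{(w,q)} \subseteq (U_{w}^{+})\lambda_{T}(\cdot,q)$: take $x \in \dom{(w,q)}$ (which, by Remark~\ref{remark:strongbisynchbijectionfromdomaintorange} applied to $\rec{T}$, or rather by tracking the construction, has $w$ as a ``virtual prefix'' through the state $(w,q)$). By Corollary~\ref{cor:strongsyncinjectionondomain} applied to the input automaton of $T$ at the state $q$, the word $w x \in \xno$ (the concatenation, which lies in $\dom{q}$ since $U^{+}_{w} \subseteq \dom{q}$ and one checks the tail stays in the domain) determines a unique infinite path $(w_1, q_0 = q, q_1)(w_2, q_1, q_2)\ldots$ in $T$ with $w_1w_2\ldots \equiv wx$ and in particular with $w$ a prefix of $w_1 w_2 \ldots$ as an element of $\xno$. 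Then $\lambda_{\rec{T}}(x, (w,q))$, being the limit of the $\lambda_{\rec{T}}(x_{[1,i]}, (w,q))$, is by the inductive fact equivalent to the limit of the outputs $\lambda_T((w x_{[1,i]})L_q)$, which converges to $\lambda_T(w_1,q_0,q_1)\lambda_T(w_2,q_1,q_2)\ldots$; hence it lies in $(U_w^{+})\lambda_T(\cdot, q)$.

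For the reverse inclusion: given an infinite path $(w_1, q_0=q, q_1)(w_2, q_1, q_2)\ldots$ in $T$ with $w_1w_2\ldots$ having prefix $w$, set $x \in \xno$ to be defined by $w x \equiv w_1 w_2 \ldots$ (possible since $w$ is a prefix of the infinite output word $w_1w_2\ldots$). Since $U^{+}_{w} \subseteq \dom{q}$ and the tail of the path witnesses that the relevant extensions stay in the domain, one checks $x \in \dom{(w,q)}$ and that the path computed from $(w,q)$ on input $x$ is exactly the given one (uniqueness from Corollary~\ref{cor:strongsyncinjectionondomain}); then $\lambda_{\rec{T}}(x,(w,q)) \equiv \lambda_T(w_1,q_0,q_1)\lambda_T(w_2,q_1,q_2)\ldots$ as desired. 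The containment $\im{(w,q)} \subseteq \im{q}$ is then immediate since every such output word is $\lambda_T(y, q)$ for $y = wx \in \dom{q}$.

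The main obstacle I anticipate is the bookkeeping around the ``$w$ as a prefix'' condition: one must be careful that the pair $(w,q) \in Q_{\rec{T}}$ encodes precisely the set of infinite $T$-paths from $q$ whose output word starts with $w$, and that the recursively-defined transition function $\pi_{\rec{T}}$ correctly advances this bookkeeping without ever ``losing'' letters of $w$ or double-counting — this is exactly where the hypotheses $U^{+}_w \subseteq \dom{q}$ and $(w)L_q = \ew$ do their work, and where the well-definedness argument from Proposition~\ref{Prop:recoveringrepinOn} (finiteness of $Q_{\rec{T}}$, non-emptiness, the synchronizing-forces-path-agreement phenomenon) must be invoked to guarantee the inductive step goes through cleanly. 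Everything else is routine limit-of-finite-outputs manipulation.
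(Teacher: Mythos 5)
Your proposal is correct and follows essentially the same route as the paper: both arguments reduce everything to the fact (the paper's Remark~\ref{remark:remarksonrecoveredsynchtransducer}, proved by the same induction you sketch) that $\lambda_{\rec{T}}(v,(w,q))$ is equivalent to the output of the path $(wv)L_q$ in $T$, then pass to limits over prefixes $v_i$ and invoke Corollary~\ref{cor:strongsyncinjectionondomain} for uniqueness of the infinite path with input label $wv$. The only bookkeeping you worry about that is actually automatic is the domain of $(w,q)$: since $\rec{T}$ is a complete deterministic transducer, $\dom{(w,q)}=\xno$, and $wx\in\dom{q}$ for every $x\in\xno$ directly from $U^{+}_{w}\subseteq\dom{q}$.
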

\begin{proof}
	This follows essentially from the definition of the output function of $\rec{T}$.  We demonstrate this with a straight-forward induction argument.
	
	Let $q \in Q_{T}$ and $w \in \xns$ such that $(w,q)$ is a state of $\rec{T}$
	
	Let $ v \in \xno$ be any element. By definition $wv \in \dom{q}$ and so there is an infinite path $(w_1, q_0, q_1)(w_2, q_1, q_2)(w_3, q_2, q_3) \ldots$ in $T$ with $q_0 = q$, such that $w_1w_2w_3\ldots \equiv wv$.
	
	Let $(v_i)_{i \in \N}$, be a sequence of prefixes of $v$ such that $|v_i|$ tends to infinity with $i$. Since $\rec{T}$ is non-degenerate and finite, $|\lambda_{\rec{T}}(v_i, q_0)|$ also tends to infinity with $i$. Moreover, as $\lambda_{\rec{T}}(v_i, q_0)$ is equivalent to the output of the path $(wv_i)L_{q}$ in $T$ (see Remark~\ref{remark:remarksonrecoveredsynchtransducer}), we see that $(\lambda_{\rec{T}}(v_i, q_0))_{i \in \N}$ is equivalent to a sequence of prefixes of  $\lambda_{T}(w_1, q_0, q_1)\lambda_{T}(w_2,q_1, q_2)\lambda_{T}(w_3, q_2, q_3)\ldots \equiv wv$.
	
	On the other hand given $y \in \xno$ an element of $\im{(w,q)}$,  by construction, there is a sequence of elements $v_1 < v_2 \ldots$ in $\xnp$, such that the output of the path $(wv_i)L_{q}$ is equivalent to $\lambda_{T}(v_i, q)$ and is a prefix of $y$. However, for $i$ large enough, $w$ is equivalent to a prefix of the  concatenation of the input labels of the path $(wv_i)L_{q}$ yielding the result.
	
\end{proof}

\subsection{Reversing arrow automorphism}\label{Section:revarrowaut}

We formalise the construction sketched out in the introduction to the section.

\begin{Definition}
	Let $T = \gen{\xn,Q_{T}, \pi_{T}, \lambda_{T}} \in \On$. 
	Define $\pi_{\rev{T}}$ as the  subset  of $\xn \times Q_{T} \times Q_{T}$ such that an element $(x, q, p) \in \xn \times Q_{T} \times Q_{T}$ is an element of $\pi_{\rev{T}}$ if and only if $\pi_{T}(x, p) = q$. 
	Define $\lambda_{\rev{T}} : \pi_{\rev{T}} \to \xns$ as follows for $(x,q,p) \in \pi_{\rev{T}}$, $\lambda_{\rev{T}} = w$ if and only if $\lambda_{T}(x,p) = \rev{w}$.  
	Set $\rev{T} = \gen{\xn,  Q_{T}, \pi_{T}, \lambda_{T}}$ a non-deterministic transducer called the \emph{reverse of $T$}.
\end{Definition}

Given an element $T \in \On$, in order to distinguish states of $q$ and states of $\rev{T}$ , given a state $q \in Q_{T}$, we write $\rev{q}$ for the corresponding state of $\rev{T}$.

\begin{Remark}\label{remark:revTbisynch}
	Let $T \in \On$. Then $\rev{T}$ is a strongly connected and bisynchronizing non-deterministic transducer. This follows since the maps $\rev{{}}: \wns \to \wns$  and $(T)\Pi: \wns \to \wns$ are bijective. 
\end{Remark}

\begin{notation}
	Let $T \in \On$ and $q \in Q_{T}$, we write $h_{\rev{q}}$ for the map $\lambda_{\rev{T}}(\cdot, \rev{q}): \dom{\rev{q}} \to \im{\rev{q}}$.
\end{notation}

\begin{Remark}\label{rem:synchinreverse}
	Let $T \in \On$ and let $k \in \N$ be a synchronizing level of $T$. The following fact follow easily from the synchronizing property and the definition of $\rev{T}$. For any word $\gamma \in \xns$  of length at least $k$, there is a unique state $\rev{q} \in Q_{\rev{T}}$ such that $U^{+}_{\gamma} \subseteq \dom{\rev{q}}$. Moreover, for this vertex $\rev{q}$, for any state $p$ in $Q_{T}$ there is exactly one path $(\gamma, \rev{q}, \rev{p})$ from $\rev{q}$ to $p$. This can be summarised by saying that a word $\rev{\gamma} \in \xn^{k}$  forces a state $q \in Q_{T}$ if and only if $U^{+}_{{\gamma}} \subseteq \dom{\rev{q}}$.
\end{Remark}

The following result establishes a connection between the construction of the transducer $\rec{T}$  for $T \in \On$ and Construction~\ref{construction:inverse} of the inverse of $T$. The proof follows straight-forwardly from the definitions and is left  to the reader.

\begin{lemma}\label{lemma:swapdomainandrangeforinverse}
	Let $T \in \On$. Set $Q_{T^{-1}}:= Q_{T}$ and set
	\begin{itemize}
		\item $\pi_{T^{-1}} \subseteq \xns \times Q_{T} \times Q_{T}$ to be the set of all elements $(w,q,p)$ such that there is a $v \in \xns$ with $\pi_{T}(v,q) = p$ and $\lambda_{T}(v,q) = w$;
		\item $\lambda_{T^{-1}}: \pi_{T^{-1}} \to \xns$ by $(w,q,p) \mapsto v$  if and only if $\pi_{T}(v,q) = p$ and $\lambda_{T}(v,q) = w$.
	\end{itemize}
Let $T^{-1}$ be the non-deterministic transducer $\gen{\xn, Q_{T^{-1}},\pi_{T^{-1}},\lambda_{T^{-1}}}$. Then $T^{-1}$ is non-degenerate and bisynchronizing. Moreover $\rec{T^{-1}}$ is precisely the transducer $T'$ obtained by applying Construction~\ref{construction:inverse} to $T$.
\end{lemma}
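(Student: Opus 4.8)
The plan is to verify the three assertions --- that $T^{-1}$ is a well-defined non-deterministic transducer, that it is non-degenerate and bisynchronizing, and that $\rec{T^{-1}} = T'$ --- by unwinding the definitions and transporting the structure on $T$ across the involution that exchanges the roles of input and output. First I would check that $\lambda_{T^{-1}}$ is a genuine function on $\pi_{T^{-1}}$: if $(w,q,p)\in\pi_{T^{-1}}$ is witnessed by two words $v_1,v_2\in\xns$, then for every $y\in\xno$ we have $\lambda_T(v_1y,q)=w\,\lambda_T(y,p)=\lambda_T(v_2y,q)$, and since $T\in\On$ forces $T_q$ to induce an injective map on $\xno$ (condition \textbf{S1} in the definition of $\SOn{n}$), we get $v_1y=v_2y$, hence $v_1=v_2$. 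The same computation shows that a path of $T^{-1}$ from $q$ to $p$ whose concatenated input label is $\equiv w$ corresponds to a unique $v$ with $\pi_T(v,q)=p$, $\lambda_T(v,q)=w$, so that circuits of $T^{-1}$ based at $q$ are in bijection with circuits of $T$ based at $q$ (a non-trivial $T$-circuit with input $v$ and output $\lambda_T(v,q)$ gives the $T^{-1}$-circuit with input label $\lambda_T(v,q)$ and output label $v$).

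Non-degeneracy of $T^{-1}$ is then immediate from this bijection: a non-trivial circuit of $T$ based at $q$ has $v\not\equiv\ew$, so the output of the associated $T^{-1}$-circuit is non-empty, and, by the Assumption on transducers (every circuit of the underlying automaton has non-empty concatenated output), $\lambda_T(v,q)\not\equiv\ew$, so its input label is non-empty; hence both $I(T^{-1})$ and $O(T^{-1})$ are non-degenerate non-deterministic automata. For bisynchronizing: by construction $O(T^{-1})$ is obtained from the digraph of $T$ by recording on each edge the input symbol of $T$, so it is the non-deterministic automaton underlying $I(T)$, which is strongly synchronizing in the non-deterministic sense because $T$ is a deterministic strongly synchronizing transducer (Remark~\ref{rmk:stronglysynchautisstronglysynchnondetaut}). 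Dually, $I(T^{-1})$ records the output blocks of $T$; since $T\in\On$ has an inverse $U\in\On\subseteq\Smn{n}$ which is strongly synchronizing and core as an automaton, reading the output blocks of $T$ forces a state after boundedly many steps, which is precisely strong synchronization of $I(T^{-1})$. Strong connectedness of $T^{-1}$ is inherited from that of $T$ (same vertex set, each $T$-edge gives a $T^{-1}$-edge with the same endpoints), so $T^{-1}$ is core and bisynchronizing.

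It remains to identify $\rec{T^{-1}}$ with $T'$. I would match the two constructions state by state. For $q\in Q_T$, an infinite input of $T^{-1}$ readable from $q$ is exactly a concatenation $\lambda_T(x_1,q)\lambda_T(x_2,\pi_T(x_1,q))\cdots$, i.e.\ an element of $\im{q}$ computed in $T$; thus $\dom{q}$ (in $T^{-1}$) $=\im{q}$ (in $T$). Moreover, for $w\in\xns$ the path-prefix $(w)L_q\in\pi_{T^{-1}}^{\ast}$ used to build $\rec{T^{-1}}$ has $\lambda_{T^{-1}}$-image equal to the greatest common prefix of $(U^{+}_{w})h_q^{-1}$, which is exactly the word $(w)L_q$ of Construction~\ref{construction:inverse}. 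Consequently the state set $\{(w,q):U^{+}_{w}\subseteq\dom{q},\ (w)L_q=\ew\}$ of $\rec{T^{-1}}$ coincides with the state set $\{(w,q):U^{+}_{w}\subseteq\im{q},\ (w)L_q=\ew\}$ of $T'$, and the transition rule $\pi_{\rec{T^{-1}}}(a,(w,q))$ and output rule $\lambda_{\rec{T^{-1}}}(a,(w,q))$ become term-for-term the formulas $wa-\lambda_T((wa)L_q,q)$ and $(wa)L_q$ of Construction~\ref{construction:inverse}. The main (and only genuinely bookkeeping-heavy) obstacle is this last identification: confirming that the path-prefix data $L_q$ inside the non-deterministic $T^{-1}$ translates, via $\lambda_{T^{-1}}$, exactly to the response-delay data $L_q$ of Construction~\ref{construction:inverse}, and that the clopen condition "$U^{+}_{w}\subseteq\dom{q}$ in $T^{-1}$" is the same as "$U^{+}_{w}\subseteq\im{q}$ in $T$"; once these are pinned down (using Proposition~\ref{Prop:recoveringrepinOn} and the results of \cite{AutGnr} that underlie Construction~\ref{construction:inverse}), everything else is a direct translation of definitions.
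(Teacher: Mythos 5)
The paper gives no proof of this lemma (it is explicitly ``left to the reader''), and your proposal is a correct unwinding of the definitions along exactly the lines the paper intends: well-definedness of $\lambda_{T^{-1}}$ from injectivity of the states of $T$, non-degeneracy from the standing Assumption on circuit outputs, and the identification $\rec{T^{-1}}=T'$ via $\dom{q}$ in $T^{-1}$ being $\im{q}$ in $T$ and the path-prefix map $L_q$ translating to the greatest-common-prefix map of Construction~\ref{construction:inverse}.

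The one place to tighten is the strong synchronization of $I(T^{-1})$. The paper's notion of ``strongly synchronizing'' for non-deterministic automata is the circuit condition (a unique circuit, up to rotation, whose label lies in each $\rotclass{u}$), not the state-forcing condition, so ``reading the output blocks of $T$ forces a state after boundedly many steps'' is not literally the statement you need. The clean route, parallel to what the paper does for $\rev{T}$ in Remark~\ref{remark:revTbisynch}, is to observe that $(T)\Pi$ is a bijection of $\rwnl{\ast}$ and that a prime circuit of $T$ has prime output (if the input label were $y^{j}$ with $j>1$ then uniqueness of the state fixed by a word forces $\pi_T(y,p)=p$, so the output is a proper power of the non-empty word $\lambda_T(y,p)$, contradicting primality); existence and uniqueness up to rotation of the $I(T^{-1})$-circuit labelled by a rotation of $u$ then follow by pulling $\rotclass{u}$ back through $(T)\Pi$. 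With that substitution the argument is complete.
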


The following useful fact is a  corollary of Lemma~\ref{lemma:swapdomainandrangeforinverse} and follows from Proposition~\ref{Prop:recoveringrepinOn}:

\begin{lemma}\label{lemma:constructedinversesynch}
	Let $T \in \On$  and let $T'$ be the inverse of $T$ as in Construction~\ref{construction:inverse}. Then $T'$ is strongly synchronizing.
\end{lemma}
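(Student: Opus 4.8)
The plan is to deduce the statement by chaining together Lemma~\ref{lemma:swapdomainandrangeforinverse} with Proposition~\ref{Prop:recoveringrepinOn}. By Lemma~\ref{lemma:swapdomainandrangeforinverse}, the transducer $T'$ produced by Construction~\ref{construction:inverse} coincides exactly with $\rec{T^{-1}}$, where $T^{-1} = \gen{\xn, Q_{T^{-1}}, \pi_{T^{-1}}, \lambda_{T^{-1}}}$ is the non-deterministic transducer built from $T$ by interchanging the input and output roles along paths; moreover the same lemma records that $T^{-1}$ is non-degenerate and bisynchronizing, hence in particular strongly synchronizing.

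First I would check that $T^{-1}$ is \emph{core}, that is, that its input automaton $I(T^{-1})$ is strongly connected. This is immediate from the fact that $T \in \On \subseteq \Smn{n}$ is core: given any two states $q, p \in Q_{T} = Q_{T^{-1}}$, strong connectivity of $T$ furnishes a path in $T$ from $q$ to $p$, reading some $v \in \xns$ and writing $w = \lambda_{T}(v,q)$; then $(w,q,p) \in \pi_{T^{-1}}$, so there is an edge from $q$ to $p$ in $I(T^{-1})$. Thus $T^{-1}$ is a core strongly synchronizing non-deterministic transducer.

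With this in hand, Proposition~\ref{Prop:recoveringrepinOn} applies to $T^{-1}$ and yields a strongly synchronizing transducer $S$. Tracing the construction of $S$ in the proof of that proposition, $S$ has state set $\{(w,q) : U^{+}_{w} \subseteq \dom{q},\ (w)L_{q} = \ew\}$ together with the transition and output functions appearing in the definition of $\rec{T^{-1}}$; that is, $S$ is literally $\rec{T^{-1}}$. Combining this with Lemma~\ref{lemma:swapdomainandrangeforinverse} gives $T' = \rec{T^{-1}} = S$, so $T'$ is strongly synchronizing, as claimed.

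I do not expect a serious obstacle: the substance is entirely contained in Lemma~\ref{lemma:swapdomainandrangeforinverse} and Proposition~\ref{Prop:recoveringrepinOn}, and the only genuine verification is the routine observation that $T^{-1}$ inherits strong connectivity from $T$. The one bookkeeping point to keep in mind is the distinction between $\rec{T^{-1}}$ and its minimal representative $\mrec{T^{-1}}$: the statement concerns $T'$ as output by Construction~\ref{construction:inverse}, which is $\rec{T^{-1}}$ itself and not its minimalisation, so it matters that Proposition~\ref{Prop:recoveringrepinOn} already asserts strong synchronization of $\rec{T^{-1}}$ rather than only of a minimal $\omega$-equivalent form.
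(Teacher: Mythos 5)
Your proof is correct and is essentially the paper's own argument: the paper states this lemma precisely as a corollary of Lemma~\ref{lemma:swapdomainandrangeforinverse} (identifying $T'$ with $\rec{T^{-1}}$) combined with Proposition~\ref{Prop:recoveringrepinOn}. Your explicit verification that $T^{-1}$ inherits strong connectivity from $T$, and your closing remark that the relevant object is $\rec{T^{-1}}$ rather than $\mrec{T^{-1}}$, are both accurate and fill in the small details the paper leaves implicit.
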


 Now we prove the main result of the subsection:

\begin{Theorem}
	The map $\overleftarrow{\mathfrak{r}}: \On \to \On$ defined by
	$$T \mapsto \mrec{\rev{T}}$$ is an automorphism of $\On$. Moreover the restriction $\overleftarrow{\mathfrak{r}} \restriction_{\Ln{n}}$ is an automorphism of $\Ln{n}$.
\end{Theorem}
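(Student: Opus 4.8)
The plan is to recognise $\overleftarrow{\mathfrak{r}}$ as the automorphism of $\On$ induced, through the faithful homomorphism $\Pi\colon\Smn{n}\to\tran{\rwnl{\ast}}$, by conjugation by the map $\rev{\phantom{a}}\colon\rwnl{\ast}\to\rwnl{\ast}$, $\rotclass{\gamma}\mapsto\rotclass{\rev{\gamma}}$ (this is well defined since $\rev{w'}$ is a rotation of $\rev{w}$ whenever $w'$ is a rotation of $w$, and it is an involution). Writing $\Phi\colon\sigma\mapsto\rev{\phantom{a}}\,\sigma\,\rev{\phantom{a}}$ for the resulting automorphism of $\sym{\rwnl{\ast}}$, which satisfies $\Phi^{2}=\id$, the theorem will follow once I establish: (i) for every $T\in\On$ the transducer $\mrec{\rev{T}}$ lies in $\On$; and (ii) $(\mrec{\rev{T}})\Pi=\Phi\bigl((T)\Pi\bigr)$. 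Granting these, $\overleftarrow{\mathfrak{r}}$ is the restriction to $\On$ of $\Pi^{-1}\Phi\Pi$, hence a homomorphism $\On\to\On$; applying (ii) once more to $\overleftarrow{\mathfrak{r}}(T)$ and using $\rev{\phantom{a}}^{2}=\id$ gives $\overleftarrow{\mathfrak{r}}^{2}=\id_{\On}$, so $\overleftarrow{\mathfrak{r}}$ is an automorphism (an involution).

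First I would check that $\mrec{\rev{T}}$ is well defined and lies in $\SOn{n}$. By Remark~\ref{remark:revTbisynch}, $\rev{T}$ is a core bisynchronizing non-deterministic transducer, so Proposition~\ref{Prop:recoveringrepinOn} produces the strongly synchronizing transducer $\rec{\rev{T}}$, whose minimal (hence strongly connected) representative is $\mrec{\rev{T}}$. To verify conditions S1 and S2, note that by Lemma~\ref{lem:determiningimage} a state $(w,\rev{q})$ of $\rec{\rev{T}}$ has image $(U^{+}_{w})h_{\rev{q}}$ and induces the map $x\mapsto h_{\rev{q}}(wx)$ on $\xno$; since $\rev{T}$ is bisynchronizing, $h_{\rev{q}}\colon\dom{\rev{q}}\to\im{\rev{q}}$ is a homeomorphism between clopen subsets of $\xno$ (Remarks~\ref{remark:strongbisynchbijectionfromdomaintorange} and~\ref{remark:bisycndomainandimageclopen}), so the induced map is injective with clopen image. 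These properties pass to the minimal representative, so $\mrec{\rev{T}}\in\SOn{n}$.

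The crux is (ii), which I would obtain from a circuit computation. Reversing arrows gives a rotation-class-respecting bijection between the prime circuits of $T$ and those of $\rev{T}$: a loop of $T$ based at $s_{0}$ that reads $a_{1}a_{2}\cdots a_{m}$ and writes $b_{1}b_{2}\cdots b_{m}$ (with $b_{i}=\lambda_{T}(a_{i},s_{i-1})$) corresponds, by the definition of $\rev{T}$, to the loop of $\rev{T}$ that reads $a_{m}a_{m-1}\cdots a_{1}$ and writes $\rev{b_{m}}\cdots\rev{b_{1}}=\rev{b_{1}b_{2}\cdots b_{m}}$. Feeding this into the description in Proposition~\ref{Prop:recoveringrepinOn} of the action of $\mrec{\rev{T}}$ on circuits, together with the definition of $\Pi$ (which reads off from the $T$-loop spelling a prime word $u$ the prime root $\nu$ of its output), and using that reversal preserves primality and satisfies $\rev{(\nu^{k})}=(\rev{\nu})^{k}$, I would conclude $(\rotclass{u})(\mrec{\rev{T}})\Pi=\rotclass{\rev{\nu}}=\bigl((\rotclass{u})\rev{\phantom{a}}\bigr)(T)\Pi\,\rev{\phantom{a}}$ for every prime $u$, which is (ii). I expect this rotation-class bookkeeping, together with handling the asynchronicity of the outputs when matching the circuits of $T$, of $\rev{T}$, and of $\mrec{\rev{T}}$, to be the main obstacle; everything else is formal.

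Finally, for (i) I would run $T^{-1}\in\On$ through the same machinery: $\mrec{\rev{T}}$ and $\mrec{\rev{T^{-1}}}$ both lie in $\SOn{n}\subseteq\Smn{n}$, and since $\Pi$ is an injective monoid homomorphism and $\Phi$ a homomorphism, (ii) gives $\bigl((\mrec{\rev{T}})(\mrec{\rev{T^{-1}}})\bigr)\Pi=\Phi\bigl((T)\Pi\,(T^{-1})\Pi\bigr)=\Phi(\id)=(\tid)\Pi$, whence $(\mrec{\rev{T}})(\mrec{\rev{T^{-1}}})=\tid$ in $\Smn{n}$, and symmetrically the other product is $\tid$. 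So $\mrec{\rev{T}}$ is invertible in $\SOn{n}$, and therefore lies in $\On$, the largest inverse-closed subset of $\SOn{n}$; this is (i). For the last assertion, if $T\in\Ln{n}=\On\cap\SLn{n}$ then by the circuit correspondence every loop of $\mrec{\rev{T}}$ reading a word $u$ has output in the rotation class of the output of a loop of $T$ reading a rotation of $\rev{u}$; since $T$ satisfies the Lipschitz loop condition~\ref{Lipshitzconstraint} that output has length $|u|$, and reversal and rotation preserve length, so $\mrec{\rev{T}}$ also satisfies~\ref{Lipshitzconstraint} and lies in $\Ln{n}$. As $\overleftarrow{\mathfrak{r}}$ is an involution preserving $\Ln{n}$, it restricts to an automorphism of $\Ln{n}$.
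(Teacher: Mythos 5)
Your proposal is correct and follows essentially the same route as the paper: both identify $\overleftarrow{\mathfrak{r}}$ via the faithful representation $\Pi$ as conjugation by $\rev{\phantom{a}}$ on $\rwnl{\ast}$, using Proposition~\ref{Prop:recoveringrepinOn} for the circuit computation $(\revrec{T})\Pi=\rev{\phantom{a}}\,(T)\Pi\,\rev{\phantom{a}}$, injectivity of $\Pi$ for the homomorphism and injectivity claims, the involution property for surjectivity, and preservation of condition~\ref{Lipshitzconstraint} for the statement about $\Ln{n}$. You simply spell out in more detail the steps the paper compresses into citations (the verification that $\mrec{\rev{T}}$ satisfies S1--S2 and is invertible, hence lies in $\On$).
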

\begin{proof}
	The map $\overleftarrow{\mathfrak{r}}$ is well-defined by Proposition~\ref{Prop:recoveringrepinOn}, Remark~\ref{remark:revTbisynch} and the definition of $\mrec{\rev{T}}$.
	
	We show that $\overleftarrow{\mathfrak{r}}$ is an automorphism. First note that as the homomorphism, $\Pi: \On \to \sym{\rwnl{\ast}}$ is faithful, then, by Proposition~\ref{Prop:recoveringrepinOn}, $\rev{\mathfrak{r}}$ is injective. This follows since, by Proposition~\ref{Prop:recoveringrepinOn}, for $T \in \On$, $(\revrec{T})\Pi$ is precisely the map ${(\rev{\phantom{a}}}(T)\Pi\rev{\phantom{a}}): \wns \to \wns$. It is also straight-forward to see using Proposition~\ref{Prop:recoveringrepinOn}, that given $T,U \in \On$, then $(\rec{T})\Pi (\rec{U})\Pi = (\rec{TU})\Pi$. Therefore, as $\Pi$ is faithful, $\mrec{T}\mrec{U} = \mrec{TU}$.
	
	The moreover statement follows from the fact that for $\overleftarrow{\mathfrak{r}}$ preserves the defining condition, condition \ref{Lipshitzconstraint}, of $\Ln{n}$ in $\On$.
\end{proof}

\begin{Remark}
	It is clear that for $T \in \On$, $\revrec{\revrec{T}} = T$.   
\end{Remark}

We now show that for $n > 2$, for any element $T \in \hn{n}$, with $|T| > 1$, $\revrec{T} \notin \hn{n}$. In particular $\revrec{\hn{n}} \cap \hn{n}$ has size $n!$ and consists entirely of single state transducers. The assumption that $n>2$ is essential since $\hn{2}$ is isomorphic to the cyclic group of order $2$.

\begin{lemma}\label{lem:detectinghomeo}
	Let $T$ be a non-deterministic strongly connected and strongly synchronizing transducer and let  $(w,q) \in Q_{\rec{T}}$. Then  $(w,q)$  corresponds to a homeomorphism state of $\mrec{T}$ if and only if $(U^{+}_{w})h_{q} = U^{+}_{\nu}$ for some $\nu \in \xns$.
\end{lemma}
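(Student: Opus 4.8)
The statement characterises when a state $(w,q)$ of $\rec{T}$ gives a homeomorphism state of $\mrec{T}$, namely exactly when the image of that state is a single basic open cone $U^{+}_{\nu}$. I would prove both directions using the description of the image from Lemma~\ref{lem:determiningimage}, together with Remark~\ref{remark:strongbisynchbijectionfromdomaintorange}, which tells us that for a \emph{bisynchronizing} core transducer every state induces a homeomorphism onto its (clopen) image, and with the fact that for the minimal transducer $\mrec{T}$ every state induces an injective continuous map on $\xnn$ (this is built into $\SOn{n}$ via condition S1, once we know $\mrec{T}$ represents something in the right class). The key translation is that, by Lemma~\ref{lem:determiningimage}, $\im{(w,q)} = (U_{w}^{+})\lambda_{T}(\cdot,q)$, and this map $\lambda_{T}(\cdot,q)$ restricted to $U_{w}^{+} \subseteq \dom{q}$ is injective and continuous (Remark~\ref{remark:strongbisynchbijectionfromdomaintorange} applied to $T$, noting $T$ here is the reverse of an element of $\On$, hence bisynchronizing). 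So $\im{(w,q)}$ is always a clopen subset of $\xnn$ homeomorphic to $U_{w}^{+}$, i.e.\ homeomorphic to Cantor space; the content of the lemma is purely about \emph{when the map of $\mrec{T}$ at $(w,q)$ is surjective onto a single cone}, equivalently when it is a homeomorphism onto its image \emph{where the image is a basic open set}.

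\textbf{The "if" direction.} Suppose $(U_{w}^{+})h_{q} = U_{\nu}^{+}$ for some $\nu \in \xns$. By Lemma~\ref{lem:determiningimage} this is precisely $\im{(w,q)}$ as a state of $\rec{T}$, hence (after removing incomplete response and $\omega$-reduction, which do not change the induced map) it is $\im{(w,q)}$ as a state of $\mrec{T}$. The induced map $\xnn \to U_{\nu}^{+}$ factors as $\xnn \xrightarrow{\sim} U_{w}^{+} \xrightarrow{h_{q}} U_{\nu}^{+}$ where the first is the obvious homeomorphism (the state of $\rec{T}$ reading relative to the prefix $w$) and the second is injective and continuous by the bisynchronizing property of $T$; a continuous injection from a compact space to a Hausdorff space is a homeomorphism onto its image, so $h_{q}\restriction_{U_{w}^{+}}$ is a homeomorphism onto $U_{\nu}^{+}$. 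Composing with the canonical homeomorphism $U_{\nu}^{+} \xrightarrow{\sim} \xnn$ (delete the prefix $\nu$) shows that the state $(w,q)$ of $\mrec{T}$ induces a homeomorphism of $\xnn$; but a homeomorphism state is exactly one whose induced map is a self-homeomorphism of $\xnn$ after this normalisation, so one has to check the normalisation is the one implicit in the definition of "homeomorphism state" — here I would invoke that $\mrec{T}$ has no states of incomplete response, so the induced map of the state $(w,q)$ of $\mrec{T}$ is genuinely onto a cone only when that cone is all of $\xnn$, i.e.\ $\nu = \ew$ — wait, more carefully: a state of $\Smn{n}$ with no incomplete response induces a map whose image is a union of cones, and being a homeomorphism state means that image is all of $\xnn$; so the precise statement is that $(w,q)$ is a homeomorphism state iff $h_{q}(U_{w}^{+})$, after deleting the common prefix of the output, equals $\xnn$, which is exactly the condition that the image is a single cone.

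\textbf{The "only if" direction, and the main obstacle.} Conversely, suppose $(w,q)$ corresponds to a homeomorphism state of $\mrec{T}$. Then its image as a state of $\mrec{T}$ is all of $\xnn$. Now the image of a state of $\mrec{T}$ is obtained from $\im{(w,q)}$ (as a state of $\rec{T}$, computed by Lemma~\ref{lem:determiningimage}) by deleting the greatest common prefix $\Lambda(\ew,(w,q))$ of the outputs — this is exactly what removing incomplete response does. So if the image as a state of $\mrec{T}$ is $\xnn$, then $\im{(w,q)}$ as a state of $\rec{T}$ is $\{\Lambda(\ew,(w,q))\xi : \xi \in \xnn\} = U_{\nu}^{+}$ with $\nu = \Lambda(\ew,(w,q))$, which by Lemma~\ref{lem:determiningimage} is $(U_{w}^{+})h_{q}$. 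That gives the claimed equality. \textbf{The hard part} is being careful about the bookkeeping between "state of $\rec{T}$" and "state of $\mrec{T}$": the image of the $\mrec{T}$-state is the image of the $\rec{T}$-state with the incomplete response stripped, and I need Remark~\ref{remark:remarksonrecoveredsynchtransducer} and Proposition~\ref{Prop:recoveringrepinOn} to guarantee that $\rec{T}$ is non-degenerate so that $\Lambda(\ew,(w,q))$ is finite and the stripping is well-defined, and I need to know that $\mrec{T}$ genuinely lies in $\SOn{n}$ (so that "homeomorphism state" makes sense and injectivity of each state is automatic) — this is where the bisynchronizing hypothesis on $T = \rev{U}$, $U \in \On$, via Remark~\ref{remark:revTbisynch} and Remark~\ref{remark:strongbisynchbijectionfromdomaintorange}, does the real work. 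Everything else is unwinding definitions.
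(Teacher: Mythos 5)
Your proof is correct and follows essentially the same route as the paper's: both directions reduce, via Lemma~\ref{lem:determiningimage}, to the observation that removing incomplete response from the state $(w,q)$ of $\rec{T}$ strips the greatest common output prefix $\nu$ from its image, so the corresponding state of $\mrec{T}$ has image all of $\xnn$ exactly when $\im{(w,q)} = U^{+}_{\nu}$. Your additional care about injectivity of $h_{q}\restriction_{U^{+}_{w}}$ (needed to upgrade ``image equals $\xnn$'' to ``homeomorphism state'', and supplied by the bisynchronizing property of $\rev{T}$ in the intended application) is a point the paper's proof leaves implicit, so this is a welcome refinement rather than a divergence.
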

\begin{proof}
	Suppose $(U^{+}_{w})h_{\rev{q}} = U^{+}_{\nu}$ for some $\nu \in \xns$. By Lemma~\ref{lem:determiningimage} $\im{(w,q)} =U^{+}_{\nu}$. Thus, after removing incomplete response,  the state $(w,q)$ corresponds to a a state of $\mrec{T}$ with image equal to $\CCn$. That is, $(w,q)$ corresponds to a homeomorphism state of $\mrec{T}$.
	
	Now suppose that $(w,q)$ corresponds to a homeomorphism state of $\mrec{T}$. This must mean that after removing incomplete response from the state $(w,q)$ of $\mrec{T}$ the image of $(w,q)$ is $\CCn$. From this it follows that $\im{(w,q)} = U^{+}_{\nu}$ for some $\nu \in \xns$, where $\nu$ is the longest guaranteed output from all sufficiently long inputs read from the state $(w,q)$. Thus  $(U^{+}_{w})h_{\rev{q}} = U^{+}_{\nu}$ by Lemma~\ref{lem:determiningimage}.
\end{proof}

\begin{proposition}\label{prop:hnthrownoffitself}
	Let $T \in \hn{n}$, then $\revrec{T} \in \hn{n}$ if and only if  the minimal representative of $T$ has size $1$.
\end{proposition}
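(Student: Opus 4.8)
The plan is to prove the two implications separately, the ``if'' direction by a direct computation and the ``only if'' direction by exhibiting a non-homeomorphism state in $\mrec{\rev T}$.

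\emph{The ``if'' direction.} Suppose the minimal representative of $T$ is a single state $q$. An invertible single-state transducer in $\On$ has single-letter outputs, so $T$ induces a permutation $\sigma:=\lambda_T(\cdot,q)$ of $X_n$. Then $\rev T$ is the single-state non-deterministic transducer with, for each $x\in X_n$, a loop labelled $x\mid\sigma(x)$ (the output words are single letters, so reversing them changes nothing). Reading off the definitions of $L_{\rev q}$ and of $\rec{\phantom a}$, one checks that $(\ew)L_{\rev q}=\ew$ while $(w)L_{\rev q}\ne\ew$ for every nonempty $w$, so $Q_{\rec{\rev T}}=\{(\ew,\rev q)\}$, and that $\rec{\rev T}$ is again the single-state synchronous transducer with permutation $\sigma$. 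It is already minimal, so $\revrec T=T\in\hn n$.

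\emph{The ``only if'' direction.} I would argue the contrapositive: assuming the minimal representative $T$ (taken minimal from now on) has $m\ge 2$ states, I want $\revrec T\notin\hn n$. If $n=2$ this is vacuous, since $\hn 2$ is the cyclic group of order $2$ and all of its elements are single-state; so assume $n\ge 3$. By Lemma~\ref{lem:detectinghomeo} it suffices to produce a state $(w,\rev q)\in Q_{\rec{\rev T}}$ with $(U^{+}_{w})h_{\rev q}$ \emph{not} equal to a single cone $U^{+}_{\nu}$. Two structural facts feed into this. First, because $T$ is a core strongly synchronizing transducer with $m\ge 2$ states, some letter $x$ must merge two states, i.e.\ there are $p\ne p'$ with $\pi_T(x,p)=\pi_T(x,p')=:q$; otherwise every letter would act as a permutation of $Q_T$, and the (group) monoid these permutations generate would contain no constant map, contradicting strong synchronization. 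Hence $\rev T$ carries the two distinct edges $(x,\rev q,\rev p)$ and $(x,\rev q,\rev{p'})$, so it genuinely branches at $\rev q$. Second, since $m\ge 2$, the clopen sets $\dom{\rev r}$ ($r\in Q_T$) partition $\xno$ into $m$ proper nonempty pieces (Remark~\ref{rem:synchinreverse}), $\rev T$ is bi-synchronizing (Remark~\ref{remark:revTbisynch}), and each $\im{\rev r}$ is clopen (Remark~\ref{remark:bisycndomainandimageclopen}).

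\emph{The heart of the proof} is then the choice of $w$. Using the clopenness of $\dom{\rev q}$ and strong connectedness of $\rev T$, I would select $w$ with $U^{+}_{w}\subseteq\dom{\rev q}$, with $(w)L_{\rev q}=\ew$ (so that $(w,\rev q)$ really is a state of $\rec{\rev T}$), and arranged so that the infinite paths in $\rev T$ extending $w$ from $\rev q$ use exactly the two branches $(x,\rev q,\rev p)$ and $(x,\rev q,\rev{p'})$; then by Lemma~\ref{lem:determiningimage} its image is a union, over these two initial edges, of translates (by $\rev{\lambda_T(x,p)}$, resp.\ $\rev{\lambda_T(x,p')}$) of sub-clopen-sets of $\im{\rev p}$, resp.\ $\im{\rev{p'}}$ — in particular a union of at least two cones. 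It remains to rule out that these pieces ``accidentally tile'' a single cone $U^{+}_{\nu}$; this is the step I expect to be the main obstacle, and it is where $n\ge 3$, the properness of $\dom{\rev p},\dom{\rev{p'}}$ (hence of $\im{\rev p},\im{\rev{p'}}$ after stripping incomplete response), and the minimality of $T$ (no two $\omega$-equivalent states, so the data carried by the two branches genuinely differ) all have to be used; concretely one must show that $\rev{\lambda_T(x,p)}\cdot\im{\rev p}$ and $\rev{\lambda_T(x,p')}\cdot\im{\rev{p'}}$ cannot together equal a single cone. Granting this, $(w,\rev q)$ is not a homeomorphism state of $\mrec{\rev T}$ by Lemma~\ref{lem:detectinghomeo}, so $\revrec T\notin\hn n$. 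Combined with the ``if'' direction this gives the stated equivalence; the ``in particular'' about $\revrec{\hn n}\cap\hn n$ then follows, since the single-state elements of $\hn n$ form a copy of $\sym{X_n}$ of size $n!$, which by the ``if'' computation is fixed pointwise by $\rev{\mathfrak r}$.
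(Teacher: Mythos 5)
Your ``if'' direction is correct and is essentially the paper's (one-line) observation, spelled out. The problem is the step you yourself flag and then ``grant'', and it is not a technicality that can be patched at the single branching point you construct. For a state $(w,\rev{q})$ of $\rec{\rev{T}}$ the word $\rev{w}$ is a synchronizing word forcing $q$, so by Remark~\ref{rem:synchinreverse} and Lemma~\ref{lem:determiningimage} the image of that state is the disjoint union $\bigcup_{p\in Q_{T}}\rev{\lambda_{T}(\rev{w},p)}\,\im{\rev{p}}$ taken over \emph{all} states $p$ of $T$, not just the two branches at your merging letter $x$; moreover, since $T$ is synchronous, all the prefixes $\rev{\lambda_{T}(\rev{w},p)}$ have the same length $|w|$, and one checks that $\bigsqcup_{p}\im{\rev{p}}=\xno$ for $T\in\hn{n}$. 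Consequently the image of $(w,\rev{q})$ \emph{is} a single cone precisely when all states of $T$ happen to produce the same output on the particular word $\rev{w}$ --- which can occur for some minimal synchronizing words even when $T$ has several non-$\omega$-equivalent states. Your choice of $w$ is governed by where the \emph{transition} function merges states, which has nothing to do with whether the \emph{outputs} $\lambda_{T}(\rev{w},p)$ agree across $p$; so the pointwise claim you would need (``these pieces cannot tile a single cone'') is false for an arbitrary such $w$, and no single cleverly chosen branching point suffices.

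The paper avoids this by arguing globally in the other direction: assume $\revrec{T}\in\hn{n}$, so by Lemma~\ref{lem:detectinghomeo} \emph{every} state $(w,\rev{q})$ has image a cone $U^{+}_{\nu}$. Injectivity of $\lambda_{T}(\cdot,p)$ on $X_{n}$ rules out $|\nu|>|w|$, and the same argument applied to $T^{-1}$ rules out $|\nu|<|w|$; hence $|\nu|=|w|$, which forces $\lambda_{T}(\rev{w},p)=\rev{\nu}$ and $\pi_{T}(\rev{w},p)=q$ for \emph{every} $p\in Q_{T}$ simultaneously. Since the words $\rev{w}$ arising from all states of $\rec{\rev{T}}$ form a complete prefix code, all states of $T$ then agree in transition and output on a complete antichain, hence are $\omega$-equivalent, and minimality gives $|T|=1$. (The case $w=\ew$ is handled separately and also gives $|T|=1$.) If you wish to keep your contrapositive framing you would still need both ingredients --- the length computation forcing $|\nu|=|w|$, and the passage over the whole prefix code to locate a word on which two states actually disagree --- at which point you have reconstructed the paper's argument rather than found a shortcut.
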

\begin{proof}
	Let $T \in \hn{n}$ and suppose that $T$ is minimal.	If $|T| =1$ then $\rec{T} = T$.
	
	 Thus suppose that $|T|> 1$ and $\revrec{T} \in \hn{n}$. We show that $|T|=1$. We mainly work with the transducer $\rec{\rev{T}}$ as  $\mrec{\rev{T}} = \revrec{T}$  is obtained form $\rec{\rev{T}}$ by removing states of incomplete response and identifying $\omega$-equivalent states.
	
	By Lemma~\ref{lem:detectinghomeo}, since every state of $\revrec{T}$ is a homeomorphism state, it follows that follows that for any state $(w,q) \in  Q_{\rec{\rev{T}}}$, $\im{(w,q)} = U^{+}_{\nu}$ for some  $\nu \in \xns$.
	
	By Remark~\ref{rem:synchinreverse} it is not hard to see that  a pair $(w,q)$ is a state if $Q_{\rec{T}}$ if and only if  $\pi_{T}(\rev{w}, \cdot): Q_{T} \to Q_{T}$ takes only the value $q$ and for any proper prefix $v$ of $\rev{w}$, $\pi_{T}(v, \cdot): Q_{T} \to Q_{T}$ has image size at least $2$.
	
	We first consider the case that $(\emptyword, q)$ is a state of $Q_{\rev{T}}$ for some $q \in Q_{T}$. This means that $\dom{\rev{q}} = \xno$.  From which we deduce that $\pi_{T}(\centerdot, q) : \xns \to Q_{T}$ takes only the value $q$. Therefore, as $T$ is core, $|T| = 1$. 
	
	We may now assume that for any element $(w,q) \in Q_{\rec{\rev{T}}}$, $w \ne \emptyword$.
	
	Let $(w,q) \in Q_{\rec{\rev{T}}}$ be a state, and  suppose that $(U^{+}_{w})h_{\rev{q}} = U^{+}_{\nu}$. Set $w = w_1 w_2 \ldots w_l$. Thus it follows that for any state $p \in  Q_{T}$, and any element  $v \in \xns$ such that $l+|v| = |\nu|$, $\lambda_{T}(vw_{l}\ldots w_1, p) = \rev{\nu}$. 
	
	If $|\nu| > |w|$, then for any state $p \in Q_{T}$, and any word $v \in \xn^{|\nu|-l}$, $\lambda_{T}(v, p)$ is a non-empty prefix of $\rev{\nu}w_{l}$.  This contradicts the fact that $\lambda_{T}(\cdot, p): \xn \to \xn$ is a bijection for any state $p \in Q_{T}$. 
	
	Suppose that $|\nu| < |w|$. Then swapping $T$ with $T^{-1}$ and repeating the argument above with $q^{-1}$ in place of $q$ and swapping the roles of $\nu$ and $w$, we obtain a contradiction as before. Therefore $|w| = |\nu|$.
	
	We deduce that for any state $p \in Q_{T}$, $\lambda_{T}(\rev{w}, p) = \rev{\nu}$ and $\pi_{T}(\rev{w}, p) = q$. 
	
	Consider the set	
	$$X:= \{ w \in \xnp: (w,q) \in Q_{\rec{\rev{T}}} \}.$$
		 We observe that $X$ forms a prefix code for $\xns$.  Now, since for distinct $p, p'$ in $Q_{T}$, $\lambda_{T}(\cdot, p)$ and $\lambda_{T}(\cdot, p')$ coincide on $X$ and $\pi_{T}(\cdot, p)$ and $\pi_{T}(\cdot, p')$ coincide on $X$, it follows that all the states of $T$ are $\omega$-equivalent. Since  $T$ is  assumed to be minimal, $|T| = 1$.
\end{proof}

\begin{Remark}
	We make a few comments. 
	
	Proposition~\ref{prop:hnthrownoffitself} shows that whenever  $n\ge 3$, $\overleftarrow{\mathfrak{r}}$ is a non-trivial automorphism. However, it is not hard to construct examples of elements $T \in \On$ such that $\revrec{T} \neq T$. For example, the transducer $T$ in  Figure~\ref{fig:L2withloopstatenonhomeo} is the image under $\overleftarrow{\mathfrak{r}}$ of an element $U \in \Ln{n}$ for which every loop state is a homeomorphism state. Since the $0$ loop state of $T$ is not a homeomorphism state, $T \ne U$.

\end{Remark}

\subsection{Reversing arrows and the map \texorpdfstring{$\rsig$}{Lg}}\label{subsection:reversingarrows}

In this section we study the homomorphism $$\rev{\sig}: \On \to \Un{n-1}$$ defined by $$T \mapsto \revrec{T}\rsig.$$ The restriction to the subgroup $\Ln{n}$ induces a homomorphism which is precisely the inverse of the map $\rsig: \Ln{n} \to \Un{n-1}$.  As with the map $\rsig$, we extend $\rev{\sig}$ to a map from $\aut{\shift{n}, \xn^{\Z}}$ onto the group $\Pi_{k \in \N}\Un{n^{k}-1, n^{k}}$. The resulting map is more straightforwardly related to the dimension group of the full two-sided shift as exposited, for example, in \cite{BoyleLindRudolph88, BoyleMarcusTrow, KriegerDimension}.

Using Lemma~\ref{lem:determiningimage}, given $T \in \On$ we relate the map $\rev{\sig}$ to the images of the states of the non-deterministic transducer $\rev{T}$.

\begin{Definition}\label{Def:mqgamma}
	Let $T \in \On$ be synchronizing and let $q \in Q_{T}$. Let $\gamma \in \xn^{\ast}$ be a synchronizing word of any length such that $U^{+}_{\gamma}  \subseteq \dom{\rev{q}}$.  Set $m_{\rev{q}, \gamma}$ to be the smallest $r \in \N$, such that there exists $\mu_1, \mu_2, \ldots, \mu_{r} \in \xns$ satisfying $\bigcup_{1 \le i \le r} U^{+}_{\mu_i} = (U^{+}_{\gamma})h_{\rev{q}}$.  Set $M_{\rev{q}, \gamma} := m_{\rev{q},\gamma}\pmod{n-1}$. Lastly set $m_{\rev{q}}$ to be the smallest $s \in \N$, such that there exists $\mu_1, \mu_2, \ldots, \mu_s \in \xns$ satisfying $\bigcup_{1 \le i \le s} U^{+}_{\mu_i} = \im{\rev{q}}$. Note that for an antichain $\gamma_1, \gamma_2, \ldots, \gamma_{l}$ satisfying $\bigcup_{1 \le i \le l} U^{+}_{\gamma_{i}} = \dom{\rev{q}}$, we  have $m_{\rev{q}} \equiv \sum_{i=1}^{l}m_{\rev{q}, \gamma_{i}} \pmod{n-1}$.
 \end{Definition}

\begin{proposition}\label{prop:revsig}
	Let $T \in \On$ and let $q \in Q_{T}$. Then for any synchronizing word $\gamma \in \xns$ such that $\pi_{T}(\gamma, \cdot): Q_{T} \to Q_{T}$ has image equal to $\{q\}$, 
	\begin{equation}\label{eqn:sigisequaltosum}
	M_{\rev{q}, \gamma} \equiv \sum_{q \in Q_{T}} m_{\rev{q}} \pmod{n-1}.
	\end{equation}
	Consequently,  $M_{\rev{q},\gamma}$ depends only on $T$.  
	
\end{proposition}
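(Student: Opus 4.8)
The plan is to prove that each side of \eqref{eqn:sigisequaltosum} is congruent modulo $n-1$ to $(\revrec{T})\rsig$; since $\revrec{T}$ depends only on $T$, the ``consequently'' clause then follows at once. The only preliminary I would set up is a counting function. For a clopen set $C\subseteq\xno$ let $\chi(C)\in\Z/(n-1)\Z$ be the class of the size of a smallest antichain $\{\mu_1,\ldots,\mu_r\}\subseteq\xns$ with $C=\bigcup_{i}U^{+}_{\mu_i}$. By the facts underlying Theorem~\ref{thm:sigOn} (from \cite{OlukoyaAutTnr}) this class is independent of the chosen antichain; and it is routine that $\chi$ is additive over finite disjoint unions of clopen sets, that $\chi(\xno)=1$, and that $\chi(C)$ is unchanged if a fixed finite word is prepended to every point of $C$. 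Finally, since $(U)\rsig$ is independent of the state (Theorem~\ref{thm:sigOn}, Definition~\ref{Def:rsig}), every state of a transducer $U\in\On$ has image with the same value of $\chi$, namely $(U)\rsig$.

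Next I would isolate the computation common to both sides of \eqref{eqn:sigisequaltosum}: \emph{if $B$ is a core strongly synchronizing non-deterministic transducer with $\mrec{B}\in\On$, if $s$ is a state of $B$, and if $w\in\xnp$ satisfies $U^{+}_{w}\subseteq\dom{s}$, then $\chi((U^{+}_{w})h_{s})\equiv(\mrec{B})\rsig\pmod{n-1}$.} To prove this I would factor the path $(w)L_{s}$ out of $w$: its input is a prefix $v$ of $w$ and it ends at some state $r$, and, by the argument proving non-emptiness of the state set in Proposition~\ref{Prop:recoveringrepinOn}, $(w-v,r)$ is a state of $\rec{B}$. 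Then $(U^{+}_{w})h_{s}$ is $(U^{+}_{w-v})h_{r}$ with the output of $(w)L_{s}$ prepended, hence has the same $\chi$; by Lemma~\ref{lem:determiningimage} it equals $\im{(w-v,r)}$; the image of the state of $\mrec{B}$ represented by $(w-v,r)$ differs from $\im{(w-v,r)}$ only by deletion of the incomplete-response prefix, so again has the same $\chi$; and since $\mrec{B}\in\On$, this last value is $(\mrec{B})\rsig$. Applying this with $B=\rev{T}$ — which is core strongly synchronizing with $\mrec{\rev{T}}=\revrec{T}\in\On$ by Remark~\ref{remark:revTbisynch} and the definition of $\revrec{T}$ — and with $s=\rev{q}$, $w=\gamma$ (admissible precisely because $U^{+}_{\gamma}\subseteq\dom{\rev{q}}$, the condition of Definition~\ref{Def:mqgamma} under which $M_{\rev{q},\gamma}$ is defined), I obtain $M_{\rev{q},\gamma}=\chi((U^{+}_{\gamma})h_{\rev{q}})\equiv(\revrec{T})\rsig\pmod{n-1}$. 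This handles the left side and shows in passing that the residue is independent of $q$ and of $\gamma$.

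For the right side I would first note that, by Remark~\ref{rem:synchinreverse} (with $k$ any synchronizing level of $T$), the clopen set $\dom{\rev{q'}}$ is the union of the length-$k$ cones $U^{+}_{\gamma}$ with $\rev{\gamma}$ forcing $q'$, so the sets $\dom{\rev{q'}}$, $q'\in Q_{T}$, form a partition of $\xno$ and $\sum_{q'\in Q_{T}}\chi(\dom{\rev{q'}})=\chi(\xno)=1$. Since $\rev{T}$ is bisynchronizing, $h_{\rev{q'}}\colon\dom{\rev{q'}}\to\im{\rev{q'}}$ is a homeomorphism (Remarks~\ref{remark:strongbisynchbijectionfromdomaintorange} and \ref{remark:bisycndomainandimageclopen}), in particular injective; writing $\dom{\rev{q'}}=\bigsqcup_{i=1}^{t}U^{+}_{w_i}$ for a smallest antichain (so that $t\equiv\chi(\dom{\rev{q'}})$), injectivity gives $\im{\rev{q'}}=\bigsqcup_{i=1}^{t}(U^{+}_{w_i})h_{\rev{q'}}$, and by the isolated computation each summand has $\chi\equiv(\revrec{T})\rsig$; additivity then gives $m_{\rev{q'}}\equiv\chi(\im{\rev{q'}})\equiv t\,(\revrec{T})\rsig\equiv\chi(\dom{\rev{q'}})\,(\revrec{T})\rsig\pmod{n-1}$. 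Summing over $q'$ and using $\sum_{q'}\chi(\dom{\rev{q'}})=1$ yields $\sum_{q'\in Q_{T}}m_{\rev{q'}}\equiv(\revrec{T})\rsig\pmod{n-1}$, which together with the preceding paragraph establishes \eqref{eqn:sigisequaltosum}; the final sentence of the Proposition is then immediate. I expect the main obstacle to be the isolated computation: one must track carefully the passage from a state $(w-v,r)$ of $\rec{\rev{T}}$, through the state of $\mrec{\rev{T}}$ it represents (whose image differs from it only by the incomplete-response prefix), to the state-independence of $\rsig$ on the group element $\revrec{T}\in\On$. The remaining steps are bookkeeping with antichains and the synchronizing property.
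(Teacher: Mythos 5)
Your proof is correct, but it takes a genuinely different route from the paper's. The paper argues entirely inside the non-deterministic transducer $\rev{T}$: by Remark~\ref{rem:synchinreverse} there is exactly one path labelled by the synchronizing word from $\rev{q}$ to each $\rev{q_i}$, so $(U^{+}_{\rev{\gamma}})h_{\rev{q}}$ decomposes as the union over the states $q_i$ of $T$ of the translates $\lambda_{\rev{T}}(\rev{\gamma},\rev{q},\rev{q_i})\im{\rev{q_i}}$, disjoint because $h_{\rev{q}}$ is injective; the congruence $m_{\rev{q},\gamma}\equiv\sum_{i}m_{\rev{q_i}}$ is then read off by counting cones, and neither $\revrec{T}$ nor $\rsig$ is mentioned. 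You instead show that each side of \eqref{eqn:sigisequaltosum} is congruent to $(\revrec{T})\rsig$, which costs more machinery — the $\rec{\cdot}$ construction, Lemma~\ref{lem:determiningimage}, the state-independence of $\rsig$ on $\On$ from Theorem~\ref{thm:sigOn}, and (for the right-hand side) the partition of $\xno$ into the sets $\dom{\rev{q'}}$ together with a weighted count using $\sum_{q'}\chi(\dom{\rev{q'}})=1$. What this buys is that your ``isolated computation'' is essentially the paper's proof of the proposition that immediately follows, namely $(T)\rev{\sig}=m_{T}$, so you establish both results in one pass. One small notational point: Definition~\ref{Def:mqgamma} defines $m_{\rev{q},\gamma}$ under the hypothesis $U^{+}_{\gamma}\subseteq\dom{\rev{q}}$, whereas the proposition's hypothesis that $\gamma$ forces $q$ yields $U^{+}_{\rev{\gamma}}\subseteq\dom{\rev{q}}$; the paper's proof silently computes with $\rev{\gamma}$, and your reading, which takes the definition at face value, is the internally consistent one — the argument is unaffected either way.
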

\begin{proof}
Let $\gamma \in \xns$ be a synchronizing word for $T$ and let $q \in Q_{T}$ be the state of $T$ forced by $\gamma$. Note that this means $U^{+}_{\rev{\gamma}} \subseteq \dom{\rev{q}}$. 

Let $q_1, q_2, \ldots
, q_t$ be the states of $T$.  It follows from Remark~\ref{rem:synchinreverse} that  $$(U^{+}_{\rev{\gamma}})h_{\rev{q}} =  \bigcup_{1 \le i \le t} \lambda_{\rev{T}}(\rev{\gamma}, \rev{q}, \rev{q_{i}})\im{\rev{q_{i}}}.$$	

For $1 \le i \le t$, and $1 \le j \le m_{\rev{q_{i}}}$, let $\mu_{i,j} \in \xns$ be such that $\bigcup_{1 \le j \le m_{\rev{q_i}}}U^{+}_{\mu_{i,j}} = \im{\rev{q_{i}}}$. Thus we have, $$\bigcup_{1 \le i \le t} \bigcup_{1 \le j \le m_{\rev{q_i}}} U^{+}_{\lambda_{\rev{T}}(\gamma, \rev{q}, \rev{q_i})\mu_{i,j}}  =  (U^{+}_{\rev{\gamma}})h_{\rev{q}}.$$

Since $T \in \On$, $h_{\rev{q}}$ is a homeomorphism from $\dom{\rev{q}}$ to $\im{\rev{q}}$. Therefore, for $1\le i_1, i_2\le t$, $$\lambda_{\rev{T}}(\gamma, \rev{q}, \rev{q_{i_1}}) \im{\rev{q_{i_1}}} \cap \lambda_{\rev{T}}(\gamma, \rev{q}, \rev{q_{i_2}}) \im{\rev{q_{i_2}}} = \emptyset.$$ From this it follows that $m_{\rev{q}, \gamma} \equiv \sum_{i=1}^{t} m_{\rev{q_i}} \pmod{n-1}$. As the latter sum depends only of $T$, we see that $M_{\rev{q}, \gamma}$ depends only on $T$. 
\end{proof}

Let $T \in \On$ and let $q \in Q_{T}$ be arbitrary. Let $\gamma \in \xnp$ have length at least the synchronizing length of $T$ and satisfy $U_{\gamma}^{+} \subseteq \dom{\rev{q}}$. In light of Proposition~\ref{prop:revsig} we write $m_{T}$ for the element $M_{\rev{q}, \gamma} $ of $\Un{n-1}$

\begin{proposition}
	Let $T \in \On$. Then  $$(T)\rev{\sig}  =  m_{T}.$$
\end{proposition}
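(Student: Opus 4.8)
The plan is to unwind the two definitions and see that they describe the same thing. Recall that $(T)\rev{\sig}$ is defined to be $(\revrec{T})\rsig$, where $\revrec{T} = \mrec{\rev{T}}$ is the minimal representative of $\rec{\rev{T}}$. By the definition of $\rsig$ (Definition~\ref{Def:rsig}), $(\revrec{T})\rsig$ is the number (mod $n-1$) of basic open cones needed to cover the image of any state of $\revrec{T}$. On the other hand, $m_{T}$ is defined (via Proposition~\ref{prop:revsig}) as $M_{\rev{q},\gamma}$, the number (mod $n-1$) of basic open cones needed to cover $(U^{+}_{\gamma})h_{\rev{q}}$ for a synchronizing word $\gamma$ forcing state $q$ of $T$. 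So I need to match a state of $\revrec{T}$ with the set $(U^{+}_{\gamma})h_{\rev{q}}$.

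First I would fix $q \in Q_T$ and a synchronizing word $\gamma$ of $T$ forcing $q$, and recall from Remark~\ref{rem:synchinreverse} that then $U^{+}_{\gamma} \subseteq \dom{\rev{q}}$ and that $\gamma$ (as an input word of $\rev{T}$, read from $\rev{q}$) determines a state of $\rec{\rev{T}}$. Concretely, by the construction of $\rec{\rev{T}}$, reading $\gamma$ from $\rev{q}$ lands at a state $(w, p)$ of $\rec{\rev{T}}$ where $(\gamma)L_{\rev{q}}$ has been consumed and $w$ is the remaining, not-yet-committed input. By Lemma~\ref{lem:determiningimage}, the image of the state $(w,p)$ of $\rec{\rev{T}}$ (equivalently of $\revrec{T}$, since passing to the minimal representative only removes incomplete response, which strips a common prefix and hence does not change the cone-count mod $n-1$) is $(U^{+}_{w})h_{\rev{p}}$, and this differs from $(U^{+}_{\gamma})h_{\rev{q}}$ only by the finite prefix $\lambda_{\rev{T}}((\gamma)L_{\rev{q}}, \rev{q})$ stripped off; stripping or prepending a fixed finite word does not change the minimal number of cones in a cover. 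Hence the cone-count of $\im{(w,p)}$ in $\revrec{T}$ equals $m_{\rev{q},\gamma}$ mod $n-1$, which is $m_T$.

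Then I would invoke the definition of $\rsig$ on $\revrec{T}$: since $\revrec{T} \in \On$ (it is the image of $T$ under the automorphism $\revrec{\phantom{a}}$), $(\revrec{T})\rsig$ is well-defined and equals the cone-count of the image of any one of its states, in particular of the state $(w,p)$ identified above. Therefore $(T)\rev{\sig} = (\revrec{T})\rsig = m_{\rev{q},\gamma} = m_T$ in $\Un{n-1}$.

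The main obstacle is bookkeeping the passage from $\rec{\rev{T}}$ to its minimal representative $\mrec{\rev{T}} = \revrec{T}$: one must check that removing incomplete response and identifying $\omega$-equivalent states does not alter the cone-count mod $n-1$ of a state's image. Removing incomplete response only deletes a common prefix from all outputs of a state, so the image is shifted by $x \mapsto \nu x$ for a fixed word $\nu$, and the minimal cover count is preserved; identifying $\omega$-equivalent states merges states with literally the same induced map, so images are unchanged. I would also need the observation — already available via Remark~\ref{rem:synchinreverse} and the fact that $\rev{T}$ is bisynchronizing (Remark~\ref{remark:revTbisynch}) — that every state of $\revrec{T}$ arises as such a $(w,p)$ reached by reading a synchronizing word from some $\rev{q}$, so that Proposition~\ref{prop:revsig}'s "depends only on $T$" conclusion is consistent with $\rsig$'s "depends only on $\revrec{T}$" conclusion. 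Given all the machinery already set up (Lemma~\ref{lem:determiningimage}, Proposition~\ref{prop:revsig}, the construction of $\rec{\phantom{a}}$), this is a short verification rather than a substantial argument.
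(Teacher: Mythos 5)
Your proposal is correct and follows essentially the same route as the paper: identify a state $(w,\rev{p})$ of $\rec{\rev{T}}$ via a synchronizing word, apply Lemma~\ref{lem:determiningimage} to see its image is $(U^{+}_{w})h_{\rev{p}}$, and observe that passing to the minimal representative (and, in your version, the prepended finite output word) does not change the cone-count mod $n-1$. Your explicit check that stripping/prepending a fixed finite prefix preserves the minimal cover count is a slightly more careful rendering of the paper's terser closing remark, but it is the same argument.
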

\begin{proof}

The proof is a consequence Lemma~\ref{lem:determiningimage} and the definition of the map $\rsig$ (Definition~\ref{Def:rsig}). 

Given a state $q \in \revrec{T}$, set $s \in \N$ to be minimal such that there is an antichain $\mu_1, \mu_2, \ldots, \mu_s \in \xns$ satisfying $\bigcup_{1 \le i \le s} U_{\mu_i}^{+} = \im{q}$. By definition $(\revrec{T})\rsig$ is the element of $\Un{n-1}$ congruent to $s$ (module $n-1$).

Now Lemma~\ref{lem:determiningimage} indicates that the image of a state $(w, \rev{p})$ of $\rev{T}$ is precisely $(U_{w}^{+})h_{\rev{p}}$ where by Remark~\ref{rem:synchinreverse}, $\rev{w}$ is a synchronizing word for $T$ which forces the state $p$ of $T$. The result now follows by the definition of $m_{T}$ (Definition~\ref{Def:mqgamma}) and the fact that removing incomplete response (i.e. in going from $\rec{\rev{T}}$ to $\revrec{T}$) does not affect the image of states.

\end{proof}

The following results relate the map $\rev{\sig}$ to the number of words of a given fixed length which force as particular state i.e. we establish a connection between the distribution of words at a given synchronizing level across the states of a given element of $\On$ and the image of $\rev{\sig}$ for the given element.

\begin{corollary}
	Let $T \in \On$. Let $k$ be a synchronizing level of $T$ and, for a state $q \in Q_{T}$, write let $X(q,k)$ for the  set of all elements of $\xn^{k}$ which force the state $q$. Then $(T)\rev{sig}$ is the  unique element $s$ of $\Un{n-1}$ satisfying $s\cdot |X(q,k)| \equiv m_{\rev{q}} \pmod{n-1}$ for all $q \in Q_{T}$.
\end{corollary}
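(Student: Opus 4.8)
The plan is to derive this corollary directly from Proposition~\ref{prop:revsig} together with the combinatorial structure of how a synchronizing word at level $k$ forces a state. First I would fix a state $q \in Q_{T}$ and a synchronizing word $\gamma \in \xn^{k}$ forcing $q$; by Remark~\ref{rem:synchinreverse}, $U^{+}_{\rev{\gamma}} \subseteq \dom{\rev{q}}$, and the decomposition used in the proof of Proposition~\ref{prop:revsig} gives
\begin{equation*}
(U^{+}_{\rev{\gamma}})h_{\rev{q}} = \bigsqcup_{p \in Q_{T}} \lambda_{\rev{T}}(\rev{\gamma}, \rev{q}, \rev{p})\,\im{\rev{p}},
\end{equation*}
where the union is disjoint because $h_{\rev{q}}$ is a homeomorphism. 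Counting cones (modulo $n-1$) on both sides yields $M_{\rev{q},\gamma} \equiv \sum_{p \in Q_{T}} m_{\rev{p}} \pmod{n-1}$, which is exactly $(T)\rev{\sig} = m_{T}$ by the preceding proposition; so this recovers Equation~\eqref{eqn:sigisequaltosum}.

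The new ingredient is to re-index the count by \emph{which} word of length $k$ is being read, rather than by the target state. I would consider the clopen set $\dom{\rev{q}}$ and observe that $\{\rev{\gamma} : \gamma \in X(q,k)\}$ is precisely the set of length-$k$ prefixes of elements of $\dom{\rev{q}}$ (again via Remark~\ref{rem:synchinreverse}: $\rev{\gamma}$ forces $q$ iff $U^{+}_{\gamma} \subseteq \dom{\rev{q}}$). Hence $\dom{\rev{q}} = \bigsqcup_{\gamma \in X(q,k)} U^{+}_{\rev{\gamma}}$, a disjoint union of $|X(q,k)|$ cones. Applying the homeomorphism $h_{\rev{q}}$ and counting cones modulo $n-1$: on one hand $\im{\rev{q}} = (\dom{\rev{q}})h_{\rev{q}}$ is a union of $m_{\rev{q}}$ cones; on the other hand it is the disjoint union over $\gamma \in X(q,k)$ of $(U^{+}_{\rev{\gamma}})h_{\rev{q}}$, each of which is a union of $m_{\rev{q},\gamma} \equiv (T)\rev{\sig} \pmod{n-1}$ cones (this last congruence being the content of Proposition~\ref{prop:revsig}, whose value is independent of the chosen synchronizing word). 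Therefore
\begin{equation*}
m_{\rev{q}} \equiv \sum_{\gamma \in X(q,k)} m_{\rev{q},\gamma} \equiv |X(q,k)| \cdot (T)\rev{\sig} \pmod{n-1},
\end{equation*}
which is the asserted identity $s \cdot |X(q,k)| \equiv m_{\rev{q}} \pmod{n-1}$ with $s = (T)\rev{\sig}$. Uniqueness of $s$ in $\Un{n-1}$ follows because $(T)\rev{\sig}$ is already known to be a well-defined element of $\Un{n-1}$, and any $s$ satisfying the congruences must equal it (e.g. take $q$ and $k$ with $\gcd(|X(q,k)|, n-1)$ controllable, or simply note that one valid solution exists and lies in the group of units, forcing agreement).

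The main obstacle I anticipate is purely bookkeeping: making sure the cone-counts genuinely add modulo $n-1$ under disjoint union, and that passing from $\rec{\rev{T}}$ to the minimal representative $\revrec{T}$ does not change any image (this was noted in the proof of the previous proposition — removing incomplete response does not alter images of states — so it should be quotable rather than re-proved). A secondary subtlety is confirming that for \emph{every} state $q$ the set $X(q,k)$ is nonempty (true since $T$ is core/strongly connected, so every state is forced by some word of length $k$), which is needed for the system of congruences to pin down $s$ rather than be vacuous at some $q$. Neither of these is deep; the proof should be short once Proposition~\ref{prop:revsig} and Remark~\ref{rem:synchinreverse} are invoked.
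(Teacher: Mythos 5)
Your derivation of the congruence $s\cdot|X(q,k)| \equiv m_{\rev{q}} \pmod{n-1}$ with $s = (T)\rev{\sig}$ is exactly the paper's argument: by Remark~\ref{rem:synchinreverse}, $\dom{\rev{q}} = \bigcup_{\gamma \in X(q,k)} U^{+}_{\rev{\gamma}}$ is a disjoint union of $|X(q,k)|$ cones, and each $(U^{+}_{\rev{\gamma}})h_{\rev{q}}$ contributes $m_{\rev{q},\gamma} \equiv (T)\rev{\sig} \pmod{n-1}$ cones by Proposition~\ref{prop:revsig}. That part is correct and matches the paper.

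The gap is in uniqueness, which is part of the claim. Your fallback --- ``one valid solution exists and lies in the group of units, forcing agreement'' --- is not an argument: a priori the system $s\cdot|X(q,k)| \equiv m_{\rev{q}} \pmod{n-1}$ could have several solutions in $\Un{n-1}$ if every $|X(q,k)|$ shared a nontrivial factor with $n-1$, and knowing one solution is a unit does nothing to exclude the others. Your first suggestion (arranging $\gcd(|X(q,k)|,n-1)$ to be ``controllable'') is neither developed nor obviously achievable for a fixed $T$ and $k$. The paper closes this by summing the congruences over all states: since $\sum_{q \in Q_{T}} |X(q,k)| = n^{k} \equiv 1 \pmod{n-1}$, any solution $s$ satisfies
$$ s \equiv \sum_{q\in Q_{T}} s\cdot|X(q,k)| \equiv \sum_{q \in Q_{T}} m_{\rev{q}} \equiv (T)\rev{\sig} \pmod{n-1},$$
the last congruence being Equation~\eqref{eqn:sigisequaltosum}. (Equivalently: if $s$ and $s'$ both solve the system, then $(n-1)/\gcd(n-1,s-s')$ divides every $|X(q,k)|$, hence divides $n^{k}$, hence equals $1$.) This is a one-line fix, but as written your uniqueness step does not go through.
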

\begin{proof}
	Let $\xn^{k}(q)$ be the set of all elements of $\xn^{k}$ that force the state $q$, then it is clear that $\bigcup_{\mu \in \xn^{k}(q)}U^{+}_{\rev{\mu}} = \dom{\rev{q}}$. Let $m_{q,k}:= |\xn^{k}(q)|$ Thus we have, by Proposition~\ref{prop:revsig}, that $m_{\rev{q}} \equiv (T)\rev{\sig}\cdot m_{q,k} \pmod{n-1}$.
	
	Uniqueness follows since, by Proposition~\ref{prop:revsig}, for let $s \in \Un{n-1}$ be such that $$s \cdot m_{q,k} \equiv m_{\rev{q}} \pmod{(n-1)}.$$ Then,
	$$
	s \equiv \sum_{q \in Q_{T}} s \cdot m_{q,k} \equiv \sum_{q \in Q_{T}} m_{\rev{q}} \equiv (T)\rev{\sig}.
	$$
\end{proof}

\begin{proposition}
	Let $T \in \On$ and $k \in \N$ be such that $T$ and $T'$ are synchronizing at level $k$.  Let  $q \in Q_{T}$ and $w \in \xns$ be such that $(w,q)$ is a state of $T'$. Let $X(k,q)$ be the subset of $\xn^{k}$ consisting of all elements which force the state $q$ of $T$ and define $X(k,(w,q))$ analogously. If $m$ is  the size of $X(k,q)$ and $m'$ is the size of $X(k,(w,q))$, then $m \equiv m_{\rev{(w,q)}} \pmod{n-1}$ and $m' \equiv m_{\rev{q}} \pmod{n-1}$. 
\end{proposition}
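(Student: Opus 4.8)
The plan is to exploit the duality between $T$ and its inverse $T'$ together with the characterisation of the map $\rev{\sig}$ in terms of images of states of the reversed transducer. Recall (Construction~\ref{construction:inverse} and Lemma~\ref{lemma:swapdomainandrangeforinverse}) that a state $(w,q)$ of $T'$ records the situation where $U^{+}_{w} \subseteq \im{q}$ and $(w)L_{q} = \ew$; dually, by Remark~\ref{rem:synchinreverse}, a word $\rev{\gamma} \in \xn^{k}$ forces the state $q$ of $T$ precisely when $U^{+}_{\gamma} \subseteq \dom{\rev{q}}$. So the states of $\rev{T}$ that we are counting domain-prefixes of are exactly the states of $T$, while the domains $\dom{\rev{q}}$ are governed by which words force $q$ in $T$.

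First I would establish $m' \equiv m_{\rev{q}} \pmod{n-1}$. The idea is that $X(k,(w,q))$, the set of length-$k$ words forcing the state $(w,q)$ of $T'$, is in bijective (or at least $\pmod{n-1}$-compatible) correspondence with a complete antichain describing $\im{q}$ as seen through $T$. More precisely, since $(w,q)$ is a state of $T'$, the image of $(w,q)$ under $T'$ relates, via Lemma~\ref{lemma:swapdomainandrangeforinverse}, to the preimage structure of $q$ in $T$; and the set of words of length $k$ forcing $q$ in $T$ corresponds to a partition of $\dom{\rev{q}}$ into $n^{?}$-many cones, so $|X(k,q)| \cdot (\text{something}) \equiv m_{\rev{q}}$. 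I would use Proposition~\ref{prop:revsig} and Definition~\ref{Def:mqgamma}: taking $\gamma$ ranging over the antichain $\{\rev{\mu} : \mu \in X(k,q)\}$ with $\bigcup U^{+}_{\mu} = \dom{\rev{q}}$ gives $m_{\rev{q}} \equiv \sum_{\mu \in X(k,q)} m_{\rev{q},\rev{\mu}}$, and since $T \in \On$ forces each $h_{\rev{q}}$ to be a homeomorphism sending a single cone to a single cone, each $m_{\rev{q},\rev{\mu}} \equiv 1 \pmod{n-1}$, so $m_{\rev{q}} \equiv |X(k,q)| \pmod{n-1}$. That already handles $m \equiv m_{\rev{(w,q)}}$ once we identify the state $(w,q)$ of $T'$ with a state of the reversed transducer of $T'$; applying the same argument to $T'$ in place of $T$ (which is legitimate since $T' \in \On$ as well, being the inverse of $T$) yields $m' \equiv m_{\rev{q}}$ where here $\rev{q}$ means the state of $\rev{T}$, after matching the state $q$ of $T$ with a state of $T'$'s reverse via Lemma~\ref{lemma:swapdomainandrangeforinverse}.

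The step I expect to be the main obstacle is setting up the bookkeeping so that the two ``$\rev{}$'' operations — reversing $T$ versus reversing $T'$ — are matched correctly, i.e. verifying that the state of $\rev{T}$ written $\rev{(w,q)}$ really does have $X(k,q)$ (words forcing $q$ in $T$) as the relevant count, and symmetrically. This is essentially Lemma~\ref{lemma:swapdomainandrangeforinverse} applied carefully: $\rec{T^{-1}}$ is Construction~\ref{construction:inverse} applied to $T$, so a state $(w,q)$ of $T'$ is a state of $\rec{\rev{T^{-1}}}$-type object, and the synchronizing word forcing it is the reverse of an image-word of $q$ in $T$; tracking lengths and the modulus $n-1$ through this correspondence is routine but must be done explicitly. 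Once the correspondence is pinned down, both congruences follow immediately from the already-established identity ``number of words forcing a state $\equiv$ $m$ of its reverse $\pmod{n-1}$'' applied once to $T$ and once to $T'$, together with the fact (used repeatedly above) that removing incomplete response does not change images of states and hence does not change any of the quantities $m_{\rev{\cdot}}$.
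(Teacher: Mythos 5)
There is a genuine gap, and it sits at the step you flag as routine. Your central computation asserts that each $m_{\rev{q},\rev{\mu}} \equiv 1 \pmod{n-1}$ because $h_{\rev{q}}$ is a homeomorphism ``sending a single cone to a single cone,'' whence $m_{\rev{q}} \equiv |X(k,q)|$. This is false: a homeomorphism between clopen subsets of Cantor space does not send a cone to a single cone modulo $n-1$ --- measuring exactly this defect is the whole point of $\rev{\sig}$. By Proposition~\ref{prop:revsig}, $m_{\rev{q},\gamma} \equiv \sum_{p \in Q_T} m_{\rev{p}} \equiv (T)\rev{\sig} \pmod{n-1}$ for every synchronizing $\gamma$, so your argument actually yields $m_{\rev{q}} \equiv |X(k,q)|\cdot (T)\rev{\sig}$, which is precisely the corollary stated immediately before this proposition and reduces to your claimed identity only when $(T)\rev{\sig}=1$. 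If $m_{\rev{q},\gamma}$ were always $1$, the homomorphism $\rev{\sig}$ would be trivial, contradicting its surjectivity onto $\Un{n-1,n}$.

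The deeper issue is that the proposition is not the ``uncrossed'' statement $m \equiv m_{\rev{q}}$ at all: it pairs the forcing count of $q$ in $T$ with the image of the reversed state $(w,q)$ of $T'$, and vice versa. The argument that actually works establishes the set-theoretic identities $\im{\rev{(w,q)}} = \dom{\rev{q}}$ and $\dom{\rev{(w,q)}} = \{\rev{w}\theta \mid \theta \in \im{\rev{q}}\}$, by tracing paths of $T'$ backwards through $T$ (both inclusions in each case). Once these are in hand, the congruences are immediate: $\dom{\rev{q}} = \bigcup_{\mu \in X(k,q)} U^{+}_{\rev{\mu}}$ is a union of $m$ cones and equals $\im{\rev{(w,q)}}$, so $m \equiv m_{\rev{(w,q)}}$; and $\dom{\rev{(w,q)}}$ is a union of $m'$ cones and is a translate of $\im{\rev{q}}$, so $m' \equiv m_{\rev{q}}$. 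Your proposal gestures at ``matching the two reversals via Lemma~\ref{lemma:swapdomainandrangeforinverse}'' but never proves these identities, and the congruence you do compute is the wrong one; so the proof does not go through as written.
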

\begin{proof}
	Fix $j \in \N$ such that for any word $\gamma \in \xns$ and any pair of states $p \in Q_{T}$ and $(v,t) \in Q_{T'}$, $U^{+}_{\lambda_{T}(\gamma, p)} \subseteq \im{p}$ and $| \lambda_{T'}(\gamma, (v,t))| \ge k$.
	
	Let $q$ and $w$ be as in the statement of the proposition and let $p \in Q_{T}$ be arbitrary. Let $\gamma \in \xn^{\ast}$ be such that $|\gamma| > j+k$. Let $q$ be the state of $T$ forced by $\gamma$. Set $\gamma_1$ to be the length $j$ prefix of $\gamma$, $\psi := \lambda_{T}(\gamma,p)$, and $\psi_1 = \lambda_{T}(\gamma_1, p)$. Since $U^{+}_{\psi_1} \subseteq
	 \im{p}$, it follows, setting $v =  \psi_1 - \lambda_{T}((\psi_1)L_{p},p)$ and $p' = \pi_{T}((\psi_1)L_{p},p)$, that $(v,p')$ is a state of $T'$. Let $\mu, \psi_2 \in \xns$ be such that $\mu := \lambda_{T}((\psi_1)L_{p},p)$ and $\mu v \psi_2 = \psi$.
	 
	 Consider the word $(\psi w)L_{p}$. Since $U^{+}_{w} \subseteq \im{q}$ and $(w)L_{q} = \emptyword$, it follows that $(\psi w)L_{p} = \gamma$. Thus, as $(\psi w)L_{p} = (\psi_1)L_{p}(v \psi_2w)L_{p'} $, it follows that $\lambda_{T'}(\psi_2w, (v, p')) = \gamma - (\psi_1)L_{p}$ and $\pi_{T'}(\psi_2w, (v, p')) = (w, q)$.
	 
	 Notice that since $j$ is fixed, as the size of $\gamma$ increases, the size of  $\gamma - (\psi_1)L_{p}$ increases also. Now since $\dom{\rev{q}} = \bigcup_{\gamma \in X(k,q)} U_{\rev{\gamma}}$,we see by  induction that $\dom{\rev{q}} \subseteq \im{\rev{(w,q)}}$ and $\{ \rev{w} \delta \mid \delta \in \im{\rev{q}}\} \subseteq \dom{\rev{(w,q)}}$.
	 
	 Now let $\phi$ be an element of $\im{\rev{(w,q)}}$. This means that for any prefix $\phi_1$ of $\phi$, there is a word $\delta \in \xns$  and a state $(v,p)$ of $T'$ such that $\pi_{T}(\delta, (v,p)) = (w,q)$ and $\phi_{1} \le \rev{\lambda_{T'}(\delta, (v,p))} \le \phi$ is a prefix of $\phi$. 
	 
	 Thus let $\delta \in \xns$ be any word of length at least $j$, and $(v,t)$ be a state of $T$ such that $\pi_{T'}(\delta, (v,t)) = (w,q)$. Let $\gamma = \lambda_{T'}(\delta, (v,t))$. Then It follows that $(v\delta)L_{t} = \gamma$, $\pi_{T}(\gamma,t) = q$ and $v\delta - \lambda_{T}(\gamma, t) = w$. Thus we see that  the state of $T$ forced by $\gamma$ is $q$ and $\lambda_{T}(\gamma,t)w = v \delta$. An easy induction argument now shows that $\im{\rev{(w,q)}} \subseteq \dom{\rev{q}}$ and  $\dom{\rev{(w,q)}} \subseteq \{\rev{w}\theta \mid \theta \in \im{\rev{q}}\}$. 
	 
	 To conclude the proof we have only to make the following observation: 
	 \begin{IEEEeqnarray*}{rCl}
	 \{ x \in \xno  \mid \exists \mu \in X(q, k) : x \in \rev{U^{+}_{\mu}}  \}& = & \dom{\rev{q}} \nonumber \\
	 \{ x \in \xno  \mid \exists \mu \in X((w,q), k) : x \in \rev{U^{+}_{\mu}}  \}& = & \dom{\rev{(w,q)}}.
	 \end{IEEEeqnarray*}
	\end{proof}

The results below demonstrates that the restriction $\rev{\sig}: \Ln{n} \to \Un{n-1}$ is precisely the map from $\Ln{n} \to \Un{n-1}$ given by $T \mapsto (T^{-1})\rsig$.

\begin{proposition} \label{prop:pastinverseoffuture}
	Let $T \in \Ln{n}$, write $T^{-1}$ for the inverse of $T$ in $\Ln{n}$ and let $T'$ the let $k \in \N$ be a synchronizing level for $T$ and for $T'$. Let $\gamma \in \xn^{k}$ and $q \in Q_{T}$ be such that the state of $T$ forced by $\gamma$ is $q$. Then there are numbers $l,s,a,b \in \N$ and distinct words $\mu_1, \mu_2, \ldots, \mu_{s} \in \xn^{b}$, such that $s = s_{T^{-1}}n^a$,   $\bigcup_{1 \le i \le s} U^{+}_{\mu_i} = (U^{+}_{\rev{\gamma}})h_{\rev{q}}$ and for any annotation $\alpha$ of $T$, $b+(q)\alpha = l$.
\end{proposition}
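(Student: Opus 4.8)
The plan is to unpack the definitions of $\rev{\sig}$, of the inverse transducer $T'$ (Construction~\ref{construction:inverse}), and of the quantity $s_{T^{-1}}$ (Remark~\ref{rem:consequencesofsynchronousrep} applied to $T^{-1}\in\Ln{n}$), and then to identify $(U^{+}_{\rev{\gamma}})h_{\rev{q}}$ with the image of a suitable state of $T'$. Concretely, Lemma~\ref{lem:determiningimage} together with Remark~\ref{rem:synchinreverse} tells us that $(U^{+}_{\rev{\gamma}})h_{\rev{q}}$ is exactly $\im{(\ew, q)}$ in $\rec{\rev{T}}$, and Lemma~\ref{lemma:swapdomainandrangeforinverse} identifies $\rec{\rev{T}}$ (up to removing incomplete response) with the inverse transducer $T'=\rec{T^{-1}}$ in the non-deterministic sense. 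So the first step is to pin down which state of $T'$ corresponds to the pair coming from $\rev{q}$ and $\gamma$, and to observe that its image is precisely $(U^{+}_{\rev{\gamma}})h_{\rev{q}}$.

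Next I would invoke Remark~\ref{rem:consequencesofsynchronousrep}: since $T^{-1}\in\Ln{n}$, there is a minimal $l_0\in\N$ and a divisor $s_{T^{-1}}$ of a power of $n$ so that for \emph{every} state $t$ of the minimal representative of $T^{-1}$, $\im{t}$ is a union of $s_{T^{-1}}n^{a_t}$ cones over words of a common length — more precisely, in the (non-minimal) transducer $T'$ before removing incomplete response, a state $(w,q)$ has $\im{(w,q)} = \{\,x - w' : x\in \im{q}_{T'}\,\}$ for the appropriate incomplete-response prefix, and $\im{q}$ decomposes into $s$ cones over words of a fixed length where $s$ is $s_{T^{-1}}$ times a power of $n$. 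The length $b$ of those words and the relation $b + (q)\alpha = l$ for every annotation $\alpha$ of $T$ is exactly the content of the last sentence of Remark~\ref{rem:consequencesofsynchronousrep}: the canonical annotation $\kappa$ of $T$ (which exists by Proposition~\ref{Prop:lnpn}, viewing $T$ as the minimal representative of some $P\in\pn{n}$) satisfies $b + (q)\kappa$ being a state-independent constant, and then $([p])\beta + l - ([p])\alpha$ being constant across states for any annotation, because any two annotations differ by a global integer. So I would set $l$ to be that constant, $b$ the length appropriate to $\gamma$, and check $b+(q)\alpha=l$ directly from the annotation rule \eqref{eqn:annotationrule} applied along the circuit at $q$.

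The third step is the bookkeeping that ties $s$ to $s_{T^{-1}}$ and extracts the exponent $a$: I would argue that passing from $\gamma$ (of length $k$) through the path $L_q$ in $T'$ only changes the number of cones and their common length by a factor that is a power of $n$ (reading one input letter either keeps you at the same state-with-longer-prefix or advances a genuine transition, and genuine transitions in $\Ln{n}$-type transducers preserve the ``number of cones modulo powers of $n$'' invariant — this is exactly the invariance used in the proof of Proposition~\ref{prop:lnsplit}). Hence the number $s$ of cones needed for $(U^{+}_{\rev{\gamma}})h_{\rev{q}}$ is $s_{T^{-1}}$ times a power of $n$; writing that power as $n^a$ gives $s = s_{T^{-1}}n^a$, and $b$ is then forced.

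The main obstacle I expect is the careful handling of incomplete response when moving between $\rev{T}$, $\rec{\rev{T}}$, $\revrec{T}$ and the constructed inverse $T'$: the images $\im{\rev{q}}$ and $\im{(w,q)}$ live in slightly different ``normalisations'' (one has an incomplete-response prefix stripped, the other does not), so one must be scrupulous that the quantity $s$ of cones is genuinely unchanged under these operations — which follows because stripping a common prefix from a clopen set is a homeomorphism onto a cone's worth of translates and multiplies the cone-count only by shifting the common length, not by changing $s$ modulo powers of $n$. Making the identification $b + (q)\alpha = l$ precise for \emph{all} annotations $\alpha$ simultaneously, rather than just the canonical one, is the other delicate point, but it is immediate once one recalls (Corollary~\ref{cor:annotationsinthesamecoset}) that any two annotations of $T$ differ by an additive constant, so $b + (q)\alpha$ shifts by that same constant independently of $q$; choosing $l$ accordingly finishes the proof.
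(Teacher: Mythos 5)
There is a genuine gap, and it sits at the very first step. You identify $(U^{+}_{\rev{\gamma}})h_{\rev{q}}$ with the image of a state of $\rec{T^{-1}}$ by appealing to Lemma~\ref{lemma:swapdomainandrangeforinverse}, but that lemma concerns the non-deterministic transducer $T^{-1}$ obtained by swapping input and output labels while keeping the arrows pointing the same way; it says nothing about the arrow-reversed transducer $\rev{T}$. The two constructions are genuinely different: $T\mapsto\mrec{\rev{T}}$ is an automorphism of $\On$ (indeed of $\Ln{n}$), whereas $T\mapsto T^{-1}$ is an anti-automorphism, so $\mrec{\rev{T}}$ and $T^{-1}$ cannot coincide in general. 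What Lemma~\ref{lem:determiningimage} actually gives you is that $(U^{+}_{\rev{\gamma}})h_{\rev{q}}$ is (up to an incomplete-response prefix) the image of a state of $\mrec{\rev{T}}$, so Remark~\ref{rem:consequencesofsynchronousrep} yields a decomposition into $s_{\mrec{\rev{T}}}\,n^{a}$ cones --- and the identity $s_{\mrec{\rev{T}}}=s_{T^{-1}}$ up to powers of $n$ is essentially the statement you are trying to prove (it is the content of Remark~\ref{remark:pastinverseoffuture} and Theorem~\ref{thm:pastinverseoffuture}, which are derived \emph{from} this proposition). Your third step, which asserts that transitions ``preserve the number of cones modulo powers of $n$,'' inherits this circularity.

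The paper closes the gap with a counting argument that you would need to reproduce. Fix an arbitrary tail $\rho\in\xno$ and a prefix $\rev{\phi}$ of $\rho$ with $\phi\in\xn^{k}$ forcing a state $(w,p)$ of $T'$. One then counts, for $N'$ large, the heads $\nu\in\xn^{N'}$ with $\nu\rho\in(U^{+}_{\rev{\gamma}})h_{\rev{q}}$: unwinding the definition of $h_{\rev{q}}$, these correspond bijectively to the inputs $\tau$ with suffix $\gamma$ satisfying $w\le\lambda_{T}(\tau,p)$ and $|\lambda_{T}(\tau,p)|=N'$, i.e.\ to the set $\{\nu'\gamma : \nu'\in\xn^{N'-d-k},\ w\le\lambda_{T}(\nu',p)\}$, and that set of $\nu'$ is precisely $\im{(w,p)}=\im{p'}$ for a state $p'$ of $T^{-1}$. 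Only at this point does Remark~\ref{rem:consequencesofsynchronousrep}, applied to $T^{-1}$, legitimately produce the count $s_{T^{-1}}n^{j}$. Your treatment of the annotation identity $b+(q)\alpha=l$ (via the fact that any two annotations differ by a global constant) is fine, but it is the easy half of the proposition.
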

\begin{proof}
	Fix a state  $q \in Q_{T}$ and a word $\gamma \in \xn^{k}$ such that the state of $T$ forced by $\gamma$ is $q$. 
	
	Let $\mu_1, \mu_2, \ldots, \mu_r \in \xns$ be minimal such that $\bigcup_{1 \le i \le r} U^{+}_{\mu_i} = (U^{+}_{\rev{\gamma}})h_{\rev{q}}$. Let $N:= \max\{ | \mu_i| \mid 1 \le i \le r \}$. It is clear that for any $\rho \in \xno$ and any $N' \ge N$, the size of the set $$\{ \nu \rho \mid \nu \in \xn ^{N'} \} \cap (U^{+}_{\rev{\gamma}})h_{\rev{q}}$$ is equal to  $r n^{j}$ for some $j \in \N$. We show that  $ rn^{j} = s_{{{T^{-1}}}}n^{a}$ for some $a \in \N$.
	
	Let $\rho \in \xno$ be arbitrary. Let $\phi \in \xn^{k}$, $w \in \xns$ and $p \in Q_{T}$ be such that $\rev{\phi}$ is a prefix of $\rho$ and  $(w,p)$ is the state of $T'$ forced by $\phi$. Thus, it follows that for any word $\delta \in \xns$ of long enough length and any state $s \in Q_{T}$ such that $\lambda_{T}(\delta, s)$ contains $\phi$ as a subword, there is a prefix $\delta_1$ of $\delta$ such that $\pi_{T}(\delta_1, s) = p$ and $\lambda_{T}(\delta_1,s)w$ has $\phi$ as a suffix.
	
	Let $l \in \N$ be minimal such that for $\tau \in \xnp$ a minimal word satisfying $w \le  \lambda_{T}(\tau, p)$, $|\tau| \le l$. Furthermore, since $T \in \Ln{n}$, there is a $d \in \Z$, such that for any word $\tau \in \xns$, satisfying $\pi_{T}(\tau, p) = q$, $|\lambda_{T}(\tau, p)| - |\tau| = d$.
	
	Let $N' \in \N$ be such that $N' - d -k \ge \max\{N-d-k, l \}$. We  count the number of elements $x_0x_1 \ldots \in (U^{+}_{\rev{\gamma}})h_{\rev{q}}$ such that $x_{N'}x_{N'+1}\ldots  = \rho$. Note that since all these words have suffix $\rho$, and $\rev{\phi}$ is a prefix of $\rho$, the following statement follows by a simple induction  argument and comments above. There is a fixed element $y \in \xno$ such that for any element $x:=\nu \rho \in (U^{+}_{\rev{\gamma}})h_{\rev{q}}$, $\nu \in \xn^{N'}$, the following things hold:
	\begin{itemize}
		\item there is an element $z \in \xns$ such that $zy \in U^{+}_{\rev{\gamma}}$ and $(zy)h_{\rev{q}} = x$,
		\item the unique path in $\rev{T}$ starting at $\rev{q}$ and labelled $zy$, reads the prefix $z$ into $\rev{p}$, and, 
		\item $\lambda_{\rev{T}}(z, q, p) = \nu\rev{w}$.
	\end{itemize}
	 Thus it suffices to count how many words $\tau \in \xns$ with suffix $\gamma$, satisfy $w \le \lambda_{T}(\tau, p)$, and $|\lambda_{T}(\tau, p)| = N'$. However, the set of elements of $\xns$, satisfying these conditions,  is precisely the set $$\{ \nu \gamma \mid \nu \in \xn^{N'-d - k}, w \le \lambda_{T}(\nu, p)\}.$$
	 
	 To conclude, we observe that by a result in \cite{OlukoyaAutTnr}, if $W$ is the subset of $\xn^{N'-d-k}$  consisting of all those elements $\nu$ for which $w \le \lambda_{T}(\nu, p)$, then $\bigcup_{\tau \in  W} U^{+}_{\tau} = \im{(w,p)}$. Since there is a state of $p'$ of $T^{-1}$ such that $\im{p'} = \im{(w,p)}$, the result is a consequence of Remark~\ref{rem:consequencesofsynchronousrep}.

\end{proof}

We now develop some consequences of Proposition~\ref{Prop:lnpn} for $\rev{T}$ where $T \in \Ln{n}$.

\begin{Remark}\label{rem:consequencesofsynchronousrep2}
	Let $T \in \pn{n}$ and $p \in Q_{T}$ and let  $T'$ be the transducer obtained from $T$ by removing incomplete response. The following facts are straight-forward to verify: if $p'$ is the state of $T'$ corresponding to $p$, then $\im{\rev{p'}} = \{ \rev{\Lambda(\emptyword,p)}\rho \mid \rho \in \im{\rev{p}}\}$; if $[T]$ is the  transducer $T'$ with $\omega$-equivalent states identified, then for a state $[p']$ of $[T']$, $\im{\rev{p'}} = \cup_{p' \in [p']} \im{\rev{p'}}$. 
	
	Using the facts above, we have the following.
	
	 By Proposition~\ref{prop:pastinverseoffuture}, given an element $P \in \pn{n}$ synchronizing at level $k$, $k$ large enough, there is a number $l \in \N$, and a divisor $s$ of some power of $n$, such that for any state $q \in Q_{P}$, and any word $\gamma \in \xn^{k}$ which forces the state $q$, there are elements $\mu_1, \mu_2, \ldots, \mu_s \in \xn^{l}$ such that $\bigcup_{1 \le i \le s} U^{+}_{\mu_i} = (U^{+}_{\rev{\gamma}})h_{\rev{q}}$. Setting $T$ to be the minimal representative of $P$, and fixing $\gamma \in \xn^{k}$, then, as observed above, for every $q \in Q_{P}$, $(U^{+}_{\rev{\gamma}})h_{\rev{[q]}} =\{ \rev{\Lambda(\ew, q)}x \mid x \in (U^{+}_{\rev{\gamma}})h_{\rev{q}} \}$. Setting, $\alpha: Q_{T}  \to \N$ to be the annotation of $T$ such that $[q] \mapsto \Lambda(\emptyword, q)$,  we see that for every state $t \in Q_{T}$ and any word $\tau  \in \xn^{k}$ which forces the state $t$, there are words $\mu'_1, \mu'_2, \ldots, \mu'_{s} \in \xn^{l+ ([q])\alpha}$ such that $\bigcup_{1 \le i \le s} U^{+}_{\mu'_{i}} = (U^{+}_{\rev{\tau}})h_{\rev{t}}$.  
	 
	 The last thing we note is the following fact: let $\beta$ be any other annotation of $T$, then for any states $[p], [q] \in Q_{T}$, $l+ ([p])\alpha - ([p])\beta  = l+([q])\alpha  - ([q])\beta $. This follows from the fact that there is an $m \in \Z$ such that $\beta = \alpha + m$ by definition of annotations. 
\end{Remark}

\begin{Remark}\label{remark:pastinverseoffuture}
 Following similar arguments to those  in Section~\ref{sec:extmorph} we may define maps $\rev{\sig}_{\omega}: \Ln{n} \to \mn{n}$ and $\rev{\sig}_{\omega}: \aut{\xnz, \shift{n}} \to \Pi_{k \in \N} \Un{n^{k}-1, n^{k}}$. By Proposition~\ref{prop:pastinverseoffuture}, it follows that $\rev{\sig}_{\omega} = \rsig_{\omega}^{-1}$ in both cases. 
\end{Remark}

As a corollary of Proposition~\ref{prop:revsig} we have the following result.

\begin{Theorem}\label{thm:pastinverseoffuture}
   Let $T \in \Ln{n}$, then $(T)\rsig\cdot (T)\rev{\sig} \equiv 1 \pmod{n-1}$.
\end{Theorem}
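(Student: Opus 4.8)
The plan is to realise this identity as a short consequence of the homomorphism property of $\rsig$ on $\On$ (Theorem~\ref{thm:sigOn}) together with Proposition~\ref{prop:pastinverseoffuture}. Since $T\in\Ln{n}\le\On$ and $\Ln{n}$ is a group, the inverse $T^{-1}$ also lies in $\Ln{n}\le\On$, so $(T)\rsig\cdot(T^{-1})\rsig=(TT^{-1})\rsig=(\tid)\rsig$, and $(\tid)\rsig\equiv 1\pmod{n-1}$ because the one-state identity transducer has image the single cone $\xno$. Thus the theorem reduces to the assertion $(T)\rev{\sig}\equiv(T^{-1})\rsig\pmod{n-1}$, i.e.\ that on $\Ln{n}$ the map $\rev{\sig}$ coincides with $T\mapsto(T^{-1})\rsig$.

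To prove that, first I would unwind $\rev{\sig}$: by the proposition established just above, $(T)\rev{\sig}=(\revrec{T})\rsig=m_{T}$, where (Definition~\ref{Def:mqgamma}) $m_{T}=m_{\rev{q},\gamma}$ is the least number of cones needed to cover the clopen set $(U^{+}_{\gamma})h_{\rev{q}}$, computed for any state $q\in Q_{T}$ and any synchronizing word $\gamma$ with $U^{+}_{\gamma}\subseteq\dom{\rev{q}}$ (such a $\gamma$ being, by Remark~\ref{rem:synchinreverse}, precisely a word whose reverse forces the state $q$ of $T$). Then I would feed this clopen set into Proposition~\ref{prop:pastinverseoffuture}, which supplies an antichain $\mu_{1},\dots,\mu_{s}$ of words of a common length with $\bigcup_{1\le i\le s}U^{+}_{\mu_{i}}=(U^{+}_{\rev{\gamma}})h_{\rev{q}}$ and $s=s_{T^{-1}}n^{a}$ for some $a\in\N$. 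Since the number of cones in an antichain cover of a fixed clopen set is independent of the cover modulo $n-1$ (this is already implicit in Definition~\ref{Def:rsig}, and follows directly by refining two covers to a common length $N$ and counting: for any antichain cover of size $r$ one gets $r\equiv\sum_{i}n^{N-|\mu_{i}|}\pmod{n-1}$, which depends only on the set), and since $n\equiv 1\pmod{n-1}$, this gives $m_{T}\equiv s\equiv s_{T^{-1}}\pmod{n-1}$. Finally Remark~\ref{rem:consequencesofsynchronousrep} gives $s_{T^{-1}}\equiv(T^{-1})\rsig\pmod{n-1}$, as $s_{T^{-1}}$ is the $n$-reduced representative of the common cone-count of the images of the states of the synchronous representative of $T^{-1}$, while $(T^{-1})\rsig$ is that cone-count taken modulo $n-1$.

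Chaining these congruences yields $(T)\rsig\cdot(T)\rev{\sig}=(T)\rsig\cdot m_{T}\equiv(T)\rsig\cdot s_{T^{-1}}\equiv(T)\rsig\cdot(T^{-1})\rsig\equiv 1\pmod{n-1}$, which is the claim. The only genuinely new ingredient is the elementary invariance-of-cone-count observation used above; beyond that, the main point to be careful about is that the stray powers of $n$ appearing in Proposition~\ref{prop:pastinverseoffuture} and Remark~\ref{rem:consequencesofsynchronousrep} all collapse modulo $n-1$, and that the reverse-synchronizing word $\gamma$ is matched correctly between Definition~\ref{Def:mqgamma} and Proposition~\ref{prop:pastinverseoffuture} via Remark~\ref{rem:synchinreverse}. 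I expect no deeper difficulty: the rest is assembly of results already in hand.
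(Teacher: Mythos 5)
Your proof is correct and follows essentially the same route as the paper: the paper's one-line proof ("this follows from the observation that $s_{T^{-1}} \pmod{n-1} = (T^{-1})\rsig$") rests on exactly the chain you spell out, namely $(T)\rev{\sig} = m_T \equiv s_{T^{-1}} \equiv (T^{-1})\rsig \pmod{n-1}$ via Proposition~\ref{prop:pastinverseoffuture}, combined with the homomorphism property of $\rsig$. You merely make explicit the invariance-of-cone-count-mod-$(n-1)$ step that the paper leaves implicit.
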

\begin{proof}
	This follows from the observation that $s_{T^{-1}} \pmod{n-1} = (T^{-1})\rsig$.
\end{proof}

It remains open whether Proposition~\ref{prop:pastinverseoffuture} holds also in $\On$. In particular,

\begin{enumerate}[label= {\bfseries Q.\arabic*}] 	\item Is it true that for $T \in \On$, $(T)\rev{\sig} = (T^{-1})\rsig$? \label{Question:pastinverseoffutureinOn}
	
\end{enumerate}

The following related questions represent the current extent of our ignorance.
\begin{enumerate}[label= {\bfseries Q.\arabic*}] \label{Question:pastinverseoffutureinOn}
\setcounter{enumi}{1}
	\item Is it true that $\rev{\sig}$ induces an automorphism of $\Ons{1}$?
	\item Is the group $\Ons{1}$ a characteristic subgroup of $\On$?
\end{enumerate}


\subsection{The dimension group}
As promised, in this section we show that the map $\rev{\sig}_{\omega}: \aut{\xnz, \shift{n}} \to \Pi_{k \in \N} \Un{n^{k}-1, n^{k}}$ is precisely the dimension representation $\mathfrak{d}_{n}$. This enables us to characterise the kernel of $\mathfrak{d}_{n}$ as the group $\Kn{n}$. We begin with a brief exposition of the dimension group and for this we follow mainly the articles \cite{BoyleLindRudolph88} and \cite{BoyleMarcusTrow}.

Let $\sim_{n}$ be the smallest equivalence relation on $\Z \times \Z$ such that $(x, i) \sim_{n} (nx,  i+1)$ for all $x,i \in \Z$. That is for $x,y, i,j \in \Z$ $(x,i) \sim_{n}(y,j)$ if and only if  $xn^{j-i}= y$. The set $(\Z \times \Z)/ \sim_{n}$ forms an abelian group with binary operation defined as follows: for $x, y, i, j \in \Z$, $[(x,i)] + [(y,j)] = [(x n^{j} + yn^{i}, (i+ j)]$. 

The group $G_{n}$ is precisely the group $(\Z \times \Z)/\sim$ and is called \emph{the  dimension group corresponding to the space $\aut{\xnz, \shift{n}}$}.  Note that $G_{n} \cong \Z[ 1/n]$ is isomorphic to the $n$-adic rationals via the map that sends $[(x, i)] \mapsto x/n^{i}$. An element $[(x,i)]$ of $G_{n}$ is therefore called \textit{positive} if $x \ge 0$.

We define the action of the group $\aut{\xnz, \shift{n}}$ on the group $G_{n}$, by relating the positive elements of the group to closed subsets of the space $\xnz$.

\begin{Definition}
	Let $i \in \Z$ and $x \in \xnz$. The \emph{$i$-ray corresponding to $x$} is the set $\{ y \in \xnz \mid y_{j} = x_{j} \mbox{ for all } j \ge i \}$. An \emph{$i$-ray} is a subset of $\xnz$ which is equal to the $i$-ray corresponding to some element $x$ in $\xnz$. An $i$-beam is a finite union of $i$-rays. If the $i$-rays making up an $i$-beam correspond to elements $x(1), \ldots, x(m) \in \xnz$, then we say that the $i$-beam \emph{corresponds} to the elements $x(1), \ldots, x(m)$. A \emph{beam} is an $i$-beam for some $i \in \Z$. Let $B$ be an $i$-beam and $R_1, R_2, \ldots, R_k$ be $i$-rays such that $B = \sqcup_{1 \le j \le k} R_{k}$, then we say that the size of $B$ is $k$ and denote this by $|B| = k$.
\end{Definition}

Note that if $B$ is an $i$-beam for some $i \in \Z$, then $B$ is a $j$ beam for any $j \in \Z$ with $j \le i$. In particular, suppose $x(1), x(2), \ldots, x(m)$ are such that $B$ is the disjoint union of the $i$-rays corresponding to  the $x(k)$'s. Then $B$ is the union of the $mn^{i-j}$  $j$-rays corresponding to the elements $y \in \xnz$ such that $y_{l} = x(k)_{l}$  for all  $l \in \Z \backslash\{i-1, \ldots, j\}$, and  $y_{j}\ldots y_{i-1}:= \nu$ for $1 \le k \le m$ and $\nu \in \xn^{i-j}$. 

\begin{Definition}
	Let $B_1, B_2 \subseteq  \xnz$ and let $i,j \in \Z$ be such that $B_1$ is an $i$-beam and $B_2$ is a $j$-beam. Then we say that $B_1$ and $B_2$ are related if and only if $(|B_1|, -i) \sim_{n}  (|B_2|,-j)$. Abusing notation, we denote also by $\sim_{n}$ the equivalence relation on beams generated by this relation.
\end{Definition}

The map which sends the equivalence class of an $i$-beam $B$ to the element $[(|B|, -i)] \in \Z \times \Z/ \sim_{n}$ is a bijection from the set of equivalence classes of beams to the positive elements of $G_{n}$. We note the need for the minus sign follows since we define rays according to a fixed right-infinite tail. Defining rays according to a fixed left-infinite tail, gives rise to an equivalent theory, and is what is sometimes referred to as the \textit{future dimension representation}. We have chosen the ``past'' formulation to fit with  \cite{BoyleLindRudolph88} and \cite{BoyleMarcusTrow}.

\begin{Remark}\label{remk:inducedactionondimensiongroup}
	Let $\phi \in \aut{\xnz, \shift{n}}$. Then, it is a well known-fact that  $\phi$ maps beams to beams  and preserves the equivalence relation $\sim_{n}$ on beams. 
	This is not hard to see. Let $T \in \Ln{n}$ and $\alpha$ be an annotation of $T$ such that $(T, \alpha) = \phi$.
	 Let $k \in \N$ be a synchronizing length of $T$ and fix an $i$-ray $R_x$ corresponding to an element $x \in \xn$. 
	 Let $p \in Q_{T}$ be the state of $T$ forced by $x_i x_{i+1} \ldots x_{i+k-1}$, and let $r = (p)\alpha$. 
	Set $\rho:= \lambda_{T}(x_{i+k}x_{i+k+1}\ldots, p) \in \xno$, then, $(R_{x})\phi$ is precisely the subset of $\xnz$ consisting of those elements $y \in \xnz$ such that $y_{i+k+r}y_{i+k+1+r}\ldots = {\rho}$ and $\rev{\ldots y_{i+k-2+r}y_{i+k-1+r}} \in (U^{+}_{x_{i+k-1}\ldots x_i})h_{\rev{p}}$. By Lemma~\ref{lemma:ND1impliesclopenimage}, $(R_{x})\phi$ is a beam. 
	Moreover, by Remark~\ref{rem:consequencesofsynchronousrep2}, there are $l,s,b \in \N$ and $\mu_1, \mu_2, \ldots, \mu_{s} \in \xn^{b}$ such that $(U^{+}_{\rev{\gamma}})h_{\rev{p}}  = \sqcup_{1 \le j \le s} U^{+}_{\mu_j}$ where $\gamma = x_{i}\ldots x_{i+k-1}$, $l$ and $s$ depend only on $T$,  and $b =r+ l$. 
	
	Thus, there is a fixed $\tau \in \xno$ such that $(R_{x})\phi$ is the union of the $s$ $(i+k+r-b)$-beams consisting of elements $y \in  \xnz$ satisfying $y_{i+k+r}y_{i+k+1+r}\ldots  = {\rho}$, $y_{i+k-b+r}y_{i+k-2+r}y_{i+k-1 +r} = \rev{\mu_{d}\tau}$ for some $1 \le d \le s$. Therefore $\phi$ maps an $i$-ray to an $i+k-l$-beam of size $s$.  In particular we see that $\phi$ induces an automorphism of $G_{n}$, that sends an element $[(j, -i)]$ to $[(jn^{k}s, -i + l)]$, $j,i \in \Z$.
	
	For $\phi \in \aut{\xnz, \shift{n}}$ write $\hat{\phi}$ for the automorphism of $G_{n}$ induced by $\phi$. The map $\mathfrak{d}_{n}:\phi \mapsto \hat{\phi}$, is a homomorphism from $\aut{\xnz, \shift{n}}$ to $\aut{G_{n}}$.
\end{Remark}

We have the following result:
\begin{Theorem}
	
	Let $\phi \in \aut{\xnz, \shift{n}}$, then $\phi$ induces the trivial automorphism of $G_{n}$ if and only if there is an element $T \in \mathcal{K}_{n}$ and an annotation $\alpha$ of $T$ such that $\phi = (T,\alpha)$. In particular $\ker(\mathfrak{d}_{n}) \cong  \Kn{n}$ via the map $(T,\alpha) \mapsto T$.
\end{Theorem}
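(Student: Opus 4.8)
The plan is to combine the explicit description of the induced automorphism $\hat{\phi}$ on $G_n$ given in Remark~\ref{remk:inducedactionondimensiongroup} with the isomorphism $\aut{\xnz, \shift{n}} \cong \Kn{n} \rtimes \Z^r$ from Theorem~\ref{thm:exthom}, Part~\ref{exthom5a}, and the identification $\rev{\sig}_{\omega} = \rsig_{\omega}^{-1}$ from Remark~\ref{remark:pastinverseoffuture}. First I would unwind the formula from Remark~\ref{remk:inducedactionondimensiongroup}: for $\phi = (T,\alpha)$ with $T \in \Ln{n}$, the automorphism $\hat\phi$ sends $[(j,-i)]$ to $[(jn^{k}s, -i+l)]$, where $s$ is the number of basic open sets (at a suitable level) making up $(U^{+}_{\rev\gamma})h_{\rev q}$ and $l$ is the level-correction; by Remark~\ref{rem:consequencesofsynchronousrep2} and Proposition~\ref{prop:pastinverseoffuture}, $s = s_{T^{-1}}n^{a}$ for some $a$, so $s$ represents $(T)\rev\sig_{\omega} = (T^{-1})\rsig_{\omega}$ in $\mn{n}$ (up to a power of $n$). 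In the dimension group $G_n \cong \Z[1/n]$, the automorphism $[(x,i)] \mapsto [(xn^{k}s, -i+l)]$ is the same as multiplication by $n^{k}s/n^{l}$; so $\hat\phi$ is the trivial automorphism of $G_n$ precisely when $n^{k}s/n^{l} = 1$ in $\Z[1/n]$, i.e. when $s$ is a power of $n$.

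Next I would translate ``$s$ is a power of $n$'' into membership in $\Kn{n}$. Since $\Kn{n}$ was defined (Proposition~\ref{prop:lnsplit}, and the notation following it) precisely as the kernel of $\rsig_{\omega}: \Ln{n} \to \mn{n}$, i.e.\ the elements $T \in \Ln{n,1}$ whose state images are unions of a power of $n$ basic open sets, and since $\rev\sig_{\omega} = \rsig_{\omega}^{-1}$ (Remark~\ref{remark:pastinverseoffuture}), the condition ``$(U^{+}_{\rev\gamma})h_{\rev q}$ is a union of a power of $n$ basic open sets for all states'' is equivalent to $(T)\rev\sig_{\omega}$ being trivial in $\mn{n}$, which is equivalent to $(T)\rsig_{\omega}$ being trivial, i.e.\ $T \in \Kn{n}$. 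Here I should be a little careful: the quantity $s$ appearing in Remark~\ref{remk:inducedactionondimensiongroup} a priori depends on the choice of the synchronizing word $\gamma$ and state $q$, but Proposition~\ref{prop:revsig} (or rather its $\omega$-analogue via Remark~\ref{rem:consequencesofsynchronousrep2}) shows the resulting element of $\mn{n}$ is well-defined, so this causes no difficulty.

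Then I would handle the general $\phi \in \aut{\xnz, \shift{n}}$, not just those represented by $(T,\alpha)$ with $T$ already satisfying the constraint. By Theorem~\ref{thm:exthom}, every $\phi$ can be written uniquely (via the splitting) as $\psi \cdot t$ with $\psi \in \ker(\rsig_{\omega}) \cong \Kn{n}$ and $t \in \gen{(T(p_i,N_i),0) \mid 1 \le i \le r} \cong \Z^{r}$. Since $\mathfrak{d}_{n}$ is a homomorphism, $\hat\phi = \hat\psi \hat t$; the elements $\psi$ have $\hat\psi$ trivial by the previous paragraph (being in $\Kn{n}$), and one computes directly from Remark~\ref{remk:inducedactionondimensiongroup} that $(T(p_i,N_i),0)$ induces multiplication by $N_i/1 = n/p_i$ (a state image is a union of $e = N_i$ basic open sets with $l$-value $0$), which is a non-trivial unit of $\Z[1/n]$ unless $n = p_i$. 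Since the $n/p_i$ generate a free abelian subgroup of $\Z[1/n]^{\times}$ of rank $r$ (they are multiplicatively independent), $\hat t$ is trivial only when $t = 1$. Hence $\hat\phi$ is trivial iff $t = 1$ iff $\phi \in \ker(\rsig_{\omega}) \cong \Kn{n}$, which gives $\ker(\mathfrak{d}_n) \cong \Kn{n}$ and identifies the isomorphism as $(T,\alpha) \mapsto T$ (well-defined because, by Theorem~\ref{thm:exthom}\ref{exthom5a}, each $T \in \Kn{n}$ has a unique annotation $\alpha$ with $(T,\alpha) \in \ker(\rsig_{\omega})$).

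The main obstacle I anticipate is bookkeeping the level-shift $l$ correctly and verifying that the element of $G_n \cong \Z[1/n]$ by which $\hat\phi$ multiplies is exactly the image of $\phi$ under $\rev\sig_{\omega}$ (equivalently $\rsig_{\omega}^{-1}$) composed with the natural map to $\Z[1/n]^{\times}/\gen{n}$ — i.e.\ matching up the ``$s$ and $l$'' data of Remark~\ref{remk:inducedactionondimensiongroup} with the monoid $\mn{n}$ and the product $\Pi_{k}\Un{n^k-1,n^k}$. This is essentially the content of Proposition~\ref{prop:pastinverseoffuture} and Remark~\ref{rem:consequencesofsynchronousrep2} applied across all alphabet sizes $n^k$ simultaneously, so it should go through, but it requires care to see that the single condition ``$\hat\phi$ trivial on $G_n$'' captures the full tuple condition defining $\ker(\rsig_{\omega})$ on $\aut{\xnz,\shift{n}}$; the key point making this work is that triviality in $\Z[1/n]$ already forces $s$ to be a power of $n$ at one alphabet level, and the compatibility of the $\ext{n}{n^k}$ maps then propagates this to all levels.
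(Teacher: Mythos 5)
Your overall strategy --- compute $\hat\phi$ as multiplication by $n^{k-l}s$ via Remark~\ref{remk:inducedactionondimensiongroup}, then split $\aut{\xnz,\shift{n}} \cong \ker(\rsig_{\omega})\rtimes\Z^{r}$ and evaluate $\mathfrak{d}_{n}$ on each factor --- is viable and close in spirit to the paper's argument, but your evaluation of $\mathfrak{d}_{n}$ on the $\Z^{r}$ factor is wrong, and the error is exactly the future/past confusion you warn yourself about in your last paragraph. The quantity $s$ in Remark~\ref{remk:inducedactionondimensiongroup} counts cones in $(U^{+}_{\rev{\gamma}})h_{\rev{q}}$, a \emph{past} image count governed by $\rev{\sig}_{\omega}=\rsig_{\omega}^{-1}$, not the future image count $e$ that computes $\rsig$. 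For $(T(p_i,N_i),0)$ the future count is $N_i$, but the induced map on $G_{n}$ is multiplication by $p_i$, not by $N_i=n/p_i$: the paper states this in its own proof, and one checks directly that an $i$-ray maps onto a disjoint union of $p_i$ many $i$-rays. With your values the argument collapses when $n$ is prime: there $n/p_1=1$, so you would conclude $\hat{t}$ is trivial for every $t$ and hence that $\shift{n}\in\ker(\mathfrak{d}_{n})$, which is false since $\widehat{\shift{n}}$ is multiplication by $n$. With the correct values $\hat{t}=\prod_i p_i^{a_i}$, injectivity on $\Z^{r}$ is immediate from unique factorisation, so the repair is local.

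There is a second, smaller gap: ``$n^{k}s/n^{l}=1$, i.e.\ $s$ is a power of $n$'' is not an equivalence. Triviality of $\hat\phi$ forces $s=n^{l-k}$ exactly, whereas ``$s$ is a power of $n$'' only characterises $T\in\Kn{n}$ and says nothing about which annotation. You then use the false converse when asserting that every $\psi\in\ker(\rsig_{\omega})$ has $\hat\psi$ trivial: a priori such a $\psi$ only has $\hat\psi$ equal to multiplication by some power of $n$, and one must check that the annotation section picked out by $\ker(\rsig_{\omega})$ (a future condition) agrees with the one picked out by $\ker(\mathfrak{d}_{n})$ (a past condition). This is fixable --- once $\hat\psi$ is known to be multiplication by $n^{e}$, write $\hat\psi=\widehat{\shift{n}^{e}}$ and use $\shift{n}\notin\ker(\rsig_{\omega})$ together with the corrected computation on the $\Z^{r}$ factor to force $e=0$ --- but as written the step is unjustified. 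The paper sidesteps both issues by arguing directly: for $T\in\Kn{n}$ it exhibits the unique power $d$ of the shift with $(T,\alpha-d)\in\ker(\mathfrak{d}_{n})$, rather than passing through the semidirect product decomposition.
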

\begin{proof}
	First suppose that  $\phi \in \aut{\xnz, \shift{n}}$ is in  $\ker{\mathfrak{d}_{n}}$ so that $\hat{\phi}$ is the trivial automorphism of $G_{n}$. Let $(T,\alpha) \in \ALn{n}$ such that  $(T,\alpha) = \phi$. Let $k$ be the minimal synchronizing length of $T$. Since $\hat{\phi}$ is trivial it follows, by the computations in Remark~\ref{remk:inducedactionondimensiongroup}, that given $[(j,i)] \in  \Z \times \Z / \sim_{n}$, then $([(j,i)])\hat{\phi} = [(jn^{k}s, i+l)] = [ (j,i)]$. Thus, it must be the case that $n^{k}s = n^{i+l}$. In particular $s$ is a power of $n$,  and the result follows by Proposition~\ref{prop:pastinverseoffuture} say.  
	
	Now suppose that $T \in \mathcal{K}_{n}$. Let $\alpha$ be the canonical annotation of $T$; $k$ the synchronizing length of $T$; and  $l$ and $s$ as in Remark~\ref{remk:inducedactionondimensiongroup}
	
	Let $n = p_1^{r_1}p_2^{r_2}\ldots p_m^{r_m}$. Observe that $\widehat{T(p_a, N_a)}$, for $1 \le a \le m$, is the map $[(j,i)] \mapsto [(j p_a, i)]$. In particular $\widehat{\shift{n}}$ induces the map  $[(j,i)] \mapsto [(j n, i)]$.  Since $T \in \Kn{n}$ then $s$ is a power of $n$,  let $b \in \N$ be such that $s = n^{b}$ for  some $b \in \N$.   Set $\psi := (T,\alpha) $. Then we have that $[(j,i)]\psi = [(jn^{k+b}, i+l)]$. 
	
	If $k+b \ge i+l$, then $\psi \shift{n}^{i+l - k-b}$ induces the trivial automorphism of $G_{n}$. Otherwise $i+l \ge k+b$ and  $\psi \shift{n}^{k+b -i-l }$ induces the trivial automorphsim of  $G_{n}$. In particular setting $d$ to be the (unique) power of $\shift{n}$ such that $\psi \shift{n}^{d} \in \ker{\mathfrak{d}_{n}}$, then $\alpha -d$ is the unique annotation of $T$ such that $(T,\alpha -d) \in \ker{\mathfrak{d}_{n}}$.
\end{proof}

Combining Proposition~\ref{prop:pastinverseoffuture} and Remark~\ref{remark:pastinverseoffuture}, with Lemma~\ref{lem:determiningimage}, it follows that for $T \in \Ln{n}$, $((T)\rsig_{\omega})^{-1} =  (\mrec{T})\rsig_{\omega}$ . In particular, $T \in \Kn{n}$ if and only if $\rec{T} \in \Kn{n}$.

\section{Marker Automorphisms, homeomorphism states and embeddings} \label{Section:markersandhomeostate}

A \emph{marker automorphism} is a general method for constructing elements of $\aut{\xnz , \shift{n}}$ and can be found for example in the articles papers \cite{Hedlund69, BoyleLindRudolph88,VilleS18,BoyleKrieger}. In this section we show that several \emph{marker constructions} appearing in the literature yield elements of $\aut{\xnz, \shift{n}}$ of the form $(T,\alpha)$ where all the loop states of $T$ are homeomorphism states and $\alpha$ assigns the value $0$ to all the loop states. We use this fact to show that several group theoretic results proven about the groups $\aut{\xnz, \shift{n}}$ and $\aut{\xnz, \shift{n}}/\gen{\shift{n}}$ also hold for the group $\Kn{n}$. Since $\Kn{n} \le \Onr$ for all $1 \le r \le n-1$, we then extend some results in the paper \cite{BelkBleakCameronOlukoya} for the  group $\On$ to any of the groups $\Ons{r}$. We further make use of the homeomorphism condition on the loop states to lift these results to the groups $\aut{G_{n,r}}$. In particular we show that $\aut{x_{m}^{\Z}, \shift{m}}$ embeds as a subgroup of $\aut{G_{n,r}}$ for all $n > r \ge 1$. 

 It turns out that the set of all elements of $\Ln{n}$ all of whose loop states are homeomorphism states is a subgroup of $\Kn{n}$.  We define this group below.
 
\begin{Definition}
	Let $\mathcal{D}_{n} \subseteq \Kn{n}$ be the subset of $\Kn{n}$ consisting of those elements all of whose loop states are homeomorphism states.
\end{Definition}

It is not hard to see that $\mathcal{D}_{n}$ is a subgroup of $\Kn{n}$. It follows essentially from results in \cite{AutGnr}. For if $T, U \in \mathcal{D}_{n}$, with $p_{l(x)}$ the $x$ loop state of $T$ and $q_{l(y)}$ the $y$ loop state of $U$ whereby $\lambda_{T}(x, p_{l(x)}) = y$, then $(p_{l(x)}, q_{l(y)})$ is the $x$ loop state of the transducer $\core(T \ast U)$.  Since the product of $TU$ is the minimal representative of $\core(T \ast U)$, and as $(p_{l(x)}, q_{l(y)})$ induces a homeomorphism of $\xno$, it follows that the corresponding $x$ loop state of $TU$ is also a homeomorphism state. Closure under inversion follows from the definition of the transducer $T'$ as constructed in Construction~\ref{construction:inverse}. For if $q$ is a homeomorphism state of $T$, then $(\emptyword, q)$ is a state of $T'$. Moreover, if $q_{l(x)}$ is the $x$ loop state, and $ y = \lambda_{T}(x, q_{l(x)})$, then $(\emptyword, q_{l(x)})$ is the $y$ loop state of $T'$. As $T^{-1}$ is the minimal representative of $T'$ and $(\emptyword, q_{l(x)})$ is a homeomorphism state of $T'$, for  $q_{l(x)}$ the $x$ loop state of $T$, then the $x$ loop state of $T'$ is also a homeomorphism state.

\begin{proposition}
	Under the multiplication induced from $\Kn{n}$, the set $\mathcal{D}_{n}$ is a group.
\end{proposition}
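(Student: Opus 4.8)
The plan is to verify that $\mathcal{D}_{n}$ is closed under the group operations inherited from $\Kn{n}$, and since $\mathcal{D}_{n} \subseteq \Kn{n}$ and the identity transducer $\tid$ trivially has all loop states (it has a single loop state which is a homeomorphism state), it then follows that $\mathcal{D}_{n}$ is a subgroup. Indeed, the bulk of the argument is already sketched in the paragraph preceding the statement; the proposition is essentially a formal record of that discussion, so the proof should simply organise those observations.

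First I would handle closure under multiplication. Let $T, U \in \mathcal{D}_{n}$ with synchronizing levels witnessing membership in $\Ln{n}$. Recall that a loop state of a strongly synchronizing core transducer is precisely a state $q$ with $\pi_{T}(x, q) = q$ for some $x \in \xn$; write $p_{l(x)}$ for the $x$ loop state of $T$ and, given that $\lambda_{T}(x, p_{l(x)}) = y$ (a single letter since $T$ is synchronous and $(p_{l(x)})\alpha = 0$ — recall elements of $\Ln{n}$ are synchronous and the loop condition \ref{Lipshitzconstraint} forces $|\lambda_{T}(x, p_{l(x)})| = |x| = 1$), let $q_{l(y)}$ be the $y$ loop state of $U$. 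By the formula for the transition function of the product, $\pi_{T \ast U}(x, (p_{l(x)}, q_{l(y)})) = (\pi_{T}(x, p_{l(x)}), \pi_{U}(\lambda_{T}(x, p_{l(x)}), q_{l(y)})) = (p_{l(x)}, q_{l(y)})$, so $(p_{l(x)}, q_{l(y)})$ is the $x$ loop state of $\core(T \ast U)$. Since both $p_{l(x)}$ and $q_{l(y)}$ induce injective clopen-image maps on $\xno$ which are in fact homeomorphisms onto $\xno$ (being homeomorphism states), their composition — which is the map induced by $(p_{l(x)}, q_{l(y)})$ — is also a homeomorphism of $\xno$. Passing to $TU$, the minimal representative of $\core(T \ast U)$, the $x$ loop state of $TU$ is $\omega$-equivalent to $(p_{l(x)}, q_{l(y)})$, hence is also a homeomorphism state. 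As $x$ was arbitrary and every loop state of $TU$ arises this way, $TU \in \mathcal{D}_{n}$.

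Next I would handle closure under inversion using Construction~\ref{construction:inverse}. If $q$ is a homeomorphism state of $T$, then $(\emptyword, q)$ is a state of the transducer $T'$ of Construction~\ref{construction:inverse}, because $\im{q} = \xno$ forces the greatest common prefix $(\emptyword)L_{q}$ to be $\emptyword$ and $U^{+}_{\emptyword} = \xno \subseteq \im{q}$. If $q_{l(x)}$ is the $x$ loop state of $T$ and $y = \lambda_{T}(x, q_{l(x)})$, then by the definition of $\pi_{T'}$ one checks $\pi_{T'}(y, (\emptyword, q_{l(x)})) = (y - \lambda_{T}((y)L_{q_{l(x)}}, q_{l(x)}), \ldots)$; since $q_{l(x)}$ is a homeomorphism state with $\lambda_{T}(x, q_{l(x)}) = y$, one has $(y)L_{q_{l(x)}} = x$ and the transition returns $(\emptyword, q_{l(x)})$, so $(\emptyword, q_{l(x)})$ is the $y$ loop state of $T'$. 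As $(\emptyword, q_{l(x)})$ induces the same continuous map as $q_{l(x)}$ — a homeomorphism — and $T^{-1}$ is the minimal representative of $T'$, the $y$ loop state of $T^{-1}$ is a homeomorphism state. Running over all $x \in \xn$ exhausts the loop states of $T^{-1}$, so $T^{-1} \in \mathcal{D}_{n}$.

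I do not anticipate a genuine obstacle here; the only point requiring a little care is bookkeeping about which state of $\core(T \ast U)$ (respectively $T'$) is the $x$ loop state (respectively $y$ loop state), and the fact that passing to a minimal representative preserves the homeomorphism-state property — this last point follows because $\omega$-equivalent states induce the same continuous map, so being a homeomorphism state is an $\omega$-equivalence invariant. One should also remark that $\mathcal{D}_{n}$ is nonempty, containing $\tid$, to conclude it is a subgroup rather than merely a subsemigroup closed under inversion.
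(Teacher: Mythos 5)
Your proof is correct and follows essentially the same route as the paper, which establishes closure under products by identifying $(p_{l(x)}, q_{l(y)})$ as the $x$ loop state of $\core(T\ast U)$ and closure under inversion via Construction~\ref{construction:inverse}, exactly as you do. The only quibble is your parenthetical claim that elements of $\Ln{n}$ are synchronous — they need not be; the fact that $|\lambda_{T}(x,p_{l(x)})|=1$ follows from condition \ref{Lipshitzconstraint} alone, which you also cite, so the argument stands.
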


The result below is likewise straight-forward. However we require some additional notation in order to prove it. 

Throughout this section, set $\dotr:= \{\dot{1}, \dot{2}, \ldots{\dot{r}}\}$ and write $\CCnr$ for the Cantor space $\{ a \rho \mid a \in \dot{r}, \rho \in \xno \}$ with the usual topology.

A \textit{transducer for $\CCnr$} is an initial transducer $A_{q_0} = \gen{\dotr \sqcup X_{n}, Q_{A}, \pi_{A}, \lambda_{A}}$ such that the following conditions are satisfied:
\begin{itemize}
	\item $\pi_{A} = (\pi_{A}: \dotr \times \{q_0\} \to Q_{A} \backslash \{q_0\}) \sqcup (\pi_{A}: \xn \times Q_{A}\backslash \{q_0\} \to Q_{A} \backslash \{q_0\})$;
	\item $\lambda_{A}(\cdot, q_0): \dotr\xn^{*} \mapsto  \{\ew\} \sqcup \dotr \xn^{\ast}$;
	\item $\lambda_{A}(\cdot, q_0): \CCnr \to \CCnr$.
\end{itemize}

Let $A_{q_0}$ be a transducer for $\CCnr$. Then $A_{q_0}$ is :
\begin{itemize}
	\item strongly synchronizing, if the non-initial transducer $T = \gen{\xn Q_{A} \backslash \{q_0\}, \pi_A, \lambda_{A}}$ is strongly synchronizing;
	\item invertible if the map $h_{q_0}:=\lambda_{A}(\cdot q_0): \CCnr \to \CCnr$ is invertible;
	\item bi-synchronizing if $A_{q_0}$ is invertible and $h_{q_0}^{-1}$ is induced by a transducer $B_{p_0}$ for $\CCnr$.
	\end{itemize}
We may extend the notion of minimality for initial transducers in the natural way to transducers for $\CCnr$. That is, a transducer for $\CCnr$ is minimal if it has no states of incomplete response, no pair of $\omega$-equivalent states and all states are accessible from the initial one.

We note that we naturally identify $\CCmr{n,1}$ with the Cantor space $\xno$.

The following result is from \cite{AutGnr}

\begin{Theorem}
	The set of homeomorphisms of $\CCnr$ which may be induced by bi-synchronizing transducers for $\CCnr$ forms a group $\Bnr$ under composition of maps. Moreover we have $\aut{G_{n,r}} \cong \Bnr$.
\end{Theorem}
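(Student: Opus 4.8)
This theorem is proved in \cite{AutGnr}; I outline the structure of the argument one would give. First one verifies that $\Bnr$ is a group. Closure under composition is the statement that the reduced product of two bi-synchronizing transducers for $\CCnr$ is again bi-synchronizing: applying Proposition~\ref{p:synchlengthsadd} to the input automaton of the product, and separately to its output automaton (which is the input automaton of the product of the inverse transducers), shows that synchronizing levels add, so the product is strongly synchronizing in both directions; removing states of incomplete response and identifying $\omega$-equivalent states preserves this and produces a minimal bi-synchronizing transducer for $\CCnr$ inducing the composite homeomorphism. Closure under inversion is immediate, since the conditions defining a bi-synchronizing transducer for $\CCnr$ are symmetric in $h_{q_0}$ and $h_{q_0}^{-1}$, and the single-state identity transducer gives the identity of $\Bnr$. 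The isomorphism will be the map $\Theta\colon h \mapsto (g \mapsto h^{-1}gh)$, so the real content is: (i) conjugation by $h \in \Bnr$ carries the standard copy of $\Gnr$ inside $\mathrm{Homeo}(\CCnr)$ to itself; (ii) $\Theta$ is injective; (iii) $\Theta$ is surjective.

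For (i) one uses the concrete description of $\Gnr$ as the group of prefix-replacement homeomorphisms of $\CCnr$ determined by two complete antichains together with a bijection between them. Each such map is induced by a transducer for $\CCnr$ which becomes the single-state identity after reading a bounded initial segment, hence is trivially bi-synchronizing; and computing the product of three transducers for $\CCnr$ shows that $h^{-1}gh$ is again a prefix-replacement map whenever $g$ is and $h \in \Bnr$ — the point is that, once the reading level is taken larger than the synchronizing levels of $h$ and $h^{-1}$, the asynchronous behaviour of $h$ and of $h^{-1}$ cancels against the finite asynchronous behaviour of $g$ on each basic clopen set. Injectivity (ii) reduces to the triviality of the centraliser of $\Gnr$ in $\mathrm{Homeo}(\CCnr)$: since $\Gnr$ has no global fixed point and its elements separate basic clopen sets, any homeomorphism commuting with all of $\Gnr$ must fix every basic clopen set setwise and is therefore the identity, so $\ker\Theta$ is trivial.

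The main obstacle is surjectivity (iii). The plan is a reconstruction argument of Rubin type: $\Gnr$ acts on $\CCnr$ so that the rigid stabilisers of clopen sets are rich enough to detect the topology, whence any $\phi \in \aut{\Gnr}$ is \emph{spatially implemented} — there is a unique homeomorphism $h$ of $\CCnr$ with $g^{\phi} = h^{-1}gh$ for all $g \in \Gnr$. What remains, and this is the technical heart, is to show that this $h$ is actually induced by a bi-synchronizing transducer for $\CCnr$ and not merely an arbitrary homeomorphism. One does this by evaluating $\phi$ on elements of $\Gnr$ supported on small basic clopen sets to read off the local ``one letter in, finite word out'' behaviour of $h$; a pigeonhole argument on the finitely many ways $\phi$ can distort a clopen set of a fixed small size forces only finitely many local states to occur, i.e. $h$ is strongly synchronizing, and the same analysis applied to $\phi^{-1}$ (equivalently, to the inverse homeomorphism) gives strong synchronization of $h^{-1}$. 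Hence $h \in \Bnr$, and $\Theta$ is onto. Combining (i)--(iii) yields $\aut{\Gnr} \cong \Bnr$.
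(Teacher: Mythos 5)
Your proposal is correct in outline and follows the same route as the source: the paper itself states this theorem without proof, importing it from \cite{AutGnr}, and the argument there is exactly the one you describe — conjugation gives a map $\Bnr\to\aut{\Gnr}$ whose injectivity comes from the triviality of the centraliser of $\Gnr$ in $\mathrm{Homeo}(\CCnr)$, surjectivity comes from Rubin-type spatial realisation of automorphisms followed by a finiteness/pigeonhole argument showing the implementing homeomorphism and its inverse are induced by strongly synchronizing transducers. One small caveat: Proposition~\ref{p:synchlengthsadd} as stated in this paper applies to \emph{synchronous} transducers, whereas transducers for $\CCnr$ are generally asynchronous, so for closure under products you should instead invoke the general fact (from \cite{AutGnr}, and used here for the monoid $\mathcal{M}_{n}$) that the core of a product of strongly synchronizing transducers is again strongly synchronizing; with that substitution your sketch matches the cited proof.
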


\begin{lemma}\label{lem:dnsubgpofAutgnr}
	Let $n > r \ge 1$. Then $\mathcal{D}_{n}$ is a subgroup of $\aut{G_{n,r}}$. 
\end{lemma}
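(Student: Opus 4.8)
The plan is to realise $\mathcal{D}_{n}$ inside $\Bnr \cong \aut{G_{n,r}}$ by building, for each element $T \in \mathcal{D}_{n}$, a transducer for $\CCnr$ whose ``core'' part is exactly $T$ and whose initial state simply routes each of the $r$ decorations $\dot{1}, \ldots, \dot{r}$ into the appropriate state of $T$. First I would fix a representative $T \in \mathcal{D}_{n}$, let $k$ be its synchronizing level, and recall that, since $T \in \Kn{n} \le \Ln{n,1}$, every loop state $q_{l(x)}$ of $T$ (for $x \in \xn$) is a homeomorphism state with $\lambda_{T}(x, q_{l(x)}) = y$ for the $y$ loop state $q_{l(y)}$; moreover $T$ induces a homeomorphism of $\xno$ fixing $\im{q} = \xno$ for all states. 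The key construction: define $A_{q_0} = \gen{\dotr \sqcup \xn, \{q_0\} \sqcup Q_{T}, \pi_{A}, \lambda_{A}}$ where for $\dot{i} \in \dotr$ we set $\pi_{A}(\dot{i}, q_0)$ to be an arbitrarily chosen state of $T$ (say a fixed loop state, or the state forced by reading $i^k$ once we identify $\CCmr{n,1}$ with $\xno$) and $\lambda_{A}(\dot{i}, q_0) = \dot{i}$; and on $\xn \times Q_{T}$ the functions $\pi_{A}, \lambda_{A}$ agree with $\pi_{T}, \lambda_{T}$. Because the decoration is copied verbatim and then $T$ acts on the $\xno$-tail, $h_{q_0} = \lambda_{A}(\cdot, q_0)$ maps $\CCnr \to \CCnr$, so $A_{q_0}$ is a transducer for $\CCnr$ in the sense of the excerpt.

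Next I would verify the three properties needed to land in $\Bnr$. Strong synchronization of $A_{q_0}$ is immediate: the non-initial transducer obtained by deleting $q_0$ is precisely $T$, which is strongly synchronizing by hypothesis. Invertibility of $h_{q_0}$ follows since $T \in \On$ is invertible (its inverse $T^{-1}$ lies in $\Ln{n}$), and the inverse map on $\CCnr$ just copies the decoration and applies $T^{-1}$ to the tail; that inverse is induced by the analogous transducer $B_{p_0}$ built from $T^{-1}$, which is again strongly synchronizing (by Lemma~\ref{lemma:constructedinversesynch} / the construction of inverses in $\Ln{n}$). Hence $A_{q_0}$ is bi-synchronizing, so $h_{q_0} \in \Bnr$. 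Then I would check that $T \mapsto h_{q_0}$ is a homomorphism: composing two such transducers for $\CCnr$ (via the product, removing non-core states, removing incomplete response, identifying $\omega$-equivalent states) produces a transducer for $\CCnr$ whose non-initial part is exactly the product $TU$ in $\Ln{n}$ — here the hypothesis that every loop state is a homeomorphism state guarantees, as in the discussion preceding the statement of $\mathcal{D}_{n}$ being a group, that no collapsing in the $\xn$-part interferes with the decoration handling — and injectivity is clear since distinct $T$ induce distinct maps on the tail $\xno \subseteq \CCnr$.

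The main obstacle I anticipate is bookkeeping at the initial state $q_0$: one must choose $\pi_{A}(\dot{i}, q_0)$ consistently so that the resulting map genuinely permutes $\CCnr$ and so that the construction is compatible with composition and inversion (in particular, so that $B_{p_0}$ built from $T^{-1}$ really inverts $A_{q_0}$ and not merely on the tails). A clean way to handle this is to route $\dot{i}$ to the state of $T$ forced after reading a fixed length-$k$ word (e.g. $0^{k}$, using $\CCmr{n,1} \cong \xno$ and that $\mathcal{D}_{n} \le \aut{G_{n,1}}$ already in the $r=1$ case), and to note that because $T$ restricted to any cone is conjugate by a synchronizing word to $T$ restricted to another cone, the choice does not affect membership in $\Bnr$. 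Once the initial-state routing is pinned down, the verification that $A_{q_0}$ is minimal (or passes to a minimal representative) and the identification of the image with $\mathcal{D}_{n}$ are routine, using the already-established fact that $\mathcal{D}_{n}$ is a group and the isomorphism $\aut{G_{n,r}} \cong \Bnr$.
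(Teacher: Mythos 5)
Your construction of a single bi-synchronizing transducer for $\CCnr$ from a given $T \in \mathcal{D}_{n}$ is essentially sound, provided you route each decoration to a homeomorphism state: an ``arbitrarily chosen state of $T$'' will not do, since a non-homeomorphism state of an element of $\Kn{n}$ has proper clopen image and the resulting map on $\CCnr$ would fail to be surjective. The genuine gap is exactly the obstacle you flag but do not resolve: with a one-level initial state that reads the decoration $\dot{i}$ and transitions to a \emph{fixed} state $p_i^{T}$ of $T$ (fixed meaning independent of the first $\xn$-letter of the tail), the assignment $T \mapsto h_{q_0}$ is not a homomorphism and its image is not closed under composition. Concretely, in the product of the transducers built from $T$ and $U$, reading $\dot{i}$ from the pair of initial states lands in the state $(p_i^{T}, p_i^{U})$ of $T \ast U$; but if $p_i^{T}$ is the $x$-loop state of $T$ and $z = \lambda_{T}(x, p_i^{T}) \ne x$, then the $x$-loop state of $T \ast U$ is $(p_i^{T}, q^{U}_{l(z)})$, not $(p_i^{T}, p_i^{U})$, and in general $(p_i^{T}, p_i^{U})$ is not $\omega$-equivalent to whatever state $p_i^{TU}$ your rule assigns to $TU$. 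Already for $U = T$ with every decoration routed to the $0$-loop state, $h_{T}h_{T}$ acts on tails via the state $(q_{l(0)}, q_{l(0)})$ while $h_{T^{2}}$ acts via $(q_{l(0)}, q_{l(z)})$ with $z = \lambda_{T}(0, q_{l(0)})$; these generically induce different maps. Your appeal to conjugacy of cones only shows that each individual $h_{T}$ lies in $\Bnr$; it does not repair the failure of multiplicativity, so no embedding of the group $\mathcal{D}_{n}$ has been exhibited.

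The paper's proof avoids this by inserting an intermediate level. It first embeds $\mathcal{D}_{n}$ into $\mathcal{B}_{n,1}$ via an initial state $q_{0}$ that reads the \emph{first letter} $x \in \xn$ of the tail and transitions to the $x$-loop state $q_{l(x)}$ with output $\lambda_{D}(x, q_{l(x)})$. Because in the product the output letter $z = \lambda_{T}(x, q^{T}_{l(x)})$ is exactly what is fed to $U$'s initial state, the composite routes $x$ to $(q^{T}_{l(x)}, q^{U}_{l(z)})$, which \emph{is} the $x$-loop state of $TU$; multiplicativity follows (here condition \ref{Lipshitzconstraint} guarantees $|z|=1$, and the homeomorphism hypothesis guarantees there is no incomplete response to disturb the identification). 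Only afterwards is the decoration handled, by prepending a further initial state sending every $\dot{a}$ to $q_{0}$ with output $\dot a$ --- the standard embedding of $\mathcal{B}_{n,1}$ into $\Bnr$. To salvage your one-step construction you would have to make the transition out of the initial state depend on the first $\xn$-letter read after the decoration, which is precisely this two-level structure.
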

\begin{proof}
	 We first define an embedding of $\mathcal{D}_{n}$ in to $\mathcal{B}_{n,1}$. Then we observe  that $\mathcal{B}_{n,1}$ has a natural embedding into the group $\mathcal{B}_{n,r}$ for any $n> r \ge 1$.
	
	Let $D \in \mathcal{D}_{n}$ and  $q_0$ be a symbol not in $Q_{D}$. For $x \in \xn$, write $q_{l(x)}$ for the unique state of $D$ satisfying $\pi_{D}(x, q_{l(x)}) = q_{l(x)}$.
	
	 Define an initial  transducer  $D_{q_0}= \gen{ \xn, Q_{D} \sqcup \{ q_0\}, \pi_{D_{q_0}}, \lambda_{D_{q_0} }}$ as follows. For $x \in \xn$, $\pi_{D_{q_0}}(x, q_0) = q_{l(x)} $ and $\lambda_{D_{q_0}}(x, q_0) = \lambda_{D}(x, q_{l(x)}) $;  for $x \in \xn$ and $q \in Q_{D}$, $\pi_{D_{q_0}}(x, q) = \pi_{D}(x, q)$ and $\lambda_{D_{q_0}}(x, q) = \lambda_{D}(x, q)$. Write $\overline{D}_{q_0}$ for the minimal representative of $D_{q_0}$ maintaining the symbol $q_0$ for the initial state.
	 
	 We note that since all the loop states of $D$ are homeomorphism states, then $\overline{D}_{q_0}$ induces a homoemorphism of $\xno$ and $\overline{D}_{q_0}$ is bisynchronizing. 
	 
	 Let $D, E \in \mathcal{D}_{n}$. Then the following facts are straight-forward to check by direct computation:
	 
	 \begin{itemize}
	 	\item If $F$ is the inverse of $D$ in $\mathcal{D}_{n}$ then $\overline{F}_{q_0}$ is equal to $\overline{D}_{q_0}^{-1}$.
	 	\item $\overline{(DE)}_{q_0}$ is the minimal representative of the initial transducer $(D \ast E)_{(q_0, q_0)}$.
	 	\item $\overline{D}_{q_0}$ is induces the identity map on $\xno$ if and only if $D$ is the identity transducer.
	 \end{itemize}	
 
 For $D \in \mathcal{D}_{n}$, write $h(D)$ for the homeomorphism of $\xno$ induced by  $\overline{D}_{q_0}$. Then, by the facts above, the map $D \mapsto h(D)$ is an embedding of $\mathcal{D}_{n}$ into the group $\mathcal{B}_{n,1}$. 
 
 However the group $\mathcal{B}_{n,1}$ embeds as a subgroup of $\mathcal{B}_{n,r}$ in a natural way which we describe below.
 
 Let $A_{q_0}$ be a minimal transducer representing an element of $h$  of $\mathcal{B}_{n,1}$. Set $p_0$ to be a symbol not in $Q_{A}$. Define a transducer $\mathsf{A}_{p_0} = \gen{ \dotr \sqcup \xn, Q_{A} \sqcup \{p_0\}, \pi_{\mathsf{A}}, \lambda_{\mathsf{A}}   }$ as follows. For $a \in \dot{r}$ we set $\pi_{\mathsf{A}}(a, p_0) = q_0$ and $\lambda_{\mathsf{A}}(a, p_0) = a$; $\pi_{\mathsf{A}}(a, p) $ and $\lambda_{\mathsf{A}}(a, p)$ are undefined for $p \in Q_{A}$; for $x \in \xn$, and $q \in Q_{A}$ we set $\pi_{\mathsf{A}}(x, q) = \pi_{A}(x, q)$ and $\lambda_{\mathsf{A}}(x, q) = \lambda_{A}(x, q)$.
 	
 Set $h_r$ to be the map induced by $\mathsf{A}_{p_0}$. It is easy to check that $h_r$  is a homeomorphism of $\CCnr$ and $\mathsf{A}_{p_0}$ is bi-synchronizing. Thus $h_{r} \in \Bnr$. It is also straightforward to check that the map $h \mapsto
  h_{r}$ from $\mathcal{B}_{n,1} \to \mathcal{B}_{n,r}$ is a monomorphism.
	 
\end{proof}

We begin by defining the various marker constructions we consider and these  are essentially of two types.

\begin{Definition}
	Let $A,B \in \xnp$ be distinct words. Then we say that \emph{$A$ overlaps non-trivially with $B$} if either some non-empty initial  prefix of $A$ coincides with  some terminal suffix of $B$ or some non-empty terminal suffix of $A$ coincides with  some  initial prefix of $B$. If $A$ and $B$ do not overlap non-trivially then they are said to \emph{overlap trivially}. If $a$ is a non-empty prefix of $A$ that coincides with a suffix of $B$, then $A$ is said to \emph{overlap terminally with $B$ at $a$}. Analogously if $a$ is a non-empty suffix of $A$ that coincides with a prefix of $B$, then $A$ is said to \emph{overlap initially with $B$ at $a$}. 
	
	A non-empty word $A$ is said to have \emph{only trivial overlaps with itself} if and only if no initial proper prefix of $A$ coincides with a terminal suffix of $A$.

\end{Definition}

\begin{Definition}
	Let $M \subset \xnp$ be a set of non-empty words. Then $M$ is said to have  \emph{only trivial overlaps} if  any two distinct words in $M$ overlap trivially, and any word in $M$ only trivially overlaps with itself. 
\end{Definition}

We now define the marker automorphisms that we consider. 

The first appears in the paper \cite{BoyleKrieger}. Let $a, b \in \xn^{+}$ be words of equal length such that for $x \in \{a,b\}$ and $y \in \{a,b\}\backslash\{x\}$,  if $xx = u x v$ for some $u,v \in \xns$ then either $u$ or $v$ is the empty word, and there are no $u,v \in \xns$ such that $xx = uyv$. Note that $a$, $b$ and $ab$, by construction, are distinct prime words. 
Moreover if $c:=c_1c_2c_3c_4 c_5, d:=d_1d_2d_3d_4d_5 \in \{a,b\}^{5}$ are two not necessarily distinct words which overlap, then it cannot be the case that $c$ overlaps initially with $d$ at some suffix of $c$ that begins with a non-empty proper suffix of $c_3$ and it also cannot be the case that $c$ overlaps terminally with $d$ at some prefix of $c$ that ends in a non-empty proper prefix of $c_3$. 
Define an automorphism $f_{a,b} \in \aut{\xnz, \shift{n}}$ as follows. Let $l= |a|$ and $x \in \xnz$. Then $(x)f_{a,b}:=y \in \xnz$ is as follows. 
Given $i \in \Z$, if $x_{i-2l}x_{i-2l+1} \ldots x_{i+3l-1} = c_1c_2c_3c_4c_5$, for $c_j \in \{a,b\}$, then $y_{i}y_{i+1}\ldots y_{l} = \bar{c}_3$ for  $\bar{c}_{3} \in \{a,b\}\backslash \{c_3\}$, otherwise $y_{i}y_{i+1}\ldots y_{l} = x_{i}\ldots x_{l}$. The overlapping conditions above means that $y$ is a well-defined element $\xnz$. Thus, $f_{a,b}$ is a well-defined element of $\aut{\xnz, \shift{n}}$ (\cite{BoyleKrieger}) and is in fact an involution.

 For $k \in \N_{1}$, and $f \in \aut{\xnz, \shift{n}}$, write $\overline{f}_{k}$ for permutation of $\xn^{k}$, given by, $v \mapsto w$ if and only if the element $x \in \xnz$ defined by $x_{i|w|}x_{i|w|+1}\ldots x_{(i+1)|w| - 1} = w$, for all $i \in \xnz$, satisfies $(x)f = y$, for $y \in \xnz$ defined by $y_{i|v|}y_{i|v|+1}\ldots y_{(i+1)|v| - 1} = v$, $i \in \Z$. It is shown in the paper \cite{BoyleKrieger} that $\overline{(f_{a,b})}_{k}$ is trivial whenever $k < l$; fixes all words of length $l$ that are not rotationally equivalent to $a$ or $b$; and maps  a word rotationally equivalent to $a$ to one rotationally equivalent to $b$.
 
 The following lemma is useful.
 
 \begin{lemma}\label{lem:detectinghomeofromactiononxnz}
 	Let $T \in \Ln{n}$  and  $k \in \N$ be such $T$ is bi-synchronizing at level $k$.  Fix a state $q \in Q_{T}$ and a word $\Gamma \in \xn^{k}$ such that $\pi_{T}(\Gamma, q) = q$. Let $q'$ be the state of $T^{-1}$ forced by $\lambda_{T}(\Gamma, q)$. Fix an annotation $\alpha$ of $T$, and let $\beta$ be the annotation of $T^{-1}$ such that $(T,\alpha)(T^{-1},\beta) = (\id, 0)$. Then $-(q)\alpha = (q')\beta$ if and only if $q$,  and so $q'$, induces a homeomorphism of $\xno$.
 \end{lemma}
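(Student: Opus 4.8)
The plan is to reduce the whole statement to one assertion about the extent of incomplete response of a single state of the product transducer $T\ast T^{-1}$, and then to recognise that condition as surjectivity of the map induced by $q$. Put $\Delta:=\lambda_{T}(\Gamma,q)$; applying the defining condition~\ref{Lipshitzconstraint} of $\Ln{n}$ to the $\Gamma$-circuit in $T$ and to the $\Delta$-circuit in $T^{-1}$ gives $|\Delta|=k=|\Gamma|$, $\pi_{T^{-1}}(\Delta,q')=q'$ and $|\lambda_{T^{-1}}(\Delta,q')|=k$; moreover $\lambda_{T}(\Gamma^{\omega},q)=\Delta^{\omega}$. Write $h_{q}:=\lambda_{T}(\cdot,q)$ and $h_{q'}:=\lambda_{T^{-1}}(\cdot,q')$ for the induced continuous injections of $\xno$ into itself, so that ``$q$ induces a homeomorphism of $\xno$'' means exactly that $h_{q}$ is onto.

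First I would set up the bridge. Reading $\Gamma^{m}$ from an arbitrary state of $T\ast T^{-1}$ drives the $T$-coordinate to $q$ (as $\Gamma$ is a synchronizing word for $T$) and, for $m$ large, produces an output ending in a long block of $\Delta$'s, which drives the $T^{-1}$-coordinate to $q'$; hence $(q,q')$ lies in $\core(T\ast T^{-1})$ and is a $\Gamma$-circuit state there. Since $(T,\alpha)(T^{-1},\beta)$ is the identity of $\ALn{n}$, we have $TT^{-1}=\tid$ and $\overline{\alpha+\beta}=\bm{0}$, so the image of $(q,q')$ in $TT^{-1}$ carries annotation $0$; by the formula defining $\overline{\alpha+\beta}$ this reads
\[(q)\alpha+(q')\beta+|\Lambda(\varepsilon,(q,q'))|=0.\]
Furthermore, after removing incomplete response and identifying $\omega$-equivalent states the state $(q,q')$ becomes the single state of $\tid$, so $(q,q')$ itself induces the map $x\mapsto\Lambda(\varepsilon,(q,q'))\,x$; that is, $h_{q'}\circ h_{q}$ is the operation of prepending the word $\Lambda(\varepsilon,(q,q'))$, which is finite since $h_{q'}\circ h_{q}$ is injective. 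Consequently $-(q)\alpha=(q')\beta$ holds if and only if $\Lambda(\varepsilon,(q,q'))=\varepsilon$, if and only if $h_{q'}\circ h_{q}=\mathrm{id}_{\xno}$.

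To prove ``$-(q)\alpha=(q')\beta\Rightarrow q$ induces a homeomorphism'', the bridge gives $h_{q'}\circ h_{q}=\mathrm{id}_{\xno}$, and then I would run the symmetric argument for $T^{-1}\ast T$. From $h_{q'}\circ h_{q}=\mathrm{id}$ and $\lambda_{T}(\Gamma^{\omega},q)=\Delta^{\omega}$ we get $\lambda_{T^{-1}}(\Delta^{\omega},q')=\Gamma^{\omega}$, whence $\lambda_{T^{-1}}(\Delta,q')=\Gamma$, being the length-$k$ prefix of $\Gamma^{\omega}$; thus $q$ is the state of $T$ forced by $\lambda_{T^{-1}}(\Delta,q')$ and $(q',q)$ is a $\Delta$-circuit state of $\core(T^{-1}\ast T)$. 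The same bridge, applied to $(q',q)$ together with the equation $(q')\beta=-(q)\alpha$, yields $\Lambda(\varepsilon,(q',q))=\varepsilon$, i.e. $h_{q}\circ h_{q'}=\mathrm{id}_{\xno}$. Hence $h_{q}$ is a homeomorphism of $\xno$, and so is $h_{q'}$. For the converse, suppose $q$ induces a homeomorphism; then $\im{q}=\xno=U^{+}_{\varepsilon}$ and $(\varepsilon)L_{q}=\varepsilon$, so $(\varepsilon,q)$ is a state of the transducer $T'$ of Construction~\ref{construction:inverse}, it induces $h_{q}^{-1}$ (a homeomorphism, hence with no incomplete response), and a direct computation from the definitions of $\pi_{T'}$ and $L_{q}$ — using $(\Delta)L_{q}=\Gamma$ and $\pi_{T}(\Gamma,q)=q$ — gives $\pi_{T'}(\Delta,(\varepsilon,q))=(\varepsilon,q)$. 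Passing to the minimal representative $T^{-1}$ of $T'$, which is synchronizing by Lemma~\ref{lemma:constructedinversesynch}, the class of $(\varepsilon,q)$ is a $\Delta$-circuit state and hence is the unique state of $T^{-1}$ forced by $\Delta$; thus $q'=(\varepsilon,q)$ and $h_{q'}=h_{q}^{-1}$, so $h_{q'}\circ h_{q}=\mathrm{id}_{\xno}$ and the bridge gives $-(q)\alpha=(q')\beta$.

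The step I expect to be the main obstacle is the ``bridge'': one has to check carefully that $(q,q')$ genuinely survives into $\core(T\ast T^{-1})$, that the product-annotation formula is applied to the correct representative of that state, and that ``zero incomplete response'' really does force $(q,q')$ to induce the identity map and not merely a map $\omega$-equivalent to it — this last point relies on the reduction of $T\ast T^{-1}$ being $\tid$ and on $h_{q'}\circ h_{q}$ being injective, so that the incomplete response is finite. A secondary fussy point is the identification $q'=(\varepsilon,q)$ in the converse, which requires tracking a single state through the passage from $T'$ to its minimal representative.
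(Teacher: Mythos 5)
Your proof is correct, but it takes a genuinely different route from the paper's. The paper argues dynamically on $\xnz$: for the forward direction it feeds $(T^{-1},\beta)$ a point with left tail $\ldots\Delta\Delta$ and arbitrary right tail $\rho$, applies $(T,\alpha)$ back, and reads off surjectivity of $h_q$ directly from the definition of the action; for the converse it tracks the left-periodic tails $\ldots\Gamma\Gamma$ and $\ldots\Delta\Delta$ and rules out $r_1+r_2>0$ by varying the right tail and $r_1+r_2<0$ by producing a state of incomplete response in $T^{-1}$. You instead work entirely inside the product machinery: the single identity $(q)\alpha+(q')\beta+|\Lambda(\varepsilon,(q,q'))|=0$, extracted from $\overline{\alpha+\beta}=\bm{0}$, together with the observation that $(q,q')$ induces the prepend-$\Lambda(\varepsilon,(q,q'))$ map because $TT^{-1}=\tid$, reduces the lemma to ``$\Lambda(\varepsilon,(q,q'))=\varepsilon$ iff $h_{q'}\circ h_q=\mathrm{id}$'', and then a second application of the same bridge to $T^{-1}\ast T$ supplies the other composite. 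Your approach buys a cleaner conceptual statement (the annotation discrepancy \emph{is} the extent of incomplete response of the product state) at the cost of leaning on the cited well-definedness of $\overline{\alpha+\beta}$ and on the survival of $(q,q')$ into the core, both of which you check adequately; the paper's argument is more elementary and self-contained but more \emph{ad hoc}. Two small remarks: your converse can be shortened by noting that once $h_q$ is bijective, $h_{q'}(y)=\Lambda(\varepsilon,(q,q'))\cdot h_q^{-1}(y)$ for all $y$, so $\Lambda(\varepsilon,(q,q'))$ is a common prefix of everything in $\im{q'}$ and must be empty by minimality of $T^{-1}$ — this avoids tracking $(\varepsilon,q)$ through Construction~\ref{construction:inverse} altogether; and your asserted equality $(\Delta)L_{q}=\Gamma$ does hold, but only because $h_q$ is surjective (so that $h_q^{-1}(U^{+}_{\Delta})=U^{+}_{\Gamma}$ exactly), a point worth making explicit since injectivity alone would not give it.
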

\begin{proof}
	Let $r \in \Z$ be such that $-(q)\alpha = (q')\beta =r$.  Since $\shift{n}^{i}(T,\alpha) = (T, \alpha +i)$ for $i \in \Z$,  we may assume that $(q)\alpha  = (q')\beta =r =  0$.
   	
	Set $\Delta:= \lambda_{T}(\Gamma, q)$. Let $\rho \in \xno$ be arbitrary, and let $y \in \xnz$ be such that ${y_{1}y_{2} \ldots } = \rho$ and $\ldots y_{-2}y_{-1}y_0\ldots =  \ldots \Delta\Delta$.
	
	Since $(q')\beta = 0$ and  $q'$ is the state  of $T^{-1}$ forced by $\Delta$, then, setting $x: = (y)(T^{-1}, \beta)$,  $ \ldots x_{-2}x_{-1}x_0\ldots =  \ldots \Gamma\Gamma$. 
	
	Set $\phi := {x_{1}x_{2}x_{3}\ldots}$. Since $(q)\alpha = 0$, and  $(x)(T,\alpha) = y$, we must have $\lambda_{T}(\phi, q) = \rho$. Thus we conclude that the state $q$ is a surjective. Since $q$ is also an injective state, by definition of $\Ln{n}$, it follows that $q$,  is a homeomorphism state of $T$ and so $q'$ is a homeomorphism state of $T^{-1}$ as well.
	
	Suppose now that $q$ is a homeomorphism state. Let  $r_1 = (q)\alpha$ and $r_2 = (q')\beta$.  Let $x \in \xnz$ be a word such that $\ldots x_{-1}x_0 = \ldots\Gamma\Gamma$. Set $y = (x)(T,\alpha)$. 
	Then  $ \ldots y_{r_1 -1}y_{r_1}  = \ldots \Delta {\Delta}$ and $ \ldots x_{r_1 + r_2 -1} x_{r_1 + r_2} = \ldots \Gamma {\Gamma}$.
	
	If $r_1 + r_2 >0$, then  $x_{1} \ldots x_{(r_1+r_2)} $ is a suffix of  $\ldots\Gamma \Gamma $. However,  as  we may choose $x$ such that $x_{(r_1+r_2)} \ldots x_{1}$ is not a prefix of $\rev{\Gamma}^{\omega}$ this yields a contradiction.
	
	Suppose $r_1 + r_2 < 0$.  In this case,  $x_{r_1 + r_2+1}\ldots x_{0}\ldots$ is precisely the word $\lambda_{T'}({y_{r_1+1} y_{r_1 + 2}\ldots, q')}$. 
	Since, $q$ is a homeomorphism state, and ${y_{r_1+1}y_{r_1+2}\ldots} = \lambda_{T}({x_{1}x_{2}\ldots }, q)$, then, given any $\rho \in \xno$, there is a choice of $x_{i}$ for $i \ge 1$ such that ${y_{r_1+1}y_{r_1+2}\ldots} = \rho$. This therefore implies that for any $\rho \in \xno$, $x_{r_1+r_2+1}\ldots x_0$ is a prefix of  $\lambda_{T}(\rho,q')$ and so $q'$ is a state of incomplete response. Since $T^{-1}$ is minimal, this yields the desired contradiction.
	
 Therefore, $r_1 + r_2 = 0$.
\end{proof}
 
 \begin{Remark}
 	We observe that a consequence of Lemma~\ref{lem:detectinghomeofromactiononxnz} is that the subset of $\mathcal{D}_{n}$ of elements which admit annotations that assign all loop states the value $0$ is in fact a subgroup. Closure under inversion follows from the quoted lemma, and closure under products follows by computation using the fact that loop states of the product are again homeomorphism states and so have no incomplete response. All the embedding results that follow, arising via marker constructions, yield groups in this proper subgroup of $\mathcal{D}_{n}$. 
 \end{Remark}
 
We have the following lemma.

\begin{lemma}\label{Lemma:swapshomeo}
	Let $a, b \in \xn^{l}$, for $l \in \N_{2}$, be such that $f_{a,b} \in \aut{\xnz, \shift{n}}$ is a well-defined marker-automorphism. Let $(T, \alpha) \in \ALn{n}$ satisfy $(T,\alpha)  = f_{a,b}$. Then  $T \in \mathcal{D}_{n}$, that is, all loop states of $T$ are homeomorphism states. 
\end{lemma}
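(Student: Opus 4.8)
The plan is to make $f_{a,b}$ completely explicit as a shifted cellular automaton, read off the value of the associated annotation at each loop state, and then combine this with the fact that $f_{a,b}$ is an involution via Lemma~\ref{lem:detectinghomeofromactiononxnz}.

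First I would record $f_{a,b}$ in block form. By construction the symbol $((x)f_{a,b})_i$ is a function $F$ of the length-$5l$ window $x_{i-2l}\cdots x_{i+3l-1}$: writing this word as five length-$l$ blocks $W_1W_2W_3W_4W_5$, we have $((x)f_{a,b})_i$ equal to the first letter of $W_3$ unless $W_1,\dots,W_5$ all lie in $\{a,b\}$, in which case it is the first letter of the element of $\{a,b\}\setminus\{W_3\}$. Setting $g:=F\in F(\xn,5l)$, the definitions of $g_\infty$ and of $f_{a,b}$ give $f_{a,b}=\shift{n}^{-(3l-1)}g_\infty$. By Corollary~\ref{cor:FinftyisoPn} and Proposition~\ref{Prop:lnpn}, $g_\infty=(T_0,\kappa)$, where $T_0\in\Ln{n}$ is the minimal representative of the element $P\in\pn{n}$ inducing $g_\infty$ and $\kappa\colon[q]\mapsto|\Lambda(\emptyword,q)|$ is its canonical annotation; since $\shift{n}^{j}=(\tid,\bm{j})$, computing the product $(\tid,\bm{-(3l-1)})(T_0,\kappa)$ shows $f_{a,b}=(T_0,\kappa-(3l-1))$. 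Thus $T=T_0$ and $(q)\alpha=(q)\kappa-(3l-1)$ for every state $q$ of $T$.

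The heart of the argument is a local computation at the loop states. Fix $x\in\xn$ and let $q_{l(x)}$ be the $x$-loop state of $T$; in the transducer realising $g_\infty$ with state set $\xn^{5l-1}$ this is the state whose history is $x^{5l-1}$. Because $a$ and $b$ are prime words and $l\ge 2$, neither is a power of a single letter, so no length-$5l$ subword of a run of $x$'s decomposes into five blocks from $\{a,b\}$. Running the state $x^{5l-1}$ on an arbitrary input $y_1y_2\cdots$, the output at step $j\le 5l$ is $F(x^{5l-j}y_1\cdots y_j)$; inspecting the five length-$l$ blocks of this word shows that its first block equals $x^{l}\notin\{a,b\}$ as long as $j\le 3l-1$, so the output is forced to be $x$ for every $j\le 3l-1$, while at $j=3l$ the output equals $y_1$ and hence is not forced. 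Therefore $\Lambda(\emptyword,q_{l(x)})=x^{3l-1}$, so $(q_{l(x)})\kappa=3l-1$ and hence $(q_{l(x)})\alpha=0$; the same computation (using $\Lambda(x,q_{l(x)})=x\,\Lambda(\emptyword,q_{l(x)})$) shows $\lambda_{T}(x,q_{l(x)})=x$.

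Finally I would assemble the pieces. Since $f_{a,b}$ is an involution, $(T,\alpha)=(T,\alpha)^{-1}=(T^{-1},\beta)$, and the faithfulness of the isomorphism $\ALn{n}\cong\aut{\xnz,\shift{n}}$ (Theorem~\ref{theorem:repsbyLnandannotation}) forces $T^{-1}=T$ and $\beta=\alpha$. Choose $k$ at least the bisynchronizing level of $T$ and at least $5l-1$, and apply Lemma~\ref{lem:detectinghomeofromactiononxnz} with the loop word $\Gamma=x^{k}$: then $\pi_{T}(x^{k},q_{l(x)})=q_{l(x)}$, the output $\lambda_{T}(x^{k},q_{l(x)})=x^{k}$ again forces the state $q_{l(x)}$ of $T^{-1}=T$, and $-(q_{l(x)})\alpha=0=(q_{l(x)})\alpha=(q_{l(x)})\beta$, so $q_{l(x)}$ is a homeomorphism state. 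As $x$ was arbitrary, every loop state of $T$ is a homeomorphism state. It remains to note $T\in\Kn{n}$: by Remark~\ref{rem:consequencesofsynchronousrep} there is a single $s$ dividing a power of $n$ for which the image of every state of $T$ is a disjoint union of exactly $s$ cones at a common level, and applying this to the homeomorphism state $q_{l(x)}$, whose image is all of $\xno$, forces $s$ to be a power of $n$; hence $(T)\rsig_{\omega}=1$ and $T\in\Kn{n}$. Therefore $T\in\mathcal{D}_n$. The main obstacle is the third paragraph: showing that the extent of incomplete response at the loop state is exactly $3l-1$, which is where the overlap hypotheses on $a,b$ enter and where one must carefully track how the five-block partition of the window slides relative to the $x$-prefix as $j$ increases.
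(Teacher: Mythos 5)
Your proof is correct and follows the same skeleton as the paper's: establish that each loop state $q_{l(x)}$ satisfies $\lambda_{T}(x,q_{l(x)})=x$ and $(q_{l(x)})\alpha=0$, establish the corresponding facts for the inverse, and conclude with Lemma~\ref{lem:detectinghomeofromactiononxnz}. The only differences in execution are that you pin down the annotation value by an exact computation of the extent of incomplete response ($\Lambda(\emptyword,q_{l(x)})=x^{3l-1}$) via the block-code presentation, where the paper argues more softly that $r>0$ and $r<0$ each lead to a contradiction using the action on a point with constant left tail together with minimality of $T$; that you handle the inverse via the involution property $T^{-1}=T$, $\beta=\alpha$ rather than re-running the argument for $f_{b,a}$; and that you explicitly verify $T\in\Kn{n}$, a point the paper leaves implicit (it follows at once since a homeomorphism state has image equal to a single cone).
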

\begin{proof}
	 Let $c \in \xn$ be a letter, $q_{c}$ be the $c$ loop state of $T$ and $d = \lambda_{T}(c, q_{c}) \in \xn$. We show that $(q_{c})\alpha$ must be $0$. 
	 
	 Let $r \in \Z$ be such that $(q_c)\alpha = r$ and let $x \in \xnz$ be a word such that $x_{i} = c$ for all $i < 0$. Set $y = (x)f_{a,b}$.
	
	Since $c^{2l} \notin \{a,b\}^{2}$ as $l \ge 2$, and $a$ and $b$ are distinct prime words, then, by definition of $f_{a,b}$, $y_{i} = c$ for all $i \le -1$ and so $d = c$.  By a similar argument, for all $ 0 \le i \le l-1$, $y_{i} = x_{i}$ since, by definition of $f_{a,b}$, $y_i$ is determined by the word $c^{2l} x_0 x_{1}\ldots x_{3l-1}$. From this we deduce that $r$ must be less than or equal to $0$, since, if $r>0$, then $y_0 \ldots y_{r-1} = c^{r}$. 
	
	Suppose $r <0$, then we have that $y_{r-1} = \lambda_{T}(x_{-1}, q_{c}) = c$. Therefore, as $x_{i} = c$ for all $i < 0$, it must be the case that $\lambda_{T}({x_0 x_{1}\ldots}, q_{c})$ must begin with  $d^{-r}$. However, as $x_i$, $ i \ge 0$, is arbitrary, we see that $q_{c}$ must then be a state of incomplete response. This results in a contradiction, as $T$ is minimal by definition. Therefore we conclude that  $r = 0$.
	
	Let $T^{-1}$ be the inverse of $T$ and  $\beta$ the annotation of $T^{-1}$ such that $(T,\alpha)(T^{-1}, \beta)$ is the identity map on $\xnz$. Since $f_{b,a}$ is the inverse of  $f_{a,b}$, it follows, by the argument above, that for for $c \in \xn$ and $p_c$ the $c$ loop state of $T^{-1}$, $\lambda_{T}(c, p_c) = c$ and $(p_c)\beta  = 0$. Lemma~\ref{lem:detectinghomeofromactiononxnz} now implies that $q_c$ and $p_c$ are homeomorphism states for all  $c \in \xn$.

\end{proof}

There  are a few corollaries of this lemma.

\begin{corollary}\label{cor:transhomeo}
	Let $[a], [b] \in  \rwnl{l}$ for some $l \in \N$. Then there is an element $T$ of $\mathcal{D}_{n}$, such that $(T)\Pi$ fixes all elements of $\rwnl{k}$ for $k < l$ and, induces the transposition swapping $[a]$ and $[b]$ when restricted to $\rwnl{l}$.
\end{corollary}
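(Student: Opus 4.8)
The plan is to deduce Corollary~\ref{cor:transhomeo} from Lemma~\ref{Lemma:swapshomeo} and the properties of the marker automorphism $f_{a,b}$ recorded just before it. First I would record the elementary dictionary between the block permutation $\overline{f}_{k}$ and the action of $\Pi$ on $\rwnl{k}$: a class $\rotclass{\gamma}\in\rwnl{k}$ corresponds to the $\shift{n}$-periodic sequence $\ldots\gamma\gamma\gamma\ldots$ of least period $k$, and if $(T,\alpha)=f$ with $T\in\Ln{n}$ then, for $q$ the unique state of $T$ with $\pi_{T}(\gamma,q)=q$, the word $\lambda_{T}(\gamma,q)$ is a power of the prime word $\nu$ determined by $\rotclass{\nu}=(\rotclass{\gamma})(T)\Pi$, and $\ldots\nu\nu\nu\ldots$ is, up to a shift, the image of $\ldots\gamma\gamma\gamma\ldots$ under $f$. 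Hence $(T)\Pi$ restricted to $\rwnl{k}$ is completely governed by the action of $f$ on sequences of least period $k$, i.e. by $\overline{f}_{k}$. In particular, when $a,b\in\xn^{l}$ satisfy the overlap hypotheses making $f_{a,b}$ a well-defined (involutive) marker automorphism, the facts quoted from \cite{BoyleKrieger} — that $\overline{(f_{a,b})}_{k}$ is trivial for $k<l$, fixes every length-$l$ word not rotationally equivalent to $a$ or $b$, and swaps the classes of $a$ and $b$ — show that, for $(T,\alpha)=f_{a,b}$, the permutation $(T)\Pi$ fixes $\rwnl{k}$ for all $k<l$ and acts on $\rwnl{l}$ exactly as the transposition $(\rotclass{a}\ \rotclass{b})$. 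By Lemma~\ref{Lemma:swapshomeo} such a $T$ lies in $\mathcal{D}_{n}$, so the corollary is proved whenever $a$ and $b$ themselves form an admissible marker pair (for $l=1$ there is nothing to do, since the transposition of two letters is already induced by the single-state alphabet-swap transducer, which lies in $\hn{n}\subseteq\mathcal{D}_{n}$).

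To remove the admissibility restriction on $(a,b)$ I would pass to products. Since $\mathcal{D}_{n}$ is a group and a product of transducers each of whose $\Pi$-image fixes $\rwnl{<l}$ again has $\Pi$-image fixing $\rwnl{<l}$, it suffices to express the transposition $(\rotclass{a}\ \rotclass{b})$ of $\rwnl{l}$ as a product of transpositions each realised by an admissible $f_{x,y}$ with $|x|=|y|=l$ (recalling that such $f_{x,y}$ fixes all length-$l$ classes other than $\rotclass{x},\rotclass{y}$). Concretely, I would fix a suitable unbordered ("bifix-free") word $c\in\xn^{l}$, e.g. $c=0^{l-1}1$ for $l\ge 2$, and show that the graph on $\rwnl{l}$ whose edges are the admissible marker pairs is connected — so that $\rotclass{a}$ and $\rotclass{b}$ are joined by a path $\rotclass{w_{0}}=\rotclass{a},\rotclass{w_{1}},\dots,\rotclass{w_{m}}=\rotclass{b}$ of admissible pairs. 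The standard identity writing $(\rotclass{w_{0}}\ \rotclass{w_{m}})$ as a product of the edge-transpositions $(\rotclass{w_{i}}\ \rotclass{w_{i+1}})$ (in the simplest case $m=2$, $(\rotclass{a}\ \rotclass{b})=(\rotclass{a}\ \rotclass{c})(\rotclass{b}\ \rotclass{c})(\rotclass{a}\ \rotclass{c})$) then gives the desired $T=T_{w_{0},w_{1}}T_{w_{1},w_{2}}\cdots T_{w_{0},w_{1}}\in\mathcal{D}_{n}$, whose $\Pi$-image fixes $\rwnl{k}$ for $k<l$ and equals $(\rotclass{a}\ \rotclass{b})$ on $\rwnl{l}$.

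The main obstacle is the combinatorial step in the second paragraph: verifying that the admissible-marker-pair graph on $\rwnl{l}$ is connected (equivalently, that from any prime word of length $l$ one can reach a fixed unbordered word by admissible hops). This is a routine but delicate analysis of prefix/suffix overlaps of words of length $l$ over an $n$-letter alphabet — one must check the self-overlap, cross-overlap and the ``length-$5$'' overlap conditions of the construction — and the small parameter values ($n=2$, or $l\le 2$) should be disposed of directly, since there $\rwnl{l}$ is small: $\rwnl{1}$ has $n$ elements with every pair of distinct letters admissible, and for $n=2$ the sets $\rwnl{1},\rwnl{2}$ are trivial or singleton. Everything else — the translation lemma, the invocation of Lemma~\ref{Lemma:swapshomeo}, and the group-theoretic assembly using that $\mathcal{D}_{n}$ is a group and $\Pi$ is a faithful homomorphism — is immediate from material already in place.
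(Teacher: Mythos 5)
Your first paragraph reproduces the core of the paper's argument: translate the action of $(T)\Pi$ on $\rwnl{k}$ into the block permutations $\overline{f}_{k}$, quote the facts from \cite{BoyleKrieger} about $\overline{(f_{a,b})}_{k}$, and invoke Lemma~\ref{Lemma:swapshomeo} to place $T$ in $\mathcal{D}_{n}$; the $l=1$ case is also handled the same way. The divergence, and the problem, is in how you deal with the admissibility of the pair $(a,b)$. The paper disposes of this in one line by citing Theorem 3.6 of \cite{BoyleKrieger}: for $l\ge 2$ one can always choose rotations $a'$, $b'$ of $a$, $b$ (hence representatives of the \emph{same} classes $[a]$, $[b]$) satisfying the required self-overlap and cross-overlap conditions, so a single marker automorphism $f_{a',b'}$ already realises the transposition and no product decomposition is needed.

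You instead propose to realise $([a]\;[b])$ as a product of transpositions along a path in the ``admissible-marker-pair graph'' on $\rwnl{l}$, and you explicitly defer the connectivity of that graph as the ``main obstacle,'' calling it routine but delicate. That deferred step is precisely the combinatorial content of the corollary, and as written your proof does not establish it; moreover the cleanest way to prove it is the very Boyle--Krieger result above, which in fact shows the graph is complete on rotation classes and thereby makes the entire second paragraph (connectivity, path, conjugation identity for transpositions) unnecessary. The surrounding group-theoretic scaffolding you build (closure of $\mathcal{D}_{n}$ under products, preservation of the property of fixing $\rwnl{k}$ for $k<l$) is sound but is extra machinery compensating for the missing citation. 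To complete your argument you must either prove the connectivity claim or replace it with the appeal to Theorem 3.6 of \cite{BoyleKrieger}.
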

\begin{proof}
	Let $a, b$ be prime words of length $l$, such that $a \in [a]$ and $b \in [b]$. Assume for the moment that $l \ge 2$. By Theorem 3.6 of \cite{BoyleKrieger} we may replace $a,b$ with rotations $a'$ and $b'$, such that for  $x \in \{a',b'\}$ and $y \in \{a',b'\}\backslash\{x\}$,  if $xx = u x v$ for some $u,v \in \xns$ then either $u$ or $v$ is the empty word, and there are no $u,v \in \xns$ such that $xx = uyv$. Thus we may construct the map $f_{a',b'}$ as before. As remarked above, in \cite{BoyleKrieger} it is shown that $\overline{(f_{a',b'})}_{k}$ is trivial whenever $k < l$; fixes all words of length $l$ that are not rotationally equivalent to $a$ or $b$; and maps  a word rotationally equivalent to $a$ to one rotationally equivalent to $b$. Thus if $T$ is the element of $\Kn{n}$ with annotation $\alpha$, satisfying $(T, \alpha) = f_{a',b'}$, then $T$ satisfies the requirements of the corollary by  Lemma~\ref{Lemma:swapshomeo}.
	
	If $l=1$, the single transducer which induces the transposition swapping $a$ with $b$ is an element of $\mathcal{D}_{n}$ satisfying the conclusion of the corollary. 
\end{proof}

\begin{corollary}\label{cor:denseinpermrep}
	Let $(\rho_1, \rho_2, \ldots) \in \Pi_{k \in \N_{1}} \sym{\rwnl{k}}$. Then there is an element $T$ of $\mathcal{D}_{n}$ such that the restriction of $(T)\Pi$ to the set $\rwnl{k}$ is the map $\rho_{k}$.
\end{corollary}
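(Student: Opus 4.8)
The plan is to realise the prescribed sequence $(\rho_1,\rho_2,\ldots)$ by fixing the levels $\rwnl{k}$ one at a time in increasing order, assembling a single element of $\mathcal{D}_{n}$ whose $\Pi$-image agrees with $\rho_k$ on every $\rwnl{k}$ at once. First note that, since every $T \in \Ln{n}$ satisfies the Lipschitz condition \ref{Lipshitzconstraint}, reading a prime word $\gamma$ around its fixed cycle produces an output of the same length, so $(T)\Pi$ preserves each set $\rwnl{k}$; thus the restrictions $(T)\Pi\restriction_{\rwnl{k}}$ are indeed permutations of $\rwnl{k}$. The engine of the construction is Corollary~\ref{cor:transhomeo}: for any pair $[a],[b]\in\rwnl{l}$ it yields an element of $\mathcal{D}_{n}$ inducing the transposition $([a]\,[b])$ on $\rwnl{l}$ while fixing every lower level $\rwnl{j}$ with $j<l$. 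Because $\Pi$ is an injective monoid homomorphism (the map defined immediately after Theorem~\ref{theorem:repsbyLnandannotation}) and any permutation of the finite set $\rwnl{l}$ is a product of transpositions, composing finitely many of these elements gives, for each $l$, an element $U_l\in\mathcal{D}_{n}$ with $(U_l)\Pi\restriction_{\rwnl{j}}=\id$ for all $j<l$ and $(U_l)\Pi\restriction_{\rwnl{l}}$ equal to any prescribed permutation of $\rwnl{l}$.

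I would then run the induction. Set $V_1:=U_1$ chosen so that $(V_1)\Pi\restriction_{\rwnl{1}}=\rho_1$, and suppose $V_m\in\mathcal{D}_{n}$ has been built with $(V_m)\Pi\restriction_{\rwnl{j}}=\rho_j$ for all $j\le m$. Let $\tau_{m+1}$ be the permutation that $(V_m)\Pi$ induces on $\rwnl{m+1}$, and use the engine to choose $U_{m+1}\in\mathcal{D}_{n}$ fixing every level $\rwnl{j}$ with $j\le m$ and inducing the correction $\tau_{m+1}^{-1}\rho_{m+1}$ on $\rwnl{m+1}$. Since $\Pi$ is a homomorphism and $U_{m+1}$ fixes levels $\le m$, the product $V_{m+1}:=V_mU_{m+1}$ lies in $\mathcal{D}_{n}$ (closure of $\mathcal{D}_{n}$ under products was established just before the statement of the corollary) and satisfies $(V_{m+1})\Pi\restriction_{\rwnl{j}}=\rho_j$ for all $j\le m+1$. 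The construction is level-monotone: once level $j$ is matched, no later correction $U_{m'}$ with $m'>j$ disturbs it, so $([\gamma])(V_m)\Pi$ is independent of $m$ for every fixed $[\gamma]$ of length $\le m$.

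The stabilisation just observed determines a well-defined permutation of $\rwnl{\ast}$, namely the one acting as $\rho_j$ on each $\rwnl{j}$; the substance of the corollary is that this permutation is realised by a genuine element of $\mathcal{D}_{n}$, i.e. lies in $(\mathcal{D}_{n})\Pi$ and not merely in its closure. To secure this I would work with the transducers $V_m$ themselves rather than only their $\Pi$-images: I would track, for a fixed synchronizing level, the local transition and output data of $V_m$, argue that this data stabilises as $m\to\infty$, and assemble the limit transducer $T$ from it, finally checking that every loop state of $T$ is a homeomorphism state (inherited from the $V_m\in\mathcal{D}_{n}$) so that indeed $T\in\mathcal{D}_{n}$ and $(T)\Pi\restriction_{\rwnl{k}}=\rho_k$ for all $k$.

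The hard part will be exactly this last step, reconciling the finiteness of the state set of a transducer in $\mathcal{D}_{n}$ with the unboundedly many independent level-corrections $U_{m+1}$ entering the product. I would expect the crux to be a uniform finiteness bound: one must control, in terms of the marker widths supplied by Corollary~\ref{cor:transhomeo} and the synchronizing lengths of the partial products $V_m$, how the local structure of $V_m$ grows with $m$, and show that past a computable level it ceases to change on any fixed window. Isolating and proving such a stabilisation estimate — which is what upgrades the pointwise limit of the $(V_m)\Pi$ to an honest finite-state element of $\mathcal{D}_{n}$ — is the technical heart of the argument, and it is the one place where I would be careful to verify the bound explicitly rather than appeal to the abstract action on $\rwnl{\ast}$.
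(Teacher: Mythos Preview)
Your construction of the partial products $V_m$ is exactly the argument the paper has in mind, and it is the entire proof: the paper's own proof is a single line, ``This is clear from Corollary~\ref{cor:transhomeo}.'' The label \texttt{denseinpermrep} makes the intended reading explicit --- the claim is that the image of $\mathcal{D}_{n}$ under $\Pi$ is \emph{dense} in $\prod_{k\ge 1}\sym{\rwnl{k}}$, i.e.\ that any \emph{finite} initial segment $(\rho_1,\ldots,\rho_N)$ is realised by some $T\in\mathcal{D}_{n}$. Your inductive step producing $V_N$ from $V_{N-1}$ and a correction $U_N$ built out of the transpositions of Corollary~\ref{cor:transhomeo} does exactly this, and nothing more is needed.

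The ``hard part'' you isolate --- passing to a limit transducer that realises all the $\rho_k$ simultaneously --- is not merely hard, it is impossible, and this is a genuine gap in your reading of the statement. Elements of $\mathcal{D}_{n}$ are finite-state transducers, so $\mathcal{D}_{n}$ is countable, whereas $\prod_{k\ge 1}\sym{\rwnl{k}}$ is uncountable; hence no map from $\mathcal{D}_{n}$ can hit every sequence $(\rho_1,\rho_2,\ldots)$. In particular there can be no ``stabilisation estimate'' of the kind you propose: the synchronizing levels of the $V_m$ genuinely grow without bound in general, and the local data does not freeze on any fixed window. So drop the limit step entirely; your $V_N$ already establishes the corollary as intended.
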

\begin{proof}
	This is clear from Corollary~\ref{cor:transhomeo}.
\end{proof}

\begin{corollary}
	The groups $\Kn{n}$ and $\mathcal{D}_{n}$ are centerless.
\end{corollary}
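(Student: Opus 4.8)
The plan is to establish the slightly stronger fact that the centraliser $C_{\Ln{n}}(\mathcal{D}_{n})$ is trivial. Since $\mathcal{D}_{n}\le\Kn{n}\le\Ln{n}$, this settles both assertions at once: any $z\in Z(\Kn{n})$ centralises $\mathcal{D}_{n}$ (as $\mathcal{D}_{n}\subseteq\Kn{n}$) and lies in $\Ln{n}$, so $Z(\Kn{n})\le C_{\Ln{n}}(\mathcal{D}_{n})$; likewise $Z(\mathcal{D}_{n})\le C_{\mathcal{D}_{n}}(\mathcal{D}_{n})\le C_{\Ln{n}}(\mathcal{D}_{n})$.

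The first step is to pass to the faithful representation $\Pi\colon\Ln{n}\hookrightarrow\sym{\rwnl{\ast}}$ of Subsection~\ref{Subsection:MnOnLn}. The key structural remark is that for $T\in\Ln{n}$ the permutation $(T)\Pi$ is \emph{level-graded}, i.e.\ it restricts to a permutation of each $\rwnl{k}$ separately. Indeed, $(T)\Pi$ is the periodic-orbit map of any $(T,\alpha)\in\ALn{n}$ representing the corresponding element of $\aut{\xnz,\shift{n}}$ (Theorem~\ref{theorem:repsbyLnandannotation}); such an element is a homeomorphism commuting with $\shift{n}$, so it carries the $\shift{n}$-orbit of a point of least period $k$ bijectively onto another $\shift{n}$-orbit, which therefore also has size $k$. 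Translating back to prime words, $(T)\Pi$ sends $\rotclass{\gamma}$ with $|\gamma|=k$ to some $\rotclass{\nu}$ with $|\nu|=k$. Hence $(\Ln{n})\Pi$ is contained in the level-preserving subgroup $\prod_{k\ge 1}\sym{\rwnl{k}}$ of $\sym{\rwnl{\ast}}$, and two elements of this subgroup commute precisely when their restrictions to each $\rwnl{k}$ commute.

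Now I would take $z\in C_{\Ln{n}}(\mathcal{D}_{n})$ and put $g=(z)\Pi$. Fix $l\ge 1$ and let $[a],[b]\in\rwnl{l}$ be any two distinct classes. By Corollary~\ref{cor:transhomeo} there is $T_{[a],[b]}\in\mathcal{D}_{n}$ with $(T_{[a],[b]})\Pi$ restricting to the transposition $\bigl([a]\,[b]\bigr)$ on $\rwnl{l}$; since $g$ commutes with $(T_{[a],[b]})\Pi$ and both are level-graded, $g\restriction_{\rwnl{l}}$ commutes with $\bigl([a]\,[b]\bigr)$. As $[a],[b]$ vary, $g\restriction_{\rwnl{l}}$ centralises every transposition of $\sym{\rwnl{l}}$, hence $g\restriction_{\rwnl{l}}\in Z(\sym{\rwnl{l}})$. (One could equally invoke Corollary~\ref{cor:denseinpermrep} to see that $(\mathcal{D}_{n})\Pi$ contains every permutation supported on a single level.) An elementary count of primitive necklaces gives $|\rwnl{1}|=n$, $|\rwnl{2}|=\binom{n}{2}$ and, for $l\ge 3$, $|\rwnl{l}|=\tfrac1l\sum_{d\mid l}\mu(d)n^{l/d}\ge\tfrac1l\bigl(n^{l}-l\,n^{l/2}\bigr)$, so $|\rwnl{l}|\ge 3$ for every $l\ge 1$ as soon as $n\ge 3$; consequently $\sym{\rwnl{l}}$ is centreless and $g\restriction_{\rwnl{l}}=\id$. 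Since $l$ was arbitrary, $g=\id$ and faithfulness of $\Pi$ forces $z=\tid$, so $C_{\Ln{n}}(\mathcal{D}_{n})=\{\tid\}$ and both $\Kn{n}$ and $\mathcal{D}_{n}$ are centreless.

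The only non-formal ingredient is the triviality of $Z(\sym{\rwnl{l}})$ for all $l$, which fails exactly at the levels with $|\rwnl{l}|=2$; for $n\ge 3$ there are none, while for $n=2$ they occur only at $l\in\{1,3\}$, and these two levels are handled by a direct inspection of the (very restricted) transducers over a two-letter alphabet, noting that no nontrivial element of $\Ln{2}$ acts through $\Pi$ only on $\rwnl{1}\cup\rwnl{3}$. I expect this small-alphabet bookkeeping to be the sole real obstacle; the rest is a mechanical consequence of Corollary~\ref{cor:transhomeo} and the faithfulness of $\Pi$.
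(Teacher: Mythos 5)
Your argument is essentially the paper's: the proof given there is the one-line ``this is clear from Corollary~\ref{cor:denseinpermrep}'', i.e.\ exactly your reduction via the faithful, level-graded representation $\Pi$ and the fact that $(\mathcal{D}_{n})\Pi$ realises enough transpositions to force a centralising element into $\prod_{k}Z(\sym{\rwnl{k}})$. Your write-up is correct and complete for $n\ge 3$, and it has the merit of making explicit two things the paper suppresses: that $(T)\Pi$ preserves each $\rwnl{k}$ (so that commuting can be checked levelwise), and that the conclusion genuinely needs $|\rwnl{l}|\ne 2$ for all $l$. The latter fails for $n=2$ at $l\in\{1,3\}$, so for $n=2$ the permutation-representation argument alone does not close the proof; your proposed ``direct inspection'' of $\Ln{2}$ at those two levels is still owed (one must rule out a nontrivial element of $\Ln{2}$ whose $\Pi$-image is supported on $\rwnl{1}\cup\rwnl{3}$, which is a transducer-specific fact, not a permutation-group one). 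Since the paper's own proof has the same unaddressed gap at $n=2$, I would not count this against you; just be aware that the outstanding step is real and not merely bookkeeping.
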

\begin{proof}
	This is clear from Corollary~\ref{cor:denseinpermrep}.
\end{proof}

Thus we have a different proof of a result in \cite{BelkBleakCameronOlukoya}.

\begin{corollary}
	The group $\Onr$ is centerless for all $1 \le r \le  n-1$.
\end{corollary}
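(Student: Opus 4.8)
The plan is to reduce centerlessness of $\Onr$ to the combinatorics of the faithful representation $\Pi\colon\On\to\sym{\rwnl{\ast}}$ (the restriction of the injective homomorphism $\Pi\colon\Smn{n}\to\tran{\rwnl{\ast}}$), using the fact that $(\mathcal{D}_{n})\Pi$ is already "as large as possible" in each length layer. Concretely, I would fix $g\in Z(\Onr)$. Since $\mathcal{D}_{n}\subseteq\Kn{n}\le\Ons{1}\le\Onr$, the element $g$ commutes with every $T\in\mathcal{D}_{n}$, so $(g)\Pi$ commutes with $(T)\Pi$ for every $T\in\mathcal{D}_{n}$. By Corollary~\ref{cor:denseinpermrep}, for each $k$ with $|\rwnl{k}|\ge 2$ and each transposition $\tau$ of $\rwnl{k}$ there is $T_{k,\tau}\in\mathcal{D}_{n}$ with $(T_{k,\tau})\Pi$ equal to the permutation of $\rwnl{\ast}$ acting as $\tau$ on $\rwnl{k}$ and as the identity on $\rwnl{j}$ for all $j\ne k$ (apply the corollary to the tuple that is $\tau$ in coordinate $k$ and trivial elsewhere).

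Next I would exploit that a permutation commuting with $(T_{k,\tau})\Pi$ must permute the two–element support $\{a,b\}$ of $\tau$. Letting $\tau$ range over all transpositions of $\rwnl{k}$: if $|\rwnl{k}|\ge 3$, then for any $c\in\rwnl{k}$ choose distinct $a,b\in\rwnl{k}\setminus\{c\}$; since $(g)\Pi$ permutes $\{c,a\}$ and $\{c,b\}$ we get $(g)\Pi(c)\in\{c,a\}\cap\{c,b\}=\{c\}$. Hence $(g)\Pi$ restricts to the identity on every layer $\rwnl{k}$ with $|\rwnl{k}|\ge 3$. Because $|\rwnl{k}|$, the number of Lyndon words of length $k$ over $\xn$, tends to infinity, only finitely many layers have size $\le 2$; a direct count shows that for $n\ge 3$ there are none (already $|\rwnl{1}|=n\ge 3$ and $|\rwnl{k}|\ge 3$ for $k\ge 2$). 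So for $n\ge 3$ we conclude $(g)\Pi=\id_{\rwnl{\ast}}$, and faithfulness of $\Pi$ gives $g=\tid$; thus $Z(\Onr)=1$.

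For $n=2$ the layers $\rwnl{1}=\{[0],[1]\}$, $\rwnl{2}=\{[01]\}$, $\rwnl{3}=\{[001],[011]\}$ have size $\le 2$ and must be handled separately. From the first two paragraphs $(g)\Pi$ fixes $\rwnl{k}$ pointwise for $k\ge 4$ and preserves the sets $\{[0],[1]\}$ and $\{[001],[011]\}$ (hence fixes $[01]$), so $(g)\Pi$ lies in a four–element set of permutations of $\{[0],[1],[01],[001],[011]\}$. To eliminate the three nontrivial possibilities I would bring in elements of $\On$ whose $\Pi$–image is \emph{length–altering}: for instance transducers one of whose loop states is not a homeomorphism state (as in the remark following Proposition~\ref{prop:hnthrownoffitself}), which can be arranged so that $[0]$ (resp.\ $[1]$) is sent by $\Pi$ to a prescribed rotation class of length $>1$ while the other generator is fixed. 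Comparing $(g)\Pi(T)\Pi$ with $(T)\Pi(g)\Pi$ on $[0]$ (and on $[1]$) then forces $(g)\Pi$ to fix $[0]$ and $[1]$, and an analogous comparison, sending $[0]$ into the length–three layer, rules out the transposition of $\{[001],[011]\}$; so again $(g)\Pi=\id$ and $g=\tid$.

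\textbf{Main obstacle.} The crux is precisely the $n=2$ step: Corollary~\ref{cor:denseinpermrep} only supplies permutations of $\rwnl{\ast}$ that preserve each length layer, so it gives no control on a central element over the finitely many layers of size $\le 2$. Overcoming this requires a genuinely different input, namely elements of $\On$ whose action on $\rwnl{\ast}$ mixes lengths; establishing that such elements with the required prescribed behaviour on $[0],[1]$ exist, and checking the commutator computations, is the only non-formal part. For $n\ge 3$ the statement is immediate from the first two paragraphs.
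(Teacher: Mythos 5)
Your argument takes a genuinely different route from the paper's, and for $n\ge 3$ it is clean and correct \emph{if} Corollary~\ref{cor:denseinpermrep} is read literally; the trouble is that it leans on that corollary exactly where it cannot bear the weight. The paper splits into two cases: for nontrivial $T\in\Ln{n}$ it argues much as you do via Corollary~\ref{cor:denseinpermrep}, while for $T\in\Onr\setminus\Ln{n}$ it finds $a,b$ in a single layer $\rwnl{l}$ sent by $(T)\Pi$ into \emph{different} layers $\rwnl{j},\rwnl{k}$ and multiplies by a layer-preserving $U\in\Kn{n}$ with $(a)(U)\Pi=b$, so that $(a)(TU)\Pi$ and $(a)(UT)\Pi$ land in different layers. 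You collapse this case split by the support argument: if $(g)\Pi$ commutes with a permutation of $\rwnl{\ast}$ that is \emph{globally} a transposition $(a\ b)$, then $(g)\Pi$ preserves $\{a,b\}$ even when $g\notin\Ln{n}$. That is elegant, but it needs elements of $\mathcal{D}_{n}$ whose image under $\Pi$ is the identity on every rotation class outside one layer. Corollary~\ref{cor:transhomeo}, from which Corollary~\ref{cor:denseinpermrep} is deduced, only controls the layers $\rwnl{j}$ with $j\le l$; the marker involution $f_{a,b}$ does act nontrivially on longer rotation classes built from $a$ and $b$, and nothing in the paper produces an element acting trivially on \emph{all} higher layers (the product statement of Corollary~\ref{cor:denseinpermrep} cannot be literally true anyway, since $\mathcal{D}_{n}$ is countable and the product of symmetric groups is not). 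What is genuinely available is prescribed behaviour on any finite initial segment of layers, with no control above. With only that, your computation $\{(a)(g)\Pi,(b)(g)\Pi\}=\{a,b\}$ is valid only when $(g)\Pi$ preserves each layer, i.e.\ when $g\in\Ln{n}$ — and the non-layer-preserving case, which is precisely where the paper does its extra work, is left open. You would need either the paper's different-layers argument or an honest construction of exact transpositions of $\rwnl{\ast}$.

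The second gap is the one you flag yourself: for $n=2$ the layers $\rwnl{1},\rwnl{2},\rwnl{3}$ have at most two elements, the transposition trick says nothing about them, and your proposed repair — elements of $\mathcal{O}_{2}$ with prescribed length-altering behaviour on $[0]$ and $[1]$, together with the commutator checks — is asserted rather than carried out. Since the statement is claimed for all $n\ge 2$, this is a real hole, not a routine verification. One concrete advantage of the paper's case split is that its treatment of $g\notin\Ln{n}$ makes no reference to layer sizes and so works uniformly in $n$; adopting it would reduce your missing $n=2$ step to the layer-preserving situation only.
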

\begin{proof}
	Let $T \in \Onr$. We may assume that $T \in \Onr \backslash \Ln{n}$ since, otherwise, by Corollary~\ref{cor:denseinpermrep}, we may find an element of $\mathcal{D}_{n}$ that does not commute with $T$.  
	
	Now, as in the paper \cite{BelkBleakCameronOlukoya},we may find $l \in \N$, words $a, b \in \rwnl{l}$ for some $l \in \N$  and distinct elements $j, k \in \N$ such that $(a)(T)\Pi \in \rwnl{j}$ and $(b)(T)\Pi \in \rwnl{k}$. 
	
	There is an element  $U \in \Kn{n}$, which maps $a$ to $b$. Thus we have that $(a)(TU)\Pi \ne (a)(UT)\Pi$ as  $(a)(UT)\Pi \in \rwnl{k}$ and $(b)(TU)\Pi \in \rwnl{j}$. Since $\Kn{n} \le \Ons{1}$, then $\Kn{n} \le \Ons{r}$ and so $T$ is an element of $\Onr$ that does not commute with $T$. 
\end{proof}

The second type of marker automorphism we consider appears in the paper \cite{VilleS18}. 

Let $w \in \xnp$ be a word and $U \subset \xn^{l}$ be a collection of non-empty words of length $l$. Suppose that any two elements $wvw$ and $wuw$ of the set $wUw = \{ w u w \mid u \in U \}$ can only overlap either initially or terminally at the prefix or suffix $w$. Note that consequently any element of the set $wUw$ is a prime word and $w$  is also a prime word. 

Let $f: (wU)^{+}w \to (wU)^{+}w$ be a map such that the following holds: 
\begin{enumerate}[label=H.\arabic*]
	\item for all $k \in \N$, $f: (wU)^{k}w \to (wU)^{k}w$ is a bijection, and, \label{preserveslength}
	\item  there is a $\delta \in \N$, such that for any pair of words $a,b \in (wU)^{+}w$ of length strictly bigger than $\delta$ and any  $1 \le i \le |a| - \delta$, such that $a_{\max\{1, i- \delta\}} \ldots a_{i+ \delta} = b_{\max\{1, i- \delta\}} \ldots b_{ i+ \delta}$, then the $i$'th letters of $(a)f$ and $(b)f$ coincide and the $i$'th letters of $(a)f^{-1}$ and $(b)f^{-1}$ coincide. \label{uniformcontinuity}
\end{enumerate}

 Define a map  $f_{wUw}$ from $ \xnz$ to itself as follows. 
 Let $x \in \xnz$. We say $x$ does not contain a left or right infinite subword in $wUw$ if for any  $i \in \Z$, $x_{i}x_{i+1}\ldots \ne wu_1wu_2wu_3\ldots$ and $\ldots x_{i-2}x_{i-1}x_{i} \ne \ldots wu_{-2}wu_{-1}w u_{0}w$ for any choice of  $u_{j}$'s in $U$. 
 Let $x \in \xnz$ be a sequence that does no contain a left or right infinite subword in $wUw$. 
 Let $i \le j \in \Z$, then we say that $i$ and $j$ are maximal  such that  $x_{i}x_{i+1}\ldots x_{j} \in (wU)^{+}w$ if for any other pair $(i',j') \in \Z\times \Z \backslash\{(i,j)\}$ with $i' \le i$, $j' \ge j$, we have $x_{i'}x_{i'+1}\ldots x_{j'} \notin (wU)^{+}w$. 
 Fix $x \in \xnz$ such that $x$ does not contain a left or right infinite subword in $wUw$. 
 Define a sequence $y \in \xnz$ as follows. For every $i \le j \in \Z$ for which $i,j$ are maximal such that $x_{i}x_{i+1}\ldots x_{j} \in (wU)^{+}w$, set $y_{i}\ldots y_{j} =  (x_{i}x_{i+1}\ldots x_{j})f$. 
 At all other indices $i \in \Z$, we set $y_i = x_{i}$. 
 Set $(x)f_{wUw} = y$.  
 Given a sequence of $x^{(i)}$, $i \in \N$ of elements of $\xnz$ all of which do not contain a left or right infinite subword in $wUw$ and which converges to a point $x \in \xnz$, then, by hypothesis~\ref{uniformcontinuity}, the sequence $y^{(i)} = (x^{(i)})f_{wUw}$ also converges to a point $y \in \xnz$. 
 We set $(x)f_{wUw} = y$. 
 This therefore extends $f_{wUw}$ to all points of $\xnz$ to yield a well-defined shift-commuting map from $\xnz \to \xnz$. 
 Note that since the inverse of  $f_{wUw}$ is the map $(f^{-1})_{wUw}$, $f_{wUw}$ is homeomorphism, and so an element of $\aut{\xnz, \shift{n}}$.

\begin{lemma}\label{lemma:conveyorhomeo}
	Let $U \subset \xn^{l}$ for some $l \in \N$ and $w \in \xnp$ be such that $|w| \ge 2$ and  any two elements $wvw$ and $wuw$ of the set $wUw = \{ w u w \mid u \in U \}$ can only overlap either initially or terminally at the prefix or suffix $w$. Let $f: (wU)^{+}w \to (wU)^{+}w$ satisfy hypotheses~\ref{preserveslength} and \ref{uniformcontinuity}. Then there is a transducer $T \in \mathcal{D}_{n}$ and an annotation $\alpha$ of $T$ such that  $f_{wUw} = (T, \alpha)$ and all loop states of $T$ have value $0$ under $\alpha$.
\end{lemma}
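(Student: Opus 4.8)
The plan is to start from the representation theory. By its very construction $f_{wUw}$ is an element of $\aut{\xnz,\shift{n}}$, so Theorem~\ref{theorem:repsbyLnandannotation} lets me write $f_{wUw}=(T,\alpha)$ for a $T\in\Ln{n}$ (unique up to $\omega$-equality) and an annotation $\alpha$ of $T$, both now fixed. It then suffices to establish: (a) every loop state $q$ of $T$ is a homeomorphism state with $(q)\alpha=0$; and (b) $T\in\Kn{n}$. Statement (b) will follow from (a): by Proposition~\ref{Prop:lnpn} and Remark~\ref{rem:consequencesofsynchronousrep} there are a number $l\in\N$ — which we may take as large as we like, in particular larger than $\max_{t\in Q_T}(t)\alpha$ — and a divisor $s$ of a power of $n$ such that $\im{q}$ is a union of $s$ cones at level $l-(q)\alpha$ for every state $q$ of $T$; applying this to a loop state $q$, for which $\im{q}=\xno$ and $(q)\alpha=0$, forces $s=n^{l}$, so $s_T$ is trivial in $\mn{n}$, i.e. $\rsig_{\omega}(T)$ is trivial and $T\in\Kn{n}$. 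Thus all of the work is in (a), and the argument mirrors that of Lemma~\ref{Lemma:swapshomeo}.

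Fix a letter $c\in\xn$, let $q_c$ be the unique state of $T$ with $\pi_{T}(c,q_c)=q_c$, and put $d=\lambda_{T}(c,q_c)$, a single letter by constraint~\ref{Lipshitzconstraint}. The combinatorial fact replacing ``$c^{2l}\notin\{a,b\}^{2}$'' of Lemma~\ref{Lemma:swapshomeo} is that every element of $wUw$ contains the prime word $w$ (of length $\ge 2$) as a subword, while a prime word of length $\ge 2$ is never a power of a single letter; hence no power $c^{m}$ contains any element of $(wU)^{+}w$ as a subword. Consequently, if $x\in\xnz$ has $x_{i}=c$ for all $i<0$, then no maximal occurrence of $(wU)^{+}w$ in $x$ meets a coordinate $\le-|w|$, so $y:=(x)f_{wUw}$ equals $x$ (hence equals $c$) on all coordinates $\le-|w|$. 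Reading $x$ through $(T,\alpha)$, the state at every sufficiently negative coordinate $i$ is $q_c$ and the letter $d$ is written at coordinate $i+(q_c)\alpha$, so the left tail of $y$ is constant $d$; comparing with the previous sentence gives $d=c$. Next, choosing $x$ with $x_{i}=c$ for $i<0$ and with $x_{0}x_{1}\cdots$ chosen so that no occurrence of $(wU)^{+}w$ in $x$ meets coordinate $0$ — this is possible while still arranging $x_{0}\ne c$, because the ``bad'' prefixes ($w$ itself together with the proper nonempty suffixes of $w$) form a proper clopen subset of $\xno$ — one reads $y_{0}=x_{0}\ne c$ from the marker side and $y_{0}=c$ from the transducer side whenever $(q_c)\alpha>0$; hence $(q_c)\alpha\le 0$. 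If $(q_c)\alpha<0$, then for $x$ of this shape the transducer forces $\lambda_{T}(x_{0}x_{1}\cdots,q_c)$ to begin with $c^{-(q_c)\alpha}$ for every choice of $x_{0}x_{1}\cdots$, so $q_c$ is a state of incomplete response, contradicting minimality of $T$. Therefore $(q_c)\alpha=0$.

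It remains to see $q_c$ is a homeomorphism state. I run the same argument on $f_{wUw}^{-1}$, which by the construction equals $(f^{-1})_{wUw}$ — again a marker automorphism attached to the same $w$ and $U$, since $f^{-1}$ satisfies~\ref{preserveslength} and~\ref{uniformcontinuity} (the latter with the same constant $\delta$). Writing $f_{wUw}^{-1}=(T^{-1},\beta)$ with $\beta$ the annotation making $(T,\alpha)(T^{-1},\beta)=(\id,0)$, the previous paragraph gives, for each letter $c$, that the $c$-loop state $q_c'$ of $T^{-1}$ — which is exactly the state forced in $T^{-1}$ by $\lambda_{T}(c^{k},q_c)=c^{k}$ — satisfies $(q_c')\beta=0$. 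Now Lemma~\ref{lem:detectinghomeofromactiononxnz}, taken with $\Gamma$ a power of $c$ forcing $q_c$, applies: $-(q_c)\alpha=0=(q_c')\beta$, so $q_c$ is a homeomorphism state of $T$. As $c$ was arbitrary, all loop states of $T$ are homeomorphism states; together with (b) this gives $T\in\mathcal{D}_{n}$, and $\alpha$ vanishes on the loop states, as claimed.

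The delicate step — and the one demanding the most care — is the combinatorial control over exactly which coordinates $f_{wUw}$ alters when the input contains a long constant block: occurrences of $(wU)^{+}w$ straddling the boundary of that block can in principle reach coordinates near $0$, and pinning $(q_c)\alpha$ to exactly $0$ (rather than merely $|(q_c)\alpha|\le|w|-1$) hinges on choosing the probing sequences so as to preclude all such straddling patterns. This is precisely where one uses that the set of ``bad'' prefixes of $\xno$ is a proper clopen subset, and, over small alphabets, where one must play off the probes for the different letters against each other; hypothesis~\ref{uniformcontinuity} is what guarantees that $f_{wUw}$ is genuinely local (so carries no shift), which is the structural reason $(q_c)\alpha=0$.
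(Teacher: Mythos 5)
Your proposal is correct and follows essentially the same route as the paper's proof: represent $f_{wUw}$ as $(T,\alpha)$, use a left-infinite constant probe to show each loop state outputs the same letter, rule out $(q_c)\alpha>0$ by a tail with $x_0\ne c$ avoiding marker occurrences at coordinate $0$ and $(q_c)\alpha<0$ via incomplete response, then run the identical argument on $(f^{-1})_{wUw}$ and invoke Lemma~\ref{lem:detectinghomeofromactiononxnz}. The only addition is your explicit verification that $T\in\Kn{n}$ (which the paper leaves implicit, since a homeomorphism loop state has image $\xno=n^{0}$ cones), and that is a harmless supplement.
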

\begin{proof}
	The proof approach is very similar to the proof of Lemma~\ref{Lemma:swapshomeo}.
	
	By results in \cite{BelkBleakCameronOlukoya} there is an element $T \in \Ln{n}$ and an annotation $\alpha$ of $T$ such that $f_{wUw} = (T, \alpha)$.
	
	By an observation above $w$ is a prime word and $wuw$ is a prime word of length at least $5$ for any $u \in U$. Let $a \in \xn$ be a letter and $q_{a}$ be the a loop state of $a$. We show that $(q_{a})\alpha = 0$.
	
	First we observe that $\lambda_{T}(a, q_a) = a$. Since the element $x \in \xnz$ given by $x_i = a$ for all $i \in \Z$ is fixed by $f_{wU}$ as $w$ is a prime word of length at least $2$.
	
	Let $r \in \Z$ be such that $(q_a)\alpha = r$ and consider a sequence $x \in \xnz$ such that $x_i =a$ for all $i<0$ and let $y = (x)f_{wUw}$.  We note that $y_{i} = a$ for all $i < 0$ as $w$ is a prime word and by definition of $f_{wUw}$ a word in $(wU)^{+}w$ is mapped to a word in $(wU)^{+}w$ of the same length.
	
	If $r >0$, then $y_{0}\ldots y_{r-1} = a^{r}$. However, since for all $k \in \N$, $f: (wU)^{k}w \to (wU)^{k}w$ is a bijection, and since $\{a\}^{+} \cap (wU)^{+}w = \emptyset$, then there is a choice of $x_{0} x_{1}\ldots$ such that $y_{0}\ldots y_{r-1} \ne a^{r}$. Specifically, we note that, as $x_i = a$ for all $i < 0$, and $|w|, n \ge  2$, then we may choose $x_{0}\ldots x_{|w|}$ not equal to $w$ and such that $x_0 \ne a$. In this case,  we have, $y_{0} = x_0 \ne a$, yielding the desired contradiction. Therefore $r \le 0$.
	
	 If $r <  0$, then the state $q_{a}$ is a state of incomplete response as the word $\lambda_{T}(x_0x_{1}x_{2}\ldots, q_a) = y_{r}y_{r+1}\ldots$ must begin with a non-trivial power of $a$. This contradicts the minimality of $T$ and so $r =0$.
	 
	 Let $T^{-1} \in \Ln{n}$ be the inverse if $T$ and let $\beta$ be the annotation of $T^{-1}$ such that $(T^{-1}, \beta) = f_{wUw}^{-1}$. Observe that  $(f^{-1})_{wUw}$ is the inverse of $f_{wUw}$. Therefore, by the arguments above, for any $a \in \xn$,  the loop state $p_a$ of $a$ in $T^{-1}$, satisfies, $\lambda_{T^{-1}}(a, p_a) = a$ and $(p_a)\beta = 0$. By Lemma~\ref{lem:detectinghomeofromactiononxnz}, $q_a$ and $p_a$ are therefore homeomorphism states.
	 
\end{proof}

\begin{corollary}\label{cor:Dncontainsautshift}
Let $m, n \in \N_{2}$, then $\aut{X_{m}^{\Z}, \shift{m}}$ is isomorphic to a subgroup of $\mathcal{D}_{n}$.  
\end{corollary}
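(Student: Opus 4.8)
The plan is to realise the classical embedding $\aut{X_m^\Z,\shift{m}}\hookrightarrow\aut{X_n^\Z,\shift{n}}$ by conveyor‑belt marker automorphisms and then read off from Lemma~\ref{lemma:conveyorhomeo} that the image lies in $\mathcal{D}_n$. Concretely, the first step is to invoke the construction of \cite{VilleS18}: since $n\ge 2$ and the block length $l$ may be taken arbitrarily large, one can fix a marker word $w\in\xn^{+}$ with $|w|\ge 2$ having only trivial self‑overlaps, together with a finite set $U\subset\xn^{l}$ for which any two elements of $wUw=\{wuw\mid u\in U\}$ can overlap only (initially or terminally) at the prefix or suffix $w$; and, for each $\phi\in\aut{X_m^\Z,\shift{m}}$, a map $f^{(\phi)}\colon(wU)^{+}w\to(wU)^{+}w$ satisfying hypotheses~\ref{preserveslength} and \ref{uniformcontinuity}, in such a way that $\Theta\colon\phi\mapsto f^{(\phi)}_{wUw}$ is an injective group homomorphism $\aut{X_m^\Z,\shift{m}}\to\aut{X_n^\Z,\shift{n}}$. (If the reference packages its encoding with a marker \emph{set} rather than a single marker $w$, a routine reduction collapses it to the single‑marker form required by Lemma~\ref{lemma:conveyorhomeo}.)

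The second step is to apply Lemma~\ref{lemma:conveyorhomeo} to each $f^{(\phi)}_{wUw}$. Using the identification $\ALn{n}\cong\aut{X_n^\Z,\shift{n}}$ of Theorem~\ref{theorem:repsbyLnandannotation}, the lemma gives $\Theta(\phi)=(T_\phi,\alpha_\phi)$ with $T_\phi\in\mathcal{D}_n$ and with $\alpha_\phi$ assigning $0$ to every loop state of $T_\phi$. Composing $\Theta$ with the canonical projection $\pi\colon\ALn{n}\cong\aut{X_n^\Z,\shift{n}}\twoheadrightarrow\aut{X_n^\Z,\shift{n}}/\gen{\shift{n}}\cong\Ln{n}$, $(L,\alpha)\mapsto L$ --- a homomorphism with kernel $\gen{\shift{n}}$, by \cite{BelkBleakCameronOlukoya} (cf.\ the proof of Theorem~\ref{thm:exthom}) --- yields a homomorphism $\Psi:=\pi\circ\Theta\colon\aut{X_m^\Z,\shift{m}}\to\Ln{n}$ with $\Psi(\phi)=T_\phi\in\mathcal{D}_n$ for all $\phi$.

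The third step is to check injectivity of $\Psi$. If $T_\phi=\tid$, then $\Theta(\phi)=(\tid,\alpha_\phi)$; since $\tid$ has a single state, which is a loop state, the normalisation provided by Lemma~\ref{lemma:conveyorhomeo} forces $\alpha_\phi=\bm{0}$, so $\Theta(\phi)$ is the identity of $\aut{X_n^\Z,\shift{n}}$ and hence $\phi=\id$ because $\Theta$ is injective. Therefore $\Psi$ is an injective homomorphism and $\Psi\bigl(\aut{X_m^\Z,\shift{m}}\bigr)$ is a subgroup of $\mathcal{D}_n$ isomorphic to $\aut{X_m^\Z,\shift{m}}$, which is the assertion of the corollary; combined with Lemma~\ref{lem:dnsubgpofAutgnr} this also yields Theorem~\ref{thm:intro2}.

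The only real obstacle is the first step: one is relying on the existence of the conveyor‑belt embedding $\aut{X_m^\Z,\shift{m}}\hookrightarrow\aut{X_n^\Z,\shift{n}}$ of \cite{VilleS18}, and one must verify that the automorphisms it produces are genuinely of the form $f_{wUw}$ with $w$ and $U$ meeting the overlap hypotheses of Lemma~\ref{lemma:conveyorhomeo} and with the finite‑word maps $f^{(\phi)}$ obeying \ref{preserveslength}--\ref{uniformcontinuity}; once that is in place, everything downstream is bookkeeping with annotations. If a self‑contained treatment of small cases is wanted, one can first record the elementary conveyor‑belt encoding showing $\aut{X_2^\Z,\shift{2}}\hookrightarrow\aut{X_n^\Z,\shift{n}}$ directly, and then cite \cite{VilleS18} only for the passage from alphabet size $2$ to arbitrary $m$.
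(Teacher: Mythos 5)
Your proposal is correct and follows essentially the same route as the paper: invoke the conveyor-belt encoding of \cite{VilleS18} to embed $\aut{X_m^{\Z},\shift{m}}$ into the maps $f_{wUw}$, apply Lemma~\ref{lemma:conveyorhomeo} to see each image is $(T,\alpha)$ with $T\in\mathcal{D}_n$ and $\alpha$ vanishing on loop states, and conclude that $(T,\alpha)\mapsto T$ is injective because such an annotation is uniquely determined. Your explicit kernel check in the third step is just a slightly more verbose version of the paper's observation that an annotation is determined by its value at a single state.
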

\begin{proof}
	In the paper \cite{VilleS18} the following facts are demonstrated. 
	
	The existence of a subset $U$ of $\xn$ and  $w$ of length at least $2$ such that any two elements $wvw$ and $wuw$ of the set $wUw = \{ w u w \mid u \in U \}$ can only overlap either initially or terminally at the prefix or suffix $w$. 
	
	An embedding of $\aut{X_{m}^{\Z}, \shift{m}}$ into the group of functions $f: (wU)^{+}w \to (wU)^{+}w$ which satisfy hypothesis~\ref{preserveslength} and \ref{uniformcontinuity}. 
	
	This embedding then yields an embedding  into the group $\aut{\xnz, \shift{n}}$ by taking the maps $f_{uWw}$. 
	
	By Lemma~\ref{lemma:conveyorhomeo} this gives an embedding of $\aut{X_{m}^{\Z}, \shift{m}}$ into the group $\mathcal{D}_{n}$ as follows. 
	
	Set $G$ to be the subgroup of $\aut{\xnz, \shift{n}}$ consisting of elements $(T,\alpha)$ for which there is some function $f: (wU)^{+}w \to (wU)^{+}w$ with $f_{wUw} = (T,\alpha)$. Define a map from $G \to \mathcal{D}_{n}$ by $(T,\alpha) \mapsto T$. Noting that by Lemma~\ref{lemma:conveyorhomeo} for any element $(T, \alpha) \in G$, $\alpha$ assigns the value $0$ to all loop states of $T$. Thus, since an annotation is uniquely determined by the image of a single state, it follows by Theorem~\ref{theorem:repsbyLnandannotation} that the map $(T,\alpha) \mapsto T$ from $G \to \mathcal{D}_{n}$ is an injective homomorphism. 
\end{proof}

\begin{corollary}\label{Cor:autshiftembedsinOnr}
	Let $m \in \N_{2}$, then $\aut{X_{m}^{\Z}, \shift{m}}$ is isomorphic to a subgroup of $\Onr$ for any $n,r \in \N$ satisfying $n > r \ge 1$.
\end{corollary}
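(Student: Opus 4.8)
The plan is to obtain the embedding with no new work, by composing the monomorphism of Corollary~\ref{cor:Dncontainsautshift} with a chain of inclusions of transducer groups already established in this paper and in \cite{AutGnr}. Concretely, I would exhibit the composite
$$\aut{X_{m}^{\Z}, \shift{m}} \;\hookrightarrow\; \mathcal{D}_{n} \;\le\; \Kn{n} \;\le\; \Ln{n,1} \;\le\; \Ons{1} \;\le\; \Ons{r} \;\cong\; \out{G_{n,r}},$$
where the first arrow is the injective homomorphism of Corollary~\ref{cor:Dncontainsautshift}.

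First I would recall why each link in the chain is valid. The inclusion $\mathcal{D}_{n} \le \Kn{n}$ is immediate from the definition of $\mathcal{D}_{n}$ together with the proposition (stated just after that definition) asserting that $\mathcal{D}_{n}$ is a group. The inclusion $\Kn{n} \le \Ln{n,1}$ is Proposition~\ref{prop:lnsplit}(2), which gives $\Kn{n} = \ker(\rsig_{\omega}) \unlhd \Ln{n,1}$. By definition $\Ln{n,1} = \Ons{1} \cap \Ln{n} \le \Ons{1}$. Since the hypothesis $n > r \ge 1$ means $1 \le r \le n-1$ (and $\Ons{n-1} = \Ons{n}$, so no range is lost), the inclusion $\Ons{1} \le \Ons{r}$ from \cite{AutGnr}, quoted in the introduction, applies; finally $\Ons{r} \cong \out{G_{n,r}}$ is the theorem of \cite{AutGnr} recorded in Subsection~\ref{Subsection:MnOnLn}. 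Composing these with the map of Corollary~\ref{cor:Dncontainsautshift} produces the desired monomorphism, and the corollary follows.

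I do not anticipate a genuine obstacle: all of the substance has been discharged in Corollary~\ref{cor:Dncontainsautshift} (which in turn rests on Lemma~\ref{lemma:conveyorhomeo} and the marker construction of \cite{VilleS18}) and in the identification of $\Kn{n}$ as a subgroup of $\Ln{n,1}$ carried out in Section~\ref{sec:extmorph}. The only point I would take care to state explicitly is that the image of $\aut{X_{m}^{\Z}, \shift{m}}$ sits inside $\Onr$ itself — not merely inside $\aut{G_{n,r}}$ as in Lemma~\ref{lem:dnsubgpofAutgnr} — which is clear because $\mathcal{D}_{n} \subseteq \Kn{n} \subseteq \Onr$ holds already at the level of (equivalence classes of) bisynchronizing core transducers, so no passage to a quotient by inner automorphisms is involved.
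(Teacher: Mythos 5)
Your proposal is correct and is essentially identical to the paper's own proof, which also deduces the result from Corollary~\ref{cor:Dncontainsautshift} together with the inclusions $\mathcal{D}_{n} \le \Ons{1} \le \Onr$. Your version merely spells out a few more intermediate links in the chain of subgroups, which the paper leaves implicit.
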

\begin{proof}
	This is a consequence of Corollary~\ref{cor:Dncontainsautshift} and the following inclusions: $$ \mathcal{D}_{n} \le \Ons{1} \le \Onr.$$
\end{proof}

\begin{corollary}
	Let $m \in \N_{2}$, then $\aut{X_{m}^{\Z}, \shift{m}}$ is isomorphic to a subgroup of $\aut{G_{n,r}}$ for any $n,r \in \N$ satisfying $n > r \ge 1$.
\end{corollary}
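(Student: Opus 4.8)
The plan is simply to compose the two embeddings already constructed. Fix $m \in \N_{2}$ and $n, r \in \N$ with $n > r \ge 1$. By Corollary~\ref{cor:Dncontainsautshift} there is an injective homomorphism $\iota \colon \aut{X_{m}^{\Z}, \shift{m}} \to \mathcal{D}_{n}$, obtained from the marker construction of \cite{VilleS18}: one picks a suitable pair $(w, U)$ with $|w| \ge 2$, embeds $\aut{X_{m}^{\Z}, \shift{m}}$ into the group of length-preserving, uniformly continuous bijections $f \colon (wU)^{+}w \to (wU)^{+}w$ satisfying \ref{preserveslength} and \ref{uniformcontinuity}, passes to the marker automorphisms $f_{wUw} \in \aut{\xnz, \shift{n}}$, and then uses Lemma~\ref{lemma:conveyorhomeo} together with Theorem~\ref{theorem:repsbyLnandannotation} to see that $f_{wUw} = (T, \alpha)$ with $T \in \mathcal{D}_{n}$ and $\alpha$ assigning $0$ to every loop state, so that $f_{wUw} \mapsto T$ is a well-defined injective homomorphism. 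By Lemma~\ref{lem:dnsubgpofAutgnr} there is an injective homomorphism $\jmath \colon \mathcal{D}_{n} \to \aut{G_{n,r}}$, realised through the chain $\mathcal{D}_{n} \hookrightarrow \mathcal{B}_{n,1} \hookrightarrow \Bnr \cong \aut{G_{n,r}}$. The composite $\jmath \circ \iota$ is then an injective homomorphism from $\aut{X_{m}^{\Z}, \shift{m}}$ into $\aut{G_{n,r}}$, which is exactly the assertion.

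Concretely, first I would recall that the first arrow of $\jmath$ sends $D \in \mathcal{D}_{n}$ to the homeomorphism of $\CCmr{n,1} = \xno$ induced by the minimal bi-synchronizing initial transducer $\overline{D}_{q_{0}}$ obtained by adjoining an initial state $q_{0}$ routing each letter $x$ to the (homeomorphism) loop state $q_{l(x)}$ of $D$, with output $\lambda_{D}(x, q_{l(x)})$; that this is a monomorphism is the content of the three bullet points in the proof of Lemma~\ref{lem:dnsubgpofAutgnr}. The second arrow sends a minimal transducer $A_{q_{0}}$ for an element of $\mathcal{B}_{n,1}$ to the transducer $\mathsf{A}_{p_{0}}$ for $\CCnr$ got by adjoining an initial state $p_{0}$ with $\pi_{\mathsf{A}}(a, p_{0}) = q_{0}$ and $\lambda_{\mathsf{A}}(a, p_{0}) = a$ for $a \in \dotr$, and with the transitions of $A$ unchanged; it is bi-synchronizing, so lands in $\Bnr$, and is again a monomorphism. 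Finally I would invoke the theorem of \cite{AutGnr} that $\Bnr \cong \aut{G_{n,r}}$ to transport the image into $\aut{G_{n,r}}$.

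There is essentially no obstacle left at this point: all of the real work --- constructing the marker automorphisms and verifying the homeomorphism-on-loop-states property, and exhibiting $\mathcal{D}_{n}$ inside $\aut{G_{n,r}}$ --- has already been carried out in Corollary~\ref{cor:Dncontainsautshift} and Lemma~\ref{lem:dnsubgpofAutgnr} (whose proofs in turn rest on Lemma~\ref{Lemma:swapshomeo}, Lemma~\ref{lemma:conveyorhomeo}, Lemma~\ref{lem:detectinghomeofromactiononxnz} and the identification $\aut{G_{n,r}} \cong \Bnr$ of \cite{AutGnr}). The only point worth a sentence of care is that the hypotheses match up cleanly: Corollary~\ref{cor:Dncontainsautshift} holds for every $m, n \in \N_{2}$, and Lemma~\ref{lem:dnsubgpofAutgnr} for every $n > r \ge 1$, so for any admissible triple $(m, n, r)$ both arrows are available and their composite is the desired embedding.
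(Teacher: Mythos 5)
Your proof is correct and is exactly the paper's argument: the paper also deduces the corollary by composing the embedding of $\aut{X_{m}^{\Z},\shift{m}}$ into $\mathcal{D}_{n}$ from Corollary~\ref{cor:Dncontainsautshift} with the embedding of $\mathcal{D}_{n}$ into $\aut{G_{n,r}}$ from Lemma~\ref{lem:dnsubgpofAutgnr}. The extra detail you supply about how each arrow is realised is accurate but already contained in the proofs of those two cited results.
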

\begin{proof}
	This follows from Lemma~\ref{lem:dnsubgpofAutgnr} and Corollary~\ref{cor:Dncontainsautshift}.
\end{proof}

It is natural to ask if the group $\mathcal{D}_{n}$ is a proper subgroup of $\Kn{n}$. The example below demonstrates that this is in fact the case. We fix $x$ an arbitrary symbol between $3$ and $n-1$ with the convention that if $n =2$, then $x = \emptyword$ and so the output when $x$ is read from any state is empty. The reader can verify that the state with loop labelled $0$ is not a homeomorphism state.

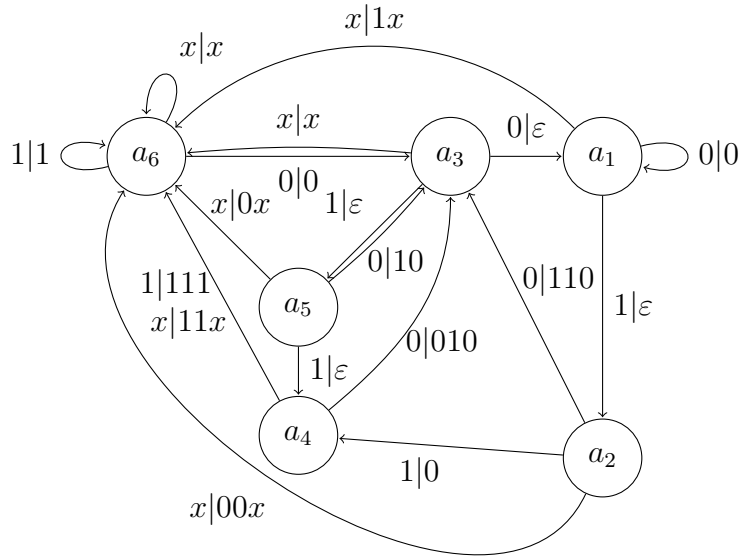
\begin{figure}[H] 
		\begin{center}
		\begin{tikzpicture}[shorten >=0.5pt,node distance=3cm,on grid,auto]
		\tikzstyle{every state}=[fill=none,draw=black,text=black]
		\node[state] (q_6)   {$a_6$};
		\node[state] (q_3) [xshift=4cm] {$a_3$};
		\node[state] (q_1) [xshift=6cm] {$a_1$};
		\node[state](q_5) [xshift=2cm, yshift=-2cm] {$a_5$};
		\node[state](q_4) [xshift=2cm, yshift=-3.7cm] {$a_4$};
		\node[state](q_2) [xshift=6cm, yshift=-4cm] {$a_2$};
		\node (d) [xshift=1cm, yshift=-4cm]{};
		\path[->]
		(q_6) edge [loop left] node [swap] {$1|1$} ()
                 edge [in=90, out=60, loop] node [swap] {$x|x$} ()
		         edge node[swap]  {$0|0$} (q_3)
		(q_3) edge node{$0|\emptyword$} (q_1)
		         edge node[swap]{$1|\emptyword$} (q_5)
		         edge[in=5, out=175] node[swap]{$x|x$} (q_6)
		(q_1) edge[loop right] node{$0|0$} ()
		         edge node{$1|\emptyword$} (q_2)
		         edge[in=45, out=135] node[swap]{$x|1x$} (q_6)   
		(q_5) edge[out=40, in=230] node[xshift=-0.3cm, swap]{$0|10$} (q_3)
		         edge node{$1|\emptyword$} (q_4)
		         edge node[xshift=-0.3cm, swap]{$x|0x$} (q_6)
		(q_4) edge[out=40, in=270] node[xshift=-0.3cm, swap]{$0|010$} (q_3)
		        edge node[yshift=0.5cm]{$1|111$}  node[xshift=0.2cm]{$x|11x$}(q_6)    
		(q_2) edge node [xshift=-0.2cm, swap]  {$0|110$} (q_3)
		         edge node{$1|0$} (q_4)
		        edge [out=245, in=237]  node {$x|00x$} (q_6);
		\end{tikzpicture}
	\end{center}
\caption{An element demonstrating the inclusion $\mathcal{D}_{n} \subsetneq \Kn{n}$.}    
\label{fig:L2withloopstatenonhomeo}

\end{figure}

One can in fact construct examples to show that the index of $\mathcal{D}_{n}$ in $\Kn{n}$ is infinite.

\printbibliography 

\end{document}